\documentclass[12pt, reqno]{amsart}
\usepackage{amsmath, amsthm, amscd, amsfonts, amssymb, graphicx, color, mathtools}
\usepackage[bookmarksnumbered, colorlinks, plainpages]{hyperref}
\usepackage{hyperref}
\hypersetup{
    colorlinks=true, 
    linkcolor=red,   
}

\usepackage{enumitem}
\usepackage{textcomp}
\usepackage{tikz}
\usepackage{geometry}
 \geometry{a4paper,
 total={170mm,257mm},
 left=20mm,
 top=20mm,}


\newtheorem{theorem}{Theorem}[section]
\newtheorem{lemma}[theorem]{Lemma}
\newtheorem{proposition}[theorem]{Proposition}
\newtheorem{corollary}[theorem]{Corollary}
\theoremstyle{definition}
\newtheorem{definition}[theorem]{Definition}
\newtheorem{notation}[theorem]{Notation}
\newtheorem{observation}[theorem]{Observation}
\newtheorem{example}[theorem]{Example}

\DeclareMathOperator{\val}{val}
\DeclareMathOperator{\ev}{ev}
\DeclareMathOperator{\ft}{ft}
\DeclareMathOperator{\mult}{mult}
\DeclareMathOperator{\trop}{trop}
\theoremstyle{remark}
\newtheorem{remark}[theorem]{Remark}
\numberwithin{equation}{section}
\date{\today}
\subjclass[2023]{14T15, 14T20, 14N10, 14N35.}
\keywords{\footnotesize{Tropical geometry, Enumerative invariants, Gromov-Witten invariants, Cross-ratios, Hirzebruch surfaces}}

\usepackage{nomencl}
\makenomenclature
\begin{document}
\sloppy
\setcounter{page}{1}


\centerline{}

\centerline{}

\title[General Kontsevich-style formula for Hirzebruch Surfaces]{General Kontsevich-style formula for Hirzebruch Surfaces\\
}

\author[Parisa Ebrahimian]{Parisa Ebrahimian}
\maketitle
\begin{center}
\address{ \footnotesize {Department of Mathematics, University of T\"{u}bingen} \\
\email{\textcolor[rgb]{0.00,0.00,0.84}{
parisa.ebrahimian1@gmail.com
}}}

\end{center}
\date{Received: xxxxxx; Revised: yyyyyy; Accepted: zzzzzz.
\newline \indent $^{*}$ Corresponding author}

\begin{abstract}

Tyomkin's correspondence theorem states the equality of counts of rational curves of fixed homology class in a toric surface satisfying point and cross-ratio conditions with their tropical counterparts \cite{Tyo}. Such correspondence theorems allow us to derive non-tropical results from tropical ones; for example, Mikhalkin's correspondence theorem is used in the tropical proof of the famous Kontsevich formula for counts of plane rational curves of degree $d$ satisfying point conditions \cite{Mik}. The latter has been generalized to counts of curves in the Hirzebruch surface $\mathbb{F}_{2}$ satisfying point conditions in \cite{FM}. A generalization of these enumerative problems for curves in $\mathbb{P}^2$, which allows satisfying multiple cross-ratio conditions, has been provided in \cite{GM}. 
In this paper, we present a Kontsevich-style formula for the Hirzebruch surface $\mathbb{F}_{r}$, for $r \in \mathbb{N}$, which counts the number of rational tropical curves of fixed homology class satisfying points and multiple cross-ratio conditions via tropical methods. Moreover, the cross-ratio conditions we impose on the curves allow more freedom than the ones defined in \cite{GM}.

\end{abstract}

\tableofcontents
\section{Introduction}

\noindent Enumerative geometry is one of the oldest areas in mathematics, focusing on determining the number of geometric objects that meet specific conditions. A classical example is Apollonius’ problem, originating in Ancient Greece, which asks for a circle tangent to three given circles in the plane.
 Apollonius of Perga (c. 262 BC - c. 190 BC) posed and solved this famous problem in his work. Subsequently, a wide range of geometric and algebraic methods were developed to solve Apollonius' problem and its generalizations.

Although these counting problems are old and arise naturally, many are very hard to answer and require the development of new methodologies. For instance, one of the complicated problems in enumerative geometry is computing Gromov-Witten (GW) invariants. GW invariants are rational numbers that, in certain situations, count curves meeting prescribed conditions on a given manifold.
 These invariants have been used to distinguish (symplectic) manifolds that were previously indistinguishable. Moreover, they play a crucial role in string theory. From another viewpoint, some GW invariants are a natural generalization of the following basic example of enumerative geometry: how many lines are there through two distinct points?
Indeed, we can ask about the number of curves of fixed degree satisfying certain incidence conditions.

Maxim Lvovich Kontsevich found a recursive formula for counting the number of complex plane rational curves of degree $d$ that pass through $ {3d-1} $ fixed general points \cite{KM}. These numbers, denoted by $ N_{d} $, are GW invariants of $ \mathbb{P}^2 $. 
The discovery of this formula for generic $d$ was a breakthrough since the number $ N_3 = 12 $ was found in $ 1848 $ (Steiner), and $ N_4=620 $ was found in $ 1873 $ (Zeuthen), and the problem was open for an arbitrary $ d > 4 $. The famous Kontsevich's formula is:

\begin{equation}
    N_{d} = \sum_{\substack{d_{1} + d_{2} = d \\ d_{1} , d_{2} > 0}} \Big( d_{1}^{2} d_{2}^{2}\binom{3d - 4}{3d_{1} - 2} - d_{1}^{3} d_{2}\binom{3d - 4}{3d_{1} - 1} \Big) N_{d_{1}}N_{d_{2}}.
\end{equation}

An important step towards deducing such a formula is to study curves that satisfy certain point conditions and one cross-ratio condition. 
A cross-ratio condition is a number associated to 4 marked points on a rational curve. Two 4-tuples of points map to each other via an isomorphism of $ \mathbb{P}^1 $ if and only if their cross-ratios coincide. 
One of the principal challenges is to determine the number of rational curves in toric surfaces satisfying point and cross-ratio conditions.

Tropical geometry is a branch of mathematics that has gained significant attention in this field in the past two decades. It is a development in algebraic geometry that aims to simplify algebro-geometric problems into purely combinatorial ones, and it is used as a modern tool that can solve enumerative problems effectively. Tropical geometry can be viewed as a degenerate version of algebraic geometry, where the algebraic operations of addition and multiplication are replaced with the tropical operations of minimum and addition, respectively. A tropical plane curve corresponds to a balanced piece-wise linear graph in $ \mathbb{R}^2 $. The degree of a tropical curve is defined to be a set of direction vectors of outgoing ends of the tropical curve, and we denote it by $ \Delta $.
Therefore, tropicalizing enumerative problems converts a problem of counting the number of algebraic curves on a toric surface to a problem of counting the number of specific graphs in $ \mathbb{R}^2 $.

In order to translate an enumerative problem into the tropical world, we need a (suitable) correspondence theorem. Grigory Mikhalkin established a correspondence theorem in his paper \cite{Mik}, demonstrating the equality of  $ N_{d} $ and $ N^{\trop}_{d} $. Here, $ N^{\trop}_{d} $ denotes the analogous counting curve problem in the tropical setting: counting the number of rational tropical plane curves of degree $ d $ through $ 3d-1 $ general given points. The equality $ N_{d} = N^{\trop}_{d} $ makes it possible to count tropical curves instead of algebraic curves. A notable achievement in this journey is the tropical proof of Kontsevich’s formula, which was accomplished approximately a decade ago by Hannah Markwig and Andreas Gathmann \cite{GM}.

Ilya Tyomkin proved an algebraic-tropical correspondence for rational tropical curves in toric surfaces satisfying point and cross-ratio conditions \cite{Tyo}. To make use of this correspondence theorem for algebraic geometry, we need to count tropical curves that satisfy point and cross-ratio conditions. Christoph Goldner studied this problem for curves in toric surfaces satisfying certain point and cross-ratio constraints, by generalizing the lattice path algorithm. The lattice path algorithm is a well-known combinatorial method in tropical geometry for enumerating all rational tropical curves satisfying point conditions and degenerate cross-ratio constraints \cite{G18}.

In the tropical setting, a cross-ratio condition corresponds to the length of the path enclosed by four marked ends; see Figure \ref{cr}. A cross-ratio is called \emph{degenerate} if this path has length zero.
The degenerate cross-ratio conditions cause vertices of valence higher than 3 in the tropical curve.  For a more detailed definition of the tropical cross-ratio condition, see \ref{CR}, and to see an example of a degenerate cross-ratio, see Example \ref{Gold}.

 \begin{figure}[h!]
		
		\includegraphics[scale=0.9]{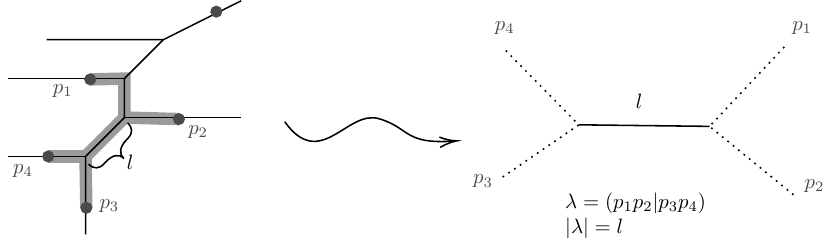}
		\centering
		\caption{A tropical cross-ratio is an unordered pair of pairs of unordered marked ends like $ \lambda = (p_{1} p_{2}|p_{3} p_{4}) $ and a length $|\lambda|$. We call a cross-ratio nondegenerate if its length \( |\lambda| \) is nonzero. If \( |\lambda| = 0 \), we call it a degenerate cross-ratio and denote it by \( \lambda = \{p_{1}, p_{2}, p_{3}, p_{4}\} \).
        This figure also illustrates a forgetful map $\ft_{\lambda}$, which maps a 5-marked tropical stable map of degree $\Delta_{\mathbb{F}_{2}}(1,1)$ to a point in $\mathcal{M}_{0,4}$ (for more details see Definition \ref{forgetfulmap}).}
        
		\label{cr}
	\end{figure}

Christoph Goldner also introduced so-called cross-ratio floor diagrams and showed that they allow us to determine the number of rational space tropical curves that satisfy general positioned point and cross-ratio conditions \cite{G20}. In another paper, Goldner provided a general Kontsevich’s formula for $\mathbb{P}^2$, which holds for more than one cross-ratio condition. His generalized version of Kontsevich’s formula recursively computes the number of rational plane curves of degree \(d\) that satisfy point, curve, and cross-ratio conditions in general position \cite{G}.

Other Kontsevich-style formulas exist for counting curves in other surfaces or under specific conditions, some of which have been discovered using tropical geometric methods. For instance, in \cite{FM}, an elegant formula is established for counting the number of irreducible rational (genus zero) plane tropical curves of a fixed degree in \( \mathbb{F}_{2} \) that satisfy point conditions in general position.
 A Hirzebruch surface $ \mathbb{F}_{r} $ is a ruled surface over the projective line.  A tropical curve in $ \mathbb{F}_{r} $ of degree $ \Delta_{\mathbb{F}_{r}}(a,b) $ is a graph dual to a subdivision of the polygon in Figure \ref{F}.


\begin{figure}[h!]
	
	\includegraphics[scale=0.8]{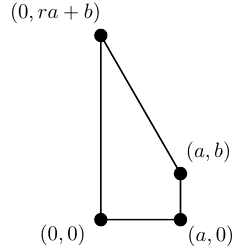}
	\centering
	\caption{A tropical curve in $\mathbb{F}_{r}$ of degree $\Delta_{\mathbb{F}_{r}}(a,b)$ is dual to a regular subdivision of this polygon.}
	\label{F}
\end{figure} 

We can obtain many interesting results by imposing multiple cross-ratio conditions on a curve. For example, determining the number of configurations of $n$ points on $\mathbb{P}^1$ with $n-3$ prescribed cross-ratios is known as a cross-ratio degree problem. 
Rob Silversmith established a closed formula
for a class of cross-ratio degrees indexed by triangulations of an $n$-gon in \cite{Sil}. These degrees are connected to the
geometry of the real locus of $\mathcal{M}_{0,n}$ and to cluster algebras. In the same theme, Alessio Cela and Aitor Iribar L\'{o}pez in \cite{CI} introduced a formula to count the number of tropical stable maps in Hirzebruch surfaces satisfying $n$ point conditions and exactly $n-3$ cross-ratio conditions. 

In this paper, we explore a more flexible framework where it is not necessary to impose all possible cross-ratio conditions. Instead, any subset of cross-ratio conditions can be applied, as considered for curves in $\mathbb{P}^2$ in \cite{G}. There is currently no formula for enumerating rational curves in $ \mathbb{F}_r $ that satisfy general position constraints for points, curves, and a chosen subset of cross-ratio conditions. \\

\begin{center}
\fbox{
\parbox{0.95\textwidth}{
\emph{Our main goal is to provide a recursive formula for counting the number of irreducible rational (genus zero) tropical curves of degree $ \Delta_{\mathbb{F}_{r}}(a,b) $ in Hirzebruch surface $ \mathbb{F}_{r} $ that satisfy $n$ point, $k$ curve, and 
$l$ cross-ratio conditions. This number is represented by $ \mathcal{N}^0_{\Delta_{\mathbb{F}_r}(a,b)}(p_{\underline{n}}, L_{\underline{k}}, \lambda_{\underline{l}}) $.}


}
}\\\hfill (\textcolor{red}{$\star$}) 
\label{box:recursive_formula}
\end{center}

\def\refstar{*} 

However, we discovered that constructing a recursion for these counts introduces non-contracted marked ends with weights greater than or equal to 1 in the primitive direction \( (1,0) \). To handle this, we mark all ends in the primitive direction \( (1,0) \) with \( \Tilde{e}_d \), and denote their corresponding weights by \( w_{\underline{d}} \) (this is explained with more details in Example \ref{Gold} and then in Section \ref{Prel}). 
So now we need to address this problem:\\

\begin{center}

\fbox{
\parbox{0.95\textwidth}{\emph{Counting the number of irreducible rational plane tropical curves of degree $ \Delta_{\mathbb{F}_{r}}(a,b, w_{\underline{d}}) $ in Hirzebruch surface $ \mathbb{F}_{r} $ that satisfy $n$ point, $k$ curve, and 
$l$ cross-ratio conditions where we have $d$ marked non-contacted ends of primitive direction $(1,0)$}}
}\\\hfill (\textcolor{red}{$\star \star$}) 
\label{box:recursive_formula1}
\end{center}

\def\refstar{**} 

To solve this new problem (\textcolor{red}{\hyperref[box:recursive_formula]{$\star \star$}}) we degenerate the tropical curves to deduce a recursive formula. In order to do that, it is necessary to (slightly) modify the definition of a tropical cross-ratio condition and address the cross-ratio conditions that offer more flexibility in terms of the chosen marked ends that contribute to a cross-ratio (see the definition of the cross-ratio in \ref{CR}). 
Marking all \( d \) non-contracted ends in the primitive direction \( (1,0) \) implies that the new cross-ratio conditions may now also involve these marked ends.
The reason why we need to solve a more general problem can be seen in the following example:
\begin{example}\label{Gold}
When searching for a tropical curve \(C\), a curve condition is given by the intersection of \(C\) with another tropical curve. The tropical curves used as conditions are of specific degrees, as defined in \ref{Line}, and are referred to as multi-line conditions.
In Figure \ref{gold}, we have a tropical curve of degree $ \Delta_{\mathbb{F}_{2}}(3,1) $ satisfying $11$ point conditions, $1$ multi-line condition $L$, a non-degenarated cross-ratio condition $\lambda'_1 = (t_{\alpha}t_{\beta}|t_{\eta}t_{\gamma})$, and a degenerate cross-ratio $\lambda_2 = \{t_{\eta}, t_{\gamma},  t_{1}, t_{\alpha} \}$. The non-degenerate cross-ratio condition comes with a fixed length $|\lambda'_1|$. Based on Remark \ref{RationalyEquivalent}, the number of such curves doesn't depend on the length of $\lambda'_1$. Therefore, we can assume that $|\lambda'_1|$ grows arbitrarily, and this results in having a 1-dimensional family of curves that only differ in the length $|\lambda'_1|$. 
In order to produce a recursive formula, we need to split the curves. As we can see in this example, it is possible to split such a curve over its movable part, which is shown with black straight lines in Figure \ref{gold}. But  $\lambda_2$ will no longer be satisfied if we do not replace $t_{\alpha}$ with another marked end which lives in the same connected component as $t_{\eta}, t_{\gamma},  t_{1} $. To resolve this problem, we mark the new end in the direction $(1, 0)$  with $\Tilde{e}$, as it is also shown in Figure \ref{gold}, and we replace $\lambda_2$ with ${\lambda}_2 = \{t_{\eta}, t_{\gamma},  t_{1}, \Tilde{e} \}$. Since, after cutting the movable component, we have to deal with this new type of cross-ratio conditions, we need to solve the problem for a more general setting, where we have a marked right end and the cross-ratio conditions can include such a marked end. 

    \begin{figure}[h!]
	
	\includegraphics[scale=0.8]{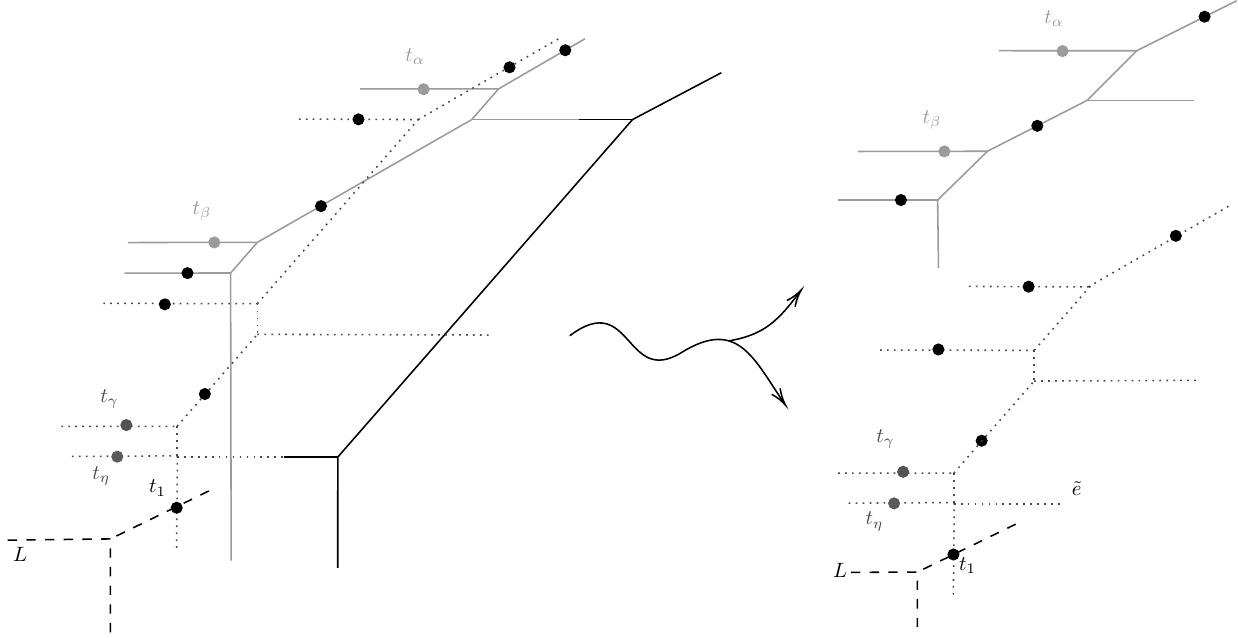}
	\centering
	\caption{In the left picture, we have a tropical curve of degree \( \Delta_{\mathbb{F}_{2}}(3,1) \) satisfying 11 point conditions, 1 multi-line condition, and 2 cross-ratio conditions. As described in Example \ref{Gold}, we split this tropical curve over its movable string, which is drawn with black straight lines. After splitting the curve, we obtain two tropical curves of degrees \( \Delta_{\mathbb{F}_{2}}(1,1) \) and \( \Delta_{\mathbb{F}_{2}}(1,2) \) in the right-hand pictures.
 The first curve is drawn with straight grey lines, and the second one is drawn with dashed black lines. }
	\label{gold}
\end{figure} 
\end{example}
Therefore, we will need to develop more general methods for counting curves.
In Proposition \ref{*} and Section \ref{Split curves}, we present and thoroughly demonstrate all possible ways to split a tropical curve in $ \mathbb{F}_{r} $ satisfying one very long, non-degenerate cross-ratio condition.

To degenerate the tropical curves in $ \mathbb{F}_{r} $, we face new cases compared to the problem of degenerating the tropical curves in $ \mathbb{P}^2 $. Indeed, in \cite[Proposition  29]{G}, tropical curves in $ \mathbb{P}^2 $ degenerate to reducible tropical curves via a contracted bounded edge. However, we prove in Proposition \ref{*} that the tropical curves in $ \mathbb{F}_{r} $ degenerate either via contracted bounded edges or via cutting movable branches. Since we can impose more generalized cross-ratio conditions on the curves, the potential movable components in our problem are more general than the one in \cite[Lemma 2.10]{FM}. 

More precisely, we wanted to find a recursive formula for counting the number of $m$-marked rational tropical stable maps (alternatively: tropical curves) in the moduli space of $m$-marked tropical stable maps of degree $ \Delta_{\mathbb{F}_{r}}(a,b) $
that satisfy $ n $ point conditions, $ k $ multi-line conditions, and $ l $ cross-ratio conditions, where only marked points can contribute to the cross-ratios.
But as we saw in Example \ref{Gold}, we need to modify the definition of the cross-ratio condition to be more flexible. Indeed, we mark all $d$ non-contracted ends in the direction of $(1,0)$, and the new cross-ratio conditions can include these marked ends as well (for the precise definitions, see \ref{tropical} and \ref{CR}). Cutting the edges leading to a movable component results in ends with weight greater than 1 in the \((1,0)\) direction.
 This means that a tropical stable map of degree $ \Delta_{\mathbb{F}_{r}}(a,b) $ can have $d$ ends of weights $w_{\underline{d}}$ in the primitive direction $(1,0)$ (of course $b= \sum_{i=1}^d w_i$), so we modify the notation of the degree of such a tropical stable maps to $\Delta_{\mathbb{F}_{r}}(a,b, w_{\underline{d}})$ (Definition \ref{Degree} and in Section \ref{Split curves}). Therefore, for a given degree $\Delta_{\mathbb{F}_{r}}(a,b, w_{\underline{d}})$, we need to count the number of $(m+d)$-marked rational tropical stable maps satisfying $ n $ point conditions, $ k $ multi-line conditions, and $ l $ cross-ratio conditions where we have $d$ marked non-contracted ends of weights $ w_{\underline{d}} $ and this is a more general problem than the one we aimed to solve.
Since we labeled all of the ends in the $(1,0)$ direction (see Definition \ref{tropical}), when $b=d$ and $w_i =1$ for each $1 \leq i \leq d$, the answer to the main enumerative problem (\textcolor{red}{\hyperref[box:recursive_formula]{$\star$}}) is as follows:

\begin{center}
\fbox{
\parbox{0.49\textwidth}{\emph{$ \mathcal{N}^{0}_{\Delta_{\mathbb{F}_{r}}(a,b)} (p_{\underline{n}}, L_{\underline{k}}, {\lambda}_{\underline{l}})= \frac{1}{b!}\mathcal{N}^{0}_{\Delta_{\mathbb{F}_{r}}(a,b, w_{\underline{b}})} (p_{\underline{n}}, L_{\underline{k}}, {\lambda}_{\underline{l}})$.}}
}
\end{center}

Theorem \ref{Main0} presents a recursive formula for enumerating tropical stable maps contributing to $\mathcal{N}^{0}_{\Delta_{\mathbb{F}_{r}}(a,b, w_{\underline{d}})} (p_{\underline{n}}, L_{\underline{k}}, {\lambda}_{\underline{l}})$, with $l\geq1$. If we set $l = 0$, an interesting enumerative problem is to compute $\mathcal{N}^{0}_{\Delta_{\mathbb{F}_{r}}(a,b)} (p_{\underline{n}})$, see Remark \ref{Frn}. Such numbers arise naturally when we expand the recursive formula in Theorem~\ref{Main0}.

In order to solve this problem, the next thing to adjust is the length of the cross-ratios. Let $ {\lambda}_{\underline{u}} = \{ {\lambda}_r \}_{r=1}^u $ and $ {\lambda}'_{\underline{u}'} = \{ {\lambda}_r' \}_{r'=1}^{u'} $ be the sets of all degenerate and non-degenerate cross-ratio conditions respectively, where $ l = u + u' $ and the underlined symbols indicate a set of symbols. Then $ \mathcal{N}^{0}_{\Delta_{\mathbb{F}_{r}}(a,b, w_{\underline{d}})}(p_{\underline{n}}, L_{\underline{k}}, {\lambda}_{\underline{u}}, {\lambda}'_{\underline{u}'}) $ is the number of rational tropical stable maps of degree $ \Delta_{\mathbb{F}_{r}}(a,b, w_{\underline{d}})$ with $d$ marked right ends that satisfy $ n $ point conditions $ p_{\underline{n}} $, $ k $ multi-line conditions $ L_{\underline{k}} $, and $ l $ cross-ratio conditions, counted with multiplicity (for a precise definition see \ref{Nd}). This number is independent of the exact length of the cross-ratio conditions (see Remark \ref{RationalyEquivalent}). Therefore by assuming that all cross-ratio conditions are degenerate, the main problem can be reduced to the problem of counting the number of the tropical stable maps contributing to $ \mathcal{N}^{0}_{\Delta_{\mathbb{F}_{r}}(a,b, w_{\underline{d}})} (p_{\underline{n}}, L_{\underline{k}}, {\lambda}_{\underline{l}}) $.

Consider the last cross-ratio $ \lambda_l $, we can replace it with a non-degenerate cross-ratio condition $\lambda'_l $ that degenerates to $ \lambda_l $. Our approach for finding a recursive formula for counting the number of tropical stable maps contributing to 
$ \mathcal{N}^{0}_{\Delta_{\mathbb{F}_{r}}(a,b, w_{\underline{d}})}(p_{\underline{n}}, L_{\underline{k}}, {\lambda}_{\underline{l}}) $
is to study $\mathcal{N}^{0}_{\Delta_{\mathbb{F}_{r}}(a,b, w_{\underline{d}})}(p_{\underline{n}}, L_{\underline{k}}, {\lambda}_{\underline{l-1}}, {\lambda}'_{l})$ instead, where the length of $ |{\lambda}'_{l}|$ is very long. Forgetting the non-degenerate cross-ratio condition results in having a 1-dimensional family of curves, and we will study this 1-dimensional family in Section \ref{STSM}. In Proposition \ref{*} we prove that there are only two general ways to split tropical stable maps contributing to $\mathcal{N}^{0}_{\Delta_{\mathbb{F}_{r}}(a,b, w_{\underline{d}})}(p_{\underline{n}}, L_{\underline{k}}, {\lambda}_{\underline{l-1}}, {\lambda}'_{l})$. 

The idea of considering a cross-ratio condition with a very long length comes from the proof of Kontsevich's formula in \cite{GM}.
Our approach to study this one-dimensional family offers a new viewpoint for studying the movable components that appear in the tropical stable maps in Hirzebruch
surfaces satisfying one less cross-ratio condition.



\subsection{Structure of the paper}

 In Section \ref{Prel} below, we describe the basic objects of our study and give the background on necessary concepts about tropical stable maps and their moduli spaces, as well as the conditions that we want to impose on the tropical stable maps, such as the definition of the cross-ratio condition \ref{CR}.

Section \ref{STSM} forms the core of our results. In this section, we generalize tropical methods to enable the splitting of tropical stable maps in $\mathbb{F}_{r}$ that satisfy prescribed incidence conditions.
The main goal of this section is to prove Proposition \ref{*} and Lemma \ref{n0}, which explain all possible ways to split the tropical stable maps contributing to $\mathcal{N}^{0}_{\Delta_{\mathbb{F}_{r}}(a,b, w_{\underline{d}})}(p_{\underline{n}}, L_{\underline{k}}, {\lambda}_{\underline{l-1}}, {\lambda}'_{l})$ where $n \geq 1$ or $n=0$. 

In Section \ref{Split curves}, we study how the existence of multiple cross-ratio conditions can affect the way we split the curves. We also study the cut parts and the ways they affect each other.

In Section \ref{MOSC}, we compute the multiplicity of the cut curves in every splitting scenario, and then we find the relation between the multiplicities of the main curve and the split version. 

Finally, in Section \ref{Formula}, we derive a recursive formula for counting the number of irreducible rational tropical stable maps of degree  $ \Delta_{\mathbb{F}_{r}}(a,b, w_{\underline{d}}) $ in the toric surface $ \mathbb{F}_{r} $ that satisfy point, curve (multi-line), and cross-ratio conditions, where we have at least one cross-ratio condition, see Theorem \ref{Main0}. Then in Section \ref{GTKFrn}, we present a recursive formula for computing 
\(\mathcal{N}^{0}_{\Delta_{\mathbb{F}_{r}}(a,b, w_{\underline{d}})}(p_{\underline{n}})\).  Moreover, we expand the recursive formula from Theorem~\ref{Main0} for a fixed degree to illustrate its application, see Example \ref{ExF3}.

In Section~\ref{Test}, we prove that for specific degrees and a fixed number of cross-ratio conditions, our generalized formula recovers known results established in~\cite{FM} and~\cite{G}. \\

\textbf{Acknowledgements.} 
I would like to thank Hannah Markwig for her valuable feedback and insightful discussions throughout every stage of this research. I am grateful to Christoph Goldner, whose work \cite{G} inspired the methods developed here, and who kindly proofread the manuscript and provided detailed feedback. I also thank Firoozeh Dastur for helpful suggestions regarding the structure of the recursive formula, Daniel Funck for proofreading the first two sections of the paper, and Victoria Schleis for valuable conversations on constructing some of the initial examples of tropical curves. This work was supported by the German Academic Exchange Service (DAAD), whose support I gratefully acknowledge.


\section{Preliminaries}\label{Prel}

This section includes a review of some standard definitions from tropical geometry. Moreover, some of the more technical definitions that will be used in the next sections will be introduced or recalled here. One can see \cite{GM, G, FM} for more details in this regard as well.

The purpose of this paper is to provide a class of tropical enumeration invariants for $ \mathbb{F}_{r} $, a special toric variety classified as a Hirzebruch surface. The Hirzebruch surface $\mathbb{F}_{r}$ is defined to be the surface $\mathbb{P}(\mathcal{O}_{\mathbb{P}^1} \oplus \mathcal{O}_{\mathbb{P}^1}(r))$; it is a smooth projective toric surface.
A toric variety $ X $ is an algebraic variety containing a torus $ T = (C^{*})^r $ as a dense open subset, in such a way that the action of the torus on itself extends to the whole $ X $. Toric varieties can be derived from lattice polytopes, and the tropicalization of a curve on a toric variety will be a graph dual to a subdivision of the mentioned lattice polytope. (For more details see \cite[Example 2.3.16]{CLS} and \cite[Proposition 3.1.6]{MS}). 

From another point of view, we can consider the tropical curves as abstract metric graphs mapped to the real plane that have some specific features. 

\begin{notation}\label{Notation}
In this paper, underlined symbols denote sets of indices. For example, \( \underline{n} = \{1, \dots, n\} \) where \( \# \underline{n} = n \).

\end{notation}

\begin{definition}
   Let \( \Gamma \) be a metric tree. We call the unbounded edges of \( \Gamma \) the \emph{ends}.
 An $m$-marked abstract rational tropical curve is a tuple $ (\Gamma, x_{\underline{m}})$, where $ \Gamma $ has exactly $ m $ ends that are labeled with $ x_{1}, \cdots, x_{m} $.
The moduli space of all m-marked abstract rational tropical curves with exactly m unbounded edges will be denoted by $ \mathcal{M}_m $. 
\end{definition}

\begin{example}
   The first nontrivial example of the moduli space of abstract curves is $ \mathcal{M}_{0,4} $. An abstract 4-marked tropical curve is a tree with 4 unbounded ends. Therefore, $ \mathcal{M}_{0,4} $ is simply a rational tropical curve with 3 ends. See Figure \ref{M4}.

 \begin{figure}[h!]
		
		\includegraphics[scale=0.9]{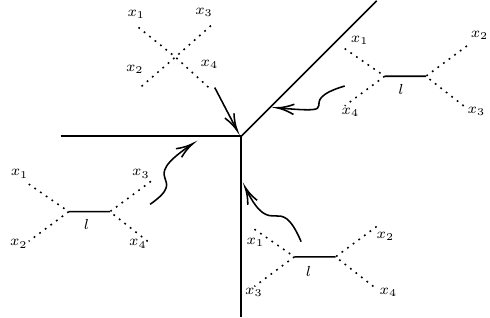}
		\centering
		\caption{The length of the only bounded edge of an abstract tropical curve in $ \mathcal{M}_{0,4} $ determines its exact location on the polyhedral complex $ \mathcal{M}_{0,4} $.}
		\label{M4}
	\end{figure}
   
\end{example}

\begin{definition} \label{tropical}
    An $(m+d)$-marked rational tropical stable map is represented as a tuple $ C = (\Gamma, x_{\underline{m}}, \Tilde{e}_{\underline{d}}, h) $, where $ \Gamma $ denotes an abstract tropical curve, and $ x_{\underline{m}} $ and $\Tilde{e}_{\underline{d}}$ are distinct labeled ends of $ \Gamma $. Additionally, $ h: \Gamma \rightarrow \mathbb{R}^2 $ is a continuous map and $C$ satisfies the following conditions:
\begin{enumerate}
    \item On each edge of $ \Gamma $, $ h $ is an integer affine linear map with rational slope. That is, for every bounded edge $ e $ of $ \Gamma $ of length $ l(e) \in [0, +\infty] $, the map $ h|_e $ is of the form $ h(t) = a + t \cdot v(e) $ and therefore it stretches the edge $ e $ with length $ l(e) $ to a line segment of length $ l(e) \cdot ||v(e)|| $ (where $ ||.|| $ denotes the usual Euclidean length). The vector $ v(e) $ is called the direction vector of the edge $ e $ at $ 0 \in [0, l(e)]$. The direction vector can be considered as $ v(e):= w(e) \cdot u(e)$, where $ w(e) \in \mathbb{N} $ is called the weight of the edge, and $ u(e) $ is the primitive integer vector in the direction of the edge.  

    \item The balancing condition holds on each vertex of $ \Gamma $. That is, the sum of all direction vectors of edges adjacent to a vertex has to be zero.

    \item Each marked end $ x_i $ maps to a point in $ \mathbb{R}^2 $.

    \item All non-contracted ends of the primitive direction vector $(1,0)$ are labeled with $	\Tilde{e}_{\underline{d}}$.
\end{enumerate}
\end{definition}

Notice that in Definition \ref{tropical}, we have two different types of marked ends: the contracted ends, which we will refer to as marked points, and the non-contracted marked ends, which we refer to as marked right ends.

\begin{definition} \label{Degree}
   The {degree} of an \((m+d)\)-marked tropical stable map is defined as the multiset \( \Delta \) of directions of its non-contracted unbounded edges. Let \( \Delta \) consist of:
\begin{itemize}
    \item \( a \) edges in the direction \( (0,-1) \),
    \item \( a \) edges in the direction \( (r,1) \),
    \item \( d \) edges in the directions \( (w_i,0) \) for \( 1 \leq i \leq d \),
    \item \( ra+\sum_{i=1}^{d} w_i \) edges in the direction \( (-1,0) \).
\end{itemize}

Defining \( b = \sum_{i=1}^{d} w_i \), we introduce the shorthand notation 
$\Delta_{\mathbb{F}_{r}}(a, b, w_{\underline{d}})$ for the degree of \((m+d)\)-marked tropical stable maps in \( \mathbb{F}_{r} \).
 
\end{definition}

 In Definition \ref{tropical}, we have slightly changed the definition of the m-marked tropical stable map introduced in \cite{GM} by marking all of the ends with the primitive direction vector $(1,0)$. The reason we might need some marked non-contracted ends in the $(1,0)$ direction is that, as explained in Example \ref{Gold} and Proposition \ref{*}, we will cut a movable component of the tropical stable map to degenerate the curves and this might lead to losing one of the marked ends contributing to a cross-ratio condition.  Cutting the edges leading to a movable component results in having weight higher than 1 for ends with primitive direction $(1,0)$, so these ends can have weights $w_{\underline{d}}$ greater than one (moreover, the ends of the cut movable component with primitive direction $(-1,0)$ can have weights $w_{\underline{d}}$ greater than one as well, see Definition \ref{ftildas}). We denote these non-contracted ends $\Tilde{e}_i$ with weight $w_i$ for $1 \leq i \leq d$. If $b=d$ and $w_i =1$ for each $1 \leq i \leq d$, we denote the degree of an $m$-marked tropical stable map with $ \Delta_{\mathbb{F}_{r}}(a,b) $ (this is the same notation used in \cite{FM, G}).

Based on Propositions 3.1.6 and 3.3.10 in \cite{MS} there is a correspondence between Laurent polynomials and tropical curves. Therefore, the image of the tropical stable map that we defined in \ref{tropical} is dual to a regular subdivision of its corresponding Newton polygon, which is defined in Definition \ref{Newton}.
\begin{definition}
    \label{Newton}
    Corresponding to a Laurent polynomial, we can define a Newton polygon, which is constructed by plotting the exponents of its monomials and taking the convex hull of these points in the plane.

\end{definition}

 For a tropical stable map $C = (\Gamma, x_{\underline{m}}, \Tilde{e}_{\underline{d}}, h) $, the image $h(\Gamma)$ as a tropical curve on $ \mathbb{F}_{r} $ of degree $ \Delta_{\mathbb{F}_{r}}(a,b, w_{\underline{d}}) $ is a graph dual to a subdivision of a right trapezoid with vertices $ (0,0) $, $ (a,0) $, $ (a, b) $ and $ (0, ra + b) $, such that it allows $d$ ends in the primitive direction $(1,0)$ of weights $w_{\underline{d}}$. 

\begin{definition}[{\cite[Combinatorial type]{GM}}]
    \label{CombinatoriyalType}
    The \emph{combinatorial type} of an abstract marked tropical curve 
$(\Gamma, x_{\underline{m}}, \Tilde{e}_{\underline{d}})$ is defined as its homeomorphism class relative to the markings $x_1, \dots, x_m$ and ends $\Tilde{e}_1, \dots, \Tilde{e}_d$. In other words, it is the data of $(\Gamma, x_{\underline{m}}, \Tilde{e}_{\underline{d}})$ modulo homeomorphisms of $\Gamma$ that fix each marked point $x_i$ and each end $\Tilde{e}_j$, where $i \in \underline{m}$ and $j \in \underline{d}$.
 The combinatorial type of an $(m+d)$-marked tropical stable map $(\Gamma, x_{\underline{m}}, \Tilde{e}_{\underline{d}}, h)$ is the data of the combinatorial type of the abstract marked tropical curve $(\Gamma, x_{\underline{m}}, \Tilde{e}_{\underline{d}})$ together with the direction vectors of all edges.
    

\end{definition}

Moduli spaces and tropical intersection theory are tools that have been used in the existing results in tropical enumerative geometry. 
The moduli space of $(m+d)$-marked rational tropical stable maps in $\mathbb{F}_{r} $ is denoted by $ \mathcal{M}^{\trop}_{0,m+d}( \mathbb{R}^{2}, \Delta_{\mathbb{F}_{r}}(a,b, w_{\underline{d}}) ) $ (see \cite{GKM}). A point in this space is a tuple $ C = (\Gamma, x_{\underline{m}}, \Tilde{e}_{\underline{d}}, h) $.  


\begin{definition}
We call the following map the \(i\)-th evaluation map:

\begin{center}
    $ \ev_i: \mathcal{M}^{\trop}_{0,m+d}( \mathbb{R}^{2}, \Delta_{\mathbb{F}_{r}}(a,b, w_{\underline{d}}) ) \to \mathbb{R}^2 $\\
    $ (\Gamma, x_{\underline{m}}, \Tilde{e}_{\underline{d}}, h) \mapsto h(x_i). $
\end{center}

The i-th evaluation map is a morphism of polyhedral complexes that returns the exact locus of the i-th marked point on a tropical stable map (see \cite{GKM}). 
\end{definition}

\begin{definition}
    \label{fixedends}
    Let $e$ be an end of a tropical stable map  $C = (\Gamma, x_{\underline{m}}, \Tilde{e}_{\underline{d}}, h)$
      in \( \mathcal{M}_{0,m+d}(\mathbb{R}^{2}, \Delta_{\mathbb{F}_{r}}(a,b, w_{\underline{d}})) \) with primitive direction \( (-1,0) \). The following evaluation map returns the $y$-coordinate of the image of \( e \) in the plane:
    
    \[
    (\ev_{e})_y: \mathcal{M}^{\trop}_{0,m+d}( \mathbb{R}^{2}, \Delta_{\mathbb{F}_{r}}(a,b, w_{\underline{d}}) ) \to \mathbb{R},
    \]
    \[
    (\Gamma, x_{\underline{m}}, \Tilde{e}_{\underline{d}}, h) \mapsto \text{$y$-coordinate of } h(e).
    \]

    For a real number $g$, we define the \emph{fixed end condition} as the constraint that an end \( e \) has a prescribed \( y \)-coordinate \( g \). This is imposed by requiring the pullback condition $(\ev_{e})_y^*(g)$.
    
\end{definition}

This fixed-end condition will be used later in Section \ref{MOSC}.

\begin{definition}[{\cite[Forgetful map]{GM}}]\label{forgetfulmap}
    Let $C = (\Gamma, x_{\underline{m}}, \Tilde{e}_{\underline{d}}, h) \in \mathcal{M}_{0,m+d}(\mathbb{R}^{2}, \Delta_{\mathbb{F}_{r}}(a,b, w_{\underline{d}}))$. We can forget the map $h$ and obtain an abstract tropical curve, but sometimes we are only interested in the minimal connected subgraph of $\Gamma$ that only contains 4 given marked ends. Let $\lambda$ be a set of four marked ends of a tropical stable map. The forgetful map $\ft_\lambda$ is a map defined on $\mathcal{M}^{\trop}_{0,m+d}(\mathbb{R}^2, \Delta_{\mathbb{F}_{r}}(a,b, w_{\underline{d}}))$ and its target is $\mathcal{M}_{0,4}$: given four distinct marked ends on a tropical stable map $C = (\Gamma, x_{\underline{m}}, \Tilde{e}_{\underline{d}}, h) $, we define $\ft_{\lambda}(C)$ as the minimal connected sub-graph of $\Gamma$ that only contains the four marked ends, which can be either contracted or non-contracted marked ends of $\Gamma$. We straighten 2-valent vertices in this subgraph by replacing the two adjacent edges and the vertex with a single edge whose length is the sum of the lengths of the original edges; thus, we obtain an element of $\mathcal{M}_{0,4}$. (For an example, see Figure \ref{cr}).    
    
\end{definition}


In algebraic geometry, a cross-ratio is a number associated with a list of four points on a (projective) line.
We define the tropical analog of the classical cross-ratio in the following way:

\begin{definition}\label{CR}
 Let $ t_{\alpha}, t_{\beta}, t_{\gamma}, t_{\eta}  $ be four marked ends on a tropical stable map, then:
\begin{itemize}
    \item \textbf{Non-degenerate cross ratio:} An unordered pair of pairs of unordered marked ends of a tropical stable map like $ {\lambda}' = (t_{\alpha} t_{\beta}| t_{\gamma} t_{\eta}) $, together with a length $ |{\lambda}'| \in \mathbb{R} \geq 0 $ is a tropical non-degenerate cross-ratio. The length $ |{\lambda}'| $ can be obtained from $ \ft_{{\lambda}'}(C) \in \mathcal{M}_{0, 4} $, where the map $\ft_{{\lambda}'}$ represents the forgetful map and can be defined using the four marked ends that define ${\lambda}'$.

    \item \textbf{Degenerate cross ratio:} A set of four marked ends of a tropical stable map like $ {\lambda} = \{ t_{\alpha}, t_{\beta}, t_{\gamma}, t_{\eta} \} $ is a tropical degenerate cross-ratio, if the length $ |{\lambda}| = \ft_{{\lambda}}(C) $ is equal to zero. 
\end{itemize}
If all of the marked ends contributing to a cross-ratio are marked points (i.e. \( \alpha, \beta, \gamma, \eta \in \underline{m} \)), then we have the cross-ratio condition as defined in \cite{GM, G, FM}. However, in this paper, non-contracted marked ends can also contribute to cross-ratios.

\end{definition}

\begin{definition}\label{Line}
A multi-line $L$ in $ \mathbb{F}_{r} $ is a rational tropical stable map with one vertex and three ends in directions $ (-1, 0) $, $ (0, -1) $ and $ (r,1) $, where the ends in the directions of $ (0, -1) $ and $ (r,1) $ have weight $ s $ and the end in the direction of $ (-1, 0) $ has weight $ sr $.\\
A tropical stable map $ C =(\Gamma, x_{\underline{m}}, \Tilde{e}_{\underline{d}}, h) $ satisfies a tropical multi-line condition $ L $, if one of the marked points $ x_i $ with label in $ \underline{k} $ belongs to the set of intersection points $ h(x_i) \in L\cap h(\Gamma) $. 
 
\end{definition}

 \begin{figure}[h!]
		
		\includegraphics[scale=0.9]{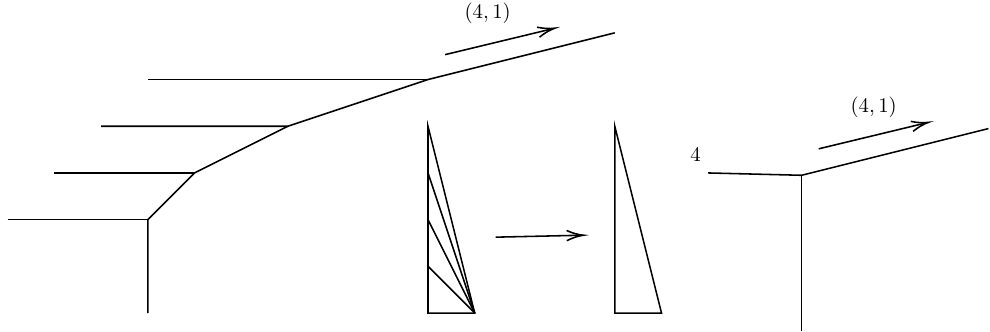}
		\centering
		\caption{The right-hand picture is a tropical multi-line in $\mathbb{F}_4$, as defined in Definition \ref{Line}. The left picture is a tropical stable map of degree $ \Delta_{\mathbb{F}_{4}}(1, 0) $. The dual polygon to each tropical curve is also drawn next to it.}
		\label{line}
	\end{figure}


 \begin{remark}  \label{RationalyEquivalent}
 Let $C = (\Gamma, x_{\underline{m}}, \Tilde{e}_{\underline{d}}, h) \in \mathcal{M}_{0,m+d}(\mathbb{R}^{2}, \Delta_{\mathbb{F}_{r}}(a,b, w_{\underline{d}}))$. The image of the tropical stable map $C$ in the plane $ h(\Gamma) $ is rationally equivalent to the recession fan derived from $ h(\Gamma) $ (\cite[Recession fan]{AHR}). Therefore, both tropical stable maps in Figure \ref{line} are rationally equivalent. Hence, by applying \cite[Rational equivalence]{AR}, we can conclude that the pull-backs of $ C $ and its recession fan along the evaluation maps are also rationally equivalent. Indeed, the intersection theory provided in \cite{AHR, AR} allows us to avoid repeating the proof of the independence of the number of tropical stable maps, in our enumerative problem, from the exact length of the bounded edges in the multi-line conditions, the exact position of the point conditions, and the exact length of the cross-ratio conditions.

 \end{remark}

We use the following notations for the three types of marked points on the tropical stable map:
\begin{itemize}
    \item Contracted ends with label in $ \underline{n} $ which satisfy point conditions ($ p_{\underline{n}} = \{p_{i}\}_{i=1}^{n} $).
\item Contracted ends with label in $ \underline{k} $ which satisfy multi-line conditions ($ L_{\underline{k}} = \{L_{j}\}_{j=1}^{k} $).
\item Contracted ends with label in $ \underline{f} $ which satisfy no condition, we call them $f-$points.
    
\end{itemize}

Therefore, we have $m= n+ k +f $ marked points for an $(m+d)$-marked rational tropical stable maps $ C = (\Gamma, x_{\underline{m}}, \Tilde{e}_{\underline{d}}, h) \in \mathcal{M}_{0,m+d}(\mathbb{R}^{2}, \Delta_{\mathbb{F}_{r}}(a,b, w_{\underline{d}})) $.




We want to find a recursive formula to count the number of $(m+d)$-marked rational tropical stable maps $ C = (\Gamma, x_{\underline{m}}, \Tilde{e}_{\underline{d}}, h) \in \mathcal{M}_{0,m+d}(\mathbb{R}^{2}, \Delta_{\mathbb{F}_{r}}(a,b, w_{\underline{d}})) $ which satisfy $ n $ point conditions, $ k $ multi-line conditions, and $ l $ cross-ratio conditions.  Since $ \mathcal{M}_{0,m+d}(\mathbb{R}^{2}, \Delta_{\mathbb{F}_{r}}(a,b, w_{\underline{d}})) $ is a tropical object (a polyhedral complex, for more details see \cite{GKM}) of dimension $ \#\Delta_{\mathbb{F}_{r}}(a,b, w_{\underline{d}}) + m -1  $,
we need to impose enough conditions in order to reduce the dimension of the moduli space to zero and get only a finite number of points. Each point condition reduces the dimension of the moduli space by 2, and each multi-line or cross-ratio condition reduces it by 1. In addition, notice that there are $ f $ marked points on the tropical stable maps that satisfy no condition. Hence, the following equality must hold: 
 \begin{center}
	$ \dim \mathcal{M}_{0,m+d}(\mathbb{R}^{2}, \Delta_{\mathbb{F}_{r}}(a,b, w_{\underline{d}})) = \#\Delta_{\mathbb{F}_{r}}(a,b, w_{\underline{d}}) + m -1  = 2n + k + l $.
\end{center}

\begin{definition}
    \label{Nd}
    
    We define $ \mathcal{N}^{0}_{\Delta_{\mathbb{F}_{r}}(a,b, w_{\underline{d}})} (p_{\underline{n}}, L_{\underline{k}}, {\lambda}_{\underline{l}})  := \text{deg}(Z_{\Delta_{\mathbb{F}_{r}}(a,b, w_{\underline{d}})}(p_{\underline{n}}, L_{\underline{k}}, {\lambda}_{\underline{l}}))$, where $Z_{\Delta_{\mathbb{F}_{r}}(a,b, w_{\underline{d}})}(p_{\underline{n}}, L_{\underline{k}}, {\lambda}_{\underline{l}})$ is the following cycle:

    \begin{center}
    $ Z_{\Delta_{\mathbb{F}_{r}}(a,b, w_{\underline{d}})}(p_{\underline{n}}, L_{\underline{k}}, {\lambda}_{\underline{l}}) =  \prod_{i\in \underline{n}}\ev^{*}_i (p_i ) \cdot \prod_{j\in \underline{k}}\ev^{*}_j (L_j ) \cdot \prod_{\iota\in \underline{l}}ft^{*}_{{\lambda}_{\iota}} (0) \cdot \mathcal{M}_{0,m+d}(\mathbb{R}^{2}, \Delta_{\mathbb{F}_{r}}(a,b, w_{\underline{d}})). $
\end{center}
    
    Here $\text{deg}$ is the degree function that sums up all multiplicities of the points in the intersection product. Additionally, we always assume that all conditions are in general position, meaning that $Z_{\Delta_{\mathbb{F}_{r}}(a,b, w_{\underline{d}})}(p_{\underline{n}}, L_{\underline{k}}, {\lambda}_{\underline{l}})$ is a zero-dimensional nonzero cycle that lies inside top-dimensional cells of $\prod_{j\in \underline{l}}ft^{*}_{{\lambda}_{j}} (0) \cdot \mathcal{M}_{0,m+d}(\mathbb{R}^{2}, \Delta_{\mathbb{F}_{r}}(a,b, w_{\underline{d}}))$. Hence, $ \mathcal{N}^{0}_{\Delta_{\mathbb{F}_{r}}(a,b, w_{\underline{d}})} (p_{\underline{n}}, L_{\underline{k}}, {\lambda}_{\underline{l}}) $ represents the number of rational tropical stable maps of degree $ \Delta_{\mathbb{F}_{r}}(a,b, w_{\underline{d}}) $ with $ d $ marked right ends, satisfying $ n $ point conditions $ p_{\underline{n}} $, $ k $ multi-line conditions $ L_{\underline{k}} $, and $ l $ cross-ratio conditions $ {\lambda}_{\underline{l}} $, counted with multiplicity.

\end{definition}

\begin{remark}
   We will assume that the weight \( s \) in Definition \ref{Line} is equal to one, since \( s > 1 \) only contributes multiplicative factors to the intersection product \( \mathcal{N}^{0}_{\Delta_{\mathbb{F}_{r}}(a,b, w_{\underline{d}})} (p_{\underline{n}}, L_{\underline{k}}, {\lambda}_{\underline{l}}) \). Without this assumption, each multi-line condition would carry a weight \( s_j \), leading to a scalar coefficient \( \prod_{j=1}^{k} s_j \) in the final formula of Theorem \ref{Main0}, which can be factored out from all summands.

\end{remark}

\begin{definition}[{\cite{G}}]\label{multcr}
    Let $C = (\Gamma, x_{\underline{m}}, \Tilde{e}_{\underline{d}}, h)$ be a tropical stable map that contributes to  
$\mathcal{N}^{0}_{\Delta_{\mathbb{F}_{r}}(a,b, w_{\underline{d}})}(p_{\underline{n}}, L_{\underline{k}}, {\lambda}_{\underline{l}})$.
For a vertex $v$ of $\Gamma$ with valence $(3 + \#\lambda_v)$ where $\lambda_v = \{\lambda_{j_1}, \dots, \lambda_{j_r} \}$ a \emph{total resolution} of $v$ is a 3-valent labeled abstract tropical curve with $r$ vertices obtained by resolving $v$ through the following recursive process:

\begin{enumerate}
    \item First, resolve $v$ according to $\lambda'_{j_1}$. The resulting two new vertices are denoted by $v_1$ and $v_2$.
    \item Next, choose a vertex $v_k$ such that $\lambda_{j_2} \in \lambda_{v_k}$ and resolve it according to $\lambda'_{j_2}$. This resolution may not be unique; choose one possible resolution.
    \item Continuing this process, after resolving $\lambda'_{j_2}$, we obtain three vertices $v_1, v_2, v_3$. Choose the one containing $\lambda_{j_3} \in \lambda_{v_k}$ and resolve it.
    \item Repeat this procedure until all $\lambda'_{j_r}$ are resolved.
\end{enumerate}

The \emph{cross-ratio multiplicity} $\operatorname{mult}_{\operatorname{cr}}(v)$ of $v$ is defined as the number of total resolutions of $v$. This number is independent of the choice of non-degenerate cross-ratios $\lambda'_{j_1}, \dots, \lambda'_{j_r}$ and, in particular, does not depend on their ordering $\vert \lambda'_{j_1} \vert > \cdots > \vert \lambda'_{j_r} \vert$, see \cite{G18}. 

In the special case where $\#\lambda_v = 0$, we set $\operatorname{mult}_{\operatorname{cr}}(v) = 1$.
\end{definition}

Then the multiplicity of $C$ is given by
\[
\mult(C) = \mult_{\ev}(C) \prod_{v \mid v \text{ vertex of } C} \mult_\text{cr}(v),
\]

where the ev-multiplicity \(\mult_{\ev}(C)\) of $C$ is the absolute value of the determinant of the ev-matrix associated to \(C\) and $ \mult_\text{cr}(v) $ is the number of total resolution of $v$ as defined in \ref{multcr} (see \cite{G18}). 

The determinant of the ev-matrix $M(C)$ associated to \(C\), is the determinant of the evaluation map on the maximal cell of $\prod_{j\in \underline{l}}ft^{*}_{{\lambda}_{j}} (0) \cdot \mathcal{M}_{0,m+d}(\mathbb{R}^{2}, \Delta_{\mathbb{F}_{r}}(a,b, w_{\underline{d}}))$ containing $C$. See Example \ref{Ex0} and \cite[Lemma 5.1]{MR} for more details.

\begin{example}\label{Ex0}
In Figure \ref{ex0} we can see a 6-marked tropical stable map of degree $\Delta_{\mathbb{F}_{3}}(1,1)$ that satisfies four point conditions $ p_{1}, p_2 $, $  p_3 $ and $  p_4 $, two multi-line conditions $ L_{1}, L_2 $. Moreover, this tropical stable map satisfies a non-degenerate cross-ratio condition $ {\lambda}'_1 = (x_{3} x_{5}| x_{6} x_{4})  $ and a degenerate one $ {\lambda}_2 =\{ x_{1}, x_{2}, x_{3}, x_{5} \}$. As we can see in Figure \ref{ex0}, the marked points on the abstract tropical curve are denoted by $ x_i$ and they are mapped to the marked points on the $ h(\Gamma) $ in $ \mathbb{R}^2 $ which are denoted by $ p_{i} $.

    \begin{figure}[h!]
		
		\includegraphics[scale=0.8]{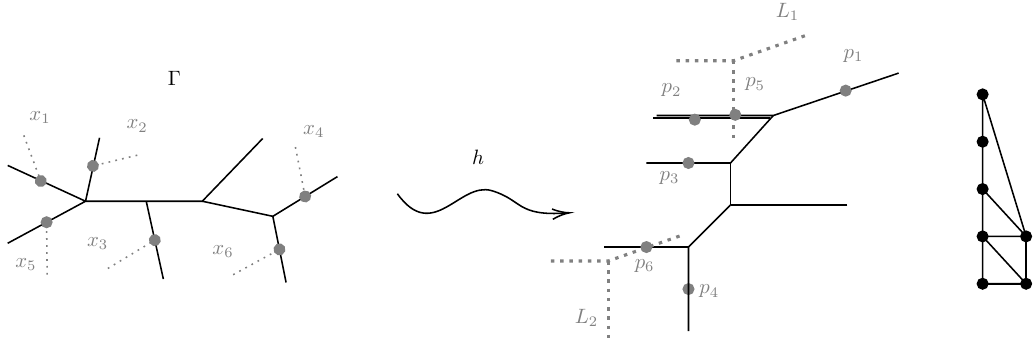}
		\centering
		\caption{In this figure from left to right we can see an abstract tropical curve $ \Gamma $ which is mapped to the plane, by $ h $, so the middle picture is the image of the tropical stable map $h(\Gamma)$ and the rightmost picture is the subdivided Newton polygon dual to the tropical curve. This tropical stable map is fixed in the plane since it satisfies the conditions that are explained in Example \ref{Ex0}. Notice that the marked points on the abstract tropical curve are mapped to the marked points of the image of $ \Gamma $ in the following order: $ h(x_i)=p_{i} $ for $ i= 1, \cdots, 4 $ and $ h(x_j)=p_j \in L_j\cap h(\Gamma) $ for $ j= 5, 6$.} 
		\label{ex0}
	\end{figure}

If we know the combinatorial type of a tropical stable map $C$ in $\mathcal{M}_{0,m+d}(\mathbb{R}^{2}, \Delta_{\mathbb{F}_{r}}(a,b, w_{\underline{d}}))$, having the coordinate of a so-called root vertex and the exact length of the bounded edges will give us the coordinate of $C \in \mathcal{M}_{0,m+d}(\mathbb{R}^{2}, \Delta_{\mathbb{F}_{r}}(a,b, w_{\underline{d}}))$. Therefore, if we wanted to count the number of tropical stable maps in $ \mathcal{N}^{0}_{\Delta_{\mathbb{F}_{3}}(1,1)}(p_{\underline{4}}, L_{\underline{2}}, {\lambda}'_{1}, {\lambda}_{2}) $, then the ev-multiplicity of the curve in Figure \ref{ex0} is the determinant of a matrix whose columns correspond to the coordinates of the root vertex (without loss of generality we can consider $p_1$ as base point) and the lengths of all bounded edges. Notice that, in this example $\mult_{cr}(v)=1$ for all vertices of $\Gamma$.
Denote the lengths of the bounded edges adjacent to marked points $p_i$ (in Figure \ref{ex0}) with $l_i$, and denote the length of the three bounded edges adjacent to no marked point with $l_7,l_8, l_9$. The rows of this matrix correspond to each condition we impose on the tropical stable map.  
In this paper, we refer to such a matrix as the evaluation matrix (ev-matrix) of $ C $ (for more details about ev-matrix, see \cite{GM, FM}). 

 In order to compute the values of each row of the ev-matrix of the tropical stable map in Figure \ref{ex0}, note first that,

\begin{align*}
    h(x_2) &= h(x_1) + l_1 \cdot \begin{pmatrix} -3 \\ -1 \end{pmatrix} 
    + l_2 \cdot \begin{pmatrix} -1 \\ 0 \end{pmatrix}, \\
    h(x_3) &= h(x_1) + l_1 \cdot \begin{pmatrix} -3 \\ -1 \end{pmatrix} 
    + l_7 \cdot \begin{pmatrix} -1 \\ -1 \end{pmatrix} + l_3 \cdot \begin{pmatrix} -1 \\ 0 \end{pmatrix}, \\
    h(x_4) &= h(x_1) + l_1 \cdot \begin{pmatrix} -3 \\ -1 \end{pmatrix} 
    + l_7 \cdot \begin{pmatrix} -1 \\ -1 \end{pmatrix} 
    + l_8 \cdot \begin{pmatrix} 0 \\ -1 \end{pmatrix} 
    + l_9 \cdot \begin{pmatrix} -1 \\ -1 \end{pmatrix}
    + l_4 \cdot \begin{pmatrix} -1 \\ 0 \end{pmatrix}, \\
    h(x_5) &= h(x_1) + l_1 \cdot \begin{pmatrix} -3 \\ -1 \end{pmatrix} 
    + l_5 \cdot \begin{pmatrix} -1 \\ 0 \end{pmatrix}, \\
     h(x_6) &= h(x_1) + l_1 \cdot \begin{pmatrix} -3 \\ -1 \end{pmatrix} 
    + l_7 \cdot \begin{pmatrix} -1 \\ -1 \end{pmatrix} 
    + l_8 \cdot \begin{pmatrix} 0 \\ -1 \end{pmatrix} 
    + l_9 \cdot \begin{pmatrix} -1 \\ -1 \end{pmatrix} + l_6 \cdot \begin{pmatrix} -1 \\ 0 \end{pmatrix}.
\end{align*}

Since $L_1$ intersects with $h(\Gamma)$ with its end in $(0,-1)$ direction, to determine the locus of $L_1 \cap h(\Gamma)$ we need to know $(p_5)_x$.
For the other multi-line condition, $L_2$ intersects with $h(\Gamma)$ with its end in $(3,1)$ direction, so we need to know $(p_6)_x$. Moreover,

\begin{center}
    $
     h(x_6) = x\begin{pmatrix} 1 \\ 0 \end{pmatrix} + y\begin{pmatrix} 3 \\ 1 \end{pmatrix}. 
    $
\end{center}

So the ev-matrix of $C$ is Matrix \eqref{multi}.

\begin{equation}\label{multi}
    \begin{array}{c|ccccccccccc}
        & (p_1)_x & (p_1)_y & l_1 & l_2 & l_3 & l_4 & l
        _5& l_6 &l_7 & l_8 &l_9 \\
        \hline
        (p_1)_x & 1 & 0 & 0 & 0 & 0 & 0 & 0 & 0 & 0 & 0&0\\
        (p_1)_y & 0 & 1 & 0 & 0 & 0 & 0 & 0 & 0 & 0 & 0&0\\
        (p_2)_x & 1 & 0 & -3 & -1 & 0 & 0 & 0 & 0 & 0 & 0&0\\
        (p_2)_y & 0 & 1 & -1 & 0 & 0 & 0 & 0 & 0 & 0 & 0&0\\
        (p_3)_x & 1 & 0 & -3 & 0 & -1 & 0 & 0 & 0 & -1 & 0 & 0\\
        (p_3)_y & 0 & 1 & -1 & 0 & 0 & 0 & 0 & 0 & -1 & 0&0\\
        (p_4)_x & 1 & 0 & -3 & 0 & 0 & 0 & 0 & 0 & -1 & 0&-1\\
        (p_4)_y & 0 & 1 & -1 & 0 & 0 & -1 & 0 & 0 & -1 & -1&-1\\
        L_1 & 1 & 0 & -3 & -1 & 0 & 0 & 0 & 0 & 0 & 0&0\\
        L_2 & 1 & -r & -1 & 0 & 0 & 0 & 0 & -1 & 1 & -3 & 1\\
        \lambda'_1 & 0 & 0 & 0 & 0 & 0 & 0 & 0 & 0 & 0 & 1&1\\
    \end{array}
\end{equation}

\end{example}

\begin{remark}\label{Gol18}
In our new setting, Lemma 2.14 of \cite{G18}, which says the following fact about cross-ratio conditions, still holds:
Let $C$ be a tropical stable map that contributes to $ \mathcal{N}^{0}_{\Delta_{\mathbb{F}_{r}}(a,b, w_{\underline{d}})}(p_{\underline{n}}, L_{\underline{k}}, {\lambda}_{\underline{l}}) $. Let $v \in C$ be a vertex of $C$ such that $\val(v)>3$. Then for every edge $e$ adjacent to $v$ either there is an entry $x_i$ in some ${\lambda}_j \in {\lambda}_v$ such that $e$ is in the shortest path from $v$ to the end labeled with $x_i$ or $e$ leads to a non-contracted marked end in some ${\lambda}_j \in {\lambda}_v$.
\end{remark}



\section{Splitting tropical stable maps on Hirzebruch surfaces 
}\label{STSM}

 The main goal of this section is to prove Proposition \ref{*} and Lemma \ref{n0}, which explain how the tropical stable maps we are dealing with can be cut.
To provide a recursion to count the number of tropical stable maps contributing to ${\mathcal{N}^{0}_{\Delta_{\mathbb{F}_{r}}(a,b, w_{\underline{d}})}(p_{\underline{n}}, L_{\underline{k}}, {\lambda}_{\underline{l}})}$, based on Remark \ref{RationalyEquivalent}, we can consider all of the cross-ratios are of length zero, except for the last one. We prove in Proposition \ref{*}, that if the length of the last cross-ratio condition is very long then there are two generally different ways to split a tropical stable map contributing to ${\mathcal{N}^{0}_{\Delta_{\mathbb{F}_{r}}(a,b, w_{\underline{d}})}(p_{\underline{n}}, L_{\underline{k}}, {\lambda}_{\underline{l-1}}, \lambda'_l)}$ into pieces. Indeed, we will study two cases separately: one where there is at least one point condition on the tropical stable map (i.e. $n\geq1$) and another one where there is no point condition (i.e. $n=0$). Proposition \ref{*} addresses the first case, and Lemma \ref{n0} addresses the second case.
The main challenge, as expected, is having more than one cross-ratio condition. This results in having more complicated movable components compared to the string appearing in \cite[Lemma 2.10]{FM}, which only has two 3-valent vertices. Here, the movable parts that can occur are called movable branches, defined in Definition \ref{DefString}. 


{The proof of Proposition \ref{*} contains the following 3 main steps:

\begin{itemize} 

\item \textbf{I}: Let $C$ be a tropical stable map that satisfies the given incidence conditions in Proposition \ref{*}. 
If we drop the cross-ratio condition ${\lambda}'_{l}$, as it is described in Remark \ref{B}, we obtain a 1-dimensional family of tropical stable maps that satisfy all of the incident conditions in Proposition \ref{*}, except for the ${\lambda}'_{l}$. A tropical stable map in this 1-dimensional family has a so-called movable component $B$ (see Definition \ref{Movable}). We will first study some of the important features of this movable component in Definitions \ref{IIII}, \ref{Partial order}, and Observations \ref{f5}, \ref{f6}, which we are going to use in the next steps. 

\item \textbf{II}: Using a partial order on the set of vertices in $B$, we obtain a maximal chain in $B$ (See Definition \ref{Partial order}). Then in  {three steps, see Lemma \ref{1Ray} and Remark  \ref{bigstring}}, we prove that the length of such a maximal chain is either equal to one or the maximal chain consists of the vertices in one of the movable branches classified in the second part of Proposition \ref{*}. 

\item \textbf{III}: Then we prove in Lemma \ref{1ver}, that if $ B $ contains only one vertex and $C$ does not have a contracted bounded edge or a movable branch as classified in \ref{*}, then the movement of $B$ is bounded. This is a contradiction. Therefore, a tropical stable map satisfying the incident conditions of the Proposition \ref{*}, either has a contracted bounded edge or a movable branch $ \mathcal{S} $. \end{itemize}}

\begin{remark}\label{-1}
    The moduli space $\mathcal{M}_{0,m+d}(\mathbb{R}^{2}, \Delta_{\mathbb{F}_{r}}(a,b, w_{\underline{d}}))$ of all $(m+d)$-marked tropical stable maps is of dimension $\#\Delta_{\mathbb{F}_{r}}(a,b, w_{\underline{d}}) + m - 1$.
 Imposing a point condition on the tropical stable maps reduces the dimension of the moduli space by 2, while each multi-line or cross-ratio condition reduces it by 1. If we consider all $(m+d)$-marked tropical stable maps contributing to ${\mathcal{N}^{0}_{\Delta_{\mathbb{F}_{r}}(a,b, w_{\underline{d}})}(p_{\underline{n}}, L_{\underline{k}}, {\lambda}_{\underline{l}})}$ where $m = n+k+f$, since $ \#\Delta_{\mathbb{F}_{r}}(a,b, w_{\underline{d}}) + m - 1 = 2n +k+l$, dropping one of the cross-ratio conditions results in obtaining a 1-dimensional family in ${\mathcal{M}_{0,m+d}(\mathbb{R}^{2}, \Delta_{\mathbb{F}_{r}}(a,b, w_{\underline{d}}))}$. We denote this 1-dimensional family by $Y$:
\begin{center}
    $ Y =   \prod_{i\in \underline{n}}\ev^{*}_i (p_i ) \cdot \prod_{j\in \underline{k}}\ev^{*}_j (L_j ) \cdot \prod_{\iota\in \underline{l-1}}ft^{*}_{{\lambda}_{\iota}} (0) \cdot \mathcal{M}_{0,m+d}(\mathbb{R}^{2}, \Delta_{\mathbb{F}_{r}}(a,b, w_{\underline{d}})). $
\end{center}
    
\end{remark}

\begin{definition}\label{Movable}
Let $  C = (\Gamma, x_{\underline{m}}, \Tilde{e}_{\underline{d}}, h) $ be an $(m+d)$-marked tropical stable map in ${ \mathcal{M}_{0,m+d}(\mathbb{R}^{2}, \Delta_{\mathbb{F}_{r}}(a,b, w_{\underline{d}}))} $  that  contributs to ${ \mathcal{N}^{0}_{\Delta_{\mathbb{F}_{r}}(a,b, w_{\underline{d}})}( p_{\underline{n}}, L_{\underline{k}}, {\lambda}_{\underline{l-1}} ) }$  where $ m = n+k+f $. Based on Remark \ref{-1}, $C$ is in the 1-dimensional family $Y$.
By moving some of the vertices of any tropical stable map, such as $C$, in this family, we can find all of the other tropical stable maps in this family. Therefore, we can talk about the movable and fixed parts of $C$.
A subgraph of $ \Gamma $ is called \textit{a movable component} of $ C $ if it can be moved unboundedly without changing the combinatorial type of $ C $. We can also define \textit{fixed components} of $ C $, which are the connected components of $ \Gamma $ obtained by removing the movable component.  

\end{definition}

\begin{remark}\label{B}

Let $ C $ be a tropical stable map contributing to ${\mathcal{N}^{0}_{\Delta_{\mathbb{F}_{r}}(a,b, w_{\underline{d}})}(p_{\underline{n}}, L_{\underline{k}}, {\lambda}_{\underline{l-1}}, {\lambda}'_{l})}$, where $|{\lambda}'_{l}|$ can be very large. Based on Remark \ref{-1}, this results in obtaining a 1-dimensional family in ${ \mathcal{M}_{0,m+d}(\mathbb{R}^{2}, \Delta_{\mathbb{F}_{r}}(a,b, w_{\underline{d}}))} $. A tropical stable map in this family has a movable component that we call $ B $. We denote the set of all vertices in $B$ with $V$. Obviously, $V$ is a non-empty set and from now we assume that $\#V\geq 2$. The case when $\#V=1$ is studied in Lemma \ref{1ver} and Observation \ref{f5}.
\end{remark}
Proposition \ref{*} claims that a tropical stable map $C$ contributing to ${\mathcal{N}^{0}_{\Delta_{\mathbb{F}_{r}}(a,b, w_{\underline{d}})}(p_{\underline{n}}, L_{\underline{k}}, {\lambda}_{\underline{l-1}}, {\lambda}'_{l})}$ where $|{\lambda}'_{l}|$ can be very large, will either have a contracted bounded edge or a movable branch $ \mathcal{S} $. Studying the features of the movable component $ B $, defined in Remark \ref{B} will lead to realizing that $B$ must be the movable branch $\mathcal{S}$.

\begin{definition}\label{IIII}
	
	A vertex $v$ of the tropical stable map is called an \textit{ordinary vertex} if its valence is at least 3 when we exclude all {contracted marked ends} at this vertex. Every vertex in the movable component $B$, defined in \ref{B}, is either adjacent to a fixed vertex or adjacent to a contracted end caused by a multi-line condition. Otherwise, it can move independently from the movement of $B$ (see Figure \ref{0type}). So every vertex that is not adjacent to any contracted ends in $B$ has to be adjacent to a vertex in the fixed component. 

 Therefore, the following are the three different types of vertices in $B$:

 \begin{itemize}
     \item  \textbf{Type I:} An ordinary vertex in $B$ that is adjacent to a vertex in a fixed component via a bounded edge is called Type I.

     \item  \textbf{Type II:} An ordinary vertex in $B$ adjacent to a contracted end caused by some multi-line condition is called Type II.

     \item \textbf{Type III:} Except for ordinary vertices, there can be vertices of valence at least 3, which are 2-valent where all of the contracted {marked end}s are excluded; we call them type III. At least one of the contracted {marked end}s at a vertex of type III is a {marked point} caused by a multi-line condition. Notice that the existence of two contracted {marked end}s caused by multi-line conditions fixes the vertex. This contradicts the fact that we are considering vertices in the movable component. Therefore, exactly one of the contracted {marked end}s at a vertex of type III is caused by a multi-line condition. We will also refer to these vertices as marked points caused by a multi-line condition.

 \end{itemize}
 
 	\begin{figure}[h!]
		
		\includegraphics[scale=0.8]{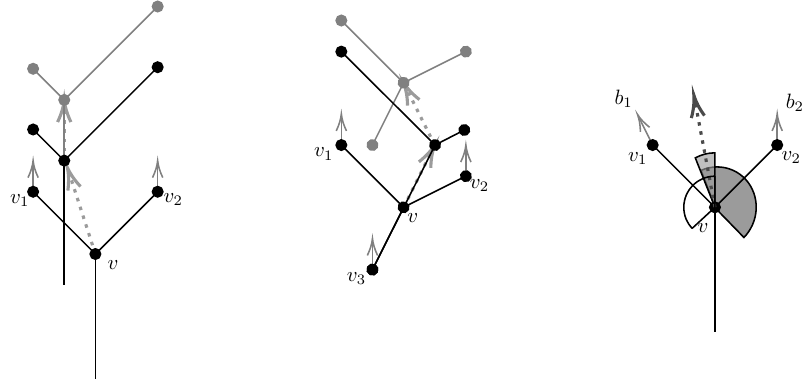}
		\centering
		\caption{Let $v$ be a vertex in the movable component $B$ that is not adjacent to a fixed vertex or to a contracted end caused by a multi-line condition. In all three pictures, the dashed grey vector shows a possible direction of movement for $v$. In the left picture, $v$ is adjacent to two vertices that have the same directions of movement.  In the middle picture, $v$ is adjacent to three vertices that have the exact same directions of movement. In the right-hand picture, $v$ is adjacent to two vertices that have different directions of movement, and the light grey arc shows all of the possible unbounded directions of movement for $v$. In all of these pictures, we can see that $v$ does not have a unique direction of movement, and this results in having more than one dimension of movement. This observation proves that every ordinary vertex in $B$ is either adjacent to a fixed vertex or adjacent to a contracted end caused by a multi-line condition.}
		\label{0type}
	\end{figure}
\end{definition}

 A similar classification for the vertices of the movable component on $\mathbb{P}^2 $ can be seen in \cite{G}. 
Since we define a multi-line condition as in Definition \ref{Line}, we call the three directions $ (-1, 0) $, $ (0, -1) $, and $ (r, 1) $ the standard directions.
The direction of movement of type II and type III vertices in $ B $ (if we already moved the movable component enough) can only be in one of the standard directions.

\begin{notation}\label{Gati}
Let $\Gamma$ be the abstract graph of a tropical stable map $C$  contributing to ${\mathcal{N}^{0}_{\Delta_{\mathbb{F}_{r}}(a,b, w_{\underline{d}})}(p_{\underline{n}}, L_{\underline{k}}, {\lambda}_{\underline{l-1}}, {\lambda}'_{l})}$, where $|{\lambda}'_{l}|$ can be very large. We can forget all the type III vertices on $ \Gamma $ (by using a forgetful map, as in \ref{forgetfulmap}) and call the resulting graph $ \Tilde{\Gamma} $. We mainly focus on ordinary vertices rather than type III vertices. When it is necessary to consider the type III vertices, we will indicate that we are switching to $ \Gamma $. Notice that, if $ \Tilde { \Gamma } $ allows no movement, then there is only one vertex in the movable component of $ \Gamma $ which is of type III (because we are in a 1-dimensional family). This specific case, where $B$ contains one vertex, is separately studied in Lemma \ref{1ver}.
     
\end{notation}

Here, we recall some of the notations and definitions from \cite{G}.

\begin{remark}\cite[Angle Lemma]{G}\label{Ang}
 If $ \Tilde{\Gamma} $ allows an unbounded 1-dimensional movement (based on Notation \ref{Gati}) and $ v_1, v_2 $ are adjacent vertices in the movable component of $ \Tilde{\Gamma} $, then $ b_2 $ the direction of movement of $ v_2 $ lies in the half-open cone:
 \begin{center}
     $  \sigma_{v_2}(b_1, e_1) := \{ x \in \mathbb{R}^2 : \; x = v_2 + {\lambda}_1 v(e_1, v_1) + {\lambda}_2 b_1, {\lambda}_1 \in \mathbb{R}_{\geq 0}, {\lambda}_2 \in \mathbb{R}_{> 0} \},  $
 \end{center}
where $b_1$ is the direction of movement of $v_1$ and $ v(e_1, v_1) $ is the direction vector of the edge $ e_1 $ at $ v_1 $ that connects $ v_1 $ and $ v_2 $.  Half-open means that the boundary of $ \sigma_{v_2}(b_1, e_1) $ that is generated by $b_1$ is part of the cone, and the boundary that is generated by $v(e_1, v_1)$ is not part of the cone, while $v_2$ itself is also not part of the cone.\\
     This half-open cone can be seen in Figure \ref{angle}. Indeed, we can find all possible directions of movement of one vertex by knowing the direction of movement of another.

		\begin{figure}[h!]
		
		\includegraphics[scale=1]{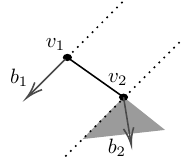}
		\centering
		\caption{In this figure, $ b_1 $ is the given direction of movement of $ v_1 $ and all possible directions of movements for the vertex $ v_2 $ live in the grey area. The grey area in this figure is $ \sigma_{v_2}(b_1, e_1) $.}
		\label{angle}
	\end{figure}
\end{remark}

\begin{definition} Here we recall the definition of the partial order from \cite[Definition 37]{G}. \label{Partial order}
Let $ \Tilde{\Gamma} $ be the metric tree of a tropical stable map $ C = (\Gamma, x_{1}, \cdots, x_{m}, h) $ in $ \mathbb{R}^2 $ that allows an unbounded 1-dimensional movement and let $ H $ be an open half-plane. If we translate \( H \) to a vertex \( v \in \widetilde{\Gamma} \), meaning that \( v \) lies on the boundary of \( H \), we denote the translated half-plane by \( H_v \).
 Let \( M \) be the set of all vertices of the movable component of \( \widetilde{\Gamma} \); that is, \( M \) consists of all type (I) and type (II) vertices of the movable component of \( \Gamma \).
The half-plane $ H $ induces a partial order $ \Omega(H) $ on $ M $ as follows: 

For $ v_{1},  v_2 \in M $, define:

\begin{equation*}
    v_1 \geq v_2 : \Longleftrightarrow \left\{
    \begin{array}{l}
        v_1 = v_2,  \text{ or } \\
         v_2 \text{ is adjacent to } v_1 \text{ and } v_2 \in H_{v_1}.  
    \end{array}\right.
\end{equation*}

Here, we only use open half-planes $ H $ such that $ b_1 + v_1 \in H_{v_1} $ ($ b_1 $ is the direction of movement of $ v_1 $). Therefore if $ v_1 \geq \cdots \geq v_n $ is a maximal chain and $ b_i $ is the direction of movement of $ v_i $ for $ i = 1, \cdots , n $, then $ b_i + v_i \in H_{v_i} $
for $ i = 1, \cdots , n $. This follows from Remark \ref{Ang} (Angle Lemma).

\end{definition} 

\begin{notation}\label{38} Given a chain $v_1 \geq \cdots \geq v_n$ in the movable component of $\Tilde{\Gamma}$, throughout this section, we denote the direction of movement of $v_i$ by $b_i$ for each $i = 1, \cdots, n$. If such a chain is maximal, then we usually denote an edge connecting $v_i$ and $v_{i+1}$ by $e_i$ for each $i = 1, \cdots, n-1$.

\end{notation}

The following observation about the direction of movement of the vertices in the movable component is handy when we want to observe the behavior of the vertices in $ B $ in each combinatorial case that will arise and also to get a contradiction to eliminate all of the impossible cases.
\begin{observation}\label{f4}
  If $v_1$ is a vertex in $B$ adjacent to a contracted end caused by a multi-line condition, then, after sufficient movement, its unbounded direction is one of the standard directions $(r,1)$, $(0,-1)$, or $(-1,0)$. Otherwise, moving $B$ will cause the intersection of $h(\Gamma)$ with the multi-line condition to no longer occur at $v_1$, thereby changing the combinatorial type of $C$.
Notice that if $ v_1 $ is adjacent to a non-contracted end $ e $, then $ b_1 $ cannot be aligned with the direction of the end $ e $.

\end{observation}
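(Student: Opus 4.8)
The plan is to track the image $h(v_1)$ of the moving vertex and to exploit the rigidity of the multi-line $L_j$ as a \emph{fixed} object in the plane. Since the marked end attached to $v_1$ is contracted, we have $h(x_i)=h(v_1)$, and the multi-line condition forces $h(v_1)\in L_j$ for every member of the one-parameter family from Remark \ref{B}. By Definition \ref{Line}, $L_j$ is the tripod with a single bounded vertex $w$ and three rays emanating in the standard directions $(-1,0)$, $(0,-1)$ and $(r,1)$; crucially, $w$ and these rays are fixed in $\mathbb{R}^2$ because $L_j$ is an imposed condition. Thus the task reduces to a purely planar question: in which directions can a point travel unboundedly while staying on the fixed set $L_j$?

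First I would describe the trajectory of $h(v_1)$. As $B$ is moved, $h(v_1)$ traces the ray $t\mapsto h(v_1)+t\,b_1$, $t\geq 0$, and this ray must be contained in $L_j$ for arbitrarily large $t$, since the movement of $B$ is unbounded by the standing hypothesis of Remark \ref{B}. As $L_j$ is a union of three rays issuing from the single vertex $w$, every point of $L_j$ of large norm lies on exactly one of these rays; hence for large $t$ the trajectory is contained in one ray $\rho$ of $L_j$, and being a sub-ray of $\rho$ it is parallel to $\rho$ and points outward. Therefore $b_1$ coincides, as a primitive direction, with the direction of $\rho$, i.e.\ $b_1\in\{(-1,0),(0,-1),(r,1)\}$. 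This is precisely the meaning of \emph{after sufficient movement}: $h(v_1)$ may begin on one ray and pass through $w$, but the eventual unbounded direction is forced to be standard. Conversely, if $b_1$ were not one of the standard directions, then $h(v_1)+t\,b_1$ would leave $L_j$ after a finite amount of movement, so the intersection point of $h(\Gamma)$ with $L_j$ would no longer sit at $v_1$; this changes the combinatorial type of $C$ and establishes the \emph{otherwise} clause.

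For the closing remark, suppose $v_1$ is adjacent to a non-contracted end $e$ and, toward a contradiction, that $b_1$ is aligned with the direction $v(e)$ of $e$. By the first part $b_1$ is the standard direction of the ray $\rho\subset L_j$ containing $h(v_1)$, so $v(e)$ is parallel to $\rho$ as well. Since $e$ emanates from $h(v_1)\in\rho$ in a direction parallel to $\rho$, the image $h(e)$ overlaps $\rho$ along an entire segment (or sub-ray). Then $L_j\cap h(\Gamma)$ contains a one-dimensional piece rather than a transverse point at $v_1$, contradicting the general position assumption on the conditions in Definition \ref{Nd}. Hence $b_1$ cannot be aligned with $v(e)$; note that this is automatic when $v(e)=(1,0)$, since $(1,0)$ is never a standard direction.

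The step I expect to be the main obstacle is making the phrase \emph{after sufficient movement} precise: a priori the trajectory of $h(v_1)$ along the family need not be a single ray, but a piecewise-linear path that can bend when it meets the vertex $w$ of $L_j$ or when some combinatorial event occurs elsewhere in $C$. The point to pin down is that, because the family is genuinely one-dimensional and the movement of $B$ is unbounded, on the unbounded tail the path stabilises to a single ray of $L_j$, and general position guarantees that $h(v_1)$ does not come to rest at $w$, so the limiting direction is unambiguously one of $(-1,0)$, $(0,-1)$, $(r,1)$. Everything else is elementary planar geometry together with the fixed-direction-of-ends property recorded in Definition \ref{tropical}.
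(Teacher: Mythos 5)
Your proposal is correct and follows essentially the same route as the paper: the paper's entire justification is the embedded remark that the contracted marked point at $v_1$ must remain on the fixed tripod $L_j$, so an unbounded movement is forced along one of its three rays -- which is exactly your argument, just spelled out with the stabilization of the piecewise-linear trajectory onto a single ray of $L_j$. The closing claim about non-alignment with an adjacent non-contracted end $e$ is asserted in the paper without any justification, and your overlap/general-position argument is a sensible way to fill that in; the only blemish is your parenthetical that the case $v(e)=(1,0)$ is ``automatic'': under the reading of ``aligned'' as ``parallel'' (which your main argument uses) this is not automatic, since $b_1=(-1,0)$ is parallel to $(1,0)$, but your overlap argument already covers that case, so nothing is lost.
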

Here, we address the case of parallel edges to avoid discussing it multiple times:

\begin{observation}\label{f5}
    We can assume that not all bounded and unbounded edges adjacent to an ordinary vertex in $ B $ are parallel unless $\#V=1$, which will be studied in \ref{1ver}. If $ v $ is a vertex in $ B $ such that all edges adjacent to it are parallel, one of the two following cases happens:
    
  Either there is another ordinary vertex in $ B $ adjacent to $ v $ like $ v_2 $ as shown in Figure \ref{fBp}.

	\begin{figure}[h!]
		
		\includegraphics[scale=0.9]{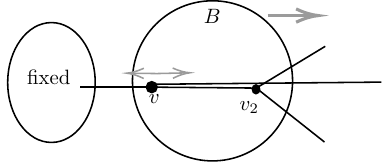}
		\centering
		\caption{In this figure, we can see a vertex $ v$ in the movable component that can freely move along the two-direction arrow, which is independent of the direction of the movement of $ B $.}
		\label{fBp}
	\end{figure}

       Obviously, the movement of $ v $ in this setting is independent of the movement of $ B $, so $ B $ has at least a 2-dimensional movement, which is a contradiction.

The second case is when there is one bounded edge adjacent to $ v $ and all of the other edges are parallel ends. In this case, $ B $ has only one vertex $ v $ and this case will be studied later in \ref{1ver}.

\end{observation}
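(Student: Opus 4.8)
The plan is to argue by contradiction: I will show that an ordinary vertex $v \in B$ all of whose adjacent edges are parallel either forces a second, independent direction of movement on $C$ (contradicting that $C$ lies in the $1$-dimensional family $Y$ of Remark \ref{-1}) or forces $\#V = 1$, which is the situation handled separately in Lemma \ref{1ver}. Thus these are the only two cases, matching the statement.

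First I would record the local geometry at $v$. Since all non-contracted edges at $v$ are parallel, they lie on a single line $\ell$ through $v$ with primitive direction $u$, and because $v$ is ordinary it carries at least three such edges whose direction vectors, by the balancing condition, sum to zero; hence edges emanate from $v$ in both the $+u$ and $-u$ directions. The key local observation is that translating $v$ along $\ell$ is always an admissible deformation: it only rescales the lengths of the bounded edges incident to $v$ (lengthening those on one side, shortening those on the other) and leaves every unbounded end and every other vertex untouched, so it preserves the combinatorial type as long as the affected bounded lengths stay positive. This produces a local $1$-parameter \emph{slide} of $v$ in the direction $u$.

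Next I would split into the two cases according to the bounded edges at $v$. In the first case, suppose $v$ is joined by a bounded edge (necessarily lying on $\ell$) to another ordinary vertex $v_2 \in B$. The slide above moves $v$ while keeping $v_2$ and every other vertex fixed. But $v_2 \in B$ forces $v_2$ to be moved by the global $1$-dimensional motion of $B$; were $v_2$ to stay fixed under that motion it would lie in a fixed component, contradicting $v_2 \in B$. Hence the slide and the global motion of $B$ are independent deformations of $C$, so $C$ admits a movement of dimension at least $2$, contradicting that $C$ sits in the $1$-dimensional family $Y$. In the second case $v$ is adjacent to no other ordinary vertex of $B$, so each bounded edge at $v$ runs to a fixed vertex. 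If $v$ had two such bounded edges they would lie on $\ell$ and anchor $v$ between fixed points, making its slide bounded and contradicting that $B$ moves unboundedly (Definition \ref{Movable}); connectedness of the tree rules out zero bounded edges. Hence $v$ has exactly one bounded edge, to a fixed vertex, all remaining edges are parallel ends, and the slide of $v$ away from that fixed vertex is the entire unbounded motion of $B$, so $B = \{v\}$ and $\#V = 1$.

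I expect the main obstacle to be making the independence argument of the first case fully rigorous: one must be certain that the slide of $v$ is a genuine extra degree of freedom rather than a reparametrization of $B$'s global motion. The clean way to see this is the asymmetry noted above — the slide fixes $v_2$, whereas membership of $v_2$ in the movable component forces $B$'s global motion to move $v_2$ — which I would phrase directly in terms of the directions of movement $b_v$ and $b_{v_2}$ as constrained by the Angle Lemma (Remark \ref{Ang}), so that the conclusion $\dim \geq 2$ follows from an honest comparison of movement vectors rather than from an informal count of "dimensions of movement."
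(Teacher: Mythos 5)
Your proposal is correct and follows essentially the same route as the paper: the paper's inline justification is precisely that, in the first case, the slide of $v$ along the common line of its parallel edges is a movement independent of the movement of $B$ (this is what Figure \ref{fBp} depicts), contradicting the one-dimensionality of the family $Y$, while the second case is deferred to Lemma \ref{1ver}. Your formalization of the independence through the asymmetry — the slide fixes $v_2$, whereas the global one-dimensional motion of $B$ must move every vertex of $B$, since a vertex fixed under that motion would belong to a fixed component — is exactly the rigorous content behind the paper's ``obviously,'' so the two arguments coincide in substance.
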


\begin{observation}\label{f6}
  We call $P$ the Newton polygon dual to $h(\Gamma)$ (see Definition \ref{Newton}). The movable component $ B $ is dual to some part of the subdivision of $ P $. Indeed, from one side (assume the left side) $ B $ is connected to the fixed components and if we name all bounded edges that connect $ B $ to the fixed components by $ u_{1}, \cdots, u_s $ from top to bottom (as in Figure \ref{wi}), then each $ u_i $ can be obtained from $ u_{i+1} $ by a left turn (otherwise the movement is bounded). Hence, $B$ is dual to a concave subsection of polygon $P$ on one side and to at least two edges of $P$ on the other side. If the Newton polygon dual to $B$ contains only one of the boundary edges of $P$, then it cannot be dual to a concave part from the other side.

\begin{figure}[h!]
		
		\includegraphics[scale=0.9]{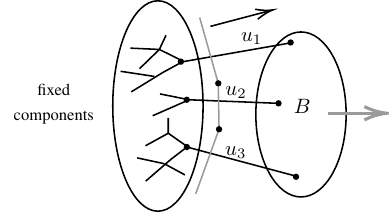}
		\centering
		\caption{In this figure, the image of the tropical stable map $C$ in the plane is depicted as a fixed component and a movable component $B$ and some bounded edges that connect them. }
		\label{wi}
	\end{figure}

  Notice that there might be some vertices of the fixed components on the right side of $ B $. Since these vertices cannot be adjacent to any vertex in $ B $ (like the following figure), we can move $B$ over the fixed part, without changing the combinatorial type (see Definition \ref{CombinatoriyalType}.) of the tropical stable map.
\begin{figure}[h!]
		
		\includegraphics[scale=0.9]{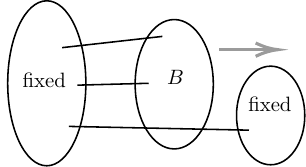}
		\centering
		\caption{This diagram illustrates a scenario in which the movable component $B$ will pass over one of the fixed components. }
		\label{fBfix}
	\end{figure}

 We can also have some ends, adjacent to the vertices in the fixed components, that intersect with the movable components, like Figure \ref{fBend}.  

\begin{figure}[h!]
		
		\includegraphics[scale=0.9]{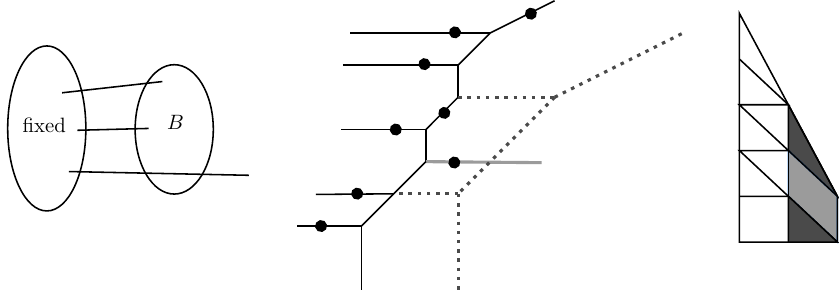}
		\centering
		\caption{In the middle graph of this figure, there is an example of a tropical stable map with a movable component. The fixed parts of the tropical stable map, known as the fixed component, are represented by simple lines (black and grey). This fixed part of the tropical stable map corresponds to the uncolored parts of the Newton polygon depicted on the right-hand side of the picture.\\
  The dashed black lines represent the movable component, corresponding to the dark grey parts of the Newton polygon shown on the right side of the picture. Additionally, there is a light grey rectangle corresponding to the fixed grey end of the tropical stable map. We can see that the fixed grey end is intersecting with the movable component.}
		\label{fBend}
	\end{figure}
		
\end{observation}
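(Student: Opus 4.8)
The plan is to translate the statement into the language of the dual subdivision and then read off the three assertions from the geometry of the movement. By the correspondence recalled before Definition~\ref{Newton} (Propositions 3.1.6 and 3.3.10 of \cite{MS}), $h(\Gamma)$ is dual to a regular subdivision of $P$, under which each vertex $v$ of $\Gamma$ corresponds to a $2$-cell $P_v$, each edge $e$ to a dual segment perpendicular to $e$, and each non-contracted end to a boundary edge of $P$. Since $B$ is connected, $P_B := \bigcup_{v\in V} P_v$ is a connected sub-polygon of $P$, which already gives the first assertion that $B$ is dual to a part of the subdivision. The connecting edges $u_1,\dots,u_s$ are exactly the bounded edges joining a type~I vertex of $B$ (in the sense of Definition~\ref{IIII}) to a fixed vertex, so their duals $u_1^\ast,\dots,u_s^\ast$ form the portion of $\partial P_B$ shared with the fixed region, and the top-to-bottom ordering of the $u_i$ corresponds to traversing this boundary path.

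Next I would pin down how the type~I vertices move. Because the combinatorial type, hence every edge direction, is fixed along the family $Y$ and the fixed endpoint $w_i$ of $u_i$ does not move, we have $h(v_i)=h(w_i)+\ell_i(t)\,v(u_i)$ for the $B$-endpoint $v_i$ of $u_i$; differentiating in the family parameter shows that the direction of movement satisfies $b_i = \ell_i'(t)\,v(u_i)$, so $b_i$ is parallel to $u_i$, and since $B$ recedes unboundedly from the fixed part (Remark~\ref{B}) we may take $\ell_i'>0$, so $b_i$ points from the fixed part into $B$. This is precisely the input needed to run the Angle Lemma (Remark~\ref{Ang}) along the chain of vertices of $B$ lying between two consecutive connecting edges.

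The core step is the left-turn property, which I would prove by contradiction. Suppose that, reading from $u_{i+1}$ to $u_i$, the connecting edges make a right turn. Orienting each $u_j$ from the fixed part into $B$, the movement directions $b_j\parallel u_j$ point into $B$; chaining the admissible half-open cones $\sigma_{v}(b,e)$ of the Angle Lemma across the vertices between $u_{i+1}$ and $u_i$, a right turn forces the intersection of admissible movement directions to be a bounded (or empty) cone, so the two connecting edges cannot both lengthen to infinity while the combinatorial type is preserved. Hence the movement of $B$ would be bounded, contradicting Remark~\ref{B}. Therefore each $u_i$ is obtained from $u_{i+1}$ by a left turn, and dually $u_1^\ast\cup\dots\cup u_s^\ast$ is a concave lattice path as seen from $P_B$; this is the claimed concave subsection on the fixed side.

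Finally, for the last assertion I would argue directly on $P_B$. Since its near-side boundary $u_1^\ast\cup\dots\cup u_s^\ast$ is a genuinely concave lattice path, it contains at least one interior bend; for $P_B$ to close up into a $2$-dimensional sub-polygon around such a bend, its far-side boundary, which lies on $\partial P$, must consist of at least two edges of $P$. Contrapositively, if $P_B$ contained only a single boundary edge of $P$, the near-side boundary could carry no bend and hence could not be concave, which is exactly the closing remark of the Observation. The step I expect to be the main obstacle is the right-turn-implies-bounded argument: it requires carefully composing the half-open Angle Lemma cones along the connecting chain and verifying that a right turn collapses the space of simultaneous unbounded movement directions, an argument that is geometrically transparent but combinatorially delicate once type~II and type~III vertices interleave the type~I ones.
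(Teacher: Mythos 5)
Your proposal is correct and takes essentially the same route as the paper: the paper records this statement as an Observation whose entire justification is the Newton-subdivision duality plus the inline remark that anything other than a left turn would bound the movement, and your argument is a faithful expansion of exactly that reasoning using the paper's own tools (the duality of Definition~\ref{Newton}, the fact that a type~I vertex moves parallel to the edge attaching it to the fixed part, and the Angle Lemma of Remark~\ref{Ang}, chained into a contradiction with the unbounded movement of Remark~\ref{B}). The only items you leave untouched are the two closing ``notice'' paragraphs about $B$ passing over fixed components and fixed ends crossing $B$, which the paper itself justifies only by figures and which require no proof.
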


\begin{definition}\label{H}

For any two ordinary adjacent vertices $ v_{1}, v_2 $, and their directions of movement $ b_{1}, b_2 $ we can have an open half-plane $ H $ such that $ v_1 \geq v_2 $ (i.e., \( e_1, b_1 + v_1 \in H_{v_1} \)). So we can make a maximal chain starting from $ v_1 $ like $ v_1 \geq v_2 \geq \cdots \geq v_n $. We consider the number of vertices in a maximal chain as its length.\\
We call an open half-plane special if its boundary is parallel to one of the lines: $ \{ x=0 \} $, $ \{ y=0 \} $, or $ \{ ry=x \} $ (see Figure \ref{half-planes}). 

\begin{figure}[h!]
		
		\includegraphics[scale=0.8]{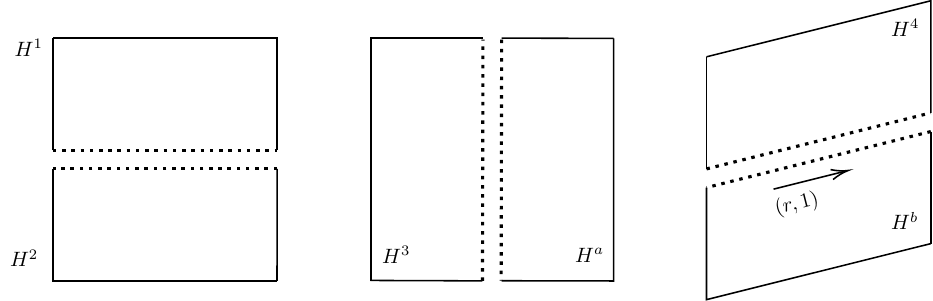}
		\centering
		\caption{This figure depicts all types of special open half-planes. We will refer to these special open half-planes with the names written on them in this figure. }
		\label{half-planes}
	\end{figure}

Some of the special open half-planes only contain one direction of ends, we call them 1-ray special open half-plane ($H^{i}$ for $i=1,\cdots, 4$ in Figure \ref{half-planes}), and some of them contain two directions of ends, we call them 2-ray special open half-planes. The only 2-ray special open half-planes are $ H^{a} $ and $ H^{b} $ in Figure \ref{half-planes}.

\end{definition}

\begin{remark} 

Let $ v_1 \geq v_2 \geq \cdots \geq v_n $ be a maximal chain of vertices in the movable component of $ \Tilde{\Gamma}$ with respect to a $ \Omega(H) $. There are 3 scenarios for $ H $ and the maximal chain of vertices mentioned in Definition \ref{H}. These are: (1) when $ H $ is a 1-ray special open half-plane (studied in Lemma \ref{1Ray}), (2) when $ H $ is a 2-ray special open half-plane (studied in Remark \ref{bigstring}), and (3) when $ H $ is not a special open half-plane (studied in Lemma \ref{NoRay}). 

\end{remark}

   We need to provide a more general version of the definition of {string} given in \cite{GM} in the following way:

\begin{definition}

\label{DefString} 
Let $ C = (\Gamma, x_{\underline{m}}, \Tilde{e}_{\underline{d}}, h)  $ be a tropical stable map contributing to ${\mathcal{N}^{0}_{\Delta_{\mathbb{F}_{r}}(a,b, w_{\underline{d}})}(p_{\underline{n}}, L_{\underline{k}}, {\lambda}_{\underline{l-1}}, {\lambda}'_{l})}$, where $|{\lambda}'_{l}|$ can be very large.
A subgraph $\mathcal{S} $ of $ \Gamma $  is called a \emph{movable branch} of $ C $ if it satisfies the following conditions. 

\begin{enumerate}[label=(\arabic*)] 
    \item There is no contracted end satisfying a point condition in $\mathcal{S}$.

    \item \label{cond2} Any bounded edge of $\mathcal{S} $ that is adjacent to a 1-valent vertex is of primitive direction $(1,0)$ (in direction away from the 1-valent vertex). By abuse of notation, we prolong these bounded edges and treat them as left ends that are fixed (see Definition \ref{fixedends}). One can view $\mathcal{S} $ as a connected component after cutting $\Gamma$ at the bounded edges, becoming ends. We refer to a movable branch with $\sigma$ fixed ends of primitive direction $(-1,0)$ as $\mathcal{S}_\sigma$. These $\sigma$ fixed left ends have weights $c_{\underline{\sigma}}$. 

    \item The direction of movement of all vertices of $\mathcal{S}_\sigma$ is in the $(1,0)$ direction.

\end{enumerate}

Based on the number of fixed left ends of the movable branches, we classify them as follows:

\begin{itemize}
    \item Case $\sigma > 1$:
\end{itemize}

\begin{enumerate}[label=(\arabic*)] 
    \setcounter{enumi}{3} 

    \item \label{fifth}$\mathcal{S}_\sigma$ corresponds to a connected component of $\Gamma$ with $0<a_0 \leq a$ ends in each of $(r,1)$ and $(0,-1)$ directions and $d_0 \leq d $ marked right ends. All of the marked right ends are labeled with $\Tilde{e}_{0\underline{d_0}}$, and have weights $w_{0\underline{d_0}}$ (for more details see Figure \ref{ExSa}). The Newton polygon dual to the image of $\mathcal{S}_\sigma$ in the plane is the convex hull of these vertices: 
    \[
    \{(0,0), (a_0,0), (a_0,\sum_{i=1}^{d_0}w_{0i}), (0, ra_0 +\sum_{i=1}^{d_0}w_{0i}) \}.
    \]
    Of course, $\sum_{i=1}^\sigma c_i = ra_0 +\sum_{i=1}^{d_0}w_{0i} $.

    \item Each vertex adjacent to an end in direction $(r,1)$ or $(0,-1)$ is 3-valent.

    \item Each vertex adjacent to an end with primitive direction $(1,0)$ or $(-1,0)$ has valence higher than 3. Notice that we can have ends in the $(-1,0)$ direction that are not the fixed ends raised from cutting bounded edges, as discussed in Condition \ref{cond2}. Since such an end is adjacent to a high-valence vertex, there must be a marked point caused by a multi-line condition on it.

\end{enumerate}

\begin{itemize}
    \item Case $\sigma = 1$:
\end{itemize}

\begin{enumerate}[label=($\arabic*'$)]
    \setcounter{enumi}{3} 
    
    \item $\mathcal{S}_1$ has a single vertex $v$ adjacent to ends in the $(1,0)$ or $(-1,0)$ direction. The vertex $v$ is connected to multiple ends, all of which are parallel. 

    \item There must be a marked point caused by a multi-line condition on every end in $(-1,0)$ direction, which is not a fixed end discussed in Condition \ref{cond2}.

\end{enumerate}

\end{definition}

In Figure \ref{S} you can see different examples of a movable branch.
	\begin{figure}[h!]
		
		\includegraphics[scale=0.6]{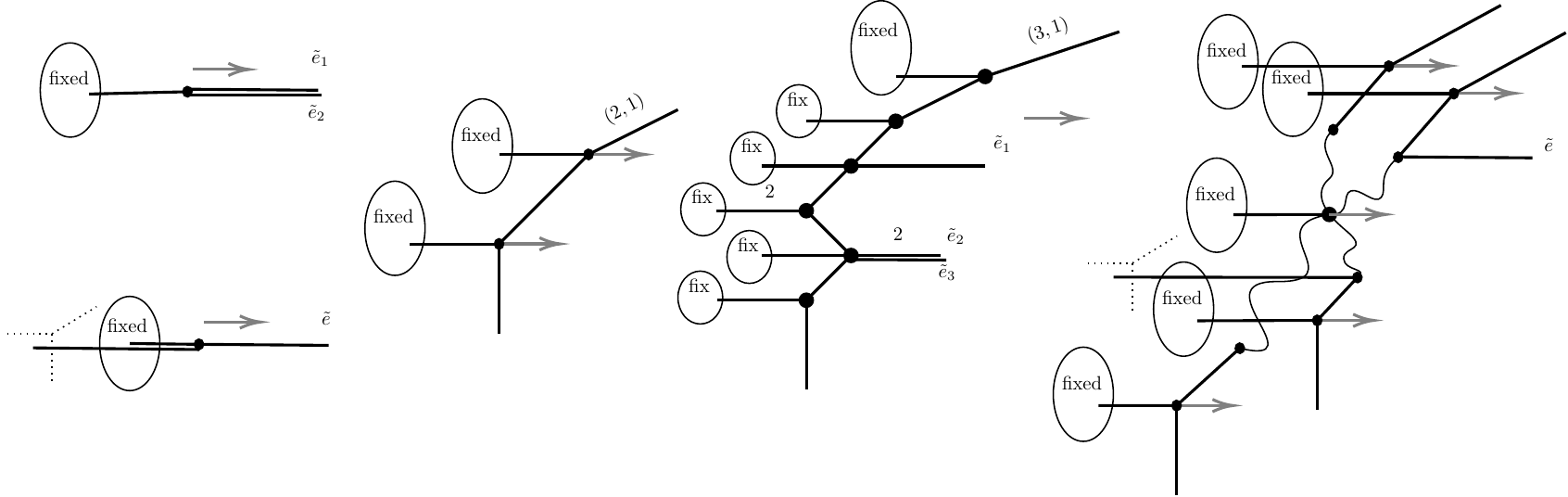}
		\centering
		\caption{Some movable branch that a tropical stable map of degree $ \Delta_{\mathbb{F}_{r}}(a,b, w_{\underline{d}}) $ can have where it satisfies a cross-ratio condition $ {\lambda}'_{l} $ with very long length. The leftmost movable branches are two examples of $\mathcal{S}_1$.}
		\label{S}
	\end{figure}

\begin{proposition}\label{*}
	Let $ n \geq 1 $ and $ C $ be a tropical stable map contributing to $ \mathcal{N}^{0}_{\Delta_{\mathbb{F}_{r}}(a,b, w_{\underline{d}})}(p_{\underline{n}}, L_{\underline{k}}, {\lambda}_{\underline{l-1}}, {\lambda}'_{l}) $. If  $|{\lambda}'_{l}|$  has a very large length, then $ C $ satisfies one of the following conditions:
	
	\begin{enumerate}
		\item $ C $ has a contracted bounded edge.
		\item $ C $ has a movable branch $ \mathcal{S} $ (defined in \ref{DefString}).

	\end{enumerate}
\end{proposition}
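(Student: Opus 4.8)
The plan is to pass from the rigid curve $C$ to the one-parameter family obtained by forgetting $\lambda'_l$, and then to read off the structure of $C$ from the way its movable component can move. By Remark~\ref{-1}, dropping $\lambda'_l$ produces the $1$-dimensional family $Y$ containing $C$, and by Remark~\ref{B} the curve $C$ carries a movable component $B$ with vertex set $V$; since the case $\#V=1$ is isolated for separate treatment, I assume $\#V\geq 2$. The first task is to organize $B$: using Definition~\ref{IIII} I classify its vertices into types I, II, and III, and I record that every ordinary vertex of $B$ must be anchored either to a fixed vertex through a bounded edge or to a contracted multi-line end — otherwise, as in Figure~\ref{0type}, it would admit a direction of movement independent of $B$, forcing $\dim Y\geq 2$. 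I also dispose of the degenerate all-parallel-edges configuration via Observation~\ref{f5}, and I fix the global picture with Observation~\ref{f6}: $B$ is dual to a concave portion of the Newton polygon on its attaching side and meets at least two boundary edges of $P$ on the other side.

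With this in hand, the core of the argument is a propagation analysis along a maximal chain. Following Definition~\ref{Partial order} and Definition~\ref{H}, I choose an open half-plane $H$ compatible with the directions of movement and extract a maximal chain $v_1\geq v_2\geq\cdots\geq v_s$ of type I and type II vertices. The Angle Lemma (Remark~\ref{Ang}) then forces the direction of movement $b_{i+1}$ to lie in the half-open cone $\sigma_{v_{i+1}}(b_i,e_i)$, so the admissible directions are pinned down inductively along the chain, while Observation~\ref{f4} restricts the directions at multi-line vertices to the standard directions $(r,1)$, $(0,-1)$, $(-1,0)$. I carry out this analysis in the three mutually exhaustive cases according to whether $H$ is a $1$-ray special, a $2$-ray special, or a non-special half-plane (Definition~\ref{H}). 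In each case I aim to prove the dichotomy that either the maximal chain collapses to length one, or the chain — together with the ends forced on it by the concavity of Observation~\ref{f6} and the weight-$\geq 1$ right ends $\Tilde{e}_{\underline{d}}$ — assembles exactly into one of the movable branches $\mathcal{S}_\sigma$ of Definition~\ref{DefString}.

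Finally, the length-one outcome must be reconciled with the statement. When the chain — equivalently the reduced graph $\Tilde{\Gamma}$ of Notation~\ref{Gati} — reduces to a single ordinary vertex, $B$ consists of one vertex (possibly with a type III multi-line vertex attached), and I show that in the absence of both a contracted bounded edge and a movable branch $\mathcal{S}$ the movement of $B$ is in fact bounded; this contradicts $C$ lying in the unbounded family $Y$ forced by the very long cross-ratio $\lambda'_l$. Consequently $C$ must exhibit a contracted bounded edge (alternative~(1)) or a movable branch $\mathcal{S}$ (alternative~(2)), which is the claim.

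I expect the decisive obstacle to be the case analysis of the second step. Pinning down the directions of movement along the chain is routine from the Angle Lemma, but ruling out every ``intermediate'' configuration — one that is neither a single vertex nor a bona fide movable branch — requires simultaneously balancing three constraints: the inductive cone condition from Remark~\ref{Ang}, the standard-direction restriction at multi-line vertices from Observation~\ref{f4}, and the concavity and boundary-edge count from Observation~\ref{f6}. The presence of several cross-ratio conditions, which forces higher-valence vertices and hence richer movable branches than the two-vertex string of \cite[Lemma 2.10]{FM}, is precisely what makes this bookkeeping delicate; the $2$-ray special half-planes $H^a,H^b$, where two families of ends can point into $H$, are where I anticipate the most care is needed.
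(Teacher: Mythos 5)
Your proposal is correct and follows essentially the same route as the paper: you pass to the $1$-dimensional family by forgetting $\lambda'_l$, classify the vertices of the movable component $B$, run the maximal-chain analysis through the three half-plane cases ($1$-ray special, $2$-ray special, non-special) to conclude that a chain is either of length one or a movable branch $\mathcal{S}_\sigma$, and finish the single-vertex case by the boundedness contradiction of Lemma~\ref{1ver}. This is precisely the paper's three-step scheme (its steps \textbf{I}--\textbf{III}, realized in Lemmas~\ref{1Ray}, \ref{NoRay}, \ref{4cases}, \ref{2Ray}, \ref{2rayn}, Remark~\ref{bigstring}, and Lemma~\ref{1ver}), including your correct anticipation that the $2$-ray case carries the main combinatorial burden.
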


\begin{observation}\label{oneEnd}
 Let $ B $ be a movable component of a tropical stable map contributing to ${\mathcal{N}^{0}_{\Delta_{\mathbb{F}_{r}}(a,b, w_{\underline{d}})}(p_{\underline{n}}, L_{\underline{k}}, {\lambda}_{\underline{l-1}}, {\lambda}'_{l})}$, where $|{\lambda}'_{l}|$ can be very large.
It is easy to see that the movable component of $ C $ has the following features:

	\begin{enumerate}

		\item Let \( v_1 \) be an ordinary vertex in \( B \) adjacent to an end. If there is a marked point caused by a multi-line condition on this end, then we can see in Figure \ref{Ends} that all possible unbounded directions of movement for \( v_1 \), which do not change the combinatorial type of \( C \), lie inside the grey areas. This means that if \( v_1 \) moves sufficiently in a direction outside the grey area, \( C \) no longer satisfies the multi-line condition on the end adjacent to \( v_1 \).

	\begin{figure}[h!]
		
		\includegraphics[scale=0.8]{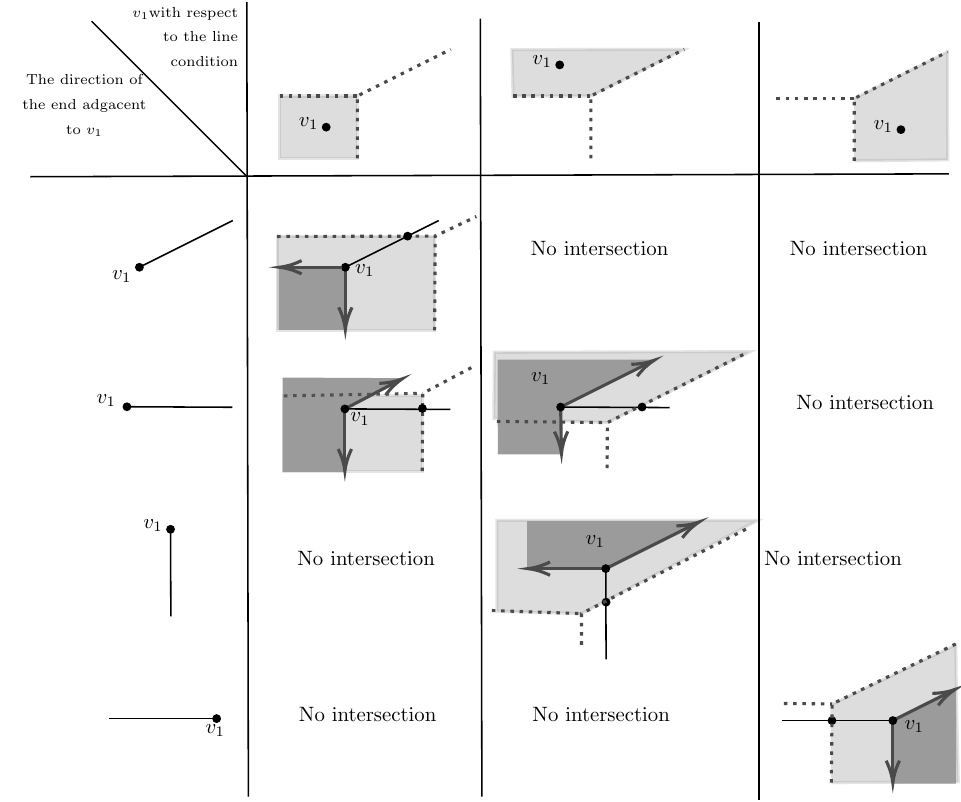}
		\centering
		\caption{In the left column of this table, we can see the 4 different directions for an end that a tropical stable map in $\mathbb{F}_{r} $ can have. Assume $v_1 $ is a vertex in $V$ adjacent to an end (we consider vertices in $\Gamma$). Also, we assume that there is a {marked point} on this end that is caused by a multi-line condition. We can determine all of the possible directions of unbounded movement that $v_1$ can have. So in the top row of the table, we depicted all three possible initial positions of the vertex $v_1$ with respect to the multi-line condition. Notice that these locations are light grey areas. \\ Inside the table, the dark grey areas trapped between two grey arrows identify all possible directions of movement for $v_1$. In some of the cases, the given end adjacent to $v_1$ does not intersect with the multi-line condition, where $v_1$ is in a specific area given in the top row. }
		\label{Ends}
	\end{figure}

\item Since we want to move each vertex unboundedly, if there is more than one end adjacent to a vertex and there is at least one {marked point} caused by a multi-line condition on each end, the only possible cases are the 9 cases in Figure \ref{9}.\\
In the third row of Figure \ref{9}, we have the cases where a vertex is adjacent to 3 ends. Notice that if the end in the direction $(1,0)$ was a marked right end, we would have obtained the same directions of unbounded movement for $v$.  Also there is one case missing in Figure \ref{9} and that is where $v$ is adjacent to 3 ends in directions $(r,1)$, $(-1,0)$ and $(0, -1)$. Since the intersection between 3 dark areas corresponding to the unbounded directions of movement of a vertex adjacent to these ends is empty, this case cannot happen.

\begin{figure}[h!]
\centering
\includegraphics[scale = 0.8]{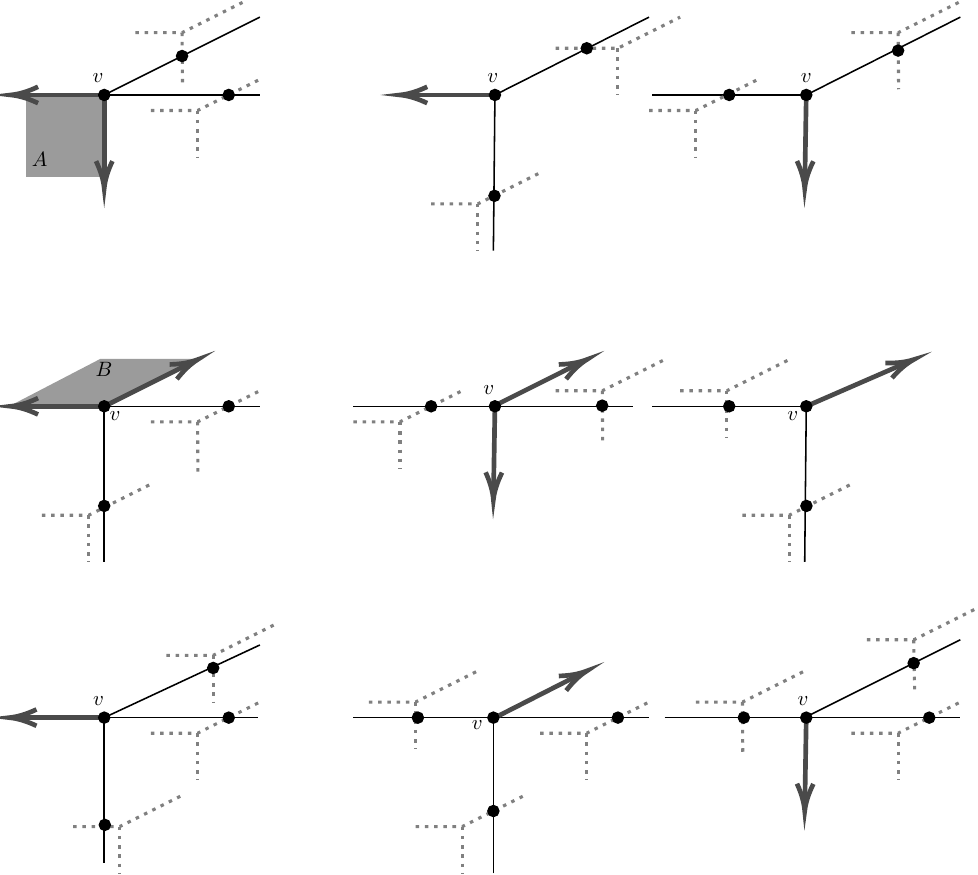}
\caption{ If there are two or three nonparallel ends adjacent to a vertex $v$ in $B$ and on each end there is a {marked point} caused by a multi-line condition, then we should find the intersection area between two or three areas that are shown in Figure \ref{Ends}. For instance, for the top left graph, we intersect the two dark grey areas in the left column of the table in Figure \ref{Ends}. The possible directions of unbounded movement for $v$ that maintain the intersections are shown with dark grey areas trapped between grey arrows. 
}
\label{9}
\end{figure}

\end{enumerate}
    
\end{observation}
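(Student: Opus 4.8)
The plan is to translate the persistence of each multi-line condition under unbounded movement of the movable component $B$ into a \emph{cone condition} on the direction of movement $b_1$ of the vertex in question, and then to read off both claims as finite case analyses. Recall from Definition \ref{Line} (with $s=1$) that a multi-line $L$ is a fixed tripod in the plane: three legs emanating from a single vertex $p$ in the standard directions $(-1,0)$, $(0,-1)$ and $(r,1)$; write $u_{\mathcal{L}}$ for the primitive direction of a leg $\mathcal{L}$. Since $B$ moves inside the family $Y$ of Remark \ref{-1}, every multi-line condition stays satisfied throughout the movement; in particular, the marked point on the end $e$ adjacent to $v_1$ remains on $L$. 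Writing $u_e$ for the primitive direction of $e$, the marked point is $h(x_i) = v_1 + \ell\, u_e$ for some length $\ell > 0$ with $h(x_i) \in L$, so $v_1$ lies in the region $R = \{ x \in \mathbb{R}^2 : (x + \mathbb{R}_{\geq 0}\, u_e) \cap L \neq \emptyset \}$. The elementary point I would record first is that, leg by leg, $R = \bigcup_{\mathcal{L}} \big( p + \operatorname{cone}(u_{\mathcal{L}}, -u_e) \big)$, a finite union of two-dimensional cones, so its recession cone is $\bigcup_{\mathcal{L}} \operatorname{cone}(u_{\mathcal{L}}, -u_e)$.

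For claim (1) I would argue that, because $v_1$ escapes to infinity while staying in the connected piece of $R$ containing it, its unbounded direction $b_1$ must lie in the recession cone $\bigcup_{\mathcal{L}\ \mathrm{reachable}} \operatorname{cone}(u_{\mathcal{L}}, -u_e)$, the union being over the leg(s) of $L$ that the ray $v_1 + \mathbb{R}_{\geq 0}\, u_e$ can meet from the given starting position. This decomposes into three regimes: the marked point escapes along a leg while $\ell$ stays bounded (giving $b_1 \parallel u_{\mathcal{L}}$), the length $\ell$ grows while $h(x_i)$ stays bounded (giving $b_1 \parallel -u_e$), or both grow (giving $b_1$ in the open cone spanned by $u_{\mathcal{L}}$ and $-u_e$). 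The middle regime is excluded by the last sentence of Observation \ref{f4}, which forbids $b_1$ from being aligned with $u_e$; this removes precisely the $-u_e$ boundary ray and leaves the half-open grey cones of Figure \ref{Ends}. Carrying this out for each of the four end directions $(r,1)$, $(0,-1)$, $(1,0)$, $(-1,0)$ and each of the three positions of $v_1$ relative to $p$ shown in the top row of Figure \ref{Ends} (which fix which legs are reachable) reproduces the grey areas exactly.

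For claim (2), suppose $v$ is adjacent to several pairwise nonparallel ends, each carrying a multi-line marked point. Its direction of movement is a single vector $b_1$, so it must simultaneously lie in the cone attached to each end by claim (1); hence $b_1$ belongs to the \emph{intersection} of these cones. I would then intersect the cones of Figure \ref{Ends} over every admissible choice of two or three pairwise nonparallel end directions. The antiparallel pair $(1,0)$, $(-1,0)$ is excluded from consideration by Observation \ref{f5} (all-parallel edges at a vertex force either a second independent dimension of movement or the single-vertex case $\#V = 1$ of Lemma \ref{1ver}), and a non-contracted marked right end in direction $(1,0)$ yields the same half-open cone as an end carrying a multi-line point, so it does not enlarge the list. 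The surviving nonempty intersections are exactly the nine configurations of Figure \ref{9}, while the one remaining three-end combination, ends in directions $(r,1)$, $(-1,0)$, $(0,-1)$, has empty cone intersection and is therefore impossible, completing (2).

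The step I expect to be the main obstacle is the bookkeeping of which legs of $L$ are reachable as the marked point slides, and in particular the transition when it passes through the central vertex $p$ and switches legs: this is what turns certain single cones into the wider unions of two cones visible in Figure \ref{Ends}, and it must be tracked carefully so that the inclusion ``admissible $b_1 \subseteq$ grey area'' holds for \emph{every} starting position rather than only generically. A secondary point needing care is to justify that remaining inside the family $Y$ genuinely forces $h(x_i) \in L$ throughout the movement (so that the cone argument applies at all) and that the recession-cone computation is legitimate even though $L$ itself is non-convex; both reduce to the observation recorded above, that $R$ is a finite union of two-dimensional cones whose recession directions are exactly the $\operatorname{cone}(u_{\mathcal{L}}, -u_e)$.
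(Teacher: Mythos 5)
Your proposal is correct and takes essentially the same route as the paper: the paper establishes this observation precisely by the position-dependent case analysis of Figures \ref{Ends} and \ref{9} (a cone of admissible unbounded directions attached to each end carrying a marked point from a multi-line, with alignment along the end excluded via Observation \ref{f4}, and with several ends handled by intersecting the corresponding cones), and your region $R$ with its recession-cone decomposition is just a formalization of exactly that picture, including the leg-switching transitions that produce the unions of cones. The only minor mismatch is cosmetic: the paper sets aside the antiparallel pair $(1,0)$, $(-1,0)$ by restricting Figure \ref{9} to nonparallel ends rather than by appealing to Observation \ref{f5}, which strictly speaking covers only the all-parallel vertex case, but this does not affect the argument.
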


\begin{lemma}
    
\label{1Ray}
  Let $ B $ be a movable component of a tropical stable map $ C = (\Gamma, x_{\underline{m}}, \Tilde{e}_{\underline{d}}, h)  $ contributing to ${\mathcal{N}^{0}_{\Delta_{\mathbb{F}_{r}}(a,b, w_{\underline{d}})}(p_{\underline{n}}, L_{\underline{k}}, {\lambda}_{\underline{l-1}}, {\lambda}'_{l})}$, where $|{\lambda}'_{l}|$ can be very large.
Let $ v_1 \geq v_2 \geq \cdots \geq v_n $ be a maximal chain of vertices in the movable component of $ \Tilde{\Gamma}$ with respect to some $ \Omega(H) $. If $H$ is a 1-ray special open half-plane, then the maximal chain is of length one.

\end{lemma}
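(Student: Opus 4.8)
The plan is to argue by contradiction: suppose the maximal chain $v_1 \geq v_2 \geq \cdots \geq v_n$ with respect to $\Omega(H)$ has length $n \geq 2$. First I would fix the linear functional $\ell$ that is the inner normal to $\partial H$, normalised so that $H_v = \{p : \ell(p) > \ell(v)\}$. By the definition of the partial order in \ref{Partial order} together with the Angle Lemma (Remark \ref{Ang}), every vertex of the chain moves strictly into $H$, i.e. $\ell(b_i) > 0$ for all $i$, and the chain is $\ell$-monotone, $\ell(v_1) < \ell(v_2) < \cdots < \ell(v_n)$; in particular each connecting edge $e_i$ has $\ell(v(e_i,v_i)) > 0$. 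The decisive consequence of the one-ray hypothesis is recorded at the outset: among the four end directions $(1,0),(-1,0),(0,-1),(r,1)$ of a tropical stable map of degree $\Delta_{\mathbb{F}_r}(a,b,w_{\underline{d}})$, exactly one direction $\rho$ satisfies $\ell(\rho) > 0$ (namely $\rho=(-1,0)$ for $H^1$ and $H^4$, $\rho=(r,1)$ for $H^2$, and $\rho=(0,-1)$ for $H^3$), whereas every other end direction has $\ell \leq 0$. Thus all ends of $C$ pointing into $H$ are parallel.

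Second, I would translate preservation of the combinatorial type along the family into relations among the movement directions. Since all direction vectors are fixed along the $1$-dimensional family, for a bounded edge $e$ between two movable vertices $v,w$ the relative velocity $b_w-b_v$ is parallel to $v(e)$; for an edge joining a movable vertex to a fixed vertex the velocity is itself parallel to that edge; and by Observation \ref{f4} a type II vertex moves in one of the standard directions and no vertex may move parallel to one of its own non-contracted ends. Combining these with $\ell(b_i)>0$ pins down the admissible movement direction of the deepest vertex $v_n$ in each of the four one-ray cases. Moreover, applying the balancing condition to the sub-collection of $B$ that escapes fastest into $H$ (the vertices maximising $\ell(b_v)$) shows that the edges leaving this sub-collection point out of $H$ whenever they grow, so for the total to balance this sub-collection must be incident to an end in the unique direction $\rho$; hence the escape of $B$ into $H$ is powered solely through $\rho$-ends.

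Third, I would extract the contradiction from the chain. Because all into-$H$ ends are parallel to $\rho$, I would use the half-open Angle-Lemma cone $b_{i+1}\in\sigma_{v_{i+1}}(b_i,e_i)$ of Remark \ref{Ang} together with the tables of admissible movement directions in Observation \ref{oneEnd} (Figures \ref{Ends} and \ref{9}) to show that a second vertex $v_2\in H_{v_1}$ whose velocity $b_2$ also points into $H$ cannot be joined to $v_1$ without either forcing one of the connecting edges $e_i$ to contract as $B$ moves — which would bound the motion of $B$, contradicting that $B$ is a movable component in the sense of \ref{Movable} — or forcing the intersection of $h(\Gamma)$ with one of the multi-lines to slide off a vertex, which changes the combinatorial type and again contradicts Observation \ref{f4}. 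Consequently no chain of length $\geq 2$ is compatible with the movement requirement $b_i+v_i\in H_{v_i}$, so the maximal chain has length one. This dovetails with the remaining scenarios of \ref{*}: the two-ray half-planes $H^a,H^b$ give rise to the movable branches of \ref{DefString} (Remark \ref{bigstring}), and non-special half-planes are handled in Lemma \ref{NoRay}, while the degenerate case $\#V=1$ is treated in Lemma \ref{1ver}.

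\textbf{Main obstacle.} The hard part will be the per-case direction chase in the third step: one must run through each one-ray half-plane $H^1,\dots,H^4$ and, for every admissible position of $v_1$ and $v_2$ relative to the multi-line conditions, verify through the figures of Observation \ref{oneEnd} and the cone of Remark \ref{Ang} that the second vertex is genuinely obstructed. The interaction with type II vertices, where the velocity is confined to a standard direction while a marked point must remain on a multi-line, and the borderline configurations in which a connecting edge is parallel to $\partial H$ (excluded only by the half-open boundary convention of the cone $\sigma_{v_{i+1}}(b_i,e_i)$), are the most delicate points. The degenerate value $r=0$, where $(r,1)=(0,1)$ changes which end directions lie on $\partial H$, should also be checked separately so that the classification of $\rho$ above remains valid.
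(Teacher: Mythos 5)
Your proposal is a plan rather than a proof: the decisive step is missing. Steps 1 and 2 (the functional $\ell$, the monotonicity $\ell(v_1)<\cdots<\ell(v_n)$, the fact that $\ell(b_i)>0$, and the observation that a $1$-ray half-plane contains exactly one end direction $\rho$, so the escape of $B$ into $H$ must be powered by $\rho$-ends) are correct and consistent with the paper's framework, but they only set the stage. In step 3, where the contradiction for $n\geq 2$ must actually be derived, you defer everything to an unexecuted ``per-case direction chase'' and even flag it yourself as the hard part. That case analysis is precisely the content of the lemma; asserting that Observation \ref{oneEnd} and the Angle Lemma ``will show'' the second vertex is obstructed does not establish it. Moreover, the failure mechanisms you propose (a connecting edge being forced to contract, or an intersection with a multi-line sliding off a vertex) do not match the actual obstruction, so it is doubtful the plan as stated would close without rediscovering a different argument.

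The paper's proof instead localizes everything at the \emph{last} vertex $v_n$ of the chain and is essentially static: (a) by maximality of the chain and the definition of $\Omega(H)$, $v_n$ cannot have a movable or fixed neighbour inside $H_{v_n}$ (a fixed neighbour would force $b_n+v_n\notin \overline{H}_{v_n}$, since a type I vertex moves along the edge to its fixed neighbour), so balancing forces $v_n$ to be adjacent to the unique end $e$ of direction $\rho$ in $H_{v_n}$; (b) if $\val(v_n)>3$, Remark \ref{Gol18} produces a marked point caused by a multi-line condition on $e$, and Observation \ref{oneEnd} shows its admissible unbounded movement directions all lie outside the $1$-ray half-plane containing $e$ --- likewise a type II/III vertex is excluded via Observation \ref{f4} --- so $v_n$ is $3$-valent of type I; (c) balancing at such a vertex, with $e$ and $b_n$ known, forces one of the two bounded edges at $v_n$ to lie on $\partial H_{v_n}$, which is incompatible with the strict containment $v_n\in H_{v_{n-1}}$ demanded by $v_{n-1}\geq v_n$. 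None of (a)--(c) in this form appears in your proposal; your ``fastest sub-collection'' argument gives only that \emph{some} vertex of $B$ meets a $\rho$-end, not that $v_n$ does, and the valence/type analysis (b) and the boundary-edge contradiction (c) are absent. To repair the proposal you would need to supply exactly these three steps.
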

    
\begin{proof}
    Let $ v_1 \geq v_2 \geq \cdots \geq v_n $ be a maximal chain induced by a 1-ray special open half-plane $ H $. Then because of the balancing, $ v_n $ has to be connected to the only existing end in $ H_{v_n} $, we call it $ e $: Otherwise, $ v_n $ needs to be adjacent to another vertex $\Tilde{v}$ in $ H_{v_n} $, either $\Tilde{v}$ is in the movable component, which is in contradiction with the fact that we chose a maximal chain $ v_1 \geq v_2 \geq \cdots \geq v_n $ at the beginning, or it is in the fixed component. Lets call $e'$ the bounded edge connecting $ v_{n}$ and $\Tilde{v}$, then $  \langle e' \rangle + v_n =   \langle b_n \rangle + v_n $. That implies $b_n +v_n \notin \Bar{H}_{v_n} $, which is again a contradiction.  Therefore, $ v_n $ is connected to the only existing end in $ H_{v_n} $.

We claim that $ v_n $ is a 3-valent vertex of type I. Since we are considering 1-ray special half-planes, the end $e$ is not a marked right end. Therefore if $ \val(v_n) > 3 $, then (we need to switch from $\Tilde{\Gamma}$ to $\Gamma$) since $ v_n $ satisfies at least one cross-ratio condition, based on Remark \ref{Gol18} there must be one {marked point} $ v $ caused by a multi-line condition on $ e $.
The possible directions of movement of $ v $ are in a standard direction. In part (1) of Observation \ref{oneEnd}, we see that any unbounded direction of movement for a vertex adjacent to an end in the standard direction, with a marked point caused by a multi-line condition on it, will always fall outside of the 1-ray special half-plane containing the end $e$. This is a contradiction.  Notice that based on Observation \ref{f4}, the direction of movement of $ v $ cannot be aligned with the direction of $ e $.

		

    Based on Definition \ref{IIII}, if $ v_n $ is not connected to the fixed component, then there is a multi-line condition at $ v_n $ that defines the direction of movement of $ v_n $. This direction cannot be aligned with the end $ e $, and $ H $ is a 1-ray open half-plane, so again it is a contradiction that the direction of movement of $ v_n $ does not live in $ H_{v_n} $.

    So $ v_n $ is a 3-valent type I vertex, and for any choice of the end $ e $ adjacent to $ v_n $ and $ b_n $, we can find all possible directions of two other bounded edges adjacent to $ v_n $. In all cases, one of the bounded edges adjacent to $ v_n $ lives on the boundary of $ H_{v_n} $. But if $n \geq 2$, this is in contradiction with the fact 
that $v_{n-1}\geq v_n$ with respect to this $H$. Therefore, the length of the maximal chain in this case is equal to one.

\end{proof}

\begin{lemma}\label{4cases}
 Let $ B $ be a movable component of a tropical stable map contributing to ${\mathcal{N}^{0}_{\Delta_{\mathbb{F}_{r}}(a,b, w_{\underline{d}})}(p_{\underline{n}}, L_{\underline{k}}, {\lambda}_{\underline{l-1}}, {\lambda}'_{l})}$, where $|{\lambda}'_{l}|$ can be very large. If $v$ is an ordinary vertex in the movable component $B$, then:
\begin{enumerate}
    \item Either $v$ is adjacent to an end in $(r,1)$ or $(0,-1)$ direction, then $v$ is a 3-valent vertex of type I,
    \item or $v$ is not adjacent to any ends, then the direction of movement of $v$ is $(1,0)$ and it is of type I,
    
     \item or $v$ is a vertex of valence higher than $3$ and it is adjacent to marked right ends or ends in $(-1,0)$ direction or both, then the direction of movement of $v$ is $(1,0)$ and it is of type I.
\end{enumerate}

\end{lemma}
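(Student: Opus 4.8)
The plan is to run a case analysis driven by which non-contracted ends are incident to the ordinary vertex \(v\). The degree \(\Delta_{\mathbb{F}_{r}}(a,b,w_{\underline{d}})\) only provides ends in the directions \((r,1)\), \((0,-1)\), \((w_i,0)\), and \((-1,0)\), so after splitting off Case 1 (some incident \((r,1)\) or \((0,-1)\) end) the remaining ordinary vertices are incident only to horizontal ends or to no end at all, yielding Cases 2 and 3. The three items are precisely the valence/type/movement data needed to match the structural conditions on vertices in the definition of a movable branch (Definition \ref{DefString}), so in each case the task is to fix \(\val(v)\), the type of \(v\) in the sense of Definition \ref{IIII}, and its direction of movement \(b\). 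Throughout I would use Observation \ref{f5} to discard the degenerate configuration in which all edges at \(v\) are parallel.

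First I would treat the case that \(v\) is adjacent to an end \(e\) in direction \((r,1)\) or \((0,-1)\) and prove \(\val(v)=3\) by contradiction. If \(\val(v)>3\), then \(v\) carries a degenerate cross-ratio, so Remark \ref{Gol18} forces every edge at \(v\) either to lie on the shortest path from \(v\) to a cross-ratio marked end or to be a marked right end. Since \(e\) is an unbounded, non-marked end pointing in \((r,1)\) or \((0,-1)\), it is neither, which is the contradiction; hence \(\val(v)=3\). For the type, I would exclude type II: a type II vertex moves in a standard direction by Observation \ref{f4}, and by the same observation its direction of movement cannot be aligned with the incident non-contracted end \(e\). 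Combining this with the balancing condition at the \(3\)-valent vertex \(v\) and the Angle Lemma (Remark \ref{Ang}) leaves only the movement realized through a bounded edge to a fixed vertex, so \(v\) is of type I.

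Next I would handle Case 3, where \(v\) has valence \(>3\) and is incident to marked right ends and/or \((-1,0)\) ends. By Remark \ref{Gol18} every non-fixed \((-1,0)\) end must carry a multi-line marked point, so I can read off the admissible unbounded directions of movement from Observation \ref{oneEnd} (Figure \ref{9}), recalling that a marked right end in direction \((1,0)\) behaves exactly like a multi-line \((1,0)\) end for this purpose; intersecting the cones of admissible directions for the incident horizontal ends leaves only \((1,0)\). Since \((1,0)\) is not among the standard directions \((r,1),(0,-1),(-1,0)\) available to a type II vertex (Observation \ref{f4}), the vertex cannot be type II, hence is type I. For Case 2, where \(v\) is incident to no end, the movement is not pinned locally, so I would propagate it along a maximal chain (Definition \ref{Partial order}) via the Angle Lemma from the outer vertices — which are in Case 1 or Case 3 — and combine this with the concavity of the subdivision dual to \(B\) (Observation \ref{f6}) to force the common unbounded direction to be \((1,0)\); again this direction rules out type II and makes \(v\) type I.

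I expect the main obstacle to be twofold. The first is genuine exhaustiveness: one must rule out a \(3\)-valent ordinary vertex incident to a horizontal end but to no \((r,1)\) or \((0,-1)\) end. Observation \ref{f5} only excludes the all-parallel configuration, so balancing forces two non-horizontal bounded edges with canceling \(y\)-components, and ruling this out requires showing its movement would be forced to \((1,0)\) (by the chain and concavity arguments) while the multi-line/alignment constraints of Observations \ref{f4} and \ref{oneEnd} make such a \(3\)-valent configuration inconsistent, thereby pushing it into Case 1 or into higher valence. The second, and more delicate, point is upgrading the purely local, cone-wise constraints of Observations \ref{f4} and \ref{oneEnd} to the global conclusion that the entire movement direction is exactly \((1,0)\) in Cases 2 and 3; this is where the maximal-chain propagation and the Newton-polygon concavity of Observation \ref{f6} must be combined carefully to avoid circularity with the case distinction itself.
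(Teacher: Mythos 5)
Your argument for Case (1) contains a genuine gap, and it is the decisive one. You claim that if $\val(v)>3$, Remark \ref{Gol18} yields an immediate contradiction because the end $e$ in direction $(r,1)$ or $(0,-1)$ "is neither" on a shortest path to a cross-ratio marked end nor a marked right end. But $e$ \emph{can} lie on such a shortest path: a cross-ratio entry may be a contracted marked end attached to a $2$-valent (type III, in the sense of Definition \ref{IIII}) vertex sitting on $e$, i.e.\ a marked point on the end. In that situation the edge of $\Gamma$ at $v$ pointing in direction $(r,1)$ is a bounded edge leading to that type III vertex, and Remark \ref{Gol18} is satisfied, not violated. This is exactly how the paper reads the remark: from $\val(v)>3$ it concludes that there \emph{is} a marked point, caused by a multi-line condition, on the end, and the actual contradiction is then extracted from the movement constraints this imposes — the admissible unbounded directions of Observation \ref{oneEnd} (Figures \ref{Ends} and \ref{9}) are shown to be incompatible with the balancing condition at $v$ together with Lemma \ref{1Ray} (no adjacent movable vertex in a $1$-ray special half-plane containing the direction of movement). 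Your shortcut skips all of this, so the claimed contradiction does not follow.

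Your Cases (2) and (3) also deviate from what actually works. For Case (3), "intersecting the cones of admissible directions for the incident horizontal ends" is not justified: a marked right end with no marked point on it imposes no constraint at all on the movement of $v$, so if $v$ is adjacent only to marked right ends and bounded edges, your intersection is not $\{(1,0)\}$. The paper instead assumes $b\neq(1,0)$, places $b$ in a $1$-ray special half-plane, and uses Lemma \ref{1Ray} plus balancing to force an end in a \emph{standard} direction inside that half-plane; Remark \ref{Gol18} then puts a multi-line marked point on it, and Observation \ref{oneEnd} gives the contradiction. For Case (2), your plan to propagate directions along a maximal chain from Case (1)/(3) vertices is circular: Case (1) of this lemma fixes only the valence and type of such vertices, not their direction of movement, which is established only later in Lemmas \ref{2Ray} and \ref{2rayn} — and those lemmas rely on the present one. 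The paper's Case (2) argument is purely local and avoids this: if $b\neq(1,0)$ then $b$ lies in a $1$-ray special half-plane, no end and (by Lemma \ref{1Ray}) no adjacent movable vertex can point into it, and an edge to a fixed vertex must be anti-aligned with $b$, so balancing fails.
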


\begin{proof}
  We prove each case separately:
   \begin{itemize}
       \item If $v$ is adjacent to an end in $(r,1)$ or $(0,-1)$ direction, we claim that $v$ must be 3-valent. First, consider the case when $v$ is adjacent to the end in $(r,1)$ direction. If $\val(v)>3$, then there is a marked point caused by a multi-line condition on this end. As shown in Figure \ref{Ends}, all possible directions of unbounded movement of $v$ live inside of 1-ray special half-planes. If $b$, the direction of movement of $v$, lives in $H^{3}_v$ then due to Lemma \ref{1Ray} we cannot have any vertex adjacent to $v$ in $H^{3}_v$. Balancing condition requires $v$ to be adjacent to the end in $(-1,0)$ direction in $H^{3}_v$ and this is a contradiction with the fact that now as it is shown in the first row and third column of Figure \ref{9} the unbounded direction of movement of $v$ has to be $b = (0,-1)$ but this direction is not inside of $H^{3}_v$. Similarly, if we assume $b \in H^{2}_v$, then we will be in the case in the first row and second column in Figure \ref{9}, which is again a contradiction with the assumption $b \in H^{2}_v$.
 In the same way, if we consider the case when $v$ is adjacent to the end in $(0,-1)$ direction, then as it is shown in Figure \ref{Ends}, all possible directions of unbounded movement of $v$ either lives in $H^{1}_v$ or $H^{3}_v$. Again, with similar reasoning, we get a contradiction, so $v$ is a 3-valent ordinary vertex and therefore, automatically, it is of type I, and this proves the case (1).

   \item If there is no end adjacent to $v$, we claim that the direction of movement of $v$ is $b=(1,0)$. 
Assume not, then either $b=(-1,0)$ in $H^{3}_v$, or $b$ is in $H^{1}_v$, or $H^{2}_v$. In all three cases, due to Lemma \ref{1Ray}, there cannot be any other vertex adjacent to $v$ in that 1-ray hyperplane containing $b$. As we assumed, $v$ is not adjacent to any end, and moreover, $v$ is not adjacent to any other vertex in the 1-ray half-plane containing $b$ ($H^{3}_v$, or $H^{1}_v$, or $H^{2}_v$). This is in contradiction with the balancing condition at $v$.
So $b$ must be $(1,0)$ and accordingly, we can say $v$ is of type I.
This proves case (2).

\item Where $v$ is a vertex of valence higher than 3 and it is adjacent to marked right ends or ends of $(-1,0)$ direction or both, we claim $b = (1,0)$. Assume not, then either $b= (-1,0)$ or $b \in H^{1}_v$ or $b \in H^{2}_v$. Due to Lemma \ref{1Ray}, there cannot be any other vertex adjacent to $v$ in the 1-ray hyperplane containing $b$. Therefore, $v$ must be adjacent to an end in one of the standard directions. As we assumed, $\val(v)>3$, so there is a marked point caused by a multi-line condition on the end in the standard direction adjacent to $v$. This results in a contradiction because in each case ($b=(-1,0) \in H^{3}$ or $b \in H^{1}_v$ or $b \in H^{2}_v$), the unbounded direction of movement of $v$ (as described in Figure \ref{Ends} for a vertex adjacent to an end in standard direction) doesn't live inside of the 1-ray half-plane that we chose first. So $b= (1,0)$ and automatically $v$ is of type I, and this proves the case (3).

\end{itemize}
\end{proof}

\begin{lemma}
    
\label{NoRay}
Let $ C $ be a tropical stable map as described in Remark \ref{B}. If $v$ and $v'$ are a pair of adjacent comparable vertices in $B$, then a special 2-ray half-plane $H$ exists such that $v$ and $v'$ are comparable with respect to $H$.

\end{lemma}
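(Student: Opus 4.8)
The plan is to recast comparability in terms of inward normals and then reduce to the two admissible $2$-ray normals by a finite case analysis. Assume $v,v'$ are adjacent and comparable; without loss of generality $v\ge v'$ with respect to some half-plane $H$ (Definition \ref{Partial order}). Let $\vec n$ denote an inward normal of $H$, let $e$ be the connecting edge, and let $u:=u(e,v)$ be the primitive edge direction at $v$ pointing toward $v'$. By Definition \ref{H} the relation $v\ge v'$ is equivalent to $u\cdot\vec n>0$ and $b_v\cdot\vec n>0$, where $b_v$ is the direction of movement of $v$. Hence the set of inward normals realizing $v\ge v'$ is the nonempty open cone $\mathcal K_{\ge}=\{\vec n:\ u\cdot\vec n>0,\ b_v\cdot\vec n>0\}$, and the one realizing $v'\ge v$ is $\mathcal K_{\le}=\{\vec n:\ (-u)\cdot\vec n>0,\ b_{v'}\cdot\vec n>0\}$. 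A direct computation shows that the two $2$-ray special half-planes of Definition \ref{H} are $\{x>c\}$ and $\{x-ry>c\}$, with inward normals $(1,0)$ and $(1,-r)$ (up to positive scaling); the rays they contain are $\{(r,1),(1,0)\}$ and $\{(0,-1),(1,0)\}$ respectively. Thus the lemma is exactly the statement that $(1,0)$ or $(1,-r)$ lies in $\mathcal K_{\ge}\cup\mathcal K_{\le}$, and I note for later that among the possible movement directions only $(-1,0)$ fails to point into either $2$-ray half-plane.

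Next I would pin down the directions that occur. By Lemma \ref{4cases} and Observation \ref{f4}, the movement direction of an ordinary vertex of $B$ lies in $\{(1,0),(r,1),(0,-1),(-1,0)\}$: it equals $(1,0)$ for a vertex with no adjacent end and for a high-valence vertex adjacent to right or $(-1,0)$ ends, it is forced along the attaching edge for a $3$-valent type (I) vertex meeting an $(r,1)$- or $(0,-1)$-end, and it is a standard direction for a type (II) vertex. To control the edge direction $u$ I would use the Angle Lemma (Remark \ref{Ang}): since $v\ge v'$, the direction $b_{v'}$ lies in the half-open cone $\sigma_{v'}(b_v,e)$ generated by $u$ and $b_v$. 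This is the relation that lets me transfer a positivity, hence a choice of $2$-ray half-plane, from one endpoint to the other when the order has to be reversed.

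The reduction is then organized by the position of $u$ relative to the two special lines $\{x=0\}$ and $\{x=ry\}$, that is, by the signs of $u\cdot(1,0)$ and $u\cdot(1,-r)$, choosing the order ($v\ge v'$ or $v'\ge v$) so that the edge direction of the larger vertex points into the chosen $2$-ray half-plane and then checking that its movement direction does too. The benign subcases are immediate: if $b_v\in\{(1,0),(r,1)\}$ and $u\cdot(1,0)>0$ then $(1,0)\in\mathcal K_{\ge}$, and if $b_v=(0,-1)$ and $u\cdot(1,-r)>0$ then $(1,-r)\in\mathcal K_{\ge}$; when the relevant edge sign is instead negative one passes to $\mathcal K_{\le}$ with $-u$ and $b_{v'}$ and argues identically.

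The main obstacle is the adverse case in which the movement direction that would be needed is $(-1,0)$ (which points into neither $2$-ray half-plane), or in which $u$ is transverse so that the naive order fails for both normals. I expect to resolve it by combining the Angle Lemma with the global geometry of the movable component in Observation \ref{f6}: the edges $u_1,\dots,u_s$ joining $B$ to the fixed components turn left from bottom to top, so $B$ is dual to a concave section of the Newton polygon $P$ on its attaching side; this concavity, fed through $b_{v'}\in\sigma_{v'}(b_v,e)$, rules out both endpoints simultaneously moving in $(-1,0)$ and pins the sign of $-u$ against $(1,0)$ or $(1,-r)$ in the reversed order. Equivalently one may phrase the entire argument as rotating the boundary of the non-special $H$ within the open arc $\mathcal K_{\ge}\cup\mathcal K_{\le}$ until the normal becomes special, the content being that this arc always sweeps across one of $(1,0)$, $(1,-r)$ rather than terminating inside the $1$-ray region governed by Lemma \ref{1Ray}.
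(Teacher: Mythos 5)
Your reformulation of comparability in terms of inward normals is correct, and your identification of the two $2$-ray special half-planes (normals $(1,0)$ and $(1,-r)$, containing the ray pairs $\{(r,1),(1,0)\}$ and $\{(0,-1),(1,0)\}$) matches Definition \ref{H}; the lemma is indeed equivalent to $\{(1,0),(1,-r)\}\cap(\mathcal K_{\ge}\cup\mathcal K_{\le})\neq\emptyset$. However, there are two genuine gaps. First, your case analysis is organized around the claim that the movement direction of every ordinary vertex of $B$ lies in $\{(1,0),(r,1),(0,-1),(-1,0)\}$. That is not available at this stage: for a $3$-valent type I vertex adjacent to an end of direction $(r,1)$ or $(0,-1)$ (case (1) of Lemma \ref{4cases}), the movement direction is along the bounded edge attaching it to the fixed component, whose direction is a priori arbitrary; the statements that pin this direction down to $(1,0)$ are Lemmas \ref{2Ray} and \ref{2rayn}, which are proved \emph{after} Lemma \ref{NoRay} and whose proofs invoke it, so appealing to them here would be circular. (You implicitly concede the issue when you write that the direction is ``forced along the attaching edge,'' which is not a standard direction in general.) Consequently your ``benign subcases'' do not exhaust the possibilities, and the dichotomy $b_v\in\{(1,0),(r,1)\}$ versus $b_v=(0,-1)$ versus $b_v=(-1,0)$ is unjustified.

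Second, and more seriously, the adverse case --- where $\mathcal K_{\ge}\cup\mathcal K_{\le}$ threatens to avoid both $(1,0)$ and $(1,-r)$ --- is exactly where the paper does its real work, and your proposal offers only a plan for it (``I expect to resolve it by combining the Angle Lemma with \dots Observation \ref{f6}''), not an argument. The concavity statement of Observation \ref{f6} is not the mechanism that closes this case. The paper instead argues: if the edge direction and $b$ lie together in no special half-plane, they lie in one of the opposite cones cut out by the three special lines, and then at least one of $b,b'$ lies in the intersection of two $1$-ray special half-planes; Lemma \ref{1Ray} forbids movable neighbors of $v$ in those half-planes, type-I-ness forbids a fixed neighbor in the half-plane containing $b$, so balancing forces $v$ to be adjacent to two ends; one-dimensionality of the movement of $B$ then forces $\val(v)>3$, Remark \ref{Gol18} places marked points coming from multi-line conditions on those ends, and Observation \ref{oneEnd} (Figure \ref{9}) then constrains the unbounded movement direction of $v$ incompatibly with the assumed $b$ --- a contradiction. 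None of this chain (adjacency to two ends, valence, multi-line marked points, bounded movement) appears in your proposal, so the case that actually carries the content of the lemma remains open.
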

\begin{proof}
Based on Lemma \ref{1Ray}, we cannot have any pair of adjacent vertices in $B$ comparable with a 1-ray special open half-plane. If $v$ and $v'$ are a pair of adjacent vertices that are not comparable with a 2-ray special open half-plane either, then the bounded edge adjacent to these vertices $ e $, and  $ b $, the direction of movement of $v$, do not live in any special open half-plane together. Therefore, $e, b$ live in one of the opposite cones in Figure \ref{Fi}. 
Notice that $b'$ must also live in the opposite cone from the cone that $e$ lives in; otherwise, $ v, v'$ will be comparable with respect to a special open half-plane.
\begin{figure}[h!]
\centering
\includegraphics[scale = 0.8]{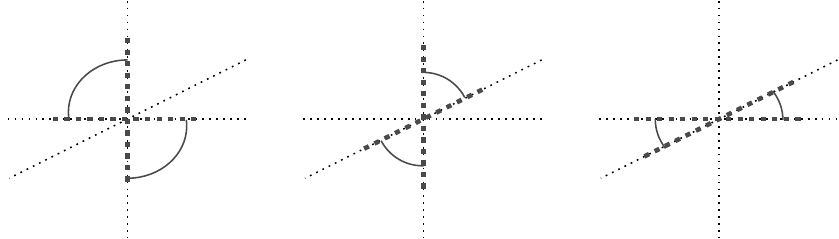}
\caption{In this figure, we can see all three possible opposite cones with respect to the three lines $ \langle (1,0) \rangle $, $ \langle (0,1) \rangle $, and $ \langle (r,1) \rangle $. }
\label{Fi}
\end{figure}
Either $b$ or $b'$ has to live in the intersection of two 1-ray special half-planes because at least one of the opposite cones in each of the three cases, as it is shown in Figure \ref{conecases}, lives in the intersection of two 1-ray half-planes. Assume without restriction that $b$ is in the intersection of two 1-ray half-planes, then there cannot be any other vertex in the movable component adjacent to $v$ in those half-planes; otherwise, it is in contradiction with Lemma \ref{1Ray}. Moreover, $v$ cannot be adjacent to a vertex in the fixed component in the same half-plane as the one containing its direction of movement. Therefore, $v$ is adjacent to two ends, so we are in one of the cases depicted in Figure \ref{conecases}. In all cases, $v$ is a vertex adjacent to two ends and another vertex $v'$ (the one we started with).

Since $B$ cannot have more than one-dimensional movement, the vertex $v$ must have a valence greater than 3. However, this would lead to marked points caused by multi-line conditions on the ends adjacent to $v$. Consequently, the unbounded direction of movement for $v$ must be consistent with the directions described in Observation \ref{9}. In these scenarios, the initial direction of movement $b$ is not an unbounded direction of movement, which is a contradiction.

\begin{figure}[h!]
\centering
\includegraphics[scale = 0.8]{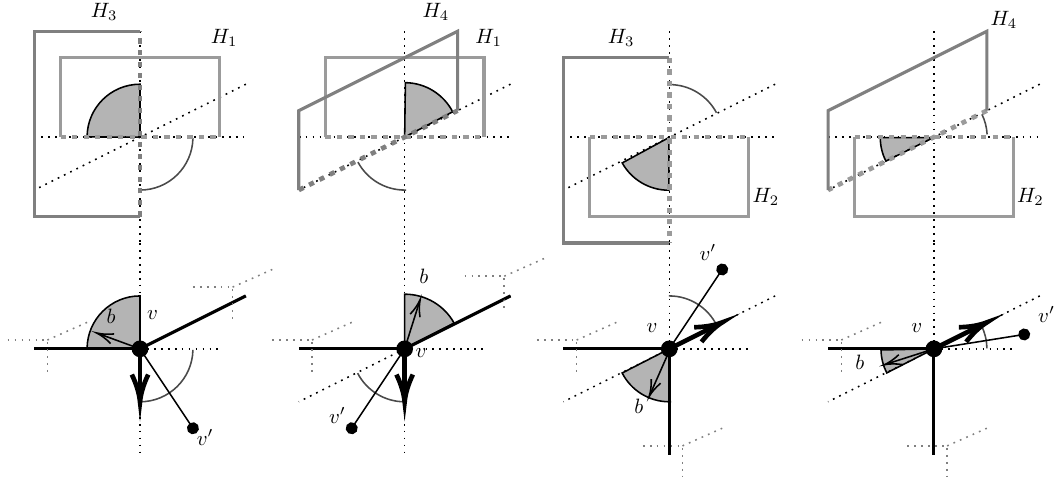}
\caption{In the above picture, we can see all possible cases when the direction of movement of $v$ lives in the cone, which is at the intersection of two 1-ray half-planes. In the picture below, in each case, the vertex $v$ with all of its adjacent edges and its unbounded direction of movement (the thickened black arrow), the initial chosen direction of movement $b$, is depicted. As it is shown in the picture, in all cases, the unbounded direction of movement is outside of the grey cone we started with. }
\label{conecases}
\end{figure}

\end{proof}

\begin{remark}
    \label{v1vn}
    Based on the case (1) of Lemma \ref{4cases}, a vertex in $B$ adjacent to an end in $(r,1)$ or $(0,-1)$ direction must be 3-valent of type I. Due to Remark \ref{B}, the number of vertices in the movable component $\#V>1$, so such 3-valent vertices are adjacent to one end and two bounded edges, such that one of the bounded edges is adjacent to a vertex in the fixed component. Such vertices can only be the first or the last vertex of a maximal chain of vertices in $B$. In Lemmas \ref{2Ray} and \ref{2rayn}, we prove that the movement direction of vertices adjacent to ends in the \( (r,1) \) or \( (0,-1) \) direction must be \( (1,0) \).

\end{remark}

\begin{lemma}
 \label{2Ray}

Let $ v_1 \geq v_2 \geq \cdots \geq v_n $ be a maximal chain of vertices in a movable component $B$ as described in Remark \ref{B}. If $v_n$ is adjacent to an end of direction $(r,1)$ or $(0,-1)$, the direction of movement of $v_n$ is $b_n = (1, 0)$.
 
\end{lemma}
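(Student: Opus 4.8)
The plan is to determine the local shape of $v_n$, reduce its motion to the direction of a single bounded edge, and then eliminate every non-horizontal direction by an integrality argument.

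\textbf{Local structure.} First I would invoke Lemma \ref{4cases}(1): a vertex adjacent to an end of direction $(r,1)$ or $(0,-1)$ is $3$-valent of type I. So $v_n$ has exactly three edges: the end $e$ of direction $u_e\in\{(r,1),(0,-1)\}$; a bounded edge $e_f$ joining $v_n$ to a fixed vertex $\tilde v$, whose primitive direction (from $v_n$ toward $\tilde v$) I call $u_f$; and the edge $e_{n-1}$ to $v_{n-1}$, of primitive direction $u_{n-1}$. Since $\tilde v$ is fixed and all edge directions stay constant along the family, $v_n$ can only slide along the line through $\tilde v$ spanned by $u_f$, and unboundedness forces $e_f$ to grow, so $b_n=-u_f$. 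In particular $b_n$ is a primitive integer vector, and the claim $b_n=(1,0)$ is equivalent to $u_f=(-1,0)$.

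\textbf{Pinning the half-plane.} Because $v_n$ is minimal in the chain, $b_n\in H$, while $v_{n-1}>v_n$ forces $e_{n-1}$ out of $H_{v_n}$, i.e. $u_{n-1}\in -H$; hence $u_f=-b_n\in -H$ as well. By Lemmas \ref{1Ray} and \ref{NoRay} the chain is comparable only with respect to a $2$-ray half-plane, so $H\in\{H^a,H^b\}$. The balancing relation $w_e u_e=-(w_f u_f+w_{n-1}u_{n-1})$ writes $w_e u_e$ as a positive combination of two vectors of the open half-plane $-H$, whence $u_e\in H$; testing $(0,-1)$ and $(r,1)$ against $H^a=\{x>0\}$ and $H^b=\{x-ry>0\}$ (both of which contain $(1,0)$) should show that $u_e=(0,-1)$ forces $H=H^b$ and $u_e=(r,1)$ forces $H=H^a$.

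\textbf{The integrality step.} Since vertices adjacent to $(r,1)$- or $(0,-1)$-ends occur only at the extremities of a maximal chain (Remark \ref{v1vn}), once $n\ge 3$ the neighbour $v_{n-1}$ is interior and, by Lemma \ref{4cases}(2)--(3), moves with $b_{n-1}=(1,0)$. Consistency of the family along $e_{n-1}$ gives $b_n-b_{n-1}\parallel u_{n-1}$, so $b_n=(1,0)+\mu\,u_{n-1}$ with $\mu\in\mathbb{Z}$. Substituting $u_f=-b_n=(-1,0)-\mu u_{n-1}$ into balancing, the end term would force $u_{n-1}\parallel(w_f,w_e)$ in the $(0,-1)$-case together with the scalar identity $w_{n-1}-w_f\mu=\gcd(w_f,w_e)$. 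Then two constraints squeeze $\mu$: positivity $w_{n-1}>0$ gives $\mu>-\gcd(w_f,w_e)/w_f\ge -1$, hence $\mu\ge 0$; and $b_n\in H$, written via the inward normal $\nu=(1,-r)$ as $\nu\cdot b_n=1-\mu\bigl(|w_f-rw_e|/\gcd(w_f,w_e)\bigr)>0$, gives $\mu\le 0$, because $\gcd(w_f,w_e)\mid(w_f-rw_e)$ makes the bracket a positive integer. Thus $\mu=0$ and $b_n=(1,0)$. The $(r,1)$-case is parallel after checking the analogous divisibility $\gcd(|w_f-rw_e|,w_e)\mid w_f$.

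\textbf{Expected obstacle and remaining cases.} The hard part is exactly this final squeeze: balancing alone admits many tilted solutions (for instance $b_n=(2,1)$ on $\mathbb{F}_1$ satisfies balancing and comparability), and only the combination of the comparability sign of $u_{n-1}$, positivity of the weights, and the divisibility of $\gcd(w_f,w_e)$ into $w_f-rw_e$ leaves $(1,0)$ as the unique integer point. The short chains, where $v_{n-1}$ may itself be a boundary vertex (the cases $n\le 2$, and the one-vertex situation of Remark \ref{B} treated in Lemma \ref{1ver}), must be handled directly; there I would use Observations \ref{f4} and \ref{oneEnd} to argue that a lone boundary vertex, or an incomparable movable neighbour forced onto $\partial H$, cannot be balanced with unbounded motion unless $b_n=(1,0)$.
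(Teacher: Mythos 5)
Your steps 1 and 2 (local structure of $v_n$, and pinning the comparability half-plane to $H^a$ or $H^b$) are sound and parallel the paper's setup. The decisive flaw is the \textbf{integrality step}. Directions of movement are only defined up to positive real scaling, so the compatibility of the deformation along $e_{n-1}$ reads $\alpha\, b_n - \beta\, b_{n-1} \parallel u_{n-1}$ for \emph{unknown positive reals} $\alpha,\beta$; it is not the relation $b_n = b_{n-1} + \mu\, u_{n-1}$ with $\mu \in \mathbb{Z}$. Once the scalars are real (as they must be), your squeeze collapses. Concretely, for $r\geq 2$ take at $v_n$: an end $u_e=(0,-1)$ of weight $2$, a fixed edge $u_f=(-1,1)$ of weight $1$, and $u_{n-1}=(1,1)$ of weight $1$. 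Balancing holds, $b_n=-u_f=(1,-1)$, and the velocities $(1,-1)$ for $v_n$ and $(2,0)$ for $v_{n-1}$ (direction $(1,0)$, speed $2$) are compatible along $e_{n-1}$; moreover $u_e\in H^b$, $u_f,u_{n-1}\in -H^b$, $b_n\in H^b$, no $1$-ray special half-plane contains $(1,0)$ or both $(1,1)$ and $(1,-1)$, so Lemma \ref{1Ray} is not violated, and the Angle Lemma \ref{Ang} is satisfied. Everything you allow yourself to use is satisfied, yet $b_n\neq(1,0)$. What actually kills this configuration is Definition \ref{Degree}: ends in directions $(r,1)$ and $(0,-1)$ have weight one --- a fact your argument never invokes (you carry $w_e$ as a free parameter inside the gcd's). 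In addition, the identity $w_{n-1}-w_f\mu=\gcd(w_f,w_e)$ is asserted without derivation, and your main argument only runs for $n\geq 3$: for $n=2$ the neighbour $v_{n-1}=v_1$ may itself be adjacent to an end of direction $(r,1)$ or $(0,-1)$ (Remark \ref{v1vn}), so $b_{n-1}=(1,0)$ is unavailable (that case is only established later, in Lemma \ref{2rayn}), and your closing sketch for short chains is not a proof.

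By contrast, the paper never looks at $b_{n-1}$ at all, so it needs neither $n\geq 3$ nor a separate short-chain case. It works entirely at $v_n$: Lemmas \ref{NoRay} and \ref{1Ray} force both bounded edges into specific half-planes and force the $y$-components of their weighted directions to be nonnegative; balancing against the \emph{weight-one} end then makes those $y$-components sum to $1$, so exactly one is zero, and a final comparability check shows the horizontal edge is the one to the fixed component, giving $b_n=(1,0)$. If you want to salvage your route you must (i) replace the integrality of $\mu$ by the correct real-scalar statement, and (ii) feed in the weight-one property of the end; at that point the residual argument is essentially the paper's balancing computation, so the half-plane-plus-balancing approach is not avoidable here.
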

\begin{proof}
    
Since $v_n$ is adjacent to an end of direction $(r,1)$ or $(0,-1)$, it is a 3-valent vertex of type I, and $v_n$ must be adjacent to two bounded edges. We call the two vertices adjacent to $v_n$ by $v_{n-1}$ and $v$ (where $v$ is a vertex in a fixed component) and denote the primitive direction vector of these bounded edges with $(x_{i}, y_i)$ for $i=1,2$ (see Figure \ref{xyv}).  Based on Lemma \ref{NoRay}, $v_n$ and $v_{n-1}$ are comparable with respect to a 2-ray special open half-plane. Now we want to determine the vectors $(x_{i}, y_i)$ in each case:
\begin{figure}[h!]
\centering
\includegraphics[scale = 0.8]{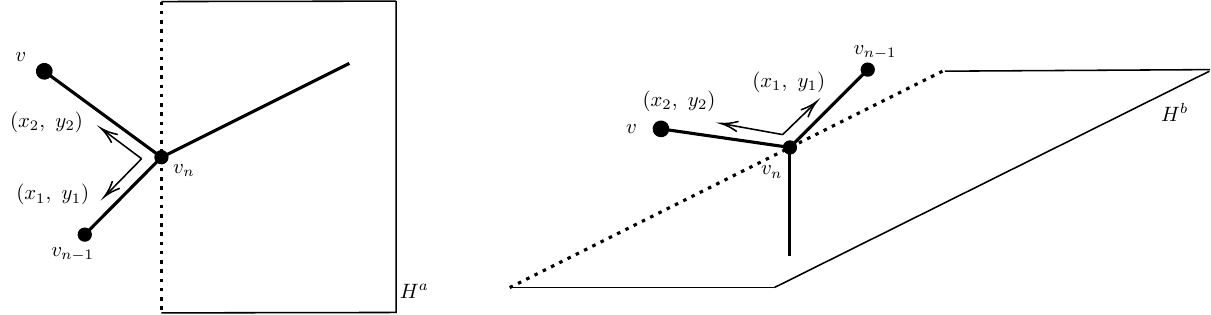}
\caption{The diagram illustrates the primitive direction vectors of the two bounded edges adjacent to $ v_n $. }
\label{xyv}
\end{figure}

\begin{itemize}
    \item  Assume $v_n$ is adjacent to an end in $(r,1)$ direction. We claim that both bounded edges adjacent to $v_n$ must be in $H^{3}_{v_n}$. As discussed in Remark \ref{v1vn}, the vertex \( v_n \) is a 3-valent vertex of type I. Given that the direction of one of the edges adjacent to \( v_n \) is known, determining the direction of any one of the remaining adjacent edges uniquely determines the direction of the third.
 The vertex $v_{n-1}$ cannot live in $ \overline{H^{a}_{v_n} \cap H^{b}_{v_n}}$ otherwise, if $ v_{n-1} \in {H^{a}_{v_n} \cap H^{b}_{v_n}}$ then $v \in H^{4}_{v_n} \cap H^{3}_{v_n} $ (due to balancing at $v_n$) and therefore $b_n \in  H^{a}_{v_n} \cap H^{b}_{v_n}$, so $v_{n-1}$ is greater than $v_n$ with respect to both 2-ray special half-planes $H^a_{v_n}, H^b_{v_n}$ and this is in contradiction with the assumption that $v_n$ is the last vertex of a maximal chain with respect to a 2-ray special half-plane.
If $v_{n-1}$ is adjacent to $v_n $ along an edge in direction $(0,-1)$, then $v_{n-1}$ cannot be greater than $v_n$ with respect to any 2-ray special open half-plane, which is a contradiction. Moreover, $v_{n-1}$ cannot be adjacent to $v_n $ along an edge in direction $(r,1)$, so $v_{n-1}$ is not in $\overline{H^{a}_{v_n} \cap H^{b}_{v_n}}$. If $v_{n-1}$ lives in $H^{a}_{v_n}\setminus \overline{H^{a}_{v_n} \cap H^{b}_{v_n}}$, as shown in Figure \ref{vnvn}, $v_{n-1}, b_{n} \in H^1_{v_n} $, \( v_n \) and \( v_{n-1} \) are comparable with respect to the 1-ray special half-plane $ H^1_{v_n} $, which contradicts Lemma \ref{1Ray}.

\begin{figure}[h!]
\centering
\includegraphics[scale = 0.9]{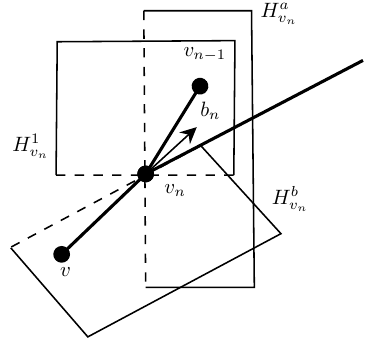}
\caption{This diagram shows that \( v_{n-1} \)  lies in \( H^a_{v_n} \setminus \overline{H^{a}_{v_n} \cap H^{b}_{v_n}} \), and \( v_n \) is adjacent to an end in the \( (r,1) \) direction. Since $v_n$ is 3-valent of type I, one can uniquely find that $b_{n} \in H^1_{v_n}$. So \( v_n \) and \( v_{n-1} \) are comparable with respect to the 1-ray special half-plane $ H^1_{v_n}. $
}  
\label{vnvn}  
\end{figure}
If either of $v$ or $v_{n-1}$ is adjacent to $v_n $ along an edge in direction $(0,1)$, then \( v_n \) and \( v_{n-1} \) are comparable with respect to a 1-ray special half-plane and again this contradicts Lemma \ref{1Ray}. 
If $v$ is adjacent to $v_n $ along an edge in direction $(0,-1)$, then due to balancing $v \in H^4_{v_n}$ and since $b_n = (0,1)$, we have $v_n \geq v_{n-1}$ with respect to $H^4_{v_n}$ which is again a contradiction.
  
So far we know that $v_{n-1}$ is in $ H^{3}_{v_n} $. If $v \in H^{a}_{v_n}$, then $v_{n-1}, b_{n} \in H^3_{v_n} $, so \( v_n \) and \( v_{n-1} \) are comparable with respect to the 1-ray special half-plane $ H^3_{v_n} $, which contradicts Lemma \ref{1Ray}. Therefore, both bounded edges adjacent to $v_n$ are in $H^{3}_{v_n}$. If both of the $y_i$s are non-zero (see Figure \ref{xyv}), $v_n$ is comparable with $v_{n-1}$ with respect to a 1-ray special half-plane. Therefore, one of the $y_i$s must be zero. More precisely, we must have $y_2 = 0$, otherwise, if $y_1 = 0$ then $ v_{n} \geq v_{n-1} $ with respect to $H^{4}_{v_n}$, which is in contradiction with Lemma \ref{1Ray}. Therefore, $y_2 = 0$ so $b_n =(1,0)$, and the exact direction of the adjacent edges to $ v_n $ are shown in Figure \ref{fHa}. 
    
\begin{figure}[h!]
\centering
\includegraphics[scale = 0.9]{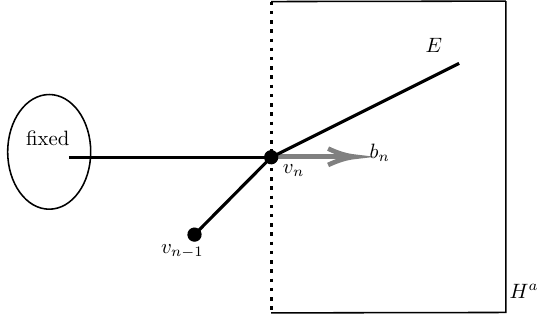}
\caption{In this diagram, $v_n$ is a 3-valent vertex of type I, which represents the last vertex of a maximal chain. Since $v_n$ is of type I, its movement direction is aligned (in the direction of moving away from the fixed component) with the bounded edge connecting $v_n$ to the fixed component. Additionally, since we know that $v_n$ is 3-valent, the direction of the two bounded edges adjacent to $v_n$ can be determined through a simple computation. These directions are $(-w,0)$ and $(w-r,-1)$ where $w$ is the weight of the bounded edge connecting $v_n$ to a vertex in the fixed component and $b_n =(1,0)$. In this figure $w=1$ and $r=2$. }
\label{fHa}
\end{figure}

    \item  Assume $v_n$ is a vertex adjacent to an end in $(0,-1)$ direction. Due to the balancing condition at $ v_n $, each of the two bounded edges must lie in either $H^{3}_{v_n}$ or $H^{a}_{v_n}$. Also, we know that $x_{i}\neq 0 $ for $i=1,2$, moreover $v_n$ is 3-valent and adjacent to an end in $(0,-1)$ direction, so $x_1=-x_2$. If one of the $y_i$ is negative, then $v_n$ is comparable with $v_{n-1}$ with respect to a 1-ray special open half-plane. Therefore $y_i \geq 0$ and we also know that $y_{1}+y_{2}=1$, so one of the $y_i$ is equal to zero. The exact direction of the adjacent edges to $ v_n $ are shown in Figure \ref{fHb}. The direction of movement of $ v_n $ is $ (1, 0) $.

   \begin{figure}[h!]
\centering
\includegraphics[scale = 0.9]{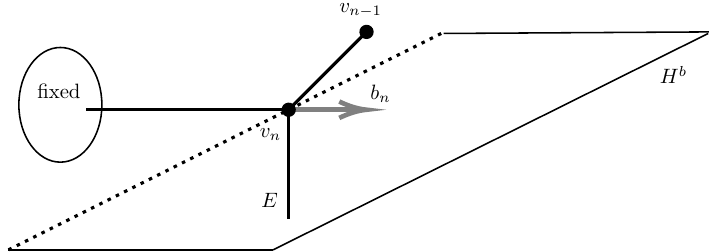}
\caption{In this diagram, $v_n$ is a 3-valent vertex of type I, which represents the last vertex of a maximal chain with respect to $H^b$. Since $v_n$ is of type I, its movement direction is aligned with the bounded edge connecting $v_n$ to the fixed component. The direction of the two bounded edges adjacent to $v_n$ are $(-w,0)$ and $(w,1)$, where $w$ is the weight of the bounded edge connecting $v_n$ to a vertex in the fixed component. }
\label{fHb}
\end{figure}

\end{itemize}

\end{proof}

\begin{lemma}\label{2rayn}
Let $ v_1 \geq v_2 \geq \cdots \geq v_n $ be a maximal chain of vertices in a movable component $B$ as described in Remark \ref{B}. If $v_1$ is adjacent to an end of direction $(r,1)$ or $(0,-1)$, the direction of movement of $v_1$ is $b_1 = (1, 0)$.
\end{lemma}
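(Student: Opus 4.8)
The plan is to mirror the proof of Lemma \ref{2Ray}, now treating $v_1$ as the maximal (source) vertex of the chain rather than the minimal (sink) vertex. First I would record, via Remark \ref{v1vn} and case (1) of Lemma \ref{4cases}, that $v_1$ is a $3$-valent vertex of type I: it is adjacent to the prescribed end $e$ (of direction $(r,1)$ or $(0,-1)$), to the next chain vertex $v_2$ along a bounded edge $e_1$, and to a fixed-component vertex $v$ along the remaining bounded edge. Since $v_1$ is of type I, its direction of movement $b_1$ runs along the edge $v_1 v$ pointing away from the fixed component, so the fixed edge is anti-parallel to $b_1$; thus the goal reduces to showing that this fixed edge has primitive direction $(-1,0)$. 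I would emphasize at the outset that this cannot be obtained from Lemma \ref{2Ray} by simply reversing the chain, because the convention $b_1 + v_1 \in H_{v_1}$ from Definition \ref{Partial order} forces $b_1$ to point into $H$, a condition that the opposite half-plane violates; hence the argument must be carried out independently.

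Next I would extract the half-plane constraints. By Lemma \ref{NoRay}, $v_1$ and $v_2$ are comparable with respect to a $2$-ray special open half-plane $H \in \{H^a, H^b\}$, and since $v_1 \geq v_2$ both the chain edge $e_1$ (pointing to $v_2$) and $b_1$ lie in $H_{v_1}$. Because $b_1$ is anti-parallel to the fixed edge, the fixed edge lies in the complementary closed half-plane. This is precisely where the situation differs from Lemma \ref{2Ray}: at the minimal vertex $v_n$ both bounded edges sat on the same complementary side, whereas at the maximal vertex $v_1$ the chain edge lies inside $H_{v_1}$ while the fixed edge lies outside it, a split configuration that forces the case analysis to be redone rather than copied. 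I would then split into the two sub-cases $e = (r,1)$ and $e = (0,-1)$. In each, writing the primitive directions of the two bounded edges as $(x_i, y_i)$ and imposing the balancing condition at the $3$-valent vertex $v_1$, I would rule out every configuration in which one of the $y_i$ is nonzero: such a configuration would either make $v_1$ and $v_2$ comparable with respect to a $1$-ray special half-plane, contradicting Lemma \ref{1Ray}, or produce a vertex strictly greater than $v_1$, contradicting the maximality of $v_1$ in the chain. This leaves $y_2 = 0$ after fixing the labeling, which together with balancing pins the fixed edge to $(-1,0)$ and hence $b_1 = (1,0)$, with the adjacent edge directions matching Figures \ref{fHa} and \ref{fHb}.

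The main obstacle will be this split configuration: I must verify, half-plane by half-plane, that the maximality of $v_1$ (the absence of a predecessor) plays exactly the contradiction-generating role that minimality of $v_n$ played in Lemma \ref{2Ray}, and that the relevant $2$-ray half-plane in each sub-case (namely $H^b$ when $e = (r,1)$ and $H^a$ when $e = (0,-1)$) is the correct one, so that the end direction lands on the boundary of $H$ while the chain edge genuinely lies in $H_{v_1}$ and the fixed edge in its complement. Once the admissible half-plane is identified in each sub-case, the remaining balancing and comparability computations are routine and entirely parallel to the minimal-vertex argument of Lemma \ref{2Ray}.
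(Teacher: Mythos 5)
Your setup is sound and matches the paper's: $v_1$ is a $3$-valent type I vertex (Remark \ref{v1vn}, Lemma \ref{4cases}), $b_1$ is anti-parallel to the edge joining $v_1$ to the fixed component, and the chain edge together with $b_1$ lies in $H_{v_1}$ while the fixed edge lies in the complement. The gap is in the pair of contradiction mechanisms you propose, namely Lemma \ref{1Ray} and the claim that producing a vertex strictly greater than $v_1$ contradicts the maximality of the chain. The second mechanism is vacuous: each order $\Omega(H)$ is antisymmetric (an open half-plane through the origin cannot contain both a vector and its negative), so $v_2 \geq v_1$ can never hold with respect to the \emph{same} $H$ for which $v_1 \geq v_2$ holds; any reversed comparison is necessarily taken with respect to a different half-plane, and the maximality of your chain --- which is maximality with respect to its own $H$ only --- says nothing about comparability in other half-planes. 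Hence this mechanism never generates a contradiction.

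This matters because the problematic configuration genuinely occurs and is not excluded by Lemma \ref{1Ray} together with balancing at $v_1$ alone. In the paper's proof it is the sub-case of Figure \ref{ttp} in which $b_1$ lies in the half-open cone $\alpha$ (resp.\ $\alpha'$) and $v_2$ lies in the cone $\theta'$: there the pair $v_1, v_2$ avoids every 1-ray special half-plane containing $b_1$, and the only structure available is that $v_2 \geq v_1$ with respect to a 2-ray special half-plane ($H^a_{v_2}$ or $H^b_{v_2}$) which is \emph{different} from the half-plane of the original chain. The paper disposes of this case by regarding $v_1$ as the last vertex of a maximal chain with respect to this new half-plane and then invoking Lemma \ref{2Ray}, which forces $b_1 = (1,0)$ and contradicts $b_1 \in \alpha$. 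This is precisely the move you forswear when you insist the argument must be carried out independently of Lemma \ref{2Ray}. Your observation that a naive reversal of the chain fails (because of the convention $b_1 + v_1 \in H_{v_1}$) is correct, but the paper's reduction is not a naive reversal: it reverses with respect to a different 2-ray half-plane, which is legitimate, and that application of Lemma \ref{2Ray} is exactly what closes the case your proposal leaves open.
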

\begin{proof}

As discussed in Remark \ref{v1vn}, if $v_1$ is adjacent to an end of direction $(r,1)$ or $(0,-1)$, it is a 3-valent ordinary vertex of type I in $B$. 
Due to Remark \ref{v1vn}, $v_1$ is adjacent to another vertex in the movable component $v_2$, and it is also adjacent to a vertex in the fixed component, which we call $v$. 

 \begin{figure}[h!]
\centering
\includegraphics[scale = 0.9]{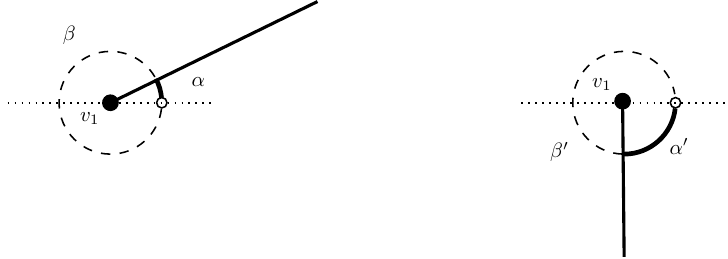}
\caption{In this diagram, $v_1$ is a 3-valent vertex of type I which appears in case (1) of Lemma \ref{4cases}, and its direction of movement can occur either in the open cone $\beta$ (or $\beta'$ if $v_1$ is adjacent to an end in $(0,-1)$), or in the half-open cone $\alpha$ (or $\alpha'$), or $ b_1 = (1,0) $. Here half-open cone means that $(r,1)\in \alpha$ but $(1,0)\notin \alpha$ (similarly for $\alpha'$, $(1,0)\notin \alpha'$). }
\label{abg}
\end{figure}

If $b_1 = (1,0)$, then the proof is complete. So we assume $b_1 \neq (1,0)$ and proceed to show that any other direction of movement for $v_1$ leads to a contradiction. As shown in Figure \ref{abg}, when $v_1$ is adjacent to an end in the direction $(r,1)$, $b_1$ can lie in the open cone $\beta$ (or $\beta'$ if $v_1$ is adjacent to an end in the direction $(0,-1)$), or in the half-open cone $\alpha$ (similarly $\alpha'$).

\begin{itemize}

    \item 
If $b_1$ lies in either of the cones $\beta$ or $\beta'$, then due to the balancing condition, $v_2$ must lie in a 1-ray half-plane that contains $b_1$. Therefore, $v_1$ and $v_2$ are comparable with respect to a 1-ray half-plane, which contradicts Lemma \ref{1Ray}.

    \item If $b_1$ lives in the half-open cones $\alpha$ or $\alpha'$ (depending on the direction of the end adjacent to it), on the one hand, we know $v_2$ must live in a 2-ray half-plane containing $b_1$, and on the other hand $v_2$ cannot be in a 1-ray half-plane containing $b_1$. Therefore, $v_2$ can live in either of the open cones $\theta$ or $\theta'$  or the primitive direction of the bounded edge adjacent to $v_1, v_2$ is $(0,-1)$ when $v_1$ is adjacent to an end in $(r,1)$ direction (or $(r,1)$ when $v_1$ is adjacent to an end in $(0,-1)$ direction), see Figure \ref{ttp}.

 \begin{figure}[h!]
\centering
\includegraphics[scale = 0.9]{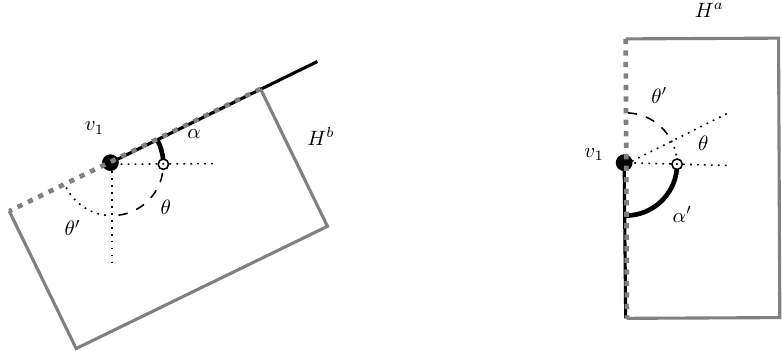}
\caption{In this diagram, $v_1$ is a 3-valent vertex of type I, which appears in case (1) of Lemma \ref{4cases}. Its direction of movement lives in the half-open cone $\alpha$ (similarly $\alpha'$). Since $v_2$ and $b_1$ cannot live in the same 1-ray half-plane, $v_2$ must live in either of the open cones $\theta$ or $\theta'$ or the primitive direction of the bounded edge adjacent to $v_1, v_2$ is $(0,-1)$ in the left picture (or $(r,1)$ in the right picture). }
\label{ttp}
\end{figure}

    \begin{itemize}
        \item If $v_2$ lives in $\theta'$, then since $b_2 = (1,0)$, we have $v_2 \geq v_1$ with respect to a special 2-ray half-plane. In the left picture of Figure \ref{ttp}, this 2-ray half-plane is $H^a_{v_2}$, and in the right picture, it is $H^b_{v_2}$. So $v_1$ is the last vertex of a maximal chain with respect to a special 2-ray half-plane and therefore, due to Lemma \ref{2Ray} $b_1 = (1,0)$, but this is in contradiction to our assumption earlier that $b_1$ is in the half-open cone $\alpha$ (or $\alpha'$).

        \item 
  If $v_2$ lives in $\theta$ or the primitive direction of the bounded edge adjacent to $v_1, v_2$ is $(0,-1)$, when $v_1$ is adjacent to an end in $(r,1)$ direction, then since we know that $b_1$ is in the half-open cone $\alpha$, $v \in H^2_{v_1}$, the direction of both bounded edges adjacent to $v_1$ lives in $H^{2}_{v_1}$ and therefore $y_1, y_2 <0$; see Figure \ref{alphatp}. 

\begin{figure}[h!]
\centering
\includegraphics[scale = 0.9]{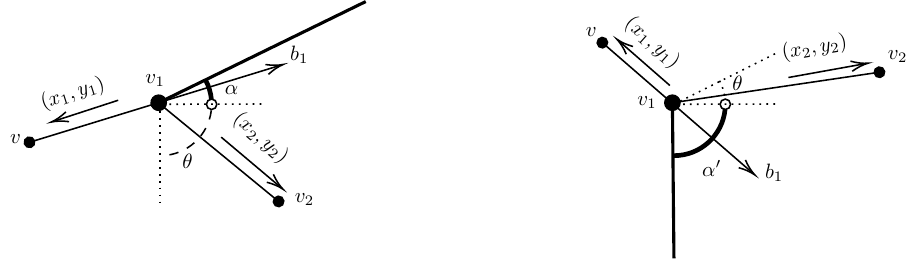}
\caption{In this diagram, $v_1$ is a 3-valent vertex of type I, which appears in case (1) of Lemma \ref{4cases}. Its direction of movement lives in the half-open cone $\alpha$ (similarly $\alpha'$). Moreover, $v_2$ lives in cones $\theta$. }
\label{alphatp}
\end{figure}

We have $y_1, y_2 <0$, but the balancing condition at $v_1$ implies that $y_1 + y_2 +1 =0$ which is a contradiction.
A similar argument holds for the case when $v_2$ lives in $\theta$ and is adjacent to an end in $(0,-1)$ directions. 


    \end{itemize}

\end{itemize}

\end{proof}

\begin{remark} \label{bigstring}
    Let $ C $ be a tropical stable map as described in Remark \ref{B}. Based on Lemma \ref{1Ray}, there cannot be any two vertices in $B$ that are comparable with respect to a 1-ray special open half-plane. Also, in Lemma \ref{NoRay} we proved that any pair of vertices in $B$ is comparable with respect to a 2-ray special open half-plane. 
    If $ v_1 \geq \cdots \geq v_n $ is a maximal chain with respect to a 2-ray special half plane with length greater than 1, then due to Lemmas \ref{4cases}, \ref{2Ray} and, \ref{2rayn} the direction of movement of all of the vertices in this maximal chain are $(1,0)$ and moreover, each vertex adjacent to an end in $(r,1)$ or $(0,-1)$ direction is 3-valent and each vertex adjacent to an end of primitive directions $(1,0)$ or $(-1,0)$ has valence higher than 3. So $ v_1 \geq \cdots \geq v_n $ is a movable branch $\mathcal{S}$ defined in \ref{DefString}.
In Remark \ref{B}, we assumed that the set of vertices in this movable component $ V $ contains more than one vertex. In Lemma \ref{1ver}, we study the case where $\#V=1$. 
\end{remark}
\begin{lemma}\label{1ver}
       Let $ B $ be a movable component of a tropical stable map contributing to ${\mathcal{N}^{0}_{\Delta_{\mathbb{F}_{r}}(a,b, w_{\underline{d}})}(p_{\underline{n}}, L_{\underline{k}}, {\lambda}_{\underline{l-1}}, {\lambda}'_{l})}$, where $|{\lambda}'_{l}|$ can be very large.
    If the set of vertices in this movable component $ V $ contains only one ordinary vertex, then $B$ is a movable branch $\mathcal{S}$ with one vertex as it is described in the last part of Definition \ref{DefString}.
\end{lemma}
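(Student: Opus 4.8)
The plan is to feed the unique ordinary vertex $v$ of $B$ into Lemma \ref{4cases} and argue that the only alternative compatible with an unbounded one-dimensional movement is the single-vertex structure of the $\sigma=1$ part of Definition \ref{DefString}. By Lemma \ref{4cases} the vertex $v$ is of type I, hence joined to the fixed part by at least one bounded edge, and its direction of movement $b$ is forced to be parallel to every such connecting edge. I would then treat the three cases of Lemma \ref{4cases} in turn, show that cases (1) and (2) cannot produce a single movable vertex, and read off the desired structure from case (3).

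The heart of the proof, and the step I expect to be the main obstacle, is ruling out case (1). Suppose $v$ were adjacent to an end in direction $(r,1)$ or $(0,-1)$; then $v$ is $3$-valent of type I, with one connecting bounded edge and two further edges. Because $\#V=1$ neither of these can run to another vertex of $B$, and a second connecting edge to the fixed part is impossible since, by the left-turn principle of Observation \ref{f6}, two connectors both parallel to $b$ would bound the movement. Hence the two remaining edges are ends, and balancing (the $\mp1$ in the second coordinate of $(0,-1)$ or $(r,1)$ can only be cancelled by the other non-horizontal standard direction) forces $v$ to carry both a $(0,-1)$ and an $(r,1)$ end together with one horizontal connector, i.e.\ the ``multi-line shape''. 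To exclude this I would invoke that the movement of $B$ must genuinely change $|{\lambda}'_l|$: since $C$ is rigid before ${\lambda}'_l$ is dropped, the four marked ends of ${\lambda}'_l$ must be separated by the connecting edge, so marked ends of ${\lambda}'_l$ sit on the $(0,-1)$ or $(r,1)$ end of $v$. As point conditions are forbidden on a movable branch (Definition \ref{DefString}(1)), such a marked end is a multi-line marked point, and then Observation \ref{oneEnd} (Figure \ref{Ends}) confines the admissible unbounded directions of $v$ to a $1$-ray special half-plane not containing the balancing-forced direction $(1,0)$ — a contradiction. The purely local balancing analysis is inconclusive here, which is exactly why one must bring in rigidity and the cone restrictions.

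With case (1) excluded, case (2) (no ends) is also impossible for a single vertex: its edges would all be bounded connectors parallel to $b=(1,0)$, hence horizontal and pointing from $v$ to fixed vertices, which cannot satisfy the balancing condition. Thus $v$ falls under case (3) of Lemma \ref{4cases}, so $b=(1,0)$, $v$ is of type I, and $v$ has valence greater than $3$ with ends only in the $(1,0)$ or $(-1,0)$ directions. Since $b=(1,0)$, every edge at $v$ is horizontal (a connector must be parallel to $b$, hence of direction $(-1,0)$, and by case (1) no $(0,-1)$ or $(r,1)$ end occurs), and no edge runs to another vertex of $B$. Applying Observation \ref{f6} once more, two distinct connectors would both be horizontal, hence parallel, again bounding the movement; so $v$ has exactly one connecting bounded edge. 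Prolonging it into a fixed left end as in Definition \ref{DefString}(2) gives $\sigma=1$, while all other edges at $v$ are genuine parallel ends — precisely condition $(4')$.

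Finally I would verify condition $(5')$. Because $v$ has valence greater than $3$, it meets at least one cross-ratio of ${\lambda}_v$, so Remark \ref{Gol18} applies: every edge at $v$ either lies on the shortest path to a marked end of some cross-ratio in ${\lambda}_v$ or leads to such a marked end. For a non-fixed $(-1,0)$ end of $v$, which is unbounded, this forces its marked end to sit on the end itself; since this end is not a marked right end and point conditions are excluded on a movable branch, the marked end is a marked point caused by a multi-line condition. This establishes $(5')$ and identifies $B$ with the single-vertex movable branch ${\mathcal{S}}_1$, completing the proof.
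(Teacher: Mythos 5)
Your overall strategy (run the single ordinary vertex $v$ through the trichotomy of Lemma \ref{4cases}, kill cases (1) and (2), and read off $\mathcal{S}_1$ from case (3)) is compatible with the paper, whose proof instead splits according to whether all edges at $v$ are parallel; your verification of condition $(5')$ via Remark \ref{Gol18} is a nice addition that the paper leaves implicit. However, the step you yourself identify as the heart of the proof — ruling out case (1) — contains a genuine error. You claim that balancing ``forces $v$ to carry both a $(0,-1)$ and an $(r,1)$ end together with one horizontal connector,'' on the grounds that the $\mp 1$ in the second coordinate can only be cancelled by the other non-horizontal standard direction. This tacitly assumes the connecting bounded edge is horizontal, which nothing forces: a bounded edge may have any rational slope. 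Balancing equally allows, e.g., an end $(0,-1)$, an end $(\mp w,0)$, and a connector of direction $(\pm w,1)$, or an end $(r,1)$, an end $(\mp w,0)$, and a connector of direction $(\pm w - r,-1)$ — precisely the corner configurations the paper enumerates in Figure \ref{P} (compare Figures \ref{fHa} and \ref{fHb}). Your argument never excludes these, and they cannot be dismissed by balancing alone: for each of them the paper must argue separately that the type-I movement direction (along the slanted connector, away from the fixed part) lies outside the cones of Figures \ref{Ends} and \ref{9}, so the movement is bounded, contradicting $|\lambda'_l|\to\infty$. Your proof is missing exactly this analysis, which is the content of the paper's ``not all parallel'' case.

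Two smaller issues in the same step: first, you conclude that a $\lambda'_l$-marked end sitting on an end of $v$ must be a marked point caused by a multi-line condition because ``point conditions are forbidden on a movable branch (Definition \ref{DefString}(1))'' — but at this stage $B$ has not been shown to be a movable branch, so you may not invoke that definition; the correct reason is that a contracted end satisfying a point condition would pin $B$ down. Second, even granting that, the marked end could be an $f$-point rather than a multi-line point; the paper closes this loophole explicitly (two $f$-points carrying the movable pair of $\lambda'_l$ would give an extra degree of freedom, contradicting the one-dimensionality of the family), whereas your write-up skips it. Both of these are repairable, but the unhandled corner configurations in case (1) constitute a real gap that the cited lemmas do not fill.
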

\begin{proof}
 Assume $ v $ is the only ordinary vertex in $ B $, so it must be connected to the fixed component of the tropical stable map by a bounded edge.
 Therefore, there can only be some ends adjacent to $ v $ and either all of the edges adjacent to $v$ are parallel or not. Assume that ${\lambda}'_{l} = (t_{\alpha}t_{\beta} \,|\, t_{\eta}t_{\gamma})$. Since the length of $|{\lambda}'_{l}|$ varies depending on the movement of $v$, one pair of marked ends must belong to the movable component, while the other pair remains in the fixed part. Without loss of generality, let us assume that $t_{\alpha}$ and $t_{\beta}$ are in the movable component.

 Assume all of the adjacent edges to $v$ are parallel, if $v$ is adjacent to an end in $(r,1)$ or $(0,-1)$ direction, based on case (1) of Lemma \ref{4cases} $v$ is a 3-valent vertex. On the one hand, all of the adjacent edges to $v$ have to be parallel, so $v$ is a vertex adjacent to two ends in $(r,1)$ direction (similarly in $(0,-1)$ direction) and its direction of movement is $(r,1)$ (correspondingly $(0,-1)$). On the other hand, the length of $ |{\lambda}'_{l}| $ depends on the movement of $ v $, so we claim there must be at least one marked point caused by a multi-line condition on one of the ends adjacent to $v$. Notice that the only {marked end}s we can have in $ B $ are the contracted ends with the label in $ \underline{k} $ or $ \underline{f} $ or marked right ends. At least one of the $\alpha, \beta$ must be in $ \underline{k} $ otherwise we have two marked $f$-points on $v$, and in this case, these $f$-points can freely move on the two adjacent ends to $v$ and the cross-ratio condition ${\lambda}'_{l}$ still holds. But this contradicts the fact that $C$ has a 1-dimensional movement.

 Having a vertex $v$ adjacent to an end in $(r,1)$ direction with a marked point caused by a multi-line condition on it has been studied in Figure \ref{Ends}, and the direction of unbounded movement of $v$ cannot be $(r,1)$. Similarly, for the case where the direction of the end is in $(0,-1)$ direction. So all of the ends adjacent to $v$ are in $(1,0)$ and $(-1,0)$ directions, and this is one type of movable branch described in the last case of Definition \ref{DefString}.

If the adjacent edges to $v$ are not parallel, then $v$ must be adjacent to at least one end in one of the directions $(r,1)$ or $(0,-1)$. Again, due to the case (1) of Lemma \ref{4cases}, we know that $\val(v)=3$. Since the length of $ |{\lambda}'_{l}| $ depends on the movement of $ v $, we claim there must be at least one marked point caused by a multi-line condition on one of the ends adjacent to $v$ (because in some cases we can have marked right ends or marked $f$-points).

	\begin{figure}[h!]
		
		\includegraphics[scale=0.8]{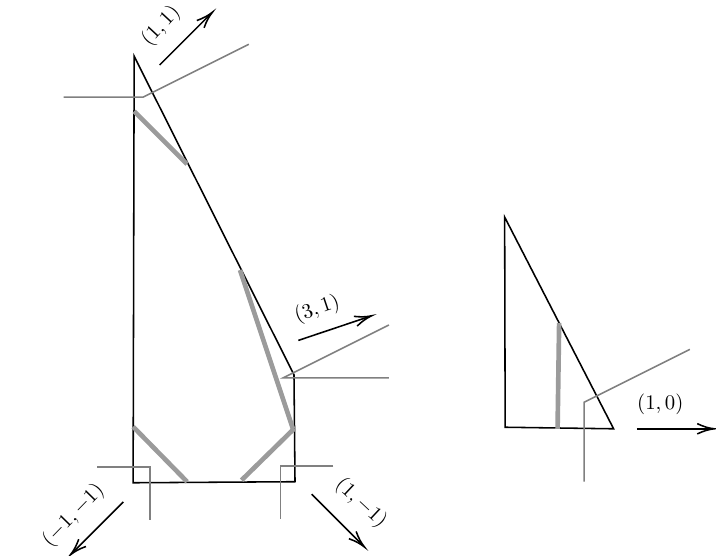}
		\centering
		\caption{If there is only one ordinary 3-valent vertex in $B$ that satisfies no cross-ratio conditions, then all of the possible 3-valent vertices adjacent to two ends are those shown on each corner of the Newton polygon dual to the image of a tropical stable map. The movement direction of each vertex must be away from the fixed component (which is the rest of the tropical stable map) and aligned with the bounded edge connecting the vertex to the fixed component. Therefore, it is easy to find the movement directions, which are written next to each case in the figure. }
		\label{P}
	\end{figure}
    
So $v$ is an ordinary 3-valent vertex, and all the possible cases that can occur are shown in Figure \ref{P}. As we can see in Figure \ref{P}, the direction of movement of $ v $ is not in the directions that are shown in Figure \ref{9} or \ref{Ends} which $ v $ can move unboundedly in those directions and maintain the intersection with the multi-line conditions. Hence, $ v $ cannot move unboundedly, but this is in contradiction with $|\lambda'_l|$ being very long.

\end{proof}

Now, we have all the tools to present the proof of Proposition \ref{*}:

\begin{proof}[\textbf{Proof of Proposition \ref{*}}]

	We show that the set of all points $ \ft_{{\lambda}'_{l}}(C) \in \mathcal{M}_{4} $ is bounded, if $  C = (\Gamma, x_{\underline{m}}, \Tilde{e}_{\underline{d}}, h) $ has no contracted bounded edge or movable branch $ \mathcal{S} $ and runs over all tropical stable maps  contributing to ${ \mathcal{N}^{0}_{\Delta_{\mathbb{F}_{r}}(a,b, w_{\underline{d}})}( p_{\underline{n}}, L_{\underline{k}}, {\lambda}_{\underline{l-1}} ) }$. Based on Remark \ref{-1} the set of such stable maps is a 1-dimensional family in ${ \mathcal{M}_{0,m+d}(\mathbb{R}^{2}, \Delta_{\mathbb{F}_{r}}(a,b, w_{\underline{d}}))} $. Without loss of generality, we restrict ourselves to a fixed combinatorial type in ${ \mathcal{M}_{0,m+d}(\mathbb{R}^{2}, \Delta_{\mathbb{F}_{r}}(a,b, w_{\underline{d}}))} $.
	That 1-dimensional family of stable maps is denoted by $ Y $:

\begin{center}
    $ Y = \prod_{i\in \underline{n}}\ev^{*}_i (p_i ) \cdot \prod_{j\in \underline{k}}\ev^{*}_j (L_j ) \cdot \prod_{\iota\in \underline{l-1}}ft^{*}_{{\lambda}_{\iota}} (0) \cdot \mathcal{M}_{0,m+d}(\mathbb{R}^{2}, \Delta_{\mathbb{F}_{r}}(a,b, w_{\underline{d}})). $
\end{center}
 
	If we choose a tropical stable map $ C' $ in $ Y $, it has a movable component such that we can move it and obtain all other tropical stable maps in $ Y $. As it is mentioned in the Remark \ref{B} this movable component is called $ B $. Since $ Y $ is a 1-dimensional family, $ B $ is connected, and $ C' $ has no other movable component.

 We assume that $n \geq 1$; therefore, $C'$ has at least one fixed component, and $B$ is connected to this fixed component by at least one bounded edge. If $B$ is as in Figure~\ref{fBf}, then we cannot obtain large $|{\lambda}'_{l}|$. Hence, all bounded edges connecting $B$ to the fixed components of $C'$ lie on one side of $B$. In other words, if there are edges adjacent to $B$ on opposite sides, as in Figure~\ref{fBf}, then the deformations of $C'$ with fixed combinatorial type and incidence conditions are bounded on both sides.

	\begin{figure}[h!]
		
		\includegraphics[scale=0.9]{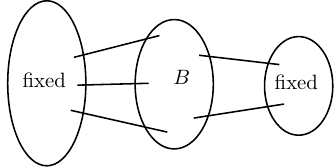}
		\centering
		\caption{In this figure, we depict a rough illustration of a movable component $B$ connected to the fixed components from two opposite sides.}
		\label{fBf}
	\end{figure}

As it is explained in Remark \ref{bigstring}, we have demonstrated that if $ \Tilde { \Gamma } $ denotes the metric tree of the tropical stable map $ C' $ (see Notation \ref{Gati}) which allows an unbounded 1-dimensional movement, then this movable component either contains a single vertex or is a movable branch $ \mathcal{S} $ described in Definition \ref{DefString}. But this is in contradiction with the assumption of $C$ having no movable branch $\mathcal{S}$. Therefore, the movable component $ B $ contains exactly one vertex, and we have no movable branch $\mathcal{S}$.
Based on the contrapositive of Lemma \ref{1ver}, when $C$ has no movable branch $\mathcal{S}$, $ \ft_{{\lambda}'_{l}}(C) \in \mathcal{M}_{4} $ is bounded and this finishes the proof.

\end{proof}


Up to this point, we have assumed that $n \geq 1$. Now, let's consider the case where $n = 0$, meaning that there are no point conditions on the tropical stable maps. We approach this case in a similar manner as we would with tropical stable maps in $ \mathbb{P}^2 $; for more details, see \cite[Lemma 51 and 52, Proposition 53]{G}.


Let $ C $ be a tropical stable map contributing to $ \mathcal{N}^{0}_{\Delta_{\mathbb{F}_{r}}(a,b, w_{\underline{d}})}( L_{\underline{k}}, {\lambda}_{\underline{l}} ) $ where $ |\Delta_{\mathbb{F}_{r}}(a,b, w_{\underline{d}})|+m-1 = k+l$. There is a vertex $ v $ of $ C $ adjacent to at least two contracted ends $ e_{1}, e_2 $ caused by two multi-line conditions. Otherwise, every vertex of $ C $ is adjacent to at most one multi-line condition, so all vertices have a 1-dimensional movement, which is a contradiction.

This vertex $ v $ is of valence higher than 3, otherwise, the only bounded edge adjacent to $ v $ must be contracted too. Therefore, we have a 1-dimensional family again, which is a contradiction, hence $ \val(v) > 3 $.

One of the cross-ratio conditions satisfying in $ v $ includes both $ e_{1}, e_2 $ like $ {\lambda} = \{e_{1}, e_{2}, \beta_{3}, \beta_4  \} $. If this is not the case, then we can resolve the vertex $ v $ as it is shown in figure \ref{resolve}. Obviously, the bounded edge $ e $ arising from the resolution of $ v $ is a contracted bounded edge, so it leads to a 1-dimensional family, which is a contradiction. 

\begin{figure}[h!]
		
		\includegraphics[scale=0.8]{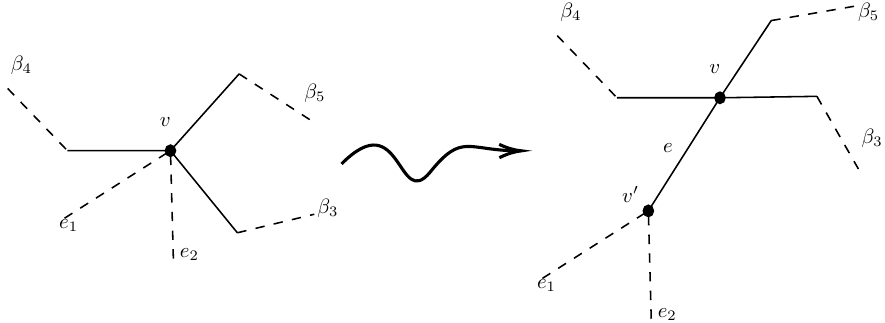}
		\centering
		\caption{Resolving $ v $ with respect to the $ {\lambda}' = (e_{1} e_{2}|\beta_{3} \beta_{4}) $, where $ e_{1}, e_2 $ are contracted ends caused by two multi-line conditions.}
		\label{resolve}
	\end{figure}

With the mentioned arguments, we have established the following corollary.
\begin{corollary}\label{0}
    If $ C $ is a tropical stable map contributing to $ \mathcal{N}^{0}_{\Delta_{\mathbb{F}_{r}}(a,b, w_{\underline{d}})}( L_{\underline{k}}, {\lambda}_{\underline{l}} ) $, then there is a vertex $ v $ of valence greater than 3 which is adjacent to at least two contracted ends $ e_{1}, e_2 $ caused by two multi-line conditions and which satisfies a cross-ratio condition including $ e_{1}, e_2 $. 
\end{corollary}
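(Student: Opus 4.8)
The plan is to argue by contradiction at three successive levels, in each case showing that the failure of the desired property produces a one-parameter family of tropical stable maps satisfying the same incidence conditions, contradicting the hypothesis that $C$ contributes to the zero-dimensional count $\mathcal{N}^{0}_{\Delta_{\mathbb{F}_{r}}(a,b, w_{\underline{d}})}(L_{\underline{k}}, {\lambda}_{\underline{l}})$. Throughout I would use the dimension equality $\#\Delta_{\mathbb{F}_{r}}(a,b, w_{\underline{d}}) + m - 1 = k + l$ specialized to $n = 0$ (Remark \ref{-1}), together with the observation that, with no point conditions present, nothing anchors the curve to a fixed locus: a single multi-line condition constrains its incident vertex only to lie on a line, which is a one-dimensional locus rather than a point.

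First I would establish the existence of a vertex carrying two multi-line conditions. Suppose to the contrary that every vertex of $C$ is adjacent to at most one contracted end caused by a multi-line condition. Then no vertex is pinned to a point by the incidence conditions alone, and, arguing as in the $\mathbb{P}^2$ case of \cite[Lemma 51 and 52, Proposition 53]{G} and using the standard-direction movement analysis developed in Section \ref{STSM}, one exhibits an unbounded one-dimensional direction of deformation of the whole curve. This yields a one-dimensional family inside $\mathcal{M}_{0,m+d}(\mathbb{R}^{2}, \Delta_{\mathbb{F}_{r}}(a,b, w_{\underline{d}}))$ satisfying all $k+l$ conditions, contradicting zero-dimensionality. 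Hence some vertex $v$ is adjacent to two contracted ends $e_1, e_2$ arising from two distinct multi-line conditions.

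Next I would show $\val(v) > 3$. If $v$ were $3$-valent, then besides $e_1$ and $e_2$ it would be adjacent to a single bounded edge $e$. Since $e_1$ and $e_2$ are contracted, their direction vectors vanish, so the balancing condition at $v$ forces the direction vector of $e$ to vanish as well; that is, $e$ is a contracted bounded edge. Its length is then a free parameter, again producing a one-dimensional family and the same contradiction. Thus $\val(v) > 3$, and by the resolution structure of Definition \ref{multcr} the vertex $v$ must satisfy at least one cross-ratio condition. Finally, to see that some cross-ratio at $v$ involves both $e_1$ and $e_2$, I would assume that no imposed cross-ratio in $\lambda_v$ pairs $e_1$ with $e_2$; then a total resolution of $v$ as in Definition \ref{multcr} can be chosen so that $e_1$ and $e_2$ are separated from the remaining adjacent edges by a new bounded edge $e$, resolving according to $(e_1 e_2 \mid \beta_3 \beta_4)$ as in Figure \ref{resolve}. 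Because $e_1$ and $e_2$ are contracted, balancing forces $e$ to be contracted, and the free length of $e$ once more gives a one-dimensional family. Therefore some cross-ratio condition of the form $\lambda = \{e_1, e_2, \beta_3, \beta_4\}$ must already be present at $v$, which completes the argument.

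I expect the main obstacle to be the first step: making precise, in the $\mathbb{F}_{r}$ setting, why the absence of a vertex bearing two multi-line conditions leaves a genuine one-dimensional degree of freedom. The later two steps reduce cleanly to the contracted-bounded-edge mechanism, whereas Step 1 requires adapting the $\mathbb{P}^2$ rigidity analysis of \cite{G} and controlling the admissible standard directions of movement established in Section \ref{STSM}, since the recession directions and the trapezoidal Newton polygon of $\mathbb{F}_{r}$ differ from the triangular case.
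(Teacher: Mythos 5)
Your proposal follows essentially the same three-step contradiction argument as the paper: the paper likewise deduces the existence of $v$ from the fact that at most one multi-line condition per vertex would leave a $1$-dimensional movement, forces $\val(v)>3$ because balancing would otherwise make the remaining bounded edge contracted, and rules out the absence of a cross-ratio pairing $e_1,e_2$ by resolving $v$ as in Figure \ref{resolve} to produce a contracted bounded edge and hence a $1$-dimensional family. Your elaborations (adapting \cite[Lemmas 51--52, Proposition 53]{G} for the first step, and the balancing justification that the resolution edge is contracted) are exactly the ingredients the paper invokes, so the two proofs coincide.
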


\begin{lemma}\label{n0}
    We use the notation from \ref{0} and without loss of generality we can assume that $ {\lambda}_{l} = \{e_{1}, e_{2}, \beta_{3}, \beta_4  \} $. Let $ {\lambda}'_l = (e_{1} e_{2} | \beta_{3} \beta_4) $ be a non-degenerate cross-ratio that degenerates to $ {\lambda}_l $. Then every tropical stable map that contributes to $ \mathcal{N}^{0}_{\Delta_{\mathbb{F}_{r}}(a,b, w_{\underline{d}})}( L_{\underline{k}}, {\lambda}_{\underline{l-1}}, {\lambda}'_l ) $ arises from a tropical stable map $ C $ that contributes to $ \mathcal{N}^{0}_{\Delta_{\mathbb{F}_{r}}(a,b, w_{\underline{d}})}( L_{\underline{k}}, {\lambda}_{\underline{l}} ) $ by adding a contracted bounded edge $ e $ to $ C $ as it is shown in figure \ref{resolve}.
\end{lemma}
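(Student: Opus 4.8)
The plan is to prove the statement by exhibiting, inside an arbitrary $C'$ contributing to $\mathcal{N}^{0}_{\Delta_{\mathbb{F}_{r}}(a,b, w_{\underline{d}})}(L_{\underline{k}}, \lambda_{\underline{l-1}}, \lambda'_l)$, a single image-contracted bounded edge whose contraction returns a tropical stable map $C$ contributing to the fully degenerate count, so that $C'$ is precisely the resolution of $C$ pictured in Figure \ref{resolve}. Since the cross-ratio $\lambda'_l = (e_1 e_2 \mid \beta_3 \beta_4)$ is non-degenerate, the minimal subtree $\ft_{\lambda'_l}(C')$ contains a bounded edge separating the pair $\{e_1, e_2\}$ from the pair $\{\beta_3, \beta_4\}$. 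I would first pull this separating edge back to a bounded edge $e$ of $C'$ and let $v_1$ denote its endpoint lying on the $\{e_1, e_2\}$ side, i.e. the vertex of $C'$ at which the paths toward $e_1$ and toward $e_2$ branch off from the path toward $\beta_3, \beta_4$.

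The heart of the argument is to show that $v_1$ is adjacent to exactly the two contracted ends $e_1, e_2$ and the single bounded edge $e$. Here I would use that $e_1, e_2$ are contracted ends coming from two distinct multi-line conditions $L, L'$, so that $h(v_1) \in L \cap L'$ is pinned to a general-position point; together with the rigidity of $C'$ (it lies in a zero-dimensional cycle), the constraint of Remark \ref{Gol18} on which edges may appear at a high-valence vertex, and a dimension count, this should exclude any further edge at $v_1$. Assuming this, the balancing condition at $v_1$ reads $v(e_1, v_1) + v(e_2, v_1) + v(e, v_1) = 0$; since the two contracted ends contribute zero, one gets $v(e, v_1) = 0$, so $e$ is forced to be image-contracted, while its abstract length is positive and equals $|\lambda'_l|$.

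I would then contract $e$, merging $v_1$ with its other endpoint $v_2$ into a single vertex $v$. The resulting curve $C$ has $\val(v) > 3$, carries the degenerate cross-ratio $\lambda_l = \{e_1, e_2, \beta_3, \beta_4\}$ at $v$, and retains the degree $\Delta_{\mathbb{F}_{r}}(a,b, w_{\underline{d}})$, the genus-zero condition, and every multi-line and remaining cross-ratio condition; a check of the dimension count shows $C$ contributes to $\mathcal{N}^{0}_{\Delta_{\mathbb{F}_{r}}(a,b, w_{\underline{d}})}(L_{\underline{k}}, \lambda_{\underline{l}})$, exactly as in Corollary \ref{0}. Reversing the contraction recovers $C'$ as the resolution of $v$ along $(e_1 e_2 \mid \beta_3 \beta_4)$ by reinserting $e$ with abstract length $|\lambda'_l|$, which is the construction of Figure \ref{resolve}. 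As $C'$ was arbitrary, every contributing $C'$ arises in this way.

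The main obstacle is the middle step: proving that $v_1$ is adjacent to only $e_1, e_2$ and $e$, so that balancing forces $e$ to contract. If an extra non-contracted edge attached at $v_1$, then $e$ need not be image-contracted and the contraction argument would fail; excluding this requires combining the general position of the two multi-lines pinning $h(v_1) = L \cap L'$ with the zero-dimensionality of the cycle defining $\mathcal{N}^{0}_{\Delta_{\mathbb{F}_{r}}(a,b, w_{\underline{d}})}(L_{\underline{k}}, \lambda_{\underline{l-1}}, \lambda'_l)$ and the admissibility constraint of Remark \ref{Gol18}.
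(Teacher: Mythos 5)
Your proposal runs in the opposite direction from the paper's proof: the paper starts from a curve $C$ contributing to the degenerate count $\mathcal{N}^{0}_{\Delta_{\mathbb{F}_{r}}(a,b, w_{\underline{d}})}(L_{\underline{k}},\lambda_{\underline{l}})$, takes the vertex $v$ supplied by Corollary \ref{0}, and shows that any resolution of $v$ according to $\lambda'_l$ forces the new edge to be contracted (treating $\val(v)=4$ and higher valence separately), then transfers the statement to all curves on the non-degenerate side via Remark \ref{RationalyEquivalent}. You instead start from an arbitrary $C'$ on the non-degenerate side and try to locate the contracted edge inside it. That direction is legitimate in principle, but the step you yourself call the heart of the argument --- that the branch vertex $v_1$ is adjacent to exactly $e_1$, $e_2$ and one bounded edge $e$ --- is only asserted, and the justification you sketch for it is circular: you appeal to the pinning $h(v_1)\in L\cap L'$, but that pinning is available only once you already know that both contracted ends $e_1,e_2$ are attached at $v_1$, which is precisely (half of) the claim. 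None of the cited ingredients (general position, zero-dimensionality of the cycle, Remark \ref{Gol18}) rules out, as stated, that $e_1$ and $e_2$ attach at two different vertices joined to $v_1$ by paths of positive length.

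There is a second, independent gap. Even granting that $v_1$ is adjacent to exactly $e_1,e_2,e$, balancing only gives $v(e,v_1)=0$; it does not give that the other endpoint of $e$ is the vertex where the paths to $\beta_3$ and $\beta_4$ diverge. If the separating path continues past $e$ through non-contracted edges, then contracting $e$ alone produces a curve on which $\ft_{\lambda_l}$ is still nonzero, i.e.\ a curve that does \emph{not} contribute to $\mathcal{N}^{0}_{\Delta_{\mathbb{F}_{r}}(a,b, w_{\underline{d}})}(L_{\underline{k}},\lambda_{\underline{l}})$, so your final step fails. The paper's mechanism for excluding exactly these configurations is the dimension-count contradiction: if the edge separating $\{e_1,e_2\}$ from the rest were non-contracted, one could insert an additional contracted bounded edge and obtain a 2-dimensional cell inside the 1-dimensional cycle obtained by dropping $\lambda'_l$ (Remark \ref{-1}), which is impossible. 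Your proposal contains no counterpart of this argument, and it is precisely the tool needed to close both gaps; without it, the plan remains a plausible outline rather than a proof.
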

    
\begin{proof}

Assume that $ C $ is a tropical stable map contributing to $ \mathcal{N}^{0}_{\Delta_{\mathbb{F}_{r}}(a,b, w_{\underline{d}})}( L_{\underline{k}}, {\lambda}_{\underline{l}} ) $. Let $ v $ be the vertex from \ref{0}, if $ \val(v) = 4 $ then the only possible way to resolve $ v $ is to add a bounded edge $ e $ to $ C $ that separates $ e_{1}, e_{2} $ from $ \beta_{3}, \beta_{4} $. In this case, $ e $ is automatically a contracted bounded edge.

If $ \val(v) >5 $ then assume that the edge $e'$ we add by resolving $v$ according to $\lambda'_l$
is not contracted and denote the tropical stable map obtained this way ba $C''$. Denote the
vertex adjacent to $e'$ and $e_1, e_2$ by $v''$. Consider $C''$ as a point in the cycle that arises from dropping the cross-ratio condition $\lambda'_l$. Then $C''$ is in the boundary of a 2-dimensional cell of the same cycle that arises from $C''$ by adding a contracted bounded edge $e$ to $C''$ that
separates $v''$ from $e_1, e_2$. Hence, there is a 2-dimensional cell inside a 1-dimensional cycle, which is a contradiction.

Therefore, by resolving the vertex $ v $, we will always obtain a contracted bounded edge adjacent to $ e_{1}, e_{2} $. Referring to Remark \ref{RationalyEquivalent}, we know that $\mathcal{N}^{0}_{\Delta_{\mathbb{F}_{r}}(a,b, w_{\underline{d}})}( L_{\underline{k}}, {\lambda}_{\underline{l}} ) = \mathcal{N}^{0}_{\Delta_{\mathbb{F}_{r}}(a,b, w_{\underline{d}})}( L_{\underline{k}}, {\lambda}_{\underline{l-1}}, {\lambda}'_l )$, so every tropical stable map that contributes to $ \mathcal{N}^{0}_{\Delta_{\mathbb{F}_{r}}(a,b, w_{\underline{d}})}( L_{\underline{k}}, {\lambda}_{\underline{l-1}}, {\lambda}'_l ) $ arises from a tropical stable map $ C $ that contributes to $ \mathcal{N}^{0}_{\Delta_{\mathbb{F}_{r}}(a,b, w_{\underline{d}})}( L_{\underline{k}}, {\lambda}_{\underline{l}} ) $ by adding a contracted bounded edge $ e $ to $ C $ to resolve the vertex $ v $ as it is shown in figure \ref{resolve}.

\end{proof}

\begin{center}
	
\end{center}

\section{Behavior of split tropical stable maps }\label{Split curves}

Proposition \ref{*} shows that a tropical stable map contributing to $ \mathcal{N}^{0}_{\Delta_{\mathbb{F}_{r}}(a,b, w_{\underline{d}})}( p_{\underline{n}}, L_{\underline{k}}, {\lambda}_{\underline{l-1}}, {\lambda}'_{l} ) $, where $ n \geq 1 $ and $ |{\lambda}'_{l}| $ goes to infinity, either has a contracted bounded edge or a movable branch $ \mathcal{S} $. Also Lemma \ref{n0} shows that a tropical stable map contributing to $ \mathcal{N}^{0}_{\Delta_{\mathbb{F}_{r}}(a,b, w_{\underline{d}})}( L_{\underline{k}}, {\lambda}_{\underline{l-1}}, {\lambda}'_l ) $ when $ |{\lambda}'_{l}| $ goes to infinity, has a contracted bounded edge. 
When splitting tropical stable maps to obtain lower degrees, two general cases arise, which we discuss in the following subsections \ref{4.1} and \ref{4.2}.

\subsection{Splitting over a contracted bounded edge}\label{4.1}
    
After cutting the contracted bounded edge $ e $, two tropical stable maps $ C_i  = (\Gamma_i, x_{\underline{m_i}}, \Tilde{e}_{\underline{d_i}}, h_i) $ of degrees $ \Delta_{ i } = \Delta_{\mathbb{F}_{r}}(a_i,b_i, w_{\underline{d_i}}) $ for $ i = 1, 2 $ are obtained such that $ a = a_1 + a_2 $, and $ w_{\underline{d}} = w_{\underline{d_1}} \cup w_{\underline{d_2}}$. The two pieces of the cut edge $ e $ are denoted as $ e_1 $ and $ e_2 $ on each part of the tropical stable map. The {marked point}s caused by these contracted ends are also referred to as $ e_1 $ and $ e_2 $, while the vertices corresponding to these {marked point}s are denoted by $ v_1 $ and $ v_2 $ (Indeed, $ v_1 $ and $ v_2 $ are the two vertices adjacent to $e$ in $h(\Gamma) $ as it is shown in Figure \ref{cut}). It is important to understand the local effects around $ e_{ i } $ when cutting the contracted bounded edge. Since $ e $ is a contracted bounded edge in $ C $, cutting $ e $ does not affect the balancing around $ v_{ i } $.

\begin{figure}[h!]

\centering
\includegraphics[scale = 0.8]{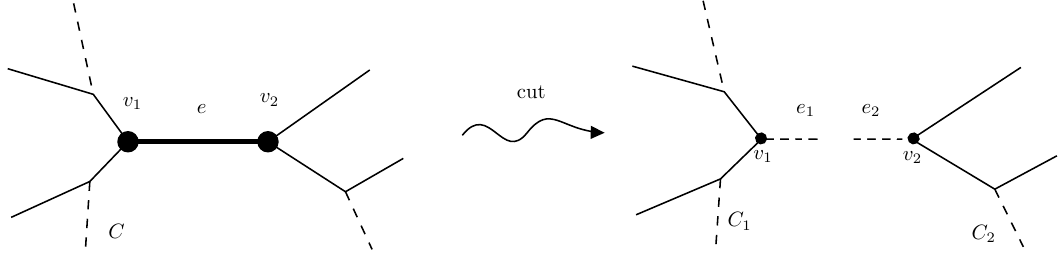}
\caption{In this figure, we can see what happens locally around the cut edge. Notice that both graphs are some parts of the abstract tropical curve. The bounded edge $e$ is the contracted bounded edge so it is cut into $e_1$ and $e_2$.}
\label{cut}
\end{figure}

\begin{remark} \label{v12}
    Let $ C $ be a tropical stable map contributing to $ \mathcal{N}^{0}_{\Delta_{\mathbb{F}_{r}}(a,b, w_{\underline{d}})}( p_{\underline{n}}, L_{\underline{k}}, {\lambda}_{\underline{l-1}}, {\lambda}'_{l} ) $, where $ |{\lambda}'_{l}| $ goes to infinity and it has a contracted bounded edge $e$.
   Before cutting the edge $e$, let us denote the image of $e$ in the plane as $v$.
 After cutting the edge $e$, the vertex $v$ will be distributed into two vertices $v_1$ and $v_2$. Notice that this distribution for the vertex $v$ happens for the image of the abstract tropical curve in $\mathbb{R}^2 $. At the vertex $v$, the cross-ratio ${\lambda}'_{l}$ is satisfied, but there might be other cross-ratios that are also satisfied at this vertex. Hence, after cutting $e$, the valence of each of the vertices $v_i$ might be greater than or equal to 3. Due to Lemma \ref{n0}, when $n=0$, one of the split vertices is of valence 3, and another one is of valence greater than or equal to 4.
\end{remark}

\begin{notation}\label{splitNotation}
Let $ C $ be a tropical stable map contributing to $ \mathcal{N}^{0}_{\Delta_{\mathbb{F}_{r}}(a,b, w_{\underline{d}})}( p_{\underline{n}}, L_{\underline{k}}, {\lambda}_{\underline{l-1}}, {\lambda}'_{l} ) $, where $ n \geq 1 $ and $ |{\lambda}'_{l}| $ goes to infinity and it has a contracted bounded edge $e$. Assume we cut $e$ and obtain two tropical stable maps $ C_1 $ and $ C_2 $.
We refer to \((\Delta_{1}, n_{1}, k_{1}, l_{1}, f_{1} \mid \Delta_{2}, n_{2}, k_{2}, l_{2}, f_{2})\) as a split of degrees and conditions between the two parts of the cut tropical stable map, \( C_1 \) and \( C_2 \), where the following equalities hold:

\begin{equation}\label{conditions on splitting cbd}
    \Delta_{1}\cup \Delta_{2} = \Delta_{\mathbb{F}_{r}}(a,b, w_{\underline{d}}),\;  n_{1} + n_{2} = n,\;  k_{1} + k_{2} = k,\;  l_{1} + l_{2} = l - 1,\; f_{1} + f_{2} = f . 
 \end{equation}
    
Based on Remark \ref{v12}, there might be some cross-ratios that need to be adapted after cutting $ e $. If $ {\lambda}_j $ is a degenerate cross-ratio that is satisfied at a vertex $ v_i \in h(\Gamma_i)  $ for $ i = 1, 2 $  and one and exactly one of the {marked end}s contributing in $ {\lambda}_j $ is a marked end of $ h(\Gamma_t)  $ such that $ t \neq i $, we replace this end with $ e_i $  and we denote the adapted cross-ratio condition with $ {\lambda}_{j}^{\rightarrow e_i} $ (in the case of $n=0$ this will be $ {\lambda}_{j}^{\rightarrow e} $ where $e$ is the contracted end in Lemma \ref{n0}, see \eqref{n=0}). Indeed, we adapted
all cross-ratios to the cut edge $e$. 
The notation introduced here is analogous to the notation used in \cite[Construction 54]{G}.
  
\end{notation}

Also, we can denote the number of {marked point}s on each $ C_i $ with $ m_i = n_i + k_i + f_i + 1 $, since $ e_i $ is a new {marked point} that appears after splitting the tropical stable map, so $ m_1 + m_2 = m + 2 $.
The following cycles address the locus of the two new marked points $ e_i$, on each $ C_i $. 
\begin{equation}\label{Xi}
     X_i = \prod_{k\in \underline{k_i}}\ev^{*}_k (L_k )\cdot \prod_{t\in \underline{n_i}}\ev^{*}_t (p_t )\cdot \prod_{j\in \underline{l_i}}ft^{*}_{{\lambda}_{j}^{\rightarrow e_i}} (0)\cdot \mathcal{M}_{0,m_i}(\mathbb{R}^{2},\Delta_i),
\end{equation}
\begin{center}
    $  Y_i =\ev_{e_i,*}(X_i) \subset \mathbb{R}^2$
\end{center}
For $i=1,2$, these cycles $ Y_i$ tell us how to glue $ C_i$ via $ e_i$. Indeed, $ Y_i $ for $ i = 1, 2 $ act as an additional condition on $ C_t $, where $ t\neq i $ and determines the locus of the {marked point} $ e_t $ on the image of $ \Gamma_t $.  
Based on the valence of each $ v_i $, there will be three different cases that can happen by cutting the contracted bounded edge:

\begin{itemize}
    \item \textbf{$ \val(v_i) = 3 $ for $ i = 1, 2 $:}\\
    In this case, on one hand, 
     \begin{eqnarray*}
 \dim\mathcal{M}_{0,m_i}(\mathbb{R}^{2},\Delta_i) = \# \Delta_i + (m_i+1 ) - 1 &=& 2n_i + (k_i +1) + l_i  \\
         \Rightarrow \# \Delta_i + n_i + (k_i+1) + f_i &=& 2n_i + (k_i+1) + l_i  \\
         \Rightarrow \# \Delta_i &=& n_i + l_i - f_i+1 . 
\end{eqnarray*}
    so $ \dim(X_i) = 1 $ for $ i = 1, 2 $, and therefore the {marked point} $ e_i $ can move unboundedly along the edge adjacent to it in $ C_i $. On the other hand, $ Y_t $ where $ t \neq i $ as a condition on $ C_i $ fixes the {marked point} $ e_i $ and, as a result, fixes the tropical stable map $ C_i $ in $ X _i $. Therefore, if we forget $ Y_t $, we obtain the 1-dimensional family of the tropical stable maps in $ Y_i $, and the only movable component of a tropical stable map in this family is the vertex $ v_i $.
    Based on Lemma \ref{YiisLst}, the 1-dimensional cycle $ Y_i $ has ends of primitive directions $ (r,1), (0,1), (1, 0) $ and $ (-1,0) $.
    Hence, in such a split we can think about $ Y_t $ as a condition on $ C_i $ which reduces the dimension of the intersection product on $ \mathcal{M}_{0,m_i}(\mathbb{R}^{2},\Delta_i) $ by one. 

  A split $ (\Delta_{1}, n_{1}, k_{1}, l_{1}, f_{1} | \Delta_{2}, n_{2}, k_{2}, l_{2}, f_{2}) $ is called a \textbf{1/1 split} if $ \# \Delta_i = n_i + l_i - f_i + 1 $ holds for $ i = 1, 2 $.
In this case, we refer to $ e $ as a 1/1 edge.
\begin{example}\label{Y1Y1}
    In Figure \ref{Ex11}, we see an example of a 1/1 slit. Let \( C = (\Gamma, x_{\underline{5}}, \tilde{e}_{1}, h) \) be a tropical stable map of degree \( \Delta_{\mathbb{F}_{r}}(1,1, w_1) \), where \( \tilde{e}_1 \) has weight \( w_1 = 1 \). The tropical stable map \( C \) satisfies three point conditions, \( p_{\underline{3}} \), and two line conditions, \( L_{\underline{2}} \), such that \( h(x_i) = p_i \) for \( i = 1,2,3 \), and \( h(x_j) = p_j \in L_j \cap h(\Gamma) \) for \( j = 4,5 \). 

Furthermore, \( C \) satisfies two cross-ratio conditions: \( \lambda_1 = \{x_{2}, x_{3}, x_{4}, x_{5} \} \) and \( \lambda'_2 = (x_{1} \tilde{e}_{1} | x_{2} x_{3}) \), where \( \lambda_2 \) has a very long length. Additionally, \( C \) contains a contracted bounded edge \( e \), which we split into two ends on the abstract tropical curve \( \Gamma \) of \( C \). These ends correspond to two marked points, \( e_1 \) and \( e_2 \), on the two components of the tropical stable map, \( C_1 \) and \( C_2 \), respectively. These marked points can move unboundedly on each \( C_i \).

     \begin{figure}[h!]
		
		\includegraphics[scale=0.8]{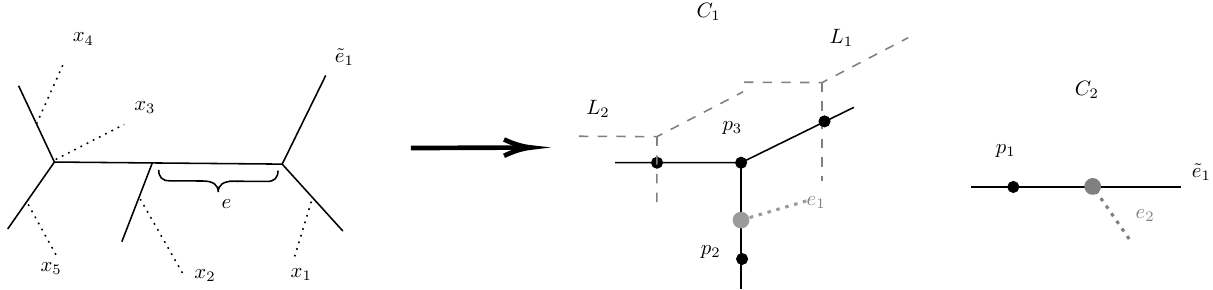}
		\centering
		\caption{In Example \ref{Y1Y1}, the tropical stable map 
$C = (\Gamma, x_{\underline{5}}, \tilde{e}_{1}, h)$ satisfies three point conditions $p_{\underline{3}}$, two line conditions $L_{\underline{2}}$, and two cross-ratio conditions $\lambda_1 = \{x_{2}, x_{3}, x_{4}, x_{5} \}$ and $\lambda'_2 = (x_{1} \tilde{e}_{1} | x_{2} x_{3})$, with $h(x_i) = p_i$ for $i = 1, 2, 3$, and $h(x_i) = L_i$ for $i = 4, 5$.
On the left, the graph $\Gamma$ is shown. We split the edge $e$ into two marked ends $e_1$ and $e_2$. The tropical stable map $C_1$ in the center satisfies $p_{2}, p_{3}, L_{1}, L_{2},$ and $\lambda_1$. On the right, $C_2$ satisfies $p_1$. The marked points $e_1$ and $e_2$ are unbounded on $C_1$ and $C_2$, giving $\dim Y_i = 1$ for $i = 1, 2$. Notice that the labeled point $e_1$ on $C_1$ can move unboundedly in the primitive direction $(0,-1)$ and $e_2$ on $C_2$ can move unboundedly in the primitive direction $(1,0)$.}

		\label{Ex11}
	\end{figure}

\end{example}

\begin{lemma}\label{YiisLst}
  Let $ C $ be a tropical stable map contributing to $ \mathcal{N}^{0}_{\Delta_{\mathbb{F}_{r}}(a,b, w_{\underline{d}})}( p_{\underline{n}}, L_{\underline{k}}, {\lambda}_{\underline{l-1}}, {\lambda}'_{l} ) $ where $ n \geq 0 $ and $\lambda'_l$ has very long length. If $ C $ has a 1/1 contracted bounded edge, then the 1-dimensional cycles $Y_i$ have ends of primitive direction $(1,0), (r,1), (-1,0)$, and $(0,-1)$.
    
\end{lemma}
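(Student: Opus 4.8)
The plan is to realize $Y_i = \ev_{e_i,*}(X_i)$ as a one-dimensional balanced tropical cycle in $\mathbb{R}^2$ and then to identify the primitive direction of each of its ends. Since $\dim X_i = 1$ and the evaluation map is integer affine-linear on each cell of the polyhedral complex $X_i$, the push-forward $Y_i$ is again a one-dimensional tropical cycle (push-forward of a cycle along a morphism of polyhedral complexes, cf. \cite{GKM, AR}); consequently it suffices to show that every unbounded ray of $Y_i$ has primitive direction in $\{(1,0),(r,1),(-1,0),(0,-1)\}$. Every such ray is the image of a sequence of tropical stable maps in $X_i$ along which $h_i(e_i)\to\infty$, so the whole question is about the directions in which the marked point $e_i$ can escape to infinity.

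The first key observation I would record is a local constraint at $e_i$. Because the split is a $1/1$ split, the vertex $v_i$ carrying the contracted end $e_i$ is $3$-valent; as $e_i$ is contracted, the balancing condition at $v_i$ forces its two non-contracted edges to be anti-parallel. Hence $v_i$ is $2$-valent after excluding $e_i$, and after straightening $v_i$ the marked point $e_i$ always lies in the relative interior of a single straight edge $E$ of $C_i$ of some primitive direction $u$. This rules out $e_i$ ever sitting at a trivalent corner vertex of $C_i$, which is exactly the configuration that could otherwise produce a non-standard escape direction such as $(1,1)$.

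With this in place, the core of the argument is a dichotomy describing how $h_i(e_i)$ can run to infinity along a ray of $X_i$. Either (a) $e_i$ slides to infinity along its own edge $E$, which forces $E$ to be unbounded, i.e. an end of $C_i$; since $C_i$ has degree $\Delta_i=\Delta_{\mathbb{F}_r}(a_i,b_i,w_{\underline{d_i}})$, the only end directions available are those listed in Definition \ref{Degree}, namely $(1,0),(r,1),(-1,0),(0,-1)$, so $u$ is standard. Or (b) $E$ remains bounded while the movable component $B_i$ of $C_i$ containing $E$ translates to infinity, in which case $e_i$ moves rigidly with $B_i$ in its movement direction $b$; applying the classification of movement directions of vertices of a movable component from this section (Lemma \ref{4cases}, together with Lemmas \ref{2Ray}, \ref{2rayn}, Lemma \ref{1ver} and Remark \ref{bigstring}), the direction $b$ is again one of the four standard directions. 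In both cases the resulting end of $Y_i$ has standard primitive direction, and running the same argument for $i=1,2$ (and, when $n=0$, using the contracted bounded edge supplied by Lemma \ref{n0}) finishes the proof.

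I expect the main obstacle to be case (b): one must carefully justify that the escape direction of $e_i$ is precisely the translation direction of the movable component carrying $E$, and then transfer the movement-direction classification, originally established for the one-dimensional family obtained by dropping $\lambda'_l$, to the family $X_i$. This transfer is legitimate because those lemmas use only balancing and the Angle Lemma (Remark \ref{Ang}) for one-dimensional families of degree-$\Delta_i$ stable maps in $\mathbb{F}_r$, but the point should be stated explicitly rather than taken for granted. A secondary technical point is to confirm that cells of $X_i$ on which $\ev_{e_i}$ is constant contribute no end, so that no spurious directions are introduced into the recession data of $Y_i$.
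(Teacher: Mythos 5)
Your reduction to the case where $e_i$ sits in the interior of a straight edge rests on the claim that a $1/1$ split forces the vertex $v_i$ carrying the contracted end $e_i$ to be $3$-valent. That claim is false, and the case it excludes is precisely the one the paper's proof has to work hardest on. A $1/1$ split is defined by the dimension count $\#\Delta_i = n_i + l_i - f_i + 1$ (equivalently $\dim Y_i = 1$ for both $i$), not by a valence condition, and a vertex of valence greater than $3$ can still move unboundedly: take $v_i$ adjacent to $e_i$, to one or more marked right ends $\Tilde{e}_j$ of primitive direction $(1,0)$, and to a single bounded edge of primitive direction $(-1,0)$ leading to the fixed part, with a degenerate adapted cross-ratio $\lambda_j^{\rightarrow e_i}$ satisfied at $v_i$. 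All non-contracted edges at $v_i$ are parallel, translating $v_i$ in direction $(1,0)$ preserves every condition, so $\dim Y_i = 1$ and the split is $1/1$, yet $\val(v_i) \geq 4$. Since degenerate cross-ratios are exactly what produce higher-valent vertices in these intersection products, this configuration is to be expected once the cross-ratios $\lambda_{\underline{l_i}}^{\rightarrow e_i}$ are present; the paper's proof devotes its entire second half to it, using Remark \ref{Gol18} to show that any non-parallel higher-valent configuration would force a marked point coming from a multi-line condition onto an end adjacent to $v_i$, hence bounded movement, leaving only the all-parallel case with escape direction $(1,0)$. Your argument never reaches this case; the fact that its escape direction happens to be standard does not repair a proof that asserts the case cannot occur.

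A secondary weak point is case (b) of your dichotomy. The movement-direction lemmas you invoke (Lemmas \ref{4cases}, \ref{2Ray}, \ref{2rayn}, \ref{1ver} and Remark \ref{bigstring}) are proved for the movable component of the family obtained by dropping $\lambda'_l$ from the whole map $C$; their proofs use not only balancing and the Angle Lemma but also Remark \ref{Gol18}, Observation \ref{oneEnd} and the placement of marked points coming from multi-line conditions, and for multi-vertex components they conclude that the movement direction is $(1,0)$ only. In $X_i$, by contrast, the vertex $v_i$ can escape in any of the four standard directions (in Figure \ref{Ex11} the point $e_1$ escapes in direction $(0,-1)$), so the wholesale "transfer" you describe is not the routine step you suggest; the paper sidesteps it by establishing that the movable part of a map in $X_i$ is the single vertex $v_i$ and reading off its possible directions directly. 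Your case (a) does coincide with the paper's $3$-valent argument, but as it stands the proposal proves the lemma only in that case.
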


\begin{proof}
    Assume that \( C = (\Gamma, x_{\underline{m}}, \Tilde{e}_{\underline{d}}, h) \) is a tropical stable map contributing to  \(
    \mathcal{N}^{0}_{\Delta_{\mathbb{F}_{r}}(a,b, w_{\underline{d}})}( p_{\underline{n}}, L_{\underline{k}}, {\lambda}_{\underline{l-1}}, {\lambda}'_{l} )
   \) 
    where \( \lambda'_l \) has a very long length, and \( C \) contains a 1/1 contracted bounded edge \( e \).  
We split \( C \) along \( e \) into, \( C_1 \) and \( C_2 \), and split \( e \) into two ends, \( e_1 \) and \( e_2 \), which are adjacent to the vertices \( v_1 \) and \( v_2 \).  
The abstract tropical curve $\Gamma_i$ associated with a tropical stable map in $X_i$ (see~\ref{Xi}) contains a single vertex $v_i$. Since $\dim X_i = 1$, this vertex moves unboundedly.
Since \( n \geq 0 \), \( v_i \) is adjacent to a vertex in the fixed component, implying that \( v_i \) is of type I (see Definition \ref{IIII}). If \( v_i \) is a 3-valent vertex, $v_i$ has to be adjacent to a non-contracted end so that it can move in one direction unboundedly, and the proof is complete.
    
    If the valence of \( v_i \) is greater than 3, either $v_i$ is connected to multiple fixed vertices of \( \Gamma_i \) along edges in different directions. This would force \( v_i \) to be fixed, which is a contradiction.
    Or $\val(v_i)>3$ and all edges adjacent to $v_i$ are parallel, then the movable component of $C_i$ is not a single vertex, but it is a movable branch containing a single vertex. On the one hand, to have an unbounded movement for $v_i$, it has to be adjacent to non-contracted ends in its direction of movement. On the other hand, $\val(v_i)>3$, so there exists a cross-ratio \( \lambda_j \in \lambda_{[l-1]} \) such that \( \lambda_{j}^{\to e_i} \) is satisfied at \( v_i \). So \( v_i \) can only be adjacent to a marked right end; otherwise, Remark \ref{Gol18} guarantees that there is a marked point caused by a multi-line condition on the end adjacent to $v_i$ and this results in having a bounded movement for $v_i$, which is a contradiction. Therefore, if $\val(v_i)>3$, then $v_i$ can only move in the $(1,0)$ direction.
\end{proof}

    \item \textbf{Without loss of generality $ \val(v_1) = 3, \val(v_2) \geq 4 $: }\\
Since $ \val(v_2) \geq 4 $, then there is at least one cross-ratio that is satisfied at $ v_2 $. On the one hand, we know that other conditions fix both $ C_i$s, and the only possible movable part of them are $ v_i$s raised from the cut edge $ e $. On the other hand, having at least one cross-ratio condition satisfied on $v_2$ such that 3 marked ends contributing to this cross-ratio live on the fixed parts of the tropical stable map, makes $v_2$ a fixed vertex. Therefore, $ v_2 $ is fixed, and this shows that $ \dim Y_2 = 0 $. Hence the following equality holds and $ Y_2 $ acts as a condition on $ C_1 $ that reduces the dimension of the intersection product on $ \mathcal{M}_{0,m_1}(\mathbb{R}^{2},\Delta_1) $ by 2.

\begin{equation}    
\label{hiddenf}
 \begin{aligned}
 \# \Delta_2 + (m_2 + 1) - 1 &= 2n_2 + k_2 + l_2  \\
         \Rightarrow \# \Delta_2 + n_2 + k_2 + (f_2 + 1)-1 &= 2n_2 + k_2 + l_2  \\
         \Rightarrow \# \Delta_2 &= n_2 + l_2 - f_2 .
 \end{aligned}
 \end{equation}
We have the following relation between the number of conditions on $ C_1 $.

\begin{eqnarray*}
 \dim\mathcal{M}_{0,m_1}(\mathbb{R}^{2},\Delta_1) = \# \Delta_1 + (m_1 + 1) - 1 &=& 2n_1 + k_1 + l_1 + 2 \\
         \Rightarrow \# \Delta_1 + n_1 + k_1 + f_1 &=& 2n_1 + k_1 + l_1 + 2 \\
         \Rightarrow \# \Delta_1 &=& n_1 + l_1 - f_1 + 2.
\end{eqnarray*}

Since one of the $ Y_i$s is 0 dimensional and the other one is 2 dimensional, we call such a split a \textbf{2/0 split}.
A split $ (\Delta_{1}, n_{1}, k_{1}, l_{1}, f_{1} | \Delta_{2}, n_{2}, k_{2}, l_{2}, f_{2}) $ is called a 2/0 split if $ \# \Delta_i = n_i + l_i - f_i + 2 $ and $ \# \Delta_t = n_t + l_t - f_t  $ hold for $ i \neq t $. In this case, we refer to $ e $ as a 2/0 edge.

Notice that not only when $ n\geq 1 $ we may have 2/0 split, but also when there is no point condition, the contracted edge we face in Lemma \ref{n0} is a 2/0 split.

\begin{example}\label{Y2Y0}

    In Figure \ref{Ex20}, we see an example of a 2/0 slit. Let \( C = (\Gamma, x_{\underline{5}}, \tilde{e}_{1}, h) \) be a tropical stable map of degree \( \Delta_{\mathbb{F}_{r}}(1,1, w_1) \), where \( \tilde{e}_1 \) has weight \( w_1 = 1 \). The map \( C \) satisfies three point conditions, \( p_{\underline{3}} \), and two line conditions, \( L_{\underline{2}} \), such that \( h(x_i) = p_i \) for \( i = 1,2,3 \), and \( h(x_j) = p_j \in L_j \cap h(\Gamma) \) for \( j = 4,5 \). 

Furthermore, \( C \) satisfies two cross-ratio conditions: \( \lambda_1 = \{x_{2}, x_{3}, x_{4}, x_{5} \} \) and \( \lambda'_2 = (x_{1} \tilde{e}_{1} | x_{2} x_{3}) \), where \( \lambda_2 \) has a very long length. Additionally, \( C \) contains a contracted bounded edge \( e \), which we split into two ends on the abstract tropical curve \( \Gamma \) of \( C \). These ends correspond to two marked points, \( e_1 \) and \( e_2 \), on the two components of the tropical stable map, \( C_1 \) and \( C_2 \), respectively. Notice that the labeled point $e_2$ on $C_2$ can move within a 2-dimensional family, while $C_1$ remains fixed.

     \begin{figure}[h!]
		
		\includegraphics[scale=0.8]{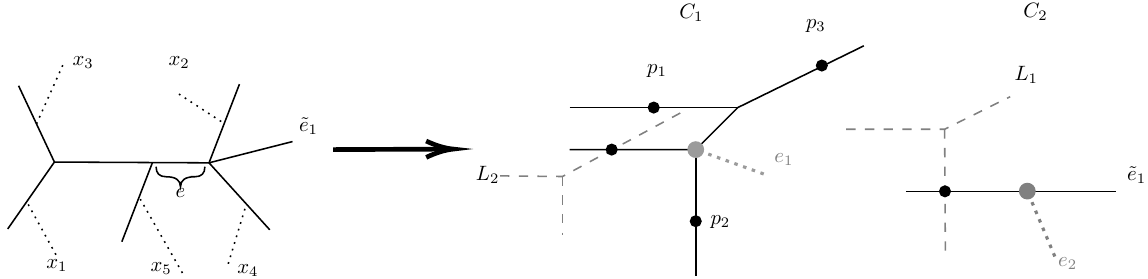}
		\centering
		\caption{In Example \ref{Y2Y0}, the tropical stable map 
$C = (\Gamma, x_{\underline{5}}, \tilde{e}_{1}, h)$ satisfies three point conditions $p_{\underline{3}}$, two line conditions $L_{\underline{2}}$, and two cross-ratio conditions \( \lambda_1 = \{x_{2}, x_{3}, x_{4}, x_{5} \} \) and \( \lambda'_2 = (x_{1} \tilde{e}_{1} | x_{2} x_{3}) \), with $h(x_i) = p_i$ for $i = 1, 2, 3$, and $h(x_i) = L_i$ for $i = 4, 5$.
On the left, the graph $\Gamma$ is shown. We split the edge $e$ into two marked ends $e_1$ and $e_2$. The tropical stable map $C_1$ in the center satisfies $p_{2}, p_{3}, L_{1}, L_{2},$ and $\lambda^{\rightarrow e_1}_1 = \{x_{2}, x_{3}, e_{1}, x_{5} \}$. On the right, $C_2$ satisfies $L_1$.  }\label{Ex20}
\end{figure}
\end{example}

    \item \textbf{$ \val(v_i) \geq 4 $ for $ i = 1, 2 $:}\\
    The argument of the previous case shows that $ \dim Y_i = 0 $ for $ i = 1, 2 $. But in such a case $ Y_1 \cdot Y_2 $ is empty, so this is not the case, and the only possible ways to split the tropical stable maps over a contracted bounded edge are 1/1 or 2/0 splits. 
    
\end{itemize}

\subsection{Splitting over a movable branch $\mathcal{S}$}\label{4.2}

Let $ C=(\Gamma, x_{\underline{m}}, \Tilde{e}_{\underline{d}}, h) $ be a tropical stable map contributing to  $\mathcal{N}^{0}_{\Delta_{\mathbb{F}_{r}}(a,b, w_{\underline{d}})}( p_{\underline{n}}, L_{\underline{k}}, {\lambda}_{\underline{l-1}}, {\lambda}'_{l} )$, 
with a very long cross-ratio $\lambda'_l$. Suppose $C$ has a movable branch consisting of $\sigma$ edges in the $(-1,0)$ direction, each having weights $c_{\underline{\sigma}}$. This movable branch is connected to $\sigma$ fixed components $C_{\underline{\sigma}}$ of degree 
\[ \Delta_{\mathbb{F}_{r}}(a_i,b_i, w_{i\underline{d_i}}), \quad 1 \leq i \leq \sigma, \]
where the first parts of Equations \eqref{eq:1}, \eqref{eq:2}, and \eqref{eq:3} hold. 
After cutting the movable branch, denote the number of ends in the $(1,0)$ direction by $d'_i$ for each $1 \leq i \leq \sigma$, where $d'_i = d_i + 1$. 
Each fixed component $C'_i$ thus gains one additional marked right end in the $(1,0)$ direction with weight $c_i$. Denote the degree of the fixed tropical stable maps $C'_{\underline{\sigma}}$ with
\begin{equation}
  \Delta_{\mathbb{F}_{r}}(a'_i, b'_i, w_{i\underline{d'_i}}) \text{ for } 1 \leq i \leq \sigma.\label{degoffixed}
\end{equation}

 After cutting the movable branch, all bounded edges with weight $c_j$ turn into unbounded ends of $\mathcal{S}_\sigma$ of primitive direction $(-1,0)$. Additionally, there may be a cross-ratio condition imposed at a vertex of $\mathcal{S}_\sigma$ containing a marked end on the $j$th fixed component $C'_j$. So, such an end of weight $c_j$ needs to be labeled.
\begin{definition} \label{ftildas}
A \emph{marked left end} is a non-contracted marked end of $\mathcal{S}_\sigma$ of primitive direction $(-1,0)$, denoted by $\Tilde{f}_j$. This end previously connected $\mathcal{S}_\sigma$ to the fixed component $C_j$ before the movable branch was cut. An end is called a marked left end if there is at least one cross-ratio condition imposed at a vertex of $\mathcal{S}_\sigma$ involving a marked end on the $j$-th fixed component $C'_j$.
Let $\delta$ be the set of all non-marked left ends of $\mathcal{S}_\sigma$, and let $\tilde{\delta}$ be the set of marked left ends. We denote the number of marked left ends by $\#\tilde{\delta} = \Lambda$, and the number of non-marked left ends by $\#\delta = \sigma - \Lambda$.
\end{definition}
If $ {\lambda} $ is a degenerate cross-ratio that is satisfied at a vertex $ v \in \mathcal{S}_\sigma $ and a {marked end} contributing in $ {\lambda} $ is a marked end of a $ C_i $ for $1 \leq i \leq \sigma$, we replace this end with $ \Tilde{f}_i $ (this is a well-defined cross-ratio condition based on Definition \ref{CR}.)

In the definition of the degree of a tropical stable map (see \ref{Degree}), if there are other weighted ends in other directions, their weights must be included in the notation. For $\mathcal{S}_{\sigma}$, which contains:
marked ends in the $(-1,0)$ direction and usual marked right ends. The degree of $\mathcal{S}_{\sigma}$ is then denoted as:
\[
\Delta_{\mathbb{F}_{r}}(a_0, b_0, w_{\underline{d_0}}, c_{\underline{\sigma}}),
\]
where $c_{\underline{\Tilde{\delta}}}$ represents the weights of ends in the $(-1,0)$ direction and $a_0 \leq a$ and $d_0 \leq d$. The number of marked right ends is denoted by $\Tilde{e}_{0\underline{d_0}}$.
Figure \ref{ExS} illustrates this setting before cutting the movable branch, while Figure \ref{ExSa} shows the configuration after the cut.

After gluing back the $C'_i$s and $\mathcal{S}_\sigma$ together, we aim to reconstruct a tropical stable map of degree $\Delta_{\mathbb{F}_{r}}(a,b, w_{\underline{d}})$. This requires the following equalities to hold:

\begin{align}
    a &= \sum_{i=0}^{\sigma} a_{i} = a_0 + \sum_{i=1}^{\sigma} a'_i, \label{eq:1} \\
   d &= \sum_{i=0}^{\sigma} d_i = d_0 + \sum_{i=1}^{\sigma} (d'_i -1), \label{eq:2} \\ 
    b= \sum_{i=0}^{\sigma} b_i, \quad b_i &= \sum_{j=1}^{d_i} w_{ij}, \quad b'_i = \sum_{j=1}^{d'_i} w_{ij} = b_i + c_i. \label{eq:3}
\end{align}
Also, cutting the movable branch will distribute the conditions between connected components $\{ (n_i, k_i, l_i, f_i) \}_{i=0}^{\sigma}$. The index zero is the number of conditions on the movable branch, and of course $n_0 = 0$. The following equalities must also hold:

\begin{equation}
    n = \sum\limits_{i=1}^{\sigma}n_{i} , \quad 
    k = \sum\limits_{i=0}^{\sigma}k_{i} , \quad 
    l-1 = \sum\limits_{i=0}^{\sigma}l_{i} , \quad 
    f = \sum\limits_{i=0}^{\sigma}f_{i} .
    \label{eq:all_sums}
\end{equation}

    \begin{figure}
	\includegraphics[scale=0.8]{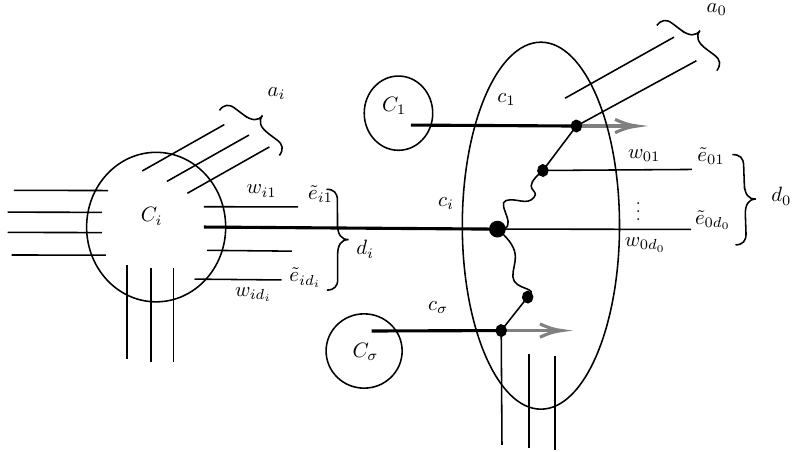}
		\centering
	\caption{In this figure, we can locally see the labeling of the ends and their weights before we cut the movable branch. Here, $C_{i} $ is one of the fixed components, and without loss of generality, we can assume $\Tilde{e}_{ij}$ has weight $w_{ij}$ for $1 \leq j \leq d_{i} $. }
	\label{ExS}
\end{figure}

    \begin{figure}
	\includegraphics[scale=0.8]{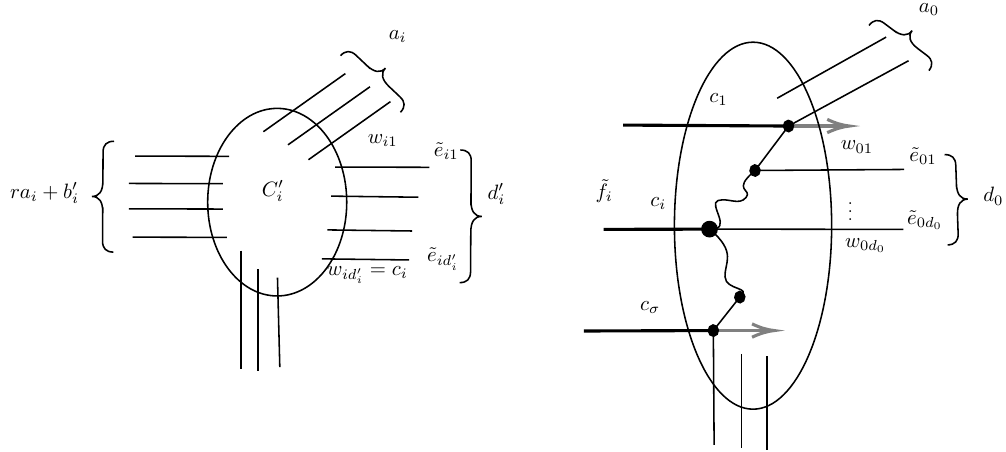}
		\centering
	\caption{In this figure, we can locally see the labeling of the ends and their weights. Here, we can see that the movable branch is of degree $ \Delta_{\mathbb{F}_{r}}(a_0,b_0, w_{\underline{d_0}}, c_{\underline{\sigma}})$. This movable branch has $d_0$ marked right ends $\Tilde{e}_{0\underline{d_0}}$. The tropical stable map $C'_i$ is of degree $\Delta_{\mathbb{F}_{r}}(a'_i,b'_i, w_{i\underline{d'_i}})$ with $d'_i$ marked right ends of weights $w_{i\underline{d'_i}}$.}
	\label{ExSa}
\end{figure}

\section{Multiplicities of split tropical stable maps}\label{MOSC}

In this section, we will see what the relation is between the multiplicity of the main tropical stable map $ C $ and the fixed split parts in different cases where we split $ C $ over a contracted edge or cut a movable branch $ \mathcal{S} $.

First, we want to recall a new type of multi-line conditions similar to what has been defined in \cite[Definition 61 and Notation 62]{G}:
\begin{definition}\label{st}
The following tropical intersections 
$ L_{(1,0)} := \max_{(x,y) \in \mathbb{R}^{2}}(x,0) \cdot \mathbb{R}^{2} $,
$ L_{(0,1)} :=\max_{(x,y) \in \mathbb{R}^{2}}(y,0) \cdot \mathbb{R}^{2} $ and $ L_{(1,-r)} := \max_{(x,y) \in \mathbb{R}^{2}}(x,ry) \cdot \mathbb{R}^{2} $ and any translations of them are called degenerate tropical lines and they will play the role of a new type of multi-line conditions that we need to impose on split tropical stable maps. See Figure \ref{lines}.

\begin{figure}[h!]
\centering
\includegraphics[scale = 0.8]{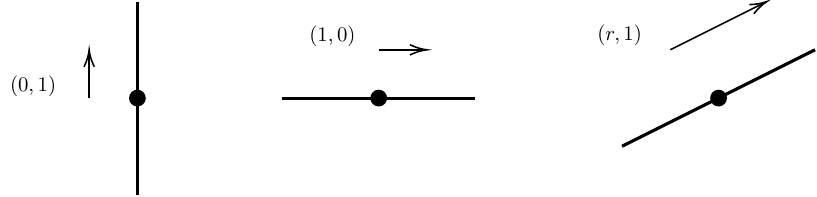}
\caption{Degenerate tropical lines on $ \mathbb{F}_{r} $ from left to right $ L_{(1,0)}, L_{(0,1)} $ and $ L_{(1,-r)} $ with ends of weight one. }
\label{lines}
\end{figure}

\end{definition}

We are going to use Definition \ref{st}, to study the {marked point} $ e_i $ in a 1/1 split. Indeed, in a 1/1 split, we saw that $ Y_i $ for $ i = 1, 2 $ acts as a 1-dimensional condition on $ C_j $ for $ j \neq i $. These degenerate tropical lines play the role of 1-dimensional conditions $ Y_i $ for $ i = 1, 2 $. 

Let $ (s, t) \in \{ (0,1), (1,0), (1,-r) \} $, then $ L_{(s, t)} $ is a degenerate tropical line as in Definition \ref{st}. Due to Lemma \ref{YiisLst}, we can replace the tropical stable map $ C_i $ that satisfies in $ Y_j $ at $ v_i $, with $ C_{i, (s, t)} $ which is equal to $ C_i $ but satisfies in $ L_{(s, t)} $ instead of $ Y_j $.

Now we will see what is the relation between the ev-multiplicity of $ C $ and $ C_i $ for $ i = 1, 2 $, for the three possible cases which is mentioned in the previous section.

     \subsection{Splitting over a 2/0 edge} 
    
    \begin{lemma}

     Let $ C $ be a tropical stable map contributing to $ \mathcal{N}^{0}_{\Delta_{\mathbb{F}_{r}}(a,b, w_{\underline{d}})}( p_{\underline{n}}, L_{\underline{k}}, {\lambda}_{\underline{l-1}}, {\lambda}'_{l} ) $ where $ n \geq 0 $ and assume that $ C $ has a 2/0 contracted bounded edge, then:
   
    \begin{equation}
    	\label{2/0mult}	
{  \mult_{\ev}(C) = \mult_{\ev}(C_1) \cdot \mult_{\ev}(C_2) }
    \end{equation}
 \end{lemma}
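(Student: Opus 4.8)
The plan is to reduce the multiplicative statement to a block decomposition of the evaluation matrix. Recall that $\mult_{\ev}(C) = |\det M(C)|$, where the columns of $M(C)$ are indexed by the two coordinates of a chosen root vertex together with the lengths of the bounded edges of $C$, and the rows are indexed by the point, multi-line, and cross-ratio conditions imposed on $C$. First I would take the root vertex to be the image $v$ of the contracted bounded edge $e$, that is the common position $h(v_1)=h(v_2)$, so that this single pair of coordinates serves simultaneously as the root of $C_1$ and of $C_2$. Because $e$ is contracted, traversing it does not move a point; hence the coordinate of every marked end lying in $C_i$ is an affine function of the root and of the lengths of the bounded edges of $C_i$ \emph{alone}, independent of the bounded edges of $C_t$ for $t\neq i$. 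This independence is the structural fact that drives the whole argument.

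Next I would order the columns as $(\text{root},\ \text{edges of } C_2,\ \text{edges of } C_1,\ l(e))$ and the rows as $(\text{the } 2n_2+k_2+l_2 \text{ conditions on } C_2,\ \text{the } 2n_1+k_1+l_1 \text{ conditions on } C_1,\ \lambda'_l)$. Since $e$ is contracted it enters no evaluation map, so the $l(e)$ column has a single nonzero entry, lying in the $\lambda'_l$ row and equal to $\pm1$ (the coefficient with which the abstract length of $e$ records the $\mathcal{M}_{0,4}$-coordinate realizing $\lambda'_l$). A cofactor expansion along the $l(e)$ column therefore deletes that column and the $\lambda'_l$ row up to sign, leaving a minor whose rows are the genuine conditions on $C_2$ and on $C_1$ and whose columns are the root together with the two families of edges. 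By the independence established above, the $C_2$-condition rows vanish in the $C_1$-edge columns, so grouping the columns as $(\text{root}+\text{edges of }C_2\mid \text{edges of }C_1)$ makes the minor block lower-triangular, giving $\det M(C)=\pm\det M(C_2)\cdot\det D$, where $D$ is the $C_1$-condition rows against the $C_1$-edge columns.

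It then remains to identify the two diagonal factors with $M(C_2)$ and $M(C_1)$. Here the defining feature of a $2/0$ split is used: by \eqref{hiddenf} the vertex $v_2$ is rigid, so $2n_2+k_2+l_2$ equals $2+\#\{\text{edges of }C_2\}$, which is exactly what makes the top-left block square and equal to the ev-matrix $M(C_2)$. For the other factor, note that the marked end $e_1$ sits at the root $v_1$, so the condition $e_1\mapsto Y_2$ appearing in $M(C_1)$ contributes rows of the form $(I_2\mid 0)$; writing $M(C_1)=\left(\begin{smallmatrix} I_2 & 0\\ * & D\end{smallmatrix}\right)$ gives $\det M(C_1)=\det D$. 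In contrast to the $1/1$ case, no degenerate line $L_{(s,t)}$ from Definition \ref{st} is needed, since $Y_2$ is $0$-dimensional (a point) rather than a curve. Combining the two identifications yields $|\det M(C)|=|\det M(C_1)|\cdot|\det M(C_2)|$, which is \eqref{2/0mult}.

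The main obstacle I expect is the bookkeeping around the $\lambda'_l$ row and the $l(e)$ column: one must verify that $l(e)$ really occurs only in that row (immediate from contractedness) and that the surviving entry is a unit, so that the cofactor expansion contributes no spurious factor. A secondary point requiring care is that the shared root columns do not obstruct block-triangularity; this is precisely where the rigidity of $C_2$ is invoked, since it guarantees that the root is already determined inside the square $C_2$-block, while the residual root entries in the $C_1$-rows are absorbed by the $(I_2\mid 0)$ structure of the $e_1\mapsto Y_2$ condition in $M(C_1)$.
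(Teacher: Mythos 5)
Your proof is correct and takes essentially the same approach as the paper's: you place the root vertex at the image of the contracted edge, exploit contractedness to get a block-triangular ev-matrix, use the 2/0 dimension count \eqref{hiddenf} to make the rigid component's block square and equal to its ev-matrix, and use the $(I_2 \mid 0)$ rows coming from the extra point condition at the cut end to identify the complementary block with the ev-matrix of the non-rigid component. The only cosmetic differences are that you name the rigid component $C_2$ where the paper calls it $C_1$, and that you spell out the cofactor expansion along the $l(e)$ column and $\lambda'_l$ row, which the paper performs implicitly by omitting them from $M(C)$.
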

\begin{proof}

    This kind of splitting occurs in two different settings, where we have at least one point condition or where there is no point condition on $ C $. So we address such a split for each case separately:
    
    \begin{itemize}
    	\item \textit{There is at least one point condition on $ C $:} \\
    	 Let $ C $ be a tropical stable map contributing to $ \mathcal{N}^{0}_{\Delta_{\mathbb{F}_{r}}(a,b, w_{\underline{d}})}( p_{\underline{n}}, L_{\underline{k}}, {\lambda}_{\underline{l-1}}, {\lambda}'_{l} ) $ where $ n \geq 1 $, such that $ C $ has a 2/0 contracted bounded edge. We call the contracted bounded edge $ e $ and the vertices adjacent to $ e $ by $ v_{1}, v_{2} $ such that $ v_i \in C_i $ for $ i = 1, 2 $.
    	 
    	 Without loss of generality, assume $ \dim(Y_1) = 0 $ and consider $ v_1 $ as a base point to compute the ev-matrix $ M(C) $.

    	 	\begin{equation*} M(C) = 
    	 		\begin{array}{cc}
    	 			
    	 			&\text{ Base }  v_1   \text{ \quad lengths in } C_1   \text{\quad lengths in }  C_2  \\  & \\ 
    	 			
    	 			\begin{array}{c}
    	 				\text{conditions in } C_1 \\ \\ \\ \text{conditions in }  C_2 
    	 			\end{array}
    	 			&  
    	 			
    	 			\left(
    	 			\begin{array}{ccccc|ccccc||ccccc} &&&&&&&&&&&&&&\\
    	 				&&*&& &&&*&& &&& 0 && \\ &&&&&&&&&&&&&& \\ \hline\hline &&&&&&&&&&&&&& \\
    	 				&&*&& &&&0&& &&& * &&  \\ &&&&&&&&&&&&&&
    	 			\end{array}
    	 			\right)

    	 		\end{array}
    	 	\end{equation*}
    	 	
    	 In order to obtain the equality \eqref{2/0mult} we need to prove that $ M(C) $ is a squared block diagonal matrix, that the upper left block corresponds to $ M(C_1) $ and the determinant of the lower right block corresponds to $ \mult_{\ev}(C_2) $. These two blocks are distinctly separated within the matrix $ M(C) $ by a double line.
    	 
    	 Since $ \dim(Y_1) = 0 $, so we have  $ \# \Delta_1 = n_1 + l_1 - f_1  $ an therefore the upper left block of $ M(C) $ has exactly these number of columns:
    	 
    	 \begin{center}
    	 	$ 2 +  \# \Delta_1 + m_1 + 1 - l_1 - 3 = \# \Delta_1 + n_1 + k_1 + f_1 - l_1 = 2n_1 + k_1 $
    	 \end{center}
    	  On one hand, this is the number of bounded edges in $ C_1 $  plus the 2 columns for the base point, which is on $ C_1 $ again. On the other hand, $ 2n_1 + k_1 $ is exactly the number of conditions on $ C_1 $. Thus, the upper left block of $ M(C) $ corresponds to $ M(C_1) $. If we call the lower right block of $ M(C) $ by $ M $, then we proved that $ |\det(M(C))| = \mult_{\ev}(C_1)\cdot |\det(M)| $.
    	  
    	  If we take $ v_2 $ as a base point for $ C_2 $ then $ M(C_2) $ will be the following blocked matrix:

      \begin{equation*} M(C_2) = \begin{array}{c}
      		\text{Base } v_2 \qquad\qquad \\
      	
      \left(	\begin{array}{ccc|ccc}
      	&&&&& \\	
     	1&0 && & 0 & \\
     	0&1 && & & \\ \hline &&&&& \\ 
     	 &* && & M & \\ &&&&&

      	\end{array}	\right) \end{array}
      \end{equation*}
    	  
    Notice that $ |\det(M(C_2))| = |\det(M)| = \mult_{\ev}(C_2) $, so the equality \eqref{2/0mult} holds in this case.

    	\item \textit{There is no point condition on $ C $:}

    To compute the ev-matrix $ M(C) $, selecting the base point on $ C_1 $ yields the same result as in the previous scenario, with the upper-right block determining $ M(C_1) $.

    	However, there is no point condition on $ C $, so $ Y_1 $ serves as a condition on $ C_2 $ and effectively reduces the dimension of $ Y_2 $ by 2.  So we can consider $ Y_1 $ as a point condition on $ C_2 $. Therefore, the ev-matrix of $ C_2 $ will remain as described in the previous case, and we obtain the same formula for computing the multiplicity in this case $ \mult_{\ev}(C) = \mult_{\ev}(C_1) \cdot \mult_{\ev}(C_2) $.

    \end{itemize} 
\end{proof}

    \subsection{Splitting over a 1/1 edge}
    
    \begin{lemma}\label{multof1/1case}
   Let $ C $ be a tropical stable map contributing to $ \mathcal{N}^{0}_{\Delta_{\mathbb{F}_{r}}(a,b, w_{\underline{d}})}( p_{\underline{n}}, L_{\underline{k}}, {\lambda}_{\underline{l-1}}, {\lambda}'_{l} ) $ where $ n \geq 1 $ and assume that $ C $ has a 1/1 contracted bounded edge, then:
      \begin{equation}\label{1/1mult}
           \mult_{\ev}(C) = |\det(M(C_{1,(1, 0)})) \cdot \det(M(C_{2,(0, 1)})) - \det(M(C_{1,(0, 1)})) \cdot \det(M(C_{2,(1, 0)}))| 
    \end{equation}
\end{lemma}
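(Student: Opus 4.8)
The plan is to compute the evaluation determinant $\det M(C)$ directly and show it factors into the announced $2\times 2$ expression built from the two pieces $C_1,C_2$. First I would fix a base point in $C_1$ and order the columns of $M(C)$ as follows: the two base coordinates, the lengths of the bounded edges of $C_1$, the length $\ell_e$ of the contracted bounded edge $e$, and finally the lengths of the bounded edges of $C_2$; the rows are the conditions imposed on $C_1$, the conditions imposed on $C_2$, and the single row coming from $\lambda'_l$. The decisive structural observation is that the column of $\ell_e$ is zero in every point and multi-line row, because $e$ is contracted and moving along it does not change the image, while $\ell_e$ enters the \emph{abstract} cross-ratio length $\ft_{\lambda'_l}$ with coefficient $\pm 1$ (the edge $e$ is exactly the edge realizing the resolution of $\lambda'_l$, and by the combinatorics established in Section \ref{Split curves} it lies on no other relevant path). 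Thus the $\ell_e$-column has a single nonzero entry, in the $\lambda'_l$-row, and a Laplace expansion along it gives $\det M(C) = \pm \det M'$, where $M'$ is obtained by deleting the $\ell_e$-column and the $\lambda'_l$-row.

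Next I would read off the block shape of $M'$. Since the contracted edge forces $\ev_{e_1}(C_1)=\ev_{e_2}(C_2)$, the rows coming from $C_1$ involve only the base coordinates and the $C_1$-lengths (they vanish on the $C_2$-columns), while the rows coming from $C_2$ form the evaluation matrix of $C_2$ based at the node $e_2$, with the node position re-expressed through the $C_1$-data. Writing $B_1$ for the $\rho_1\times(\rho_1+1)$ block of $C_1$-conditions and $N_2=[A_2\mid B_2']$ for the $\rho_2\times(\rho_2+1)$ evaluation matrix of $C_2$ based at $e_2$ (here $\rho_i=2n_i+k_i+l_i$, so each block has one more column than row because $\dim X_i=1$), I would invoke the standard kernel--minor identity: if $\xi_1$ spans $\ker B_1$, then for any evaluation row $a$ one has $\det\!\begin{pmatrix}B_1\\ a\end{pmatrix}=\pm\,\gamma_1\,(a\cdot\xi_1)$ for a nonzero scalar $\gamma_1$ depending only on $B_1$, and likewise $\xi_2,\gamma_2$ for $N_2$. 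Setting $u_i:=\ev_{e_i}(\xi_i)\in\mathbb R^2$, the velocity of the node on the $C_i$-side as it moves in the one-dimensional family $X_i$, I would carry out a generalized Laplace expansion of $M'$ along the $C_2$-length columns (all of which are supported on the $C_2$-rows), leaving exactly one $C_2$-row to join the $B_1$-block, and combine it with the two identities above to obtain $\det M' = \pm\,\gamma_1\gamma_2\,\det(u_1\mid u_2)$. Conceptually this is the statement that $\mult_{\ev}(C)$ is the transverse tropical intersection number of the cycles $Y_1$ and $Y_2$ at the gluing point.

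Finally I would compute the right-hand side of the claim with the same identity. By Lemma \ref{YiisLst} the cycle $Y_i$ has ends in the four relevant directions, so by Definition \ref{st} the condition $Y_j$ on $C_i$ may be replaced by a degenerate tropical line; imposing $L_{(1,0)}$ (a vertical line) fixes the $x$-coordinate of $e_i$, and imposing $L_{(0,1)}$ (a horizontal line) fixes its $y$-coordinate. Hence the extra row of $M(C_{i,(1,0)})$ is the $x$-evaluation of $e_i$ and the extra row of $M(C_{i,(0,1)})$ is its $y$-evaluation, and the kernel--minor identity (with the same $\gamma_i$ as above) yields $\det M(C_{i,(1,0)})=\pm\,\gamma_i\,u_i^x$ and $\det M(C_{i,(0,1)})=\pm\,\gamma_i\,u_i^y$. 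Substituting,
$$\det M(C_{1,(1,0)})\det M(C_{2,(0,1)})-\det M(C_{1,(0,1)})\det M(C_{2,(1,0)})=\gamma_1\gamma_2\,(u_1^x u_2^y-u_1^y u_2^x)=\gamma_1\gamma_2\,\det(u_1\mid u_2),$$
whose absolute value agrees with $|\det M'|=\mult_{\ev}(C)$ from the previous step, establishing \eqref{1/1mult}.

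The step I expect to be the main obstacle is the sign and normalization bookkeeping. I must choose the kernel generators $\xi_1,\xi_2$ and the scalars $\gamma_1,\gamma_2$ consistently across the gluing matrix $M'$ and all four auxiliary matrices $M(C_{i,(s,t)})$, so that the four determinants recombine into the single wedge $\det(u_1\mid u_2)$ with the correct relative sign (in particular that the two contributions enter as $u_1^xu_2^y$ and $-u_1^yu_2^x$); this is precisely where the generalized Laplace expansion in the second paragraph, splitting off the $C_2$-columns, has to be pinned down. A secondary point requiring care is the structural claim that $\ell_e$ appears in no condition row other than $\lambda'_l$, which relies on the resolved combinatorial type of $C$ described in Section \ref{Split curves}.
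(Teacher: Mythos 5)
Your proposal is correct, and its first half coincides with the paper's own proof: the paper likewise deletes the $\ell_e$-column together with the $\lambda'_l$-row (it merely asserts this omission, whereas you justify it via the single $\pm 1$ entry — a small improvement), arranges the matrix so that the $C_1$- and $C_2$-blocks are coupled only through the base columns (the paper takes the base point to be the node $v_1$ itself, which kills your coupling block $P$ outright), and performs the same generalized Laplace expansion across the splitting (the paper expands along the $C_1$-length columns, you along the $C_2$-length columns; this is symmetric). The two arguments genuinely diverge in how they finish. The paper groups the resulting cofactor sums $F_{(s,t)}$ by the base entries $(s,t)\in\{(1,0),(0,1),(1,-r)\}$ of the leftover row, getting $\det M(C)=\sum_{(s,t)}F_{(s,t)}\det M(C_{2,(s,t)})$ as in \eqref{detM}, and then determines the $F$'s through the linear system \eqref{detA} against $\det M(C_{1,(s,t)})$, substituting the solution with $F_{(1,-r)}=0$; that system is in fact underdetermined (rank two, since $A_{(1,-r)}=A_{(1,0)}-rA_{(0,1)}$ by multilinearity), and the substitution is legitimate only because $\det M(C_{2,(1,-r)})=\det M(C_{2,(1,0)})-r\det M(C_{2,(0,1)})$. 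Your kernel--minor identity sidesteps this subtlety entirely: every one-row extension of the full-rank $\rho_i\times(\rho_i+1)$ block has determinant $\pm\gamma_i\,(a\cdot\xi_i)$ with sign \emph{independent of the appended row} $a$, so $\det M'$ and all four auxiliary determinants collapse onto components of the single wedge $\gamma_1\gamma_2\det(u_1\mid u_2)$, and the disappearance of the direction $(1,-r)$ is automatic rather than the outcome of solving a system. This same row-independence of the sign also disposes of the obstacle you flagged: the factors $\epsilon_1\epsilon_2\gamma_1\gamma_2$ are common to both terms of \eqref{1/1mult} and vanish inside the absolute value, so the bookkeeping is benign. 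What each approach buys: the paper's is elementary, explicit matrix manipulation; yours is symmetric in $C_1\leftrightarrow C_2$, interprets $\mult_{\ev}(C)$ conceptually as the transverse intersection number $|\det(u_1\mid u_2)|$ of the cycles $Y_1,Y_2$ at the node (the intuition behind Lemma \ref{YiisLst} and Definition \ref{st}), and makes transparent why only the two coordinate directions survive in the final formula.
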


\begin{proof}
   The tropical stable map $ C $ contributing to $ \mathcal{N}^{0}_{\Delta_{\mathbb{F}_{r}}(a,b, w_{\underline{d}})}( p_{\underline{n}}, L_{\underline{k}}, {\lambda}_{\underline{l-1}}, {\lambda}'_{l} ) $ has a 1/1 contracted bounded edge. We call the contracted bounded edge \( e \), and denote the vertices adjacent to \( e \) by \( v_{1}, v_{2} \) such that \( v_i \in C_i \) for \( i = 1, 2 \).
 Notice that in this case $ \dim(Y_i) = 1 $ for $ i = 1, 2 $ and they act similarly to multi-line conditions, so we replace them with the degenerate tropical lines that have been introduced in definition \ref{st}. We omit the column corresponding to the length of the contracted bounded edge, and accordingly, the row corresponding to \( \lambda'_l \).


\begin{equation*} M(C) = 
	\begin{array}{cc}
		
		&\text{ \qquad Base }v_1\text{ \qquad lengths in } C_1   \text{\quad lengths in }  C_2  \\  & \\ 
		
		\begin{array}{c}
			\text{conditions in } C_1 \\ \\ \\ \text{conditions in }  C_2 
		\end{array}
		&  
		
		\left(
		\begin{array}{ccccc|ccccc||c|ccccc} &&&&&&&&&&*&&&&&\\
			&&*&& &&&*&&&\vdots &&& 0 && \\ &&&&&&&&&&*&&&&& \\ \hline\hline &&&&&&&&&&0&&&&& \\
			&&*&& &&&0&&&\vdots &&& * &&  \\ &&&&&&&&&&0&&&&&
		\end{array}
		\right) 
	\end{array}
\end{equation*}

Denote the number of bounded edges in $C_i$ by $x_i$, and let $y_i = 2n_i + k_i$ for $i = 1,2$. Since $\dim Y_i = 1$ for $i=1,2$, we have the relation:
\[
x_i + 2 = y_i + 1.
\]
Now, consider the lower right block of $M(C)$, whose entries are denoted by $*$ and correspond only to the columns associated with the lengths in $C_2$. Let us denote this submatrix by $M$. The matrix $M$ has dimensions $x_2 \times y_2$, meaning it is not a square matrix.
Next, define $A$ as the submatrix of $M(C)$ consisting of all *-entries above the double line. This submatrix $A$ is also not square and has dimensions $(x_1 +2) \times y_1$.

To compute $\det M(C)$, we apply the Laplace expansion starting from its $(x_1+2)$nd column. Recursively, we continue applying the Laplace expansion to each column corresponding to the lengths in $C_1$, beginning with the rightmost column, that is the $(x_1+2)$nd column of $M(C)$.

By the time we reach the third column of $M(C)$ during the Laplace expansion, all nonzero elements above the double line, except for two entries $a_{i1}, a_{i2}$ in one row $1 \leq i \leq y_1$, have been covered. Additionally, all nonzero elements of $M(C)$ below the double line remain.
Using the $i$th row that contains the remaining entries above the double line in $M(C)$, we define $M_{(s,t)}$ for a pair $(a_{i1}, a_{i2}) = (s,t) \in I$, where:
$I = \{ (1,0), (0,1), (1, -r) \}$.


\begin{equation*} M_{(s, t)} = \begin{array}{c}
		\qquad\text{\qquad lengths in }  C_2 \\
		
		\left(	\begin{array}{cc|cccc}
			a_{r1}& a_{r2}&&0&\cdots&0 \; \\	
			\hline	&&&&& \\
			&&&&& \\ 
			&* \quad && & M & \\ &&&&& \\ &&&&&

		\end{array}	\right). \end{array}
\end{equation*}

By grouping the summands, we obtain:

\begin{equation*}
	\det(M(C)) = \sum_{(s, t) \in I}F_{(s, t)}\det(M_{(s, t)}),
\end{equation*}
\begin{equation}\label{Fst}	
	 \text{such that \;} F_{(s, t)} = \sum_{i:(a_{i1}, a_{i2})=(s,t)}\sum_{\sigma}sgn(\sigma)\prod_{j=3}^{x_1+2}a_{\sigma(j)j},
\end{equation}

where the second sum in $ F_{(s, t)} $ goes over all bijections $ \sigma : \{ 3, \cdots, x_1+2
\} \rightarrow \{ 1, \cdots, i-1, i+1, \cdots, y_1 \} $.
Since $ v_1 $ and $ v_2 $ map to the same point in $ \mathbb{R}^2 $, $ M_{(s, t)} $ is actually $ M(C_{2,(s, t)}) $, therefore:

\begin{equation}\label{detM}
	 |\det(M(C))| = |F_{(1, 0)} \det(M(C_{2,(1, 0)})) + F_{(0, 1)} \det(M(C_{2,(0, 1)})) + F_{(1, -r)} \det(M(C_{2,(1, -r)}))| 
\end{equation}

Let $ A_{(s, t)} $ be the following matrix, obtained from the matrix $ A $ by adding a new first row $ (s, t, 0, \cdots, 0) $. Again notice that $ A_{(s, t)} $ is actually $ M(C_{1,(s, t)}) $.


\begin{equation*} A_{(s, t)} = \begin{array}{cc}
		&\qquad\text{\quad lengths in }  C_1 \\  \text{conditions in } C_1 &
		
		\left(	\begin{array}{cc|cccc}
			s&t&&0&\cdots&0 \; \\	
			\hline	&&&&& \\
			&&&&& \\ 
			&* \quad && & * & \\ &&&&& \\ &&&&&

		\end{array}	\right) \end{array}
\end{equation*}
	
Using Leibniz' formula to compute the determinant of $ A_{(s, t)} $ and replacing \eqref{Fst}, leads to the following equality:

\begin{equation}\label{detA}
	\det(A_{(s, t)}) = \det \left(\begin{array}{cc}
		s&t\\1&0
	\end{array}\right) F_{(1, 0)} + \det \left(\begin{array}{cc}
	s&t\\0&1
\end{array}\right) F_{(0, 1)} + \det \left(\begin{array}{cc}
s&t\\1&-r
\end{array}\right) F_{(1, -r)}
\end{equation}	
	 
Therefore, we have the following relations:

\begin{center}
$ 	\det(A_{(1, 0)}) = F_{(0, 1)} - rF_{(1, -r)} $ \\ $
	\det(A_{(0, 1)}) = -F_{(1, 0)} - F_{(1, -r)}$ \\ $
	\det(A_{(1, -r)}) = rF_{(1, 0)} + F_{(0, 1)} $
\end{center}
 By solving this system of linear equations that we obtained from equality \eqref{detA}, we get the following answer:
 
 \begin{equation*}
 	\left(\begin{array}{c}
 		F_{(1, 0)}\\F_{(0, 1)}\\F_{(1, -r)}
 	\end{array}\right) = \left(\begin{array}{c}
 	\det (-A_{(0, 1)})\\ \det( A_{(1, 0)})\\0
 \end{array}\right)
 \end{equation*}


Replacing this solution in equality \eqref{detM}, finishes the proof.

\end{proof}

  \subsection{Splitting over a movable branch $\mathcal{S}$}
  \begin{remark}\label{Sfix}
Let $\mathcal{S}_\sigma$ be a movable branch defined in \ref{DefString} of degree $\Delta_{\mathbb{F}_{r}}(a_0,b_0, w_{\underline{d_0}}, c_{\underline{\sigma}})$ (see Definition \ref{Degree}). By Definition \ref{DefString},  the direction of movement of all vertices in $\mathcal{S}_\sigma$ is $(1,0)$. Hence, all vertices in $\mathcal{S}_\sigma$ are type I vertices adjacent to fixed edges in $(-1,0)$ direction. Therefore, $\mathcal{S}_\sigma$ consists of exactly $\sigma$ vertices connected by $\sigma-1$ bounded edges with directions $(x_i, y_i)$ for $1 \leq i \leq \sigma-1$. 
A fixed point on any bounded edge of $\mathcal{S}_\sigma$ or an end in direction $(r,1)$ or $(0,-1)$ prevents $\mathcal{S}_\sigma$ from moving in $(1,0)$. If $ \sigma \geq 2 $, without loss of generality, we can consider a fixed point \( p \) on an end in direction \( (r,1) \). Turning \( \mathcal{S}_\sigma \) into a fixed tropical stable map then allows us to compute its multiplicity; see Lemma \ref{mul1}. If \( \sigma = 1 \), we let the ev-multiplicity of \( \mathcal{S}_1 \) to be equal to one.

\end{remark}

\begin{lemma}\label{mul1}
Let $\mathcal{S}_\sigma$, for $ \sigma \geq 2 $, be the movable branch described in Remark \ref{Sfix}. Then the ev-multiplicity of $\mathcal{S}_\sigma$ is \( \left| \prod_{i=1}^{\sigma-1} y_i \right| \).
\end{lemma}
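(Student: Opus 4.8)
The plan is to write the ev-matrix $M(\mathcal{S}_\sigma)$ of the fixed map from Remark \ref{Sfix} explicitly in coordinates adapted to the tree structure, and then to show by repeated Laplace expansion that the determinant collapses to the product of the $y$-components of the internal edges. By Remark \ref{Sfix}, $\mathcal{S}_\sigma$ is a tree on $\sigma$ vertices $u_1,\dots,u_\sigma$ joined by the bounded edges $e_1,\dots,e_{\sigma-1}$ of directions $(x_i,y_i)$, and each $u_j$ carries one fixed left end of primitive direction $(-1,0)$. Rooting the tree at $u_1$, I would take as coordinates the image $h(u_1)=(X,Y)$ of the root together with the lengths $\ell_1,\dots,\ell_{\sigma-1}$ of the internal edges, so that $h(u_j)=(X,Y)+\sum_{e_i\in P_j}\ell_i(x_i,y_i)$, where $P_j$ is the edge set of the path from $u_1$ to $u_j$.

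Since an end of direction $(-1,0)$ is horizontal, the fixed-end condition of Definition \ref{fixedends} at the $j$-th left end fixes the $y$-coordinate of $u_j$, giving the row \[ Y+\sum_{e_i\in P_j}\ell_i\,y_i=g_j,\qquad j=1,\dots,\sigma. \] In the unknowns $(X,Y,\ell_1,\dots,\ell_{\sigma-1})$ these $\sigma$ rows have a $0$ in the $X$-column, a $1$ in the $Y$-column, and the entry $y_i$ in the $\ell_i$-column exactly when $e_i\in P_j$. To make the map rigid I would add the fixing point $p$ lying on an $(r,1)$-end of some vertex $u_k$ (such an end exists because $a_0\ge 1$ when $\sigma\ge 2$); writing $s$ for the position parameter along that end, the marked point has image $h(u_k)+s\,(r,1)$, so $\ev_x(p)$ gives a row with a $1$ in the $X$-column and $r$ in the new $s$-column, and $\ev_y(p)$ gives a row with a $1$ in the $Y$-column and a $1$ in the $s$-column. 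This yields a square matrix on the columns $X,Y,\ell_1,\dots,\ell_{\sigma-1},s$.

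I would then expand the determinant twice. The $X$-column is nonzero only in the $\ev_x(p)$ row, and once that row and column are removed the $s$-column is nonzero only in the $\ev_y(p)$ row; each expansion contributes a cofactor whose entry is $1$, so up to sign the determinant equals that of the $\sigma\times\sigma$ block $B$ formed by the $\sigma$ fixed-end rows in the columns $Y,\ell_1,\dots,\ell_{\sigma-1}$. Factoring $y_i$ out of each column $\ell_i$ writes $\det B=\big(\prod_{i=1}^{\sigma-1}y_i\big)\det B'$, where $B'_{j,Y}=1$ and $B'_{j,\ell_i}=1$ precisely when $e_i\in P_j$. The matrix $B'$ is the vertex-by-(root, edges) path incidence matrix of the rooted tree; ordering the vertices by a depth-first traversal and labeling each edge by its child endpoint makes $B'$ lower triangular with unit diagonal, so $\det B'=\pm1$. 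Hence $\mult_{\ev}(\mathcal{S}_\sigma)=|\det M(\mathcal{S}_\sigma)|=\big|\prod_{i=1}^{\sigma-1}y_i\big|$.

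The step that needs the most care, and where I expect the only real friction, is the bookkeeping around the fixing point $p$: one must check that after the two Laplace expansions the block $B$ is genuinely independent of which $(r,1)$-end carried $p$ and of the parameter $s$, so that the multiplicity depends only on the internal edge directions $(x_i,y_i)$ and not on the auxiliary data used to rigidify the branch. The tree (as opposed to path) shape of $\mathcal{S}_\sigma$ is handled uniformly by the incidence-matrix argument, since unimodularity of $B'$ holds for any rooted tree.
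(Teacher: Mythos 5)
Your proof follows essentially the same route as the paper's: rigidify the branch with a point $p$ on an $(r,1)$-end (which exists since $a_0>0$ for $\sigma\geq 2$), write the ev-matrix in coordinates given by a base point and the bounded edge lengths, and collapse the determinant to $\pm\prod_{i}y_i$ by triangularity. Your choice of the root vertex $u_1$ as base, with the extra parameter $s$ along the end carrying $p$, versus the paper's choice of $p$ itself as base point with the length $l$ of its adjacent edge, is an immaterial change of coordinates; and your incidence-matrix argument that $\det B'=\pm 1$ for an arbitrary rooted tree is in fact slightly more robust than the paper's matrix \eqref{multS}, which implicitly orders the vertices along the chain so that the matrix is visibly lower triangular.

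There is, however, one omission relative to what the paper proves and later uses. By Definition \ref{DefString} (case $\sigma>1$, last condition) and the splitting data of Section \ref{Split curves} (where $k=\sum_{i=0}^{\sigma}k_i$ with $k_0$ conditions landing on the branch), $\mathcal{S}_\sigma$ may carry marked points coming from $k_0\geq 0$ multi-line conditions on its non-fixed $(-1,0)$-ends, as well as degenerate cross-ratio conditions at its vertices. The degenerate cross-ratios only raise valences and add no rows, but each multi-line condition adds one row (its evaluation) and one column (the length $l^*_j$ of the bounded edge ending at the corresponding marked point) to the ev-matrix; the paper's matrix \eqref{multS} includes these, with diagonal entries $-1$, so the absolute value of the determinant is unchanged. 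Your matrix contains only the $\sigma$ fixed-end rows and the two point rows, so as written your computation establishes the lemma only for a branch free of such conditions. The fix is a one-liner — append the $k_0$ rows and columns and observe that they prolong the triangular structure with unit-magnitude diagonal entries — but it should be stated, since Lemma \ref{multSC} applies the result to branches that do carry multi-line conditions.
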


\begin{proof}
By Remark~\ref{Sfix}, \( \mathcal{S}_\sigma \) has \( \sigma \) vertices connected by \( \sigma-1 \) bounded edges \( l'_{\underline{\sigma-1}} \) with directions \( (x_i, y_i) \) for \( 1 \leq i \leq \sigma-1 \). For each \( 1 \leq i \leq \sigma-1 \), \( y_i \) cannot be zero; otherwise, we obtain a two-dimensional movement for \( \mathcal{S}_\sigma \). Moreover, in Remark~\ref{Sfix}, we have a fixed point \( p = (p_x, p_y) \) on an end in direction \( (r,1) \). Let \( l \) denote the length of the bounded edge adjacent to \( p \).
The multiplicity of the tropical stable map satisfying:
\begin{itemize}
    \item $\sigma$ fixed ends $L^*_{\underline{\sigma}}$ (defined via real numbers $g_{\underline{\sigma}}$ as $L^*_i = (\ev_e)^*_y(g_i)$ for each $1\leq i \leq \sigma$ defined in \ref{fixedends}),
    \item  1 point condition $p$,
    \item $l_0$ cross-ratio conditions $\lambda_{\underline{l_0}}$,
    \item $k_0$ multi-line conditions $L_{\underline{k_0}}$,
\end{itemize}
is given by the determinant of the matrix in~(\ref{multS}), where the lengths of the bounded edges adjacent to the marked points caused by the multi-line conditions are denoted by \( l^*_{\underline{k_0}} \). Note that the $k_0$ multi-line conditions $L_{\underline{k_0}}$ can only be imposed on the ends of $\mathcal{S}_\sigma$ in primitive direction $(-1,0)$. Otherwise, placing them on other edges would block unbounded movement in the $(1,0)$ direction.

\begin{equation}\label{multS}  \begin{array}{c|ccccccccc}
		
        &p_x&p_y&l&l'_1&\cdots&l'_{\sigma-1}&l^*_1&\cdots&l^*_k\\
		\hline
         p_x&1&&&&&&&&\\
         p_y&0&1&&&&&&&\\
       L^*_1&0&1&-1&&&&0&&\\
      
       L^*_2&0&1&-1&y_1&&&&&\\
       \vdots&\vdots&\vdots&\vdots&\vdots&\ddots&&&&\\
       L^*_\sigma&0&1&-1&y_1&&y_{\sigma-1}&&&\\
       L_{1}&0&1&-1&y_1&&&-1&&\\
       \vdots&\vdots&\vdots&\vdots&\vdots&&*&&\ddots&\\
       L_{k_0}&0&1&-1&y_1&&&&&-1\\
         \end{array}        
\end{equation}
    The absolute value of the determinant of this matrix is equal to the multiplicity of $\mathcal{S}_\sigma$.
\end{proof}

\begin{lemma} \label{multSC}
    Let $ C $ be a tropical stable map contributing to $ \mathcal{N}^{0}_{\Delta_{\mathbb{F}_{r}}(a,b, w_{\underline{d}})}( p_{\underline{n}}, L_{\underline{k}}, {{\lambda}}_{\underline{l-1}}, {{\lambda}}'_{l} ) $ where $ n \geq 1 $ and assume that $ C $ has a movable branch $ \mathcal{S}_\sigma $ with $\sigma \geq 2 $, which is described as in the second part of Proposition \ref{*}. (As mentioned in Definition \ref{DefString}, $c_{\underline{\sigma}}$ are the weights of the ends of $\mathcal{S}_\sigma$ of primitive direction $(-1,0)$.) \\If  $\lambda'_l$  has two marked ends in $ C_i$ and two marked ends in $ C_{j} $ for  $1 \leq i,j \leq \sigma$, then:
\begin{equation}\label{cicj}
   \mult_{\ev}(C) =  (c_{i}+c_{j})\cdot \mult_{\ev}(\mathcal{S}_\sigma)\cdot \mult_{\ev}(C_{i})\cdot \mult_{\ev}(C_{j})\prod^{\sigma}_{\substack{q=1,\\q\neq i, j}} c_q \cdot \mult_{\ev}(C_q).
\end{equation}
  If $\lambda'_l$  has two marked ends in $ C_i$ for  $1 \leq i \leq \sigma$ and two marked ends on two other fixed components, then: 
  \begin{equation}\label{cq}
   \mult_{\ev}(C) = \mult_{\ev}(\mathcal{S}_\sigma)\cdot \mult_{\ev}(C_{i})\prod^{\sigma}_{\substack{q=1,\\q\neq i}} c_q \cdot \mult_{\ev}(C_q).
\end{equation}  
 
\end{lemma}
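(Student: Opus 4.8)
The plan is to compute $\mult_{\ev}(C)=|\det M(C)|$ directly by exhibiting a block structure of the ev-matrix that mirrors the decomposition of $C$ into the movable branch $\mathcal{S}_\sigma$ and the fixed components $C_1,\dots,C_\sigma$, in the same spirit as the $2/0$- and $1/1$-edge computations above. First I would cut $C$ along the movable branch as in Section~\ref{4.2}: each bounded edge joining a branch vertex $B_q$ to the anchor vertex $A_q$ of $C_q$ becomes a connecting edge $\epsilon_q$ of weight $c_q$ and primitive direction $(1,0)$, and dropping $\lambda'_l$ produces the $1$-dimensional family in which the components stay fixed while the whole branch translates rigidly in the $(1,0)$ direction by a parameter $s$ (Remark~\ref{Sfix}).

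Next I would organize $M(C)$ by grouping the columns as $[\,R,\ \text{branch-internal lengths}\mid \epsilon_1, C_1\text{-internal}\mid\cdots\mid \epsilon_\sigma, C_\sigma\text{-internal}\,]$ and the rows by the piece on which each condition lives, placing the $\lambda'_l$-row last. The key structural observation is that a point or multi-line condition on $C_q$ only involves the position of its marked point, which equals $R$ plus the branch path to $B_q$ plus $l'_q\,(c_q,0)$ plus the internal $C_q$-edges; hence the $C_q$-rows vanish on every column belonging to a different component. This gives a bordered matrix (a \emph{block-arrow} matrix) whose border columns are $R$ together with the branch-internal lengths. I would then evaluate its determinant by a Schur-complement / sequential Laplace expansion exactly as in the proof of Lemma~\ref{multof1/1case}: each component block, after pulling the weight $c_q$ out of its single $\epsilon_q$-column (the only place the connecting direction $(c_q,0)$ enters), reduces to the ev-matrix of $C_q$ and contributes $c_q\,\mult_{\ev}(C_q)$; the branch's left ends inherit the fixed $y$-coordinates $y(A_q)$ of the anchors (the fixed-end data of Definition~\ref{fixedends}), so the residual branch block is precisely the matrix of Lemma~\ref{mul1} and contributes $\mult_{\ev}(\mathcal{S}_\sigma)=\big|\prod_{i=1}^{\sigma-1}y_i\big|$.

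It remains to read off the contribution of the last row. After the elimination the $\lambda'_l$-row collapses to the single $1$-dimensional matching coming from the branch slide, namely $\dfrac{d|\lambda'_l|}{ds}$. Since $|\lambda'_l|$ is the $\mathcal{M}_{0,4}$-length of the path separating the two pairs of $\lambda'_l$, and since the components are fixed while the branch translates rigidly, only the connecting edges that lie on this path change length, each at rate $dl(\epsilon_q)/ds=1/c_q$ (the image length grows by $s$ while the abstract length is rescaled by the weight $c_q$). In Case~\eqref{cicj} the path leaves the branch through both $\epsilon_i$ and $\epsilon_j$, so the slope is $\tfrac1{c_i}+\tfrac1{c_j}=\tfrac{c_i+c_j}{c_ic_j}$; in Case~\eqref{cq} the two ends $t_\gamma,t_\eta$ diverge already inside the rigid branch, so only $\epsilon_i$ contributes and the slope is $\tfrac1{c_i}$. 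Multiplying the three types of contributions gives the unified identity $\mult_{\ev}(C)=\mult_{\ev}(\mathcal{S}_\sigma)\cdot\big(\prod_{q=1}^{\sigma}c_q\,\mult_{\ev}(C_q)\big)\cdot\big|\tfrac{d|\lambda'_l|}{ds}\big|$, and substituting the two slopes yields \eqref{cicj} and \eqref{cq} respectively.

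I expect the main obstacle to be making the Schur-complement step fully rigorous: the point conditions on the components do double duty, since they rigidify each $C_q$ and, through the horizontal connecting edges, supply the fixed-$y$ data that rigidifies the branch, so the branch block and the component blocks are not literally independent. One must track carefully that the simultaneous elimination extracts exactly one factor $c_q$ per connecting edge, reproduces the matrix of Lemma~\ref{mul1} for the branch, and leaves precisely the scalar $d|\lambda'_l|/ds$ from the cross-ratio row, with no spurious cross-terms. This bookkeeping is the weighted, multi-component analogue of the $2\times2$ coupling in Lemma~\ref{multof1/1case}, and verifying that it genuinely linearizes, because every connecting edge and the branch motion all point in the single direction $(1,0)$, is the crux of the argument.
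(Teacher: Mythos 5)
Your overall strategy is the same as the paper's: both arguments factor $\mult_{\ev}(C)=|\det M(C)|$ by exhibiting a block structure of the ev-matrix, extracting one factor $c_q$ per connecting edge, a factor $\mult_{\ev}(\mathcal{S}_\sigma)=\bigl|\prod_i y_i\bigr|$ tied to the branch-internal columns via Lemma~\ref{mul1}, and one residual scalar from the $\lambda'_l$-row. Your final bookkeeping is also correct, and your interpretation of that residual scalar as $\bigl|\,d|\lambda'_l|/ds\,\bigr|$ along the rigid branch translation -- equal to $\tfrac{1}{c_i}+\tfrac{1}{c_j}$ when the separating path of $\lambda'_l$ leaves the branch through two connecting edges, and to $\tfrac{1}{c_i}$ when it leaves through only one -- is a genuinely cleaner way to see what the paper obtains, after its chain of column operations, as the single surviving entry $\tfrac{c_i+c_j}{c_ic_j}\,c_\sigma$ in the $\lambda'_l$-row; multiplying by $\prod_q c_q$ gives precisely the dichotomy between \eqref{cicj} and \eqref{cq}.

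However, the linear-algebra mechanism you propose has a concrete defect, which you flag as the crux but do not resolve, and as stated it fails. With your grouping (border $=$ root plus branch-internal columns; $q$-th diagonal block $=$ the $\epsilon_q$-column plus the $C_q$-internal columns), the $q$-th block has $2n_q+k_q$ rows but only $2n_q+k_q-1$ columns: pulling $c_q$ out of the $\epsilon_q$-column produces only the $x$-translation column of $M(C_q)$, and the block is missing the column that plays the role of the $y$-translation of the root of $C_q$. The blocks are therefore not square, they cannot individually ``reduce to the ev-matrix of $C_q$,'' and a Schur-complement or sequential Laplace expansion cannot be run block by block as you describe. Consequently the residual border matrix has size $\sigma+k_0+1$, one less than the matrix of Lemma~\ref{mul1} (which has size $\sigma+k_0+2$), so it is not ``precisely'' that matrix either. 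The paper's proof repairs exactly this point by interleaving the border into the blocks: each component block is assigned one connecting column $l_q$ \emph{and} one branch-internal column $l'_q$, and the explicit operations $l_q \mapsto l_{q-1}\cdot\frac{c_q}{c_{q-1}}+l_q$ are performed; after factoring $c_q$ from the connecting columns and $y_q$ from the branch-internal ones (this is where $\mult_{\ev}(\mathcal{S}_\sigma)$ arises -- not from a stand-alone branch block), the $l'$-columns become the missing second root columns of the $M(C_q)$, the matrix becomes genuinely block diagonal, and the one leftover column paired with the $\lambda'_l$-row carries your slope factor. If you replace your Schur-complement step by this redistribution of border columns, your argument becomes the paper's proof.
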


\begin{proof}
Let $C$ be a tropical stable map satisfying the conditions of Proposition \ref{*} with a movable branch $ \mathcal{S}_\sigma $.
Depending on the distribution of the marked ends contributing to the non-degenerate cross-ratio condition ${\lambda}'_{l}$, we have two general cases. Either there are $i\neq j \in \{1, \cdots, \sigma \}$ such that two marked ends of $\lambda'_l$ are on $C_i$ and the other two marked ends on $C_j$. The other case is when there are two marked ends on $C_i$, and the other marked ends each live on different fixed components. This means that the only bounded edge contributing to $\lambda'_l$ whose length grows as we move the movable branch to the right is the bounded edge connecting $C_i$ to $\mathcal{S}_\sigma$.

Let's first label the bounded edges connecting $\mathcal{S}_\sigma$ to the fixed components with $l_q$ for $1\leq q \leq \sigma$, and each edge can have a weight $c_q\in \mathbb{N}$. Without loss of generality, assume that the base point (we denote it by $p$), for identifying $C$ in its moduli space, lives in $C_1$, and $C_1$ is a fixed component adjacent to a vertex satisfying case (1) in Lemma \ref{4cases}. Then the order of the $C_q$s is such that for smaller $q$, $C_q$ is connected to $C_1$ with fewer bounded edges (when the number of bounded edges connecting $C_1$ to two fixed components is equal, we can choose randomly which one can have smaller $q$). We label the fixed components such that $C_q $ is connected to $\mathcal{S}_\sigma$ via $l_q$.

We label the bounded edges in \(\mathcal{S}_\sigma\) adjacent to \(l_s\) and \(l_{s+1}\) for \(1\leq s \leq \sigma-1\) as \(l'_s\).
If there exists a bounded edge in \(\mathcal{S}_\sigma\) adjacent to \(l_s\) and \(l_t\) for \(1\leq s < s+1 < t \leq \sigma-1\), we label it as \(l'_{\min\{s,t\}}\).
If \(l'_{\min\{s,t\}}\) has already been assigned to another bounded edge, we increment its index to \(l'_{\min\{s,t\}+u}\) for \(u = 1, 2, \dots\) until we find an unused label. If multiple values of \(t\) satisfy this condition, we assign labels starting from the lowest \(t\).

\begin{figure}[h!]
\centering
\includegraphics[scale = 0.8]{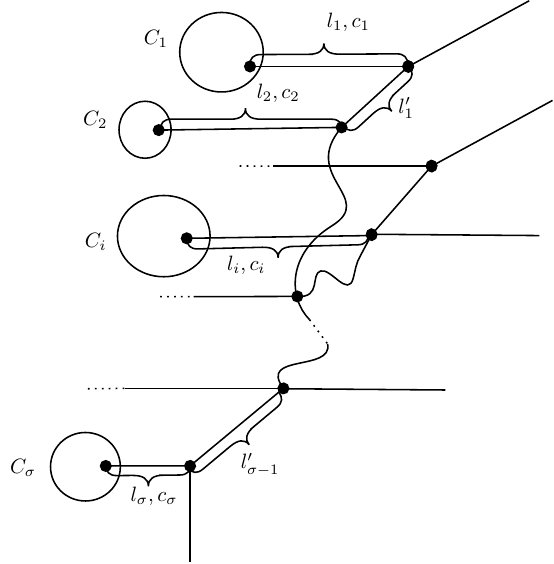}
\caption{In this figure, we present a topological sketch of a movable branch (not an actual tropical curve) connected to the fixed components via edges labeled by their lengths \(l_q\) and weights \(c_q\) for \(1 \leq q \leq \sigma\). The fixed components are represented by circles labeled \(C_q\).
 }
\label{ll'}
\end{figure}
As an example, see Figure \ref{ll'}.
In the evaluation matrix of $C$, we put the lengths of the bounded edges of $C$ in the order given in the following table:

\begin{equation*} \footnotesize{ \begin{array}{c|cc|ccc|ccc|ccc|c}
		&  &\text{ lengths in } &&&   \text{ lengths in } &&&\text{lengths in } &&&\text{lengths in } &\\
        &p&C_1&l_1&l'_1& C_2 &l_2&\cdots&C_{i-1}&l_{i-1}&l'_{i-1}& C_i&l_i\\
		\hline
         p&I&&&&&&&&&&&\\
       C_1&I&*&&&&&&&&&&\\
     C_2&I&*&\begin{array}{c}
            c_1  \\
            0 
       \end{array}&*&*&\begin{array}{c}
            -c_2  \\
            0 
       \end{array}&&&&&\\
       \vdots&\vdots&&\vdots&\vdots&&\vdots&&&&&&\\
       C_{i-1}&I&*&\begin{array}{c}
            c_1  \\
            0 
       \end{array}&*&&0&\cdots&*&\begin{array}{c}
            -c_{i-1}  \\
            0 
       \end{array}&&&\\
       C_i&I&*&\begin{array}{c}
            c_1  \\
            0 
       \end{array}&*&&0&&&0&*&*&\begin{array}{c}
            -c_i  \\
            0 
       \end{array}\\
       C_{i+1}&I&*&\begin{array}{c}
            c_1  \\
            0 
       \end{array}&*&&0&&&0&*&&0\\
       \vdots&\vdots&&\vdots&\vdots&&\vdots&&&\vdots&\vdots&&\vdots\\
       \lambda'_l&&&&&&&&&&&&1\\
         \end{array}
         }       
\end{equation*}

    We want to prove that the evaluation matrix of $C$, is a blocked diagonal matrix, and the determinant of each block, up to a sign, is equal to the evaluation multiplicity of a tropical stable map $C_q$.
If we expand the block of the evaluation matrix $M(C)$, for each $C_i$, for $1\leq i \leq \sigma$, we can see that all blocks of are size $2+ \# \{\text{lengths of all bounded edges in } C_i \}$, and this is equal to $2n_i + k_i$. (Note that the columns corresponding to the lengths of the contracted bounded edges are omitted here, and similarly, the rows corresponding to the associated cross-ratio conditions are also omitted.)

 By applying the following elementary column operations: 
\(
l_{q-1} \cdot \frac{c_q}{c_{q-1}} + l_q \hookrightarrow l_q,
\) starting from \( q = 2 \) and repeating this process for each \( q = 2, \ldots, \sigma \), the evaluation matrix is transformed into a block diagonal matrix. 
The matrix below presents a more detailed form of the evaluation matrix after performing column operations. The variables \(x_i\) and \(y_i\) will be explained later.

\begin{equation*} \footnotesize{ \begin{array}{c|cc|ccc|ccc|ccc|c}
		&  &\text{ lengths in } &&&   \text{ lengths in } &&&\text{lengths in } &&&\text{lengths in } &\\
        &p&C_1&l_1&l'_1& C_2 &l_2&\cdots&C_{i-1}&l_{i-1}&l'_{i-1}& C_i&l_i\\
		\hline
         p&I&&&&&&&&&&&\\
       C_1&I&*&&&&&&&&&&\\
     C_2&I&*&\begin{array}{c}
            c_1  \\
            0 
       \end{array}&\begin{array}{c}
            x_1  \\
            y_1 
       \end{array}&*&&&&&&\\
       \vdots&\vdots&&\vdots&\vdots&&&&&&&&\\
       C_{i-1}&I&*&\begin{array}{c}
            c_1  \\
            0 
       \end{array}&\begin{array}{c}
            x_1  \\
            y_1 
       \end{array}&&\begin{array}{c}
            c_2  \\
            0 
       \end{array}&\cdots&*&&&&\\
       C_i&I&*&\begin{array}{c}
            c_1  \\
            0 
       \end{array}&\begin{array}{c}
            x_1  \\
            y_1 
       \end{array}&&\begin{array}{c}
            c_2  \\
            0 
       \end{array}&&&\begin{array}{c}
            c_{i-1}  \\
            0 
       \end{array}&\begin{array}{c}
            x_{i-1}  \\
            y_{i-1} 
       \end{array}&*&\\
       C_{i+1}&I&*&\begin{array}{c}
            c_1  \\
            0 
       \end{array}&\begin{array}{c}
            x_1  \\
            y_1 
       \end{array}&&\begin{array}{c}
            c_2  \\
            0 
       \end{array}&&&\begin{array}{c}
            c_{i-1}  \\
            0 
       \end{array}&\begin{array}{c}
            x_{i-1}  \\
            y_{i-1} 
       \end{array}&&\begin{array}{c}
            c_i  \\
            0 
       \end{array}\\
       \vdots&\vdots&&\vdots&\vdots&&\vdots&&&\vdots&\vdots&&\vdots\\
       \lambda'_l&0&0&0&0&0&0&\cdots&0&0&0&0&1\\
         \end{array}
         }       
\end{equation*}


     Applying the mentioned elementary operations results in having a value $\frac{c_{i}+c_{j}}{c_{i}c_j}\cdot c_\sigma$ in the column corresponding to $l_\sigma $ and the row corresponding to ${{\lambda}}'_l$ and the rest of the elements in the column of $l_\sigma$ will be zero. Then, we can factor all of the weights of the bounded edges connecting the movable branch to the fixed components so we obtain this factor:
    $ \frac{c_{i}+c_{j}}{c_{i}c_j}\prod^{\sigma}_{\substack{q=1}} c_q $, when each couple of the marked ends contributing to $\lambda'_l$ lives on $C_i $ and $C_j$ and if the only very long length contributing to $\lambda'_l$ is $l_i$ then the factor would be: $\prod^{\sigma}_{\substack{q=1, \\q\neq i}} c_q $.

After factoring the weights $c_q$, the columns corresponding to $l_q$s are up to sign equal to the first column of the ev-matrix of the corresponding $C_q$ for $q=1, \cdots \sigma$. The columns corresponding to $l'_q$ are the ones that are supposed to become equal to the second column of the ev-matrix corresponding to $C_q$.

Having a bounded edge $l'_q$ in the movable branch in $(x_q,0)$ direction results in having a two-dimensional movement. So the direction of a bounded edge $l'_q$ is $(x_q,y_q)$ for $y\neq 0$. Due to Lemma \ref{mul1} if we factor all $y_q$s, then $\prod_{q=1}^{\sigma-1}y_q$ is the ev-multiplicity of the movable branch $\mathcal{S}_\sigma$. So after factoring all $y_q$, it is straightforward that we can use some elementary column operations to turn the column corresponding to $l'_q$ to the second column of the ev-matrix of $C_q$, and this finishes the proof.

\end{proof}

\begin{corollary}\label{whyc1c2}
    Since the cross-ratio multiplicities can be expressed locally at vertices (see Definition \ref{multcr}), the contributions from vertices to cross-ratio multiplicities do not depend on cutting edges. Therefore, from \eqref{2/0mult}, the following equality can be concluded:

   \begin{equation}\label{2c1c2}
    	 \mult(C) = \mult(C_1) \cdot \mult(C_2)
    \end{equation}
Moreover, from \eqref{cicj} and \eqref{cq}, the following equalities can be concluded:
\begin{equation}\label{2cicj}
   \mult(C) =  (c_{i}+c_{j})\cdot \mult(\mathcal{S}_\sigma)\cdot \mult(C_{i})\cdot \mult(C_{j})\prod^{\sigma}_{\substack{q=1,\\q\neq i, j}} c_q \cdot \mult(C_q),
\end{equation}
 
  \begin{equation}\label{2cq}
   \mult(C) = \mult(\mathcal{S}_\sigma)\cdot \mult(C_{i})\prod^{\sigma}_{\substack{q=1,\\q\neq i}} c_q \cdot \mult(C_q),
\end{equation} 
When \( \sigma = 1 \), since \( \lambda'_l \) has two marked ends on \( C_1 \) and two marked ends on \( \mathcal{S}_1 \), and we let the ev-multiplicity of \( \mathcal{S}_1 \) to be equal to one, it follows that \( \mult(C) = \mult(C_1) \).

\end{corollary}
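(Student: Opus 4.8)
The plan is to upgrade the three ev-multiplicity identities \eqref{2/0mult}, \eqref{cicj} and \eqref{cq} to the full-multiplicity identities \eqref{2c1c2}, \eqref{2cicj} and \eqref{2cq} by tracking only the cross-ratio part of the multiplicity across the cut. Recall that $\mult(C)=\mult_{\ev}(C)\prod_{v}\mult_{\text{cr}}(v)$, the product running over the vertices of $C$. Since the ev-parts are already settled by the preceding lemmas, and the weight factors $c_q$ and $(c_i+c_j)$ in \eqref{cicj}--\eqref{cq} arise purely from the ev-matrix, it suffices to prove the single combinatorial fact that $\prod_{v}\mult_{\text{cr}}(v)$ factors as the product of the corresponding products over the vertices of the pieces produced by cutting.

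First I would record that $\mult_{\text{cr}}(v)$ is local: by Definition \ref{multcr} it is the number of total resolutions of $v$, and this count depends only on the valence of $v$ (after excluding contracted marked ends) and on the set $\lambda_v$ of \emph{degenerate} cross-ratios passing through $v$. The cross-ratio $\lambda'_l$ is non-degenerate with positive length, so it corresponds to a path of edges rather than to any single high-valence vertex; hence it contributes to no factor $\mult_{\text{cr}}(v)$, and only the degenerate cross-ratios $\lambda_1,\dots,\lambda_{l-1}$ matter. Cutting the contracted bounded edge (or, in the movable-branch case, the bounded edges leading to $\mathcal{S}_\sigma$) assigns each vertex of $\Gamma$ to exactly one piece: in the contracted-edge case the two endpoints $v_1,v_2$ of the (abstract) edge $e$ are already distinct vertices, going to $C_1$ and $C_2$ respectively, and in the movable-branch case the vertices are simply distributed among $C_1,\dots,C_\sigma$ and $\mathcal{S}_\sigma$. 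The main obstacle is to verify that the adaptation of cross-ratios preserves $\mult_{\text{cr}}$ at each affected vertex: whenever a degenerate $\lambda_j$ at a vertex $v$ loses a marked end to another piece, it is replaced (Notation \ref{splitNotation}, Definition \ref{ftildas}) by the adapted cross-ratio $\lambda_j^{\to e_i}$, $\lambda_j^{\to e}$, or its $\tilde f_i$-analogue, which occupies the same edge-slot at $v$. Since this substitution leaves the valence of $v$ and the grouping of its edges unchanged, the total-resolution count, and therefore $\mult_{\text{cr}}(v)$, is unchanged. Consequently
$\prod_{v\in C}\mult_{\text{cr}}(v)=\prod_{\text{pieces }P}\prod_{v\in P}\mult_{\text{cr}}(v)$.

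Finally I would assemble the identities. Multiplying \eqref{2/0mult} by this factored cross-ratio product and using $\mult(C_i)=\mult_{\ev}(C_i)\prod_{v\in C_i}\mult_{\text{cr}}(v)$ gives \eqref{2c1c2}; applying the same manipulation to \eqref{cicj} and \eqref{cq}, together with $\mult(\mathcal{S}_\sigma)=\mult_{\ev}(\mathcal{S}_\sigma)\prod_{v\in\mathcal{S}_\sigma}\mult_{\text{cr}}(v)$, yields \eqref{2cicj} and \eqref{2cq}. For the degenerate case $\sigma=1$, the branch $\mathcal{S}_1$ has the single vertex described in Definition \ref{DefString}, whose higher valence is forced by multi-line conditions rather than by any degenerate cross-ratio, so the only cross-ratio meeting it is the non-degenerate $\lambda'_l$; hence $\lambda_v=\varnothing$ and $\mult_{\text{cr}}=1$ there. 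Combined with $\mult_{\ev}(\mathcal{S}_1)=1$ from Remark \ref{Sfix}, this gives $\mult(\mathcal{S}_1)=1$, and therefore $\mult(C)=\mult(C_1)$, completing the plan.
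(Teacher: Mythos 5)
Your argument for \eqref{2c1c2}, \eqref{2cicj}, and \eqref{2cq} follows the same route as the paper: the paper's entire justification of this corollary is precisely the locality of $\mult_{\text{cr}}$ from Definition \ref{multcr}, and your extra details --- that the non-degenerate $\lambda'_l$ contributes no vertex factor, and that replacing a lost entry of a degenerate $\lambda_j$ by the adapted $\lambda_j^{\rightarrow e_i}$ (or by $\Tilde{f}_i$) fills the same edge-slot at the vertex, so the total-resolution count is unchanged --- make explicit what the paper leaves implicit. For the contracted-edge cases and for $\sigma \geq 2$ your proof is correct and essentially identical to the paper's.

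Your treatment of $\sigma = 1$, however, contains a genuine flaw. You assert that the single vertex $v$ of $\mathcal{S}_1$ satisfies $\lambda_v = \varnothing$ because its "higher valence is forced by multi-line conditions rather than by any degenerate cross-ratio." This is backwards in the paper's framework: marked points caused by multi-line conditions are \emph{contracted} ends, which are excluded when counting the valence of an ordinary vertex (Definition \ref{IIII}), so they cannot account for $v$ being adjacent to more than three non-contracted parallel ends. By Definition \ref{multcr} the valence of $v$ is $3 + \#\lambda_v$, and Remark \ref{Gol18} requires every edge at a vertex of valence $>3$ to be tied to some degenerate cross-ratio in $\lambda_v$; the paper itself uses exactly this in the proof of Lemma \ref{YiisLst}, where a single-vertex movable component of valence $>3$ is shown to carry some $\lambda_j$ among the first $l-1$ cross-ratios. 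Hence $\lambda_v = \varnothing$ fails whenever $\mathcal{S}_1$ has more than three non-contracted ends, and your conclusion $\mult_{\text{cr}}(v) = 1$ would additionally require $\#\lambda_v \leq 1$ (for $\#\lambda_v \geq 2$ the number of total resolutions can exceed one), which you do not establish. Note that the paper's own (terse) justification for $\sigma = 1$ does not rest on your claim: it invokes only the two-and-two distribution of the ends of $\lambda'_l$ between $C_1$ and $\mathcal{S}_1$ together with the convention $\mult_{\ev}(\mathcal{S}_1) = 1$ of Remark \ref{Sfix}. To close the gap you would either have to prove that the vertex of $\mathcal{S}_1$ carries at most one degenerate cross-ratio, or track its cross-ratio multiplicity explicitly as part of a factor $\mult(\mathcal{S}_1)$, as is done for $\sigma \geq 2$.
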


\section{General Kontsevich-style formula for Hirzebruch Surfaces}\label{Formula}

In this section, we prove a Kontsevich-style formula for counting $(m+d)-$marked tropical stable maps in $\mathbb{F}_{r}$ satisfying point, multi-line, and cross-ratio conditions.

\begin{lemma} \label{Cristophlemma}
    Let $(\Delta_{1}, n_{1}, k_{1}, l_{1}, f_{1} | \Delta_{2}, n_{2}, k_{2}, l_{2}, f_{2})$ be a given split of conditions as in Notation \ref{splitNotation}. Then:
\begin{align}
&
\sum_{\substack{(\Delta_{1}, n_{1}, k_{1}, l_{1}, f_{1} | \Delta_{2}, n_{2}, k_{2}, l_{2}, f_{2}) \\ \text{is a split over a contracted bounded } \\ \text{ edge respecting } {\lambda}'_l } } \sum\mult(C) =\label{00}\\ 
&
    \sum_{\substack{(\Delta_{1}, n_{1}, k_{1}, l_{1}, f_{1} | \Delta_{2}, n_{2}, k_{2}, l_{2}, f_{2}) \\ \text{is a 1/1 split respecting } {\lambda}'_l}} \mathcal{N}^{0}_{\Delta_{1}}(p_{\underline{n_1}}, L_{\underline{k_1}},  L_{e_1}, {\lambda}_{\underline{l_1}}^{\rightarrow e_1}) \cdot \mathcal{N}^{0}_{\Delta_{2}}(p_{\underline{n_2}}, L_{\underline{k_2}},  L_{e_2}, {\lambda}_{\underline{l_2}}^{\rightarrow e_2}) \label{01} 
       \\ &+ 
        \sum_{\substack{(\Delta_{1}, n_{1}, k_{1}, l_{1}, f_{1} | \Delta_{2}, n_{2}, k_{2}, l_{2}, f_{2}) \\ \text{is a 2/0 split respecting } {\lambda}'_l \text{, such that} \\ \#\Delta_1 = n_1 + l_1 - f_1 }} \mathcal{N}^{0}_{\Delta_{1}}(p_{\underline{n_1}}, L_{\underline{k_1}}, {\lambda}_{\underline{l_1}}^{\rightarrow e_1}) \cdot \mathcal{N}^{0}_{\Delta_{2}}(p_{\underline{n_2}},  p, L_{\underline{k_2}}, {\lambda}_{\underline{l_2}}^{\rightarrow e_2}),  \label{02}
\end{align}

    holds where the second sum in \eqref{00} goes over all tropical stable maps $C$ with a contracted bounded edge $e$ (that splits to two contracted ends $e_1, e_2$) such that $C$ contributes to $ \mathcal{N}^{0}_{\Delta_{\mathbb{F}_{r}}(a,b, w_{\underline{d}})}(p_{\underline{n}}, L_{\underline{k}}, {\lambda}_{\underline{l-1}}, {\lambda}'_l) $ and the cross-ratio ${\lambda}'_l$ has a very long length, and for $i=1,2$,  \( L_{e_i} \) is a multi-line condition of weight one that is imposed on $e_i$ where $p = \ev_{e}(C) \in Y_1 \cap Y_2$.
\end{lemma}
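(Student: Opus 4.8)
The plan is to read the inner sum in \eqref{00} off the geometry of the cut. By the analysis in Section~\ref{4.1}, cutting a contracted bounded edge $e$ of a tropical stable map $C$ contributing to $\mathcal{N}^{0}_{\Delta_{\mathbb{F}_{r}}(a,b, w_{\underline{d}})}(p_{\underline{n}}, L_{\underline{k}}, \lambda_{\underline{l-1}}, \lambda'_{l})$ produces exactly one of two types of split: a $1/1$ split (both $Y_i$ one-dimensional) or a $2/0$ split (one $Y_i$ a point, the other two-dimensional); the remaining possibility $\val(v_1),\val(v_2)\geq 4$ is excluded there, since then $Y_1\cdot Y_2=\emptyset$. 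Hence the inner sum over all such $C$ splits as the sum of its $1/1$ and its $2/0$ contributions, and it suffices to match the former with \eqref{01} and the latter with \eqref{02}. Throughout I will use Corollary~\ref{whyc1c2}: the cross-ratio multiplicities are computed locally at the vertices (Definition~\ref{multcr}) and are untouched by the cut, so every multiplicity identity reduces to the corresponding identity for the ev-multiplicities, while the adapted cross-ratios $\lambda^{\rightarrow e_i}_{\underline{l_i}}$ transport the cross-ratio data of $C$ to the two pieces.

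For the $2/0$ contributions I would fix a split type with $\#\Delta_1=n_1+l_1-f_1$, so that $\dim Y_1=0$ by the dimension count of Section~\ref{4.1}; then $C_1$ is rigid and determines the cut point $p=\ev_{e_1}(C_1)$, while $e_2$ ranges over the two-dimensional cycle $Y_2$. Cutting $e$ sets up a bijection between the tropical stable maps $C$ of this split type and the pairs $(C_1,C_2)$ in which $C_1$ contributes to $\mathcal{N}^{0}_{\Delta_{1}}(p_{\underline{n_1}}, L_{\underline{k_1}}, \lambda^{\rightarrow e_1}_{\underline{l_1}})$ and $C_2$ has $e_2$ pinned at $p$, i.e. contributes to $\mathcal{N}^{0}_{\Delta_{2}}(p_{\underline{n_2}}, p, L_{\underline{k_2}}, \lambda^{\rightarrow e_2}_{\underline{l_2}})$. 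Since $\mult(C)=\mult(C_1)\cdot\mult(C_2)$ by \eqref{2c1c2}, summing over all $C$ of this type factors as the product of the two degrees, which is exactly the summand of \eqref{02}; summing over all $2/0$ split types then yields \eqref{02}.

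The $1/1$ contributions are the heart of the argument. Here both $e_1$ and $e_2$ move in one-dimensional families, so the glued maps $C$ of a fixed $1/1$ split type are in bijection with the transverse intersection points $p\in Y_1\cap Y_2$ decorated with the data of $C_1,C_2$, and the summed ev-multiplicity equals the stable intersection degree $\deg(Y_1\cdot Y_2)$. The key input is the determinant formula of Lemma~\ref{multof1/1case}, which writes $\mult_{\ev}(C)$ as $|\det M(C_{1,(1,0)})\det M(C_{2,(0,1)})-\det M(C_{1,(0,1)})\det M(C_{2,(1,0)})|$, each factor $\det M(C_{i,(s,t)})$ being the ev-multiplicity of $C_i$ with the degenerate line $L_{(s,t)}$ of Definition~\ref{st} imposed on $e_i$. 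The plan is to substitute this into $\sum_C\mult(C)$ and reorganize the resulting sum, over all admissible splittings $\Delta_1\cup\Delta_2=\Delta_{\mathbb{F}_{r}}(a,b, w_{\underline{d}})$, into the products $\mathcal{N}^{0}_{\Delta_{1}}(p_{\underline{n_1}}, L_{\underline{k_1}}, L_{e_1}, \lambda^{\rightarrow e_1}_{\underline{l_1}})\cdot\mathcal{N}^{0}_{\Delta_{2}}(p_{\underline{n_2}}, L_{\underline{k_2}}, L_{e_2}, \lambda^{\rightarrow e_2}_{\underline{l_2}})$ of counts in which the cut point $e_i$ lies on a multi-line $L_{e_i}$ through $p$.

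I expect this reorganization to be the main obstacle, and it is precisely where $\mathbb{F}_{r}$ departs from $\mathbb{P}^2$. The determinant expression does not factor pointwise, so the factorization must be carried out globally: by Lemma~\ref{YiisLst} the recession fan of each $Y_i$ has ends only in the directions $(1,0),(r,1),(-1,0),(0,-1)$, and by rational equivalence (Remark~\ref{RationalyEquivalent}) the number $\deg(Y_1\cdot Y_2)$ depends only on these fans. Writing each recession fan as a combination of the multi-line class and the fiber class, the intersection pairing of $\mathbb{F}_{r}$ enters; this pairing has rank two, in contrast to the rank-one pairing on $\mathbb{P}^2$ that made the analogous step in \cite{G} a single tropical Bézout product. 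The delicate point is to verify that, for each fixed degree splitting, the two surviving determinant terms of Lemma~\ref{multof1/1case} assemble into a single product of the claimed multi-line counts — in particular that the $(1,-r)$-contribution vanishes exactly as $F_{(1,-r)}=0$ does in the proof of Lemma~\ref{multof1/1case}, and that the weights of the $(1,0)$- and $(-1,0)$-ends are correctly absorbed into the conditions $L_{e_1},L_{e_2}$. Once this bilinear bookkeeping is checked, the $1/1$ contributions sum to \eqref{01}, and adding the $1/1$ and $2/0$ contributions gives the equality of \eqref{00} with the sum of \eqref{01} and \eqref{02}.
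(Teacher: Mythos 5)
Your overall skeleton matches the paper: you split the inner sum of \eqref{00} into its $1/1$ and $2/0$ parts using the trichotomy of Section \ref{4.1}, you invoke Corollary \ref{whyc1c2} to reduce all multiplicity identities to ev-multiplicities, and your treatment of the $2/0$ case (the rigid piece $C_1$ pins the point $p = \ev_{e}(C)$, which becomes an additional point condition on $C_2$, and $\mult(C) = \mult(C_1)\cdot\mult(C_2)$ by \eqref{2c1c2}) is essentially the paper's argument.

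However, there is a genuine gap in the $1/1$ case, which is the heart of the lemma. You correctly identify that the formula of Lemma \ref{multof1/1case} is bilinear, $\mult_{\ev}(C) = |\det M(C_{1,(1,0)})\det M(C_{2,(0,1)}) - \det M(C_{1,(0,1)})\det M(C_{2,(1,0)})|$, and hence does not factor pointwise into a single product of the form appearing in \eqref{01}; but you then leave precisely this step unresolved, writing that ``the delicate point is to verify'' the assembly and that the conclusion holds ``once this bilinear bookkeeping is checked.'' That bookkeeping \emph{is} the content of the lemma for $1/1$ splits, and your proposed route through recession fans and the rank-two intersection pairing of $\mathbb{F}_{r}$ is never carried out; it is also unclear how it would produce the specific counts $\mathcal{N}^{0}_{\Delta_{1}}(p_{\underline{n_1}}, L_{\underline{k_1}}, L_{e_1}, {\lambda}_{\underline{l_1}}^{\rightarrow e_1}) \cdot \mathcal{N}^{0}_{\Delta_{2}}(p_{\underline{n_2}}, L_{\underline{k_2}}, L_{e_2}, {\lambda}_{\underline{l_2}}^{\rightarrow e_2})$ with both $L_{e_i}$ passing through a common point $p$, which is what \eqref{01} requires. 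The paper closes this gap with a concrete geometric device you do not use: by Remark \ref{RationalyEquivalent} the conditions may be repositioned into widely separated boxes ($p_{\underline{n_1}}, L_{\underline{k_1}}$ in the upper left, $p_{\underline{n_2}}, L_{\underline{k_2}}$ in the lower right), which forces every intersection point $p \in Y_1 \cap Y_2$ to lie on the end of $Y_1$ of primitive direction $(0,-1)$ and the end of $Y_2$ of primitive direction $(-1,0)$ (Figure \ref{Yiinboxes}); choosing $L_{e_1}, L_{e_2}$ through such a $p$ as in that figure makes the two cross terms $\det M(C_{1,(0,1)})$ and $\det M(C_{2,(1,0)})$ of \eqref{1/1mult} vanish, so the multiplicity factors pointwise as $\mult(C) = \mult(C_{1,(1,0)}) \cdot \mult(C_{2,(0,1)})$, and the bijection between glued maps and pairs contributing to \eqref{01} follows immediately. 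Without this (or an equivalent) degeneration of the conditions, your argument does not establish \eqref{01}.
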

\begin{proof}

Let $C = (\Gamma, x_{\underline{m}}, \Tilde{e}_{\underline{d}}, h)$ be a tropical stable map contributing to $ \mathcal{N}^{0}_{\Delta_{\mathbb{F}_{r}}(a,b, w_{\underline{d}})}(p_{\underline{n}}, L_{\underline{k}}, {\lambda}_{\underline{l-1}}, {\lambda}'_l) $ where the cross-ratio ${\lambda}'_l$ has a very long length and $C$ has a contracted bounded edge $e$. We can split \( C \) along \( e \) into \( C_1 = (\Gamma_1, x_{\underline{m_1}}, \Tilde{e}_{\underline{d_1}}, h_1) \) and \( C_2 = (\Gamma_2, x_{\underline{m_2}}, \Tilde{e}_{\underline{d_2}}, h_2) \) such that
\( m_1+m_2=m, d_1+d_2=d \), and split \( e \) into two ends, \( e_1 \) and \( e_2 \), such that their image in the plane are \( \ev_{e_1}(C_1)= v_1 \) and \( \ev_{e_2}(C_2)= v_2 \). 



Either \( e \) is a \( 1/1 \) edge or a \( 2/0 \) edge.  
If \( e \) is a \( 1/1 \) edge, then, due to Remark \ref{RationalyEquivalent}, we know that the number of tropical stable maps contributing to \( \mathcal{N}^{0}_{\Delta_{\mathbb{F}_{r}}(a,b, w_{\underline{d}})}(p_{\underline{n}}, L_{\underline{k}}, {\lambda}_{\underline{l-1}}, {\lambda}'_l) \) is independent of the exact position of the point and multi-line conditions.
 Thus, we can reposition all conditions so that they are contained within rectangular boxes far enough away from each other in \( \mathbb{R}^2 \). Specifically, we want \( p_{\underline{n_1}} \) and \( L_{\underline{k_1}} \) be placed in the upper left region, while \( p_{\underline{n_2}} \) and \( L_{\underline{k_2}} \) are in the lower right region.
 The placement of point and multi-line conditions in the boxed regions implies that the vertices of \( h_1(\Gamma_1) \) are contained in the upper left box and the vertices of \( h_2(\Gamma_2) \) in the lower right box of \( \mathbb{R}^2 \). Consequently, \( Y_1 \) and \( Y_2 \) occupy the positions indicated in Figure \ref{Yiinboxes}.

 \begin{figure}[h!]
\centering
\includegraphics[scale = 0.9]{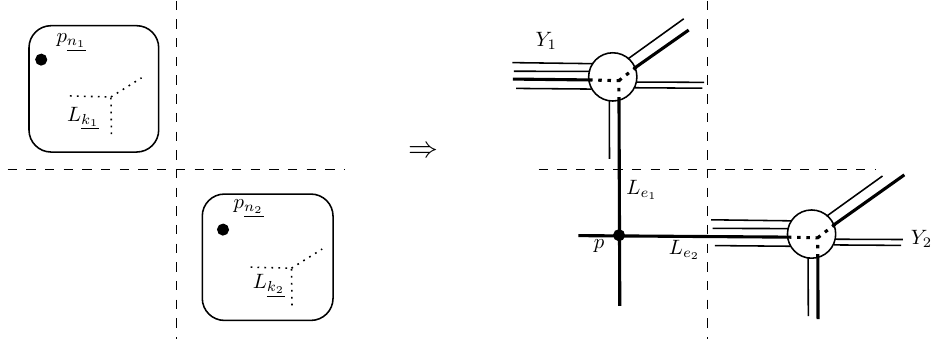}
\caption{In the left figure, the real plane is divided into four quadrants. All of the marked points \( p_{\underline{n_1}} \) and the lines \( L_{\underline{k_1}} \) are placed in the upper-left quadrant, while \( p_{\underline{n_2}} \) and \( L_{\underline{k_2}} \) are located in the lower-right quadrant while they are contained in boxes far away from each other. As a result, \( Y_1 \) and \( Y_2 \) appear in the corresponding quadrants, as shown in the right figure. In the right picture, $L_{e_i}$ are the multi-line conditions that we want to apply on $C_j$ at $ v_j $ for $i\neq j$.  The unbounded ends of \( L_{e_1} \) and \( L_{e_2} \) are drawn with thicker lines to highlight them.}
\label{Yiinboxes}
\end{figure}

We know that for \( i = 1, 2 \), \( v_i \) is a vertex of \( h_i(\Gamma_i) \), and from another perspective, \( v_i = \ev_{e_i}(C_i) \in Y_i \).  For \( i \neq j \), \( Y_i \) acts as a condition on \( C_j \), reducing the dimension of 
\[ \prod_{k\in \underline{k_i}}\ev^{*}_k (L_k )\cdot \prod_{t\in \underline{n_i}}\ev^{*}_t (p_t )\cdot \prod_{j\in \underline{l_i}}ft^{*}_{{\lambda}_{j}^{\rightarrow e_i}} (0)\cdot \mathcal{M}^{\trop}_{0,m_j+d_j}(\mathbb{R}^2, \Delta_j) \]
 by one. Moreover, for \( i \neq j \), \( v_i \in Y_j \cap h_i(\Gamma_i) \), so \( v_i \in Y_j \). Hence, \( p = v_i = v_j \) is a point in \( Y_i \cap Y_j \), which is the image of the contracted edge \( e \). But notice that $Y_i \cap Y_j$ can contain more than one point. 
For a $p \in Y_1 \cap Y_2$, let \( L_{e_i} \) be a multi-line condition of weight one that passes through $p$, as shown in Figure \ref{Yiinboxes}. 
Since the point \( p \) lies on the ends of \( Y_i \) and \( Y_j \) of primitive directions \( (0,-1) \) and \( (-1,0) \), we can replace \( Y_i \) with \( L_{e_i} \) as a condition implying on $C_j$ and consider  \( C_1 \) to be the tropical stable map that contributes to  
\( \mathcal{N}_{d_1} ( p_{\underline{n_1}}, L_{\kappa_1}, L_{e_1}, \lambda^{\to e_1}_{l_1} ) \)  
and  \( C_2 \) to be the tropical stable map that contributes to  
\( \mathcal{N}_{d_2} ( p_{\underline{n_2}}, L_{\kappa_2}, L_{e_2}, \lambda^{\to e_2}_{l_2} ) \). Due to Lemma \ref{multof1/1case} and Corollary \ref{whyc1c2},
\[
\mult(C)= \mult(C_{1,(1,0)}) \cdot \mult(C_{2,(0,1)}),
\]

 because $\mult(C_{1,(0,1)})$ and $\mult(C_{2,(1,0)})$ in \eqref{1/1mult} both vanish by our choice of $L_{e_1}$ and $L_{e_2}$. So when \( e \) is a \( 1/1 \) edge, we can split \( C \) into \( C_{1,(1,0)} \) and \( C_{2,(0,1)} \), which contribute to the sum in \eqref{01}. Since \( \ev_e(C) = p \in L_{e_1} \cap L_{e_2} \), we can glue the tuple \( C_{1,(1,0)}, C_{2,(0,1)} \), satisfying \eqref{01}, and reconstruct a tropical stable map that contributes to \eqref{00}.

If \( e \) is a \( 2/0 \) edge, without loss of generality, consider \( \#\Delta_1 = n_1 + l_1 - f_1 \). Then, \( \dim Y_2 = 2 \), and \( Y_1 \) is exactly the point \( p = \ev_e(C) \), which acts as an additional point condition on \( C_2 \). Reordering the sum in \eqref{00} gives us two sums over \( 1/1 \) and \( 2/0 \) splits, and this completes the proof.

\end{proof}

\begin{theorem}\label{Main0}
We use the same notations as in Section \ref{Split curves} and Lemma \ref{Cristophlemma}. Consider \( n \) points \( p_{\underline{n}} \), \( k \) multi-lines \( L_{\underline{k}} \), and \( l\geq1 \) cross-ratio conditions \( {\lambda}_{\underline{l}} \), all in general position. The number of \((m+d)\)-marked tropical stable maps with \( d \) marked right ends \( \Tilde{e}_{\underline{d}} \) that contribute to  
\(\mathcal{N}^{0}_{\Delta_{\mathbb{F}_{r}}(a,b, w_{\underline{d}})}(p_{\underline{n}}, L_{\underline{k}}, {\lambda}_{\underline{l}})\)  
is determined recursively by the following formulas. Here, \( m = n + k + f \), where the marked points labeled in \( \underline{f} \) satisfy no additional conditions. Let $\lambda'_l$ denote a cross-ratio that degenerates to $\lambda_l$.

\begin{enumerate}
    \item \textbf{For} $n \geq 1$:
    
    \begin{equation}
            \mathcal{N}^{0}_{\Delta_{\mathbb{F}_{r}}(a,b, w_{\underline{d}})}(p_{\underline{n}}, L_{\underline{k}}, {\lambda}_{\underline{l}}) = \label{General}
    \end{equation}

    \begin{align}
    &\sum_{\substack{
        (\Delta_{1}, n_{1}, k_{1}, l_{1}, f_{1} | \Delta_{2}, n_{2}, k_{2}, l_{2}, f_{2}) \\ 
        \text{is a 1/1 split respecting } {\lambda}'_l
    }} 
    \mathcal{N}^{0}_{\Delta_{1}}(p_{\underline{n_1}}, L_{\underline{k_1}}, L_{e_1}, {\lambda}_{\underline{l_1}}^{\rightarrow e_1}) 
    \cdot 
    \mathcal{N}^{0}_{\Delta_{2}}(p_{\underline{n_2}}, L_{\underline{k_2}}, L_{e_2}, {\lambda}_{\underline{l_2}}^{\rightarrow e_2}) 
    \label{1} 
    \\ 
    &+ \sum_{\substack{
        (\Delta_{1}, n_{1}, k_{1}, l_{1}, f_{1} | \Delta_{2}, n_{2}, k_{2}, l_{2}, f_{2}) \\ 
        \text{is a 2/0 split respecting } {\lambda}'_l \\ 
        \text{and } \#\Delta_1 = n_1 + l_1 - f_1 
    }} 
    \mathcal{N}^{0}_{\Delta_{1}}(p_{\underline{n_1}}, L_{\underline{k_1}}, {\lambda}_{\underline{l_1}}^{\rightarrow e_1}) 
    \cdot 
    \mathcal{N}^{0}_{\Delta_{2}}(p_{\underline{n_2}},  p, L_{\underline{k_2}}, {\lambda}_{\underline{l_2}}^{\rightarrow e_2})  
    \label{2}
    \\ 
    &+ \sum_{\sigma=1}^{ra+b} 
    \sum_{\mathcal{S}_\sigma} \mult(\mathcal{S}_\sigma)  
    \sum_{\bigstar} 
    \mathcal{Y} \cdot
    \mathcal{X} \cdot 
    \prod_{i=1}^{\sigma} 
    \mathcal{N}^{0}_{\Delta_{\mathbb{F}_{r}}(a'_i, b'_i, w_{i\underline{d'_i}})}
    (p_{\underline{n_i}}, L_{\underline{k_i}}, {\lambda}_{\underline{l_i}}), 
    \label{3}
    \end{align}\\
such that the first two summands count tropical stable maps that are obtained by gluing two tropical stable maps of smaller degrees along contracted bounded edges, under the conditions specified in \eqref{conditions on splitting cbd}; see part \textbf{I} of Section \ref{Split curves}. The last summand counts tropical stable maps obtained by gluing movable branches $\mathcal{S}_\sigma$ to $\sigma$ fixed components $C'_{\underline{\sigma}}$ of lower degree; see \eqref{degoffixed} and part \textbf{II} of Section \ref{Split curves}. Moreover, the following hold:\\

\begin{itemize}
 \item $\bigstar := \{ (\Delta_{\mathbb{F}_r}(a'_i, b'_i, w_{i\underline{d'_i}}),  n_i, k_i, l_i, f_i) \}_{i=1}^{\sigma}$ where $\sum_{i=0}^{\sigma}(a'_i ,d'_i) = (a ,d+\sigma)$ and equalities \ref{eq:3} and \ref{eq:all_sums} must hold,\\
\item $\mathcal{Y}$ is defined based on the marked ends contributing to $\lambda'_l,$ as follows:
\[\begin{cases} 
         \begin{array}{lc}  \begin{array}{l}
     \text{Case (A): if } \lambda'_l \text{ has two marked ends in } C_s \text{ for } 1 \leq s \leq \sigma, \text{ and two}\\  \text{other marked ends live on two other fixed components:}  
\end{array} & \begin{array}{c}
    \mathcal{Y} = \prod\limits_{\substack{q=1 \\ q \neq s}}^\sigma c_q,
\end{array} \\ &\\

  \begin{array}{l}
    \text{Case (B): if } \lambda'_l \text{ has two marked ends in } C_t \\\text{ and two marked ends in } C_s, \text{ for } 1 \leq s, t \leq \sigma:
 \end{array} &
 \begin{array}{c}
   \mathcal{Y} = (c_s + c_t) \prod\limits_{\substack{q=1 \\ q \neq s,t}}^\sigma c_q. 
\end{array}
 
 \end{array}
    \end{cases}
    \]


    \item $\mathcal{X}$ represents the total number of possible ways to glue $C'_{\underline{\sigma}}$ and $\mathcal{S}_\sigma$ together:
    \[\mathcal{X}= \prod_{\substack{\iota \in \tilde{\delta}}} \#\phi_{\iota} \cdot \prod_{\substack{\iota \in {\delta}}} \#\psi_{\iota }.
    \]
      for $\iota \in {\delta}$, $c_\iota$ is the weight of a non-marked left end in $\mathcal{S}_\sigma$. All possible right ends of $C'_{\underline{\sigma}}$ that can be glued to this end of $\mathcal{S}_\sigma$ are:
      \begin{center}
          $\, \psi_{\iota} := \{ 1 \leq j \leq \sigma,   1\leq i \leq d'_j :
      c_\iota = w_{ji} \},  $
      \end{center}
      for $\iota \in \Tilde{\delta}$, $c_\iota$ is the weight of a marked left end in $\mathcal{S}_\sigma$. All possible right ends of $C'_{\underline{\sigma}}$ that can be glued to this end of $\mathcal{S}_\sigma$ are:
      \begin{center}
      $\, \phi_{\iota}:= \{ 1 \leq i \leq d'_\iota : w_{\iota i}=c_\iota \}$.
\end{center}

\end{itemize}

    \item \textbf{For} $n = 0$ the equation:

\begin{equation}
    \mathcal{N}^{0}_{\Delta_{\mathbb{F}_{r}}(a,b, w_{\underline{d}})}(L_{\underline{k}}, {\lambda}_{\underline{l}}) =    
    \mathcal{N}^{0}_{\Delta_{\mathbb{F}_{r}}(a,b, w_{\underline{d}})}(p, L_{\underline{k-2}}, {\lambda}_{\underline{l-1}}^{\rightarrow e}),  
    \label{n=0}
\end{equation}

holds where  $p = \ev_e (C)$ is in the intersection of the two multi-line conditions in Lemma \ref{0}, and $e$ is the contracted bounded edge in Lemma \ref{n0}.
    
\end{enumerate}

\end{theorem}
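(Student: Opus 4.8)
The plan is to deduce the formula entirely from the degeneration analysis of Sections \ref{STSM}--\ref{MOSC}, using rational equivalence to trade the last cross-ratio for a very long one. Concretely, by Remark \ref{RationalyEquivalent} the number $\mathcal{N}^0$ does not depend on the lengths of the cross-ratio conditions, so I replace the degenerate $\lambda_l$ by a non-degenerate $\lambda'_l$ of very long length that degenerates to it, giving
\[
\mathcal{N}^{0}_{\Delta_{\mathbb{F}_{r}}(a,b, w_{\underline{d}})}(p_{\underline{n}}, L_{\underline{k}}, {\lambda}_{\underline{l}})
= \mathcal{N}^{0}_{\Delta_{\mathbb{F}_{r}}(a,b, w_{\underline{d}})}(p_{\underline{n}}, L_{\underline{k}}, {\lambda}_{\underline{l-1}}, \lambda'_l)
= \sum_{C} \mult(C),
\]
the last sum being over all tropical stable maps contributing to the long-$\lambda'_l$ count. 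Everything then reduces to classifying these $C$ and accounting for their multiplicities.

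For $n \geq 1$ I invoke Proposition \ref{*}: each such $C$ has either a contracted bounded edge or a movable branch $\mathcal{S}_\sigma$, and these are the two mutually exclusive types into which I split $\sum_C \mult(C)$. The contracted-bounded-edge part is exactly Lemma \ref{Cristophlemma}, whose right-hand side \eqref{01}--\eqref{02} is the pair of summands \eqref{1} and \eqref{2}; the 1/1 versus 2/0 dichotomy of Subsection \ref{4.1} together with the multiplicity identities \eqref{2/0mult}, \eqref{1/1mult} and Corollary \ref{whyc1c2} ensures the product of the two lower-degree counts reproduces $\mult(C)$, the vanishing of the cross terms in \eqref{1/1mult} being forced by the choice of the degenerate lines $L_{e_1}, L_{e_2}$.

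The core of the argument is the movable-branch summand \eqref{3}. Here I reconstruct each $C$ as a gluing of a movable branch $\mathcal{S}_\sigma$ of degree $\Delta_{\mathbb{F}_r}(a_0,b_0,w_{\underline{d_0}},c_{\underline\sigma})$ to $\sigma$ fixed components $C'_{\underline\sigma}$ of lower degree along its $\sigma$ left ends, as set up in Subsection \ref{4.2}. Summing $\mult(C)$ over all such $C$ factors into: a sum over $\sigma$ and over the combinatorial type of the branch, contributing $\mult(\mathcal{S}_\sigma)$; a sum over the distribution $\bigstar$ of degrees and conditions subject to \eqref{eq:1}--\eqref{eq:all_sums}; and, for each fixed component, a sum over all stable maps of the prescribed data, which collects into the factors $\mathcal{N}^{0}_{\Delta_{\mathbb{F}_r}(a'_i,b'_i,w_{i\underline{d'_i}})}(\ldots)$. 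The weight prefactor $\mathcal{Y}$ is read off from Corollary \ref{whyc1c2}, with \eqref{2cq} yielding Case (A) and \eqref{2cicj} yielding Case (B) according to how the four ends of $\lambda'_l$ distribute among the fixed components; the combinatorial factor $\mathcal{X}=\prod_{\iota\in\tilde{\delta}}\#\phi_\iota\cdot\prod_{\iota\in\delta}\#\psi_\iota$ counts the admissible weight-preserving matchings of the left ends of $\mathcal{S}_\sigma$ to right ends of the $C'_i$, distinguishing marked from unmarked left ends as in Definition \ref{ftildas}. For $n = 0$ the analysis collapses to Lemma \ref{n0}: every contributing map arises by resolving a vertex carrying two multi-line conditions through a single (2/0) contracted bounded edge, and replacing the two multi-lines meeting there by a single point condition $p$ at their intersection yields precisely \eqref{n=0}.

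The main obstacle will be the bookkeeping in \eqref{3}: proving that every $C$ with a movable branch is produced with the correct total multiplicity and exactly once. The delicate points are (a) separating the edge weights $c_q$ joining the branch to the fixed components, which constitute the factor $\mathcal{Y}$, from the gluing count $\mathcal{X}$, so that no weight is double-counted; (b) handling the marked left ends, since a cross-ratio among $\lambda_{\underline{l-1}}$ may force a left end of $\mathcal{S}_\sigma$ to remember the fixed component it came from; and (c) checking that the Case (A)/Case (B) alternative for $\mathcal{Y}$ exhausts all distributions of the four ends of $\lambda'_l$ permitted by Proposition \ref{*}.
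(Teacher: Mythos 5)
Your proposal is correct and follows essentially the same route as the paper's own proof: trading $\lambda_l$ for a very long $\lambda'_l$ via Remark \ref{RationalyEquivalent}, splitting the contributing maps by Proposition \ref{*} into those with a contracted bounded edge (handled through Lemma \ref{Cristophlemma} and the 1/1 versus 2/0 dichotomy, with the cross terms in \eqref{1/1mult} killed by the choice of $L_{e_1}, L_{e_2}$) and those with a movable branch (handled through $\mult(\mathcal{S}_\sigma)$, the gluing count $\mathcal{X}$, and the weight factor $\mathcal{Y}$ read off from Corollary \ref{whyc1c2}), with Lemma \ref{n0} and Corollary \ref{0} covering $n=0$. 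The bookkeeping issues you flag for \eqref{3} are left at essentially the same level of detail in the paper itself, which likewise appeals directly to Sections \ref{Split curves} and \ref{MOSC} for those points.
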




\begin{proof}[Proof of Theorem \ref{Main0}]
For a fixed degree $\Delta_{\mathbb{F}_r}(a, b, w_{\underline{d}})$, and given $n$ point conditions $p_{\underline{n}}$, $k$ multi-line conditions $L_{\underline{k}}$, and $l$ cross-ratio conditions $\lambda_{\underline{l}}$, we consider tropical stable maps on the Hirzebruch surface $\mathbb{F}_r$ that satisfy these constraints.

Based on Remark \ref{RationalyEquivalent}, the number of the tropical stable maps contributing to $ \mathcal{N}^{0}_{\Delta_{\mathbb{F}_{r}}(a,b, w_{\underline{d}})} (p_{\underline{n}}, L_{\underline{k}}, {\lambda}_{\underline{l}}) $ is equal to $ \mathcal{N}^{0}_{\Delta_{\mathbb{F}_{r}}(a,b, w_{\underline{d}})}(p_{\underline{n}}, L_{\underline{k}}, {\lambda}_{\underline{l-1}}, {\lambda}'_l) $ for a cross-ratio ${\lambda}'_l$ with a very long length that degenerates to ${\lambda}_l$. 
It follows from Proposition \ref{*} that a tropical stable map that contributes to $\mathcal{N}^{0}_{\Delta_{\mathbb{F}_{r}}(a,b, w_{\underline{d}})}(p_{\underline{n}}, L_{\underline{k}}, {\lambda}_{\underline{l-1}}, {\lambda}'_l)$, where $|{\lambda}'_l|$ goes to infinity, either has a contracted bounded edge or a movable branch $ \mathcal{S} $. 

In Section \ref{Split curves}, two possible ways to split the tropical stable maps over a contracted bounded edge were discussed. The first and second summands of the formula, which are \eqref{1} and \eqref{2}, correspond to the 1/1 and 2/0 splits, respectively. The notation for the degree of the split tropical stable maps is as in Notation \ref{splitNotation}. Reordering  \eqref{General} and applying Lemma \ref{Cristophlemma} proves that \eqref{1} and \eqref{2} count the number of tropical stable maps with a contracted bounded edge.
The same argument applies to the case where $n=0$, as Lemma \ref{n0} shows that $C_1$ is a fixed component with one vertex and three contracted ends, two of which arise from the two multi-line conditions based on \ref{0}.

The second possibility is to split \( C \) over a movable branch into \( C'_{\underline{\sigma}} \), where each \( C'_i \) (for \( 1 \leq i \leq \sigma \)) is a tropical stable map in \( \mathbb{F}_r \) of degree \( \Delta_{\mathbb{F}_r}(a'_i, b'_i, w_{i\underline{d'_i}}) \). Note that for a tropical stable map in \( \mathbb{F}_r \) of degree \( \Delta_{\mathbb{F}_r}(a,b, w_{\underline{d}}) \), the maximum value that \( \sigma \) can attain is \( ra + b \). The movable branch \( \mathcal{S}_{\sigma} \) has degree \( \Delta_{\mathbb{F}_r}(a_0, b_0, w_{0\underline{d_0}}, c_{\underline{\sigma}}) \), and the degree-splitting condition
\[
\sum_{i=0}^{\sigma}(a'_i, d'_i) = (a, d + \sigma)
\]
must be satisfied, along with the equalities in \eqref{eq:3} and \eqref{eq:all_sums}.
We denote the partition of the degrees and conditions with $\{ (\Delta_{\mathbb{F}_r}(a'_i, b'_i, w_{i\underline{d'_i}}), n_i, k_i, l_i, f_i) \}_{i=1}^{\sigma}$.


In \eqref{3}, $\mathcal{X}$ represents the total number of possible ways to glue $C'_{\underline{\sigma}}$ and $\mathcal{S}_\sigma$ together. For each end of primitive direction $(-1,0)$ with weight $c_{\iota}$ in $\mathcal{S}_\sigma$, if it corresponds to a marked left end $f_{\iota}$ (as defined in \ref{ftildas}), it can be glued to one of the ends of $C_\iota$ that has the same weight. The number of such ends is given by the cardinality of the set $\phi_\iota$.  
A non-marked left end of weight $c_\iota$ in $\mathcal{S}_\sigma$  can be glued to any marked right end of any fixed component. This number is precisely given by the cardinality of the set $\psi_\iota$.

Moreover in \eqref{3}, we have $\mathcal{Y}$, which is a factor that depends on the non-degenerate cross-ratio condition $\lambda'_l$, which appears in the multiplicity of the tropical stable map $C$ obtained from gluing $C'_{\underline{\sigma}}$ and $\mathcal{S}_\sigma$ (see Lemma \ref{whyc1c2}).  
This completes the proof.

\end{proof}

\begin{remark}
As seen in the last summand \eqref{3} of the formula in Theorem \ref{Main0}, all possible movable branches \(\mathcal{S}_\sigma\) must be determined. As illustrated in Example \ref{ExF3}, we apply and expand the recursive formula introduced in Theorem \ref{Main0}.

\end{remark}

\section{Counting tropical curves passing through points in Hirzebruch Surfaces}\label{GTKFrn}

In this section, we present a recursive formula for counting the number of tropical curves on the Hirzebruch surface $\mathbb{F}_r$ that satisfy only point conditions. Note that in Theorem \ref{Main0}, it is necessary to have $l \geq 1$.

\begin{theorem} \label{Frnw}
We use the same notations as in Section \ref{Split curves}.
For $ n= \#\Delta_{\mathbb{F}_r}(a, b, w_{\underline{d}})-1$, the number of tropical curves of degree \(\Delta_{\mathbb{F}_r}(a, b, w_{\underline{d}})\), passing through $n$ point conditions can be computed recursively using the following formula. 

 \begin{equation}\label{NFrw}
   \begin{aligned}
\mathcal{N}^{0}_{\Delta_{\mathbb{F}_r}(a, b, w_{\underline{d}})}(p_{\underline{n}}) = 
    & \frac{1}{r}\sum_{\substack{
        (\Delta_{1}| \Delta_{2}) \\ 
        \text{ is a 1/1 split}}} \Phi_1 \cdot
    \mathcal{N}^{0}_{\Delta_{\mathbb{F}_r}(a_1, b_1, w_{\underline{d_1}})}(p_{\underline{n_1}}) 
    \cdot 
    \mathcal{N}^{0}_{\Delta_{\mathbb{F}_r}(a_2, b_2, w_{\underline{d_2}})}(p_{\underline{n_2}}) 
    \\ 
    +&\frac{1}{r} \sum_{\sigma=1}^{ra+b} 
      \sum_{\mathcal{S}_\sigma} \mult(\mathcal{S}_\sigma)
       \sum_{\substack{
        \{ (\Delta_{\mathbb{F}_r}(a'_i, b'_i, w_{i\underline{d'_i}})\}_{i=1}^{\sigma}  \\ 
        \sum_{i=0}^{\sigma}(a'_i ,d'_i) = (a ,d+\sigma) }}
    \varphi \cdot \Phi_2\cdot  
    \mathcal{X} \cdot  
    \prod_{i=1}^{\sigma} 
   \mathcal{N}^{0}_{\Delta_{\mathbb{F}_r}(a'_i, b'_i, w_{i\underline{d'_i}})}(p_{\underline{n_i}}), 
    \end{aligned}
    \end{equation}\\
such that the first summand counts tropical stable maps that are obtained by gluing two tropical stable maps of smaller degrees along a 1/1 contracted bounded edge, under the conditions specified in \eqref{conditions on splitting cbd}. The second summand counts tropical stable maps obtained by gluing movable branches $\mathcal{S}_\sigma$ to $\sigma$ fixed components $C'_{\underline{\sigma}}$ of lower degree, where all vertices in $\mathcal{S}_\sigma$ are 3-valent and $n = \sum_{i=1}^{\sigma}n_{i}$. Moreover, the following hold:\\

\begin{itemize}
\item in the first summand of \eqref{NFrw}, $\Phi_1$ is:
\begin{equation}
   \begin{aligned}
\Phi_1 &= (ra_1 + b_1)(ra_2 + b_2)(a_1  b_2 + a_2 b_1 +ra_1 a_2)\binom{n-3}{n_1 -2}\\ &- (ra_1 + b_1)^2(a_1 b_2 + a_2 b_1 +ra_1 a_2)\binom{n-3}{n_1 -1},
  \end{aligned}
\end{equation}
\item in the second summand of \eqref{NFrw}, $\varphi$ is: 
\begin{equation}
\varphi = 
\begin{cases}
\binom{\sigma}{3} & \text{if } \sigma \geq 3 \\
1 & \text{otherwise},
\end{cases}
\end{equation}

\item for $1\leq s,t,u \leq \sigma$: ($c_{\underline{\sigma}}$ are the weights of ends of $\mathcal{S}_{\sigma}$ in primitive direction $(-1,0)$, see part II of Section \ref{Split curves}.)
\begin{equation}
   \begin{aligned}
       \Phi_2 &= \binom{n-3}{n_s-1}\binom{n-n_s-2}{n_t-1}\prod\limits_{\substack{i=1 \\ i \neq s}}^\sigma\binom{n-n_s-n_t-1-\sum_{\substack{j=1 \\ j \neq s,t}}^{i}n_i}{n_i} \Bigg[ (ra_s + b_s)(ra_t + b_t) (c_t+c_s)
       \\&\prod\limits_{\substack{q=1 \\ q \neq s,t}}^\sigma c_q + (ra_s + b_s)(ra_u + b_u)\prod\limits_{\substack{q=1 \\ q \neq s}}^\sigma c_q - (ra_u + b_u)^2\prod\limits_{\substack{q=1 \\ q \neq u}}^\sigma c_q\Bigg]  
       - \binom{n-3}{n_s-2}
       \\&\prod\limits_{\substack{i=1 \\ i \neq s}}^\sigma\binom{n-n_s-1-\sum_{\substack{j=1 \\ j \neq s,t}}^{i}n_i}{n_i} \Bigg[ (ra_t + b_t)^2(c_t+c_s)\prod\limits_{\substack{q=1 \\ q \neq s,t}}^\sigma c_q + (ra_t + b_t)(ra_u + b_u)\prod\limits_{\substack{q=1 \\ q \neq s}}^\sigma c_q \Bigg],
  \end{aligned}
\end{equation}

    \item $\mathcal{X}$ represents the total number of possible ways to glue $C'_{\underline{\sigma}}$ and $\mathcal{S}_\sigma$ together:
    \[\mathcal{X}= \prod_{\substack{\iota \in \tilde{\delta}}} \#\phi_{\iota} \cdot \prod_{\substack{\iota \in {\delta}}} \#\psi_{\iota }.
    \]
      such that, for $\iota \in {\delta}$, $\, \psi_{\iota} := \{ 1 \leq j \leq \sigma,   1\leq i \leq d'_j :
      c_\iota = w_{ji} \},  $
      and for $\iota \in \Tilde{\delta}$,  $\, \phi_{\iota}:= \{ 1 \leq i \leq d'_\iota : w_{\iota i}=c_\iota \}$.
\end{itemize}

\end{theorem}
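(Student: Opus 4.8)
The plan is to derive Theorem~\ref{Frnw} as a specialization of the general recursion in Theorem~\ref{Main0}. Since Theorem~\ref{Main0} requires $l\ge 1$ while here we impose only point conditions, the first step is to realize the point-only invariant $\mathcal{N}^{0}_{\Delta_{\mathbb{F}_r}(a,b,w_{\underline d})}(p_{\underline n})$, with $n=\#\Delta_{\mathbb{F}_r}(a,b,w_{\underline d})-1$, through an auxiliary count carrying exactly one cross-ratio condition. Following the strategy used for $\mathbb{P}^2$ in \cite{G} and the tropical Kontsevich argument of \cite{GM}, I would build this single cross-ratio $\lambda'_l$ from four of the $ra+b$ unbounded ends of primitive direction $(-1,0)$ (the ``left ends''). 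These ends are not pinned by any point condition, so marking four of them (permitted for non-contracted ends by Definition~\ref{CR}) and imposing a degenerate cross-ratio keeps the expected dimension zero together with the $n$ point conditions. Passing between the fully symmetric point-only count and the count with four distinguished left ends is where the choice-of-ends factors $(ra_i+b_i)$ and the global normalization $\tfrac1r$ enter; I would make this bookkeeping precise first, recording how many left ends each piece of a split carries and which symmetrization is needed to return to the unmarked invariant.

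With one cross-ratio in place I would invoke Theorem~\ref{Main0} for $l=1$, whose right-hand side a priori has $1/1$ splits, $2/0$ splits, and movable-branch splits. The second step is to observe that the last two inputs of the list vanish in this setting: a $2/0$ split requires a vertex of valence $\ge 4$ satisfying a \emph{further} cross-ratio from $\lambda_{\underline{l-1}}$, but for $l=1$ the set $\lambda_{\underline{l-1}}$ is empty, so both vertices adjacent to the contracted bounded edge are $3$-valent and every contracted-edge split is $1/1$; likewise the $n=0$ branch of Theorem~\ref{Main0} needs two multi-line conditions meeting at a vertex (Lemma~\ref{n0}), which are absent here. Thus only the $1/1$ summand and the movable-branch summand survive, matching the two lines of \eqref{NFrw}. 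The $1/1$ term then yields the first line: distributing the $n$ point conditions across $C_1,C_2$ produces the binomials $\binom{n-3}{n_1-2}$ and $\binom{n-3}{n_1-1}$ (three slots being consumed by the cross-ratio configuration, the two terms distinguished by which component receives the paired ends, exactly as $\binom{3d-4}{3d_1-2}$ versus $\binom{3d-4}{3d_1-1}$ in the classical formula), while the intersection factor $(a_1b_2+a_2b_1+ra_1a_2)$, the end-counts $(ra_i+b_i)$, and the cross-ratio multiplicity (Definition~\ref{multcr}) assemble into $\Phi_1$.

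The third step treats the movable-branch summand using the multiplicity and gluing data already established in Lemmas~\ref{mul1} and~\ref{multSC} together with the factors $\mathcal{X},\mathcal{Y}$ of Theorem~\ref{Main0}, specialized to point-only fixed components $C'_i$. Here $\varphi=\binom{\sigma}{3}$ (respectively $1$) records the choice of the three fixed components among the $\sigma$ ones attached to $\mathcal{S}_\sigma$ that carry the ``other'' ends of the cross-ratio, and the nested binomials in $\Phi_2$ arise from distributing the $n$ points over the fixed components subject to $n=\sum_{i=1}^\sigma n_i$. The weight products $\prod c_q$ and the end-counts $(ra_s+b_s),(ra_t+b_t),(ra_u+b_u)$ reproduce the Case~(A)/(B) structure of $\mathcal{Y}$, now summed over the placement of the cross-ratio ends among the components. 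I would then collect the $1/1$ and movable-branch contributions, factor out $\tfrac1r$, and check the compatibility of the degree and weight sums \eqref{eq:1}--\eqref{eq:all_sums}.

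I expect the main obstacle to be the combinatorial bookkeeping that produces $\Phi_1$, $\Phi_2$, $\varphi$ and the precise binomial offsets: one must track exactly which marked ends the degenerate cross-ratio occupies on each component, count the total resolutions of the resulting high-valence vertex (Definition~\ref{multcr}), and then reconcile the symmetrization needed to return from the four-distinguished-left-ends count to the fully symmetric point-only invariant. This final reconciliation is the source of the $\tfrac1r$ prefactor and is the step most prone to off-by-a-factor errors, so I would pin it down by comparing against a low-degree instance before asserting the general identity.
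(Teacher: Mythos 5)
Your high-level instinct --- realize the point-only invariant through an auxiliary count with a single cross-ratio and then apply Theorem~\ref{Main0} --- matches the paper's strategy, but your implementation has two concrete, fatal gaps.

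First, your auxiliary invariant is dimensionally inconsistent. With $n=\#\Delta_{\mathbb{F}_r}(a,b,w_{\underline d})-1$ point conditions, the cycle $\prod_i \ev_i^*(p_i)\cdot \mathcal{M}_{0,m+d}(\mathbb{R}^2,\Delta)$ is already zero-dimensional (here $m=n$, so $\#\Delta+m-1=2n$). Marking four left ends of direction $(-1,0)$ adds no moduli (and is anyway outside the labeling conventions of Definition~\ref{tropical}, which only labels ends of direction $(1,0)$), while imposing $\ft_{\lambda}^*(0)$ cuts the dimension by one more; so your count with $n$ points plus one cross-ratio has expected dimension $-1$ and is generically empty. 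The paper instead trades the last point condition for \emph{two multi-line conditions} $L_a,L_b$ with $p_n\in L_a\cap L_b$, each carried by its own contracted marked end: the auxiliary count $\mathcal{N}^0_{\Delta}(p_{\underline{n-1}},L_a,L_b,\lambda')$ has $m=n+1$ and $2(n-1)+2+1=\#\Delta+m-1$, hence is honestly zero-dimensional, and the cross-ratio is built from the contracted ends $x_n,x_{n+1}$ (on the multi-lines) and $x_c,x_d$ (two point-condition ends), not from left ends. This also shows your guess for the origin of the $\tfrac1r$ prefactor is wrong: it comes from $\#(L_a\cap L_b)=r$, because the configurations where $x_n$ and $x_{n+1}$ map to one of the $r$ intersection points of $L_a\cap L_b$ reproduce exactly $r\cdot\mathcal{N}^0_{\Delta}(p_{\underline n})$ as a summand inside the expansion --- not from any symmetrization over markings of left ends.

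Second, and independently, a single application of Theorem~\ref{Main0}, which is all your plan invokes, yields only a sum of (non-negative) products of invariants, so it cannot produce the subtractions appearing in $\Phi_1$ and $\Phi_2$. The paper's proof is a WDVV-style argument: it expands the auxiliary invariant \emph{twice}, once for $\lambda'_A=(x_nx_{n+1}\mid x_cx_d)$ and once for the other pairing $\lambda'_B=(x_nx_c\mid x_{n+1}x_d)$, then uses the equality $\mathcal{N}^0(\cdots,\lambda'_A)=\mathcal{N}^0(\cdots,\lambda'_B)$ from Remark~\ref{RationalyEquivalent}. The term $r\mathcal{N}^0_{\Delta}(p_{\underline n})$ occurs only in the $\lambda'_A$-expansion (only there does the long cross-ratio keep $x_n$ and $x_{n+1}$ on the same side), and solving the resulting identity for it is precisely what generates the signed binomial combinations in $\Phi_1$ and $\Phi_2$. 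Note that in the paper's argument the point-only invariant appears as a \emph{summand inside} an expansion, never as a left-hand side being expanded; your proposal, as written, has no mechanism to isolate it at all. Your observations that $2/0$ splits cannot occur when $l=1$ and that $\varphi=\binom{\sigma}{3}$ counts the choice of three fixed components are correct, but they do not repair these two gaps.
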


\begin{proof}
To count the number of tropical curves of degree \(\Delta_{\mathbb{F}_r}(a, b, w_{\underline{d}})\), passing through $n$ point conditions, we need to find the number of tropical stable maps contributing to $\mathcal{N}^{0}_{\Delta_{\mathbb{F}_r}(a, b, w_{\underline{d}})}(p_{\underline{n}})$.
Let  \( L_a \) and \( L_b \) be two multi-line conditions such that \( p_n \in L_a \cap L_b \). Let $C = (\Gamma, x_{\underline{m}}, \Tilde{e}_{\underline{d}}, h) $ be a tropical stable map contributing to $\mathcal{N}^{0}_{\Delta_{\mathbb{F}_r}(a, b, w_{\underline{d}})}(p_{\underline{n-1}}, L_a, L_b, \lambda'_A)$, where \( h(x_i) = p_i \) for \( i = 1, \cdots, n-1 \), and \( h(x_n) \in L_a \cap h(\Gamma) \), \( h(x_{n+1}) \in L_b \cap h(\Gamma) \) and assume \( \lambda'_A = (x_n x_{n+1} \mid x_c x_d) \) with \( 1 \leq c,d \leq n-1 \). Since we have \( n-1 \) point conditions and \( k = 2 \), it follows that \( m = n + 1 \) and \( l = 1 \), so we can apply Theorem \ref{Main0}.

We claim that the recursive formula in part (1) of Theorem \ref{Main0} reduces in this case to:

 \begin{equation}\label{nabA}
            \mathcal{N}^{0}_{\Delta_{\mathbb{F}_r}(a, b, w_{\underline{d}})}(p_{\underline{n-1}}, L_{a}, L_b, {\lambda'}_{A}) = 
    \end{equation}

    \begin{align}
    &\sum_{\substack{
        (\Delta_{1}, n_{1} | \Delta_{2}, n_{2}) \\ 
        \text{is a 1/1 split respecting } {\lambda}'_l
    }} 
    \mathcal{N}^{0}_{\Delta_{1}}(p_{\underline{n_1}}, L_{a}, L_b, L_{e_1}) 
    \cdot 
    \mathcal{N}^{0}_{\Delta_{2}}(p_{\underline{n_2}},  L_{e_2}) 
    +r\mathcal{N}^{0}_{\Delta_{\mathbb{F}_r}(a, b, w_{\underline{d}})}(p_{\underline{n-1}}, p_n)
    \label{Napears}\\ 
    &+
     \sum_{\sigma=1}^{ra+b} 
      \sum_{\mathcal{S}_\sigma} \mult(\mathcal{S}_\sigma)
       \sum_{\substack{
        \{ (\Delta_{\mathbb{F}_r}(a'_i, b'_i, w_{i\underline{d'_i}}), n_i \}_{i=1}^{\sigma}  \\ 
        \sum_{i=0}^{\sigma}(a'_i ,d'_i) = (a ,d+\sigma) }}
     \varphi \cdot  \mathcal{X} \cdot  
  Y, \label{YA}
    \end{align}

such that $Y$ is a factor with three summands \eqref{Y1}, \eqref{Y2}, and \eqref{Y3}.
According to Proposition~\ref{*}, the curve \( C \) can be decomposed into tropical stable maps of lower degrees over cutting a contracted bounded edge or cutting a movable branch. Since we only have one cross-ratio condition, either \( C \) splits into two components \( C_i = (\Gamma_i, x_{\underline{m_i}}, h_i) \) for \( i=1,2 \) over a 1/1 contracted bounded edge, or it contains a movable branch. On the left hand side of \ref{nabA}, in the first summand, \( L_a \) and \( L_b \) intersect \( C_1 \) at distinct points, while in the second summand, both lines intersect at \( p_n \in L_a \cap L_b \), and we have \( \#(L_a \cap L_b) = r \). So, we obtain the $ r\mathcal{N}^{0}_{\Delta_{\mathbb{F}_r}(a, b, w_{\underline{d}})}(p_{\underline{n-1}}, p_n) = r\mathcal{N}^{0}_{\Delta_{\mathbb{F}_r}(a, b, w_{\underline{d}})}(p_{\underline{n}}) $ as the second summand here which is the number we wanted to count from the beginning.

In the first summand, in order to consider the number of intersection points of $L_{e_i}$ with $C_i$  we should notice that when we split the 1/1 contracted bounded edge $e$ into two contracted ends $e_1, e_2$, for $i=1,2$,  \( L_{e_i} \) is a multi-line condition of weight one that is imposed on $e_i$ where $p = \ev_{e}(C) \in Y_1 \cap Y_2$.
More precisely, the multi-set of ends of $Y_i$ is equal to the degree of $C_i$, $ \Delta_i=\Delta_{\mathbb{F}_r}(a_i, b_i, w_{i\underline{d_i}}) $, for $i=1,2$ and therefore we have $r a_1 a_2 + a_1 b_2 + a_2 b_1$ possible choices for $p$ as $L_{e_i}$ intersect with $h_i(\Gamma_i)$ at $p$ so we only need to consider $\#(Y_1 \cap Y_2)$. Here, $n_i = \#\Delta_i-1 $ and instead of taking the sum over $  (\Delta_1, n_{1} | \Delta_2, n_{2})$ we compute all of possible ways to split $n$ into $n_1, n_2$. 
We have $n-1$ point conditions and $x_c, x_d$, for \( 1 \leq c,d \leq n-1 \), live on $C_2$, so we have $\binom{n-3}{n_1 -1} $ many choices to pick $n_1$ point conditions, and the rest of the point conditions are on $C_2$. 
Hence, we obtain the following from the first summand, on the left side of \eqref{nabA}:

\begin{equation*}
\sum_{\substack{
        (\Delta_{1}| \Delta_{2}) \\ 
        \text{ is a 1/1 split}}}  
  (ra_1 + b_1)^2(a_1 b_2 + a_2 b_1 +ra_1 a_2) \binom{n-3}{n_1 -1}
    \mathcal{N}^{0}_{\Delta_1}(p_{\underline{n_1}}) 
    \cdot 
    \mathcal{N}^{0}_{\Delta_2}(p_{\underline{n_2}}). 
\end{equation*}

Moreover, the summand corresponding to cutting movable branches in \eqref{3} simplifies here to \eqref{YA}, where $Y$ is defined as follows:

\begin{align}
      Y &= (c_t+c_s) \cdot
        \mathcal{N}^{0}_{\Delta_{\mathbb{F}_r}(a'_t, b'_t, w_{t\underline{d'_t}})}(p_{\underline{n_t}}, L_a, L_b)\cdot 
  \mathcal{N}^{0}_{\Delta_{\mathbb{F}_r}(a'_s, b'_s, w_{s\underline{d'_s}})}(p_{\underline{n_s}})   \prod_{\substack{i=1\\i\neq s,t}}^{\sigma} c_i \cdot
   \mathcal{N}^{0}_{\Delta_{\mathbb{F}_r}(a'_i, b'_i, w_{i\underline{d'_i}})}(p_{\underline{n_i}}) 
  \label{Y1} \\&+  \mathcal{N}^{0}_{\Delta_{\mathbb{F}_r}(a'_t, b'_t, w_{t\underline{d'_t}})}(p_{\underline{n_t}}, L_a) \cdot
   \mathcal{N}^{0}_{\Delta_{\mathbb{F}_r}(a'_u, b'_u, w_{u\underline{d'_u}})}(p_{\underline{n_u}}, L_b) \cdot \prod_{\substack{q=1\\q\neq s}}^{\sigma} c_q \cdot \prod_{\substack{i=1\\i\neq u,t}}^{\sigma} 
   \mathcal{N}^{0}_{\Delta_{\mathbb{F}_r}(a'_i, b'_i, w_{i\underline{d'_i}})}(p_{\underline{n_i}})  
   \label{Y2}\\&+ \mathcal{N}^{0}_{\Delta_{\mathbb{F}_r}(a'_u, b'_u, w_{u\underline{d'_u}})}(p_{\underline{n_u}}, L_a, L_b)\cdot 
 \prod_{\substack{i=1\\i\neq u}}^{\sigma} c_i \cdot
   \mathcal{N}^{0}_{\Delta_{\mathbb{F}_r}(a'_i, b'_i, w_{i\underline{d'_i}})}(p_{\underline{n_i}}).\label{Y3}
\end{align}

We need to specify the position of $x_c, x_d$ by determining all possible ways to split $n$ into $n_{\underline{\sigma}}$, where $n = \sum_{i=1}^{\sigma}n_{i}$. When $C$ is a tropical stable map obtained from gluing a movable branch $\mathcal{S}_\sigma$ with $\sigma$ fixed components and $C$ satisfies a cross-ratio condition \( \lambda'_A = (x_n x_{n+1} \mid x_c x_d) \), then either we are in case (A) or (B), defined in Theorem \ref{Main0}. So if we have more than 3 fixed components, we can chose $\binom{\sigma}{3}$ fixed components $1\leq s,u,t\leq \sigma$, and then there are three possible ways to distribute the marked points $x_n, x_{n+1}, x_c, x_d$ among the $C'
_s, C'_t, C'_u$. In \eqref{Y1}, \eqref{Y2}, and \eqref{Y3}, the four marked points are distributed as shown in the left, middle, and right pictures of Figure \ref{stu}, respectively.

\begin{figure}[h!]
\centering
\includegraphics[scale=0.8]{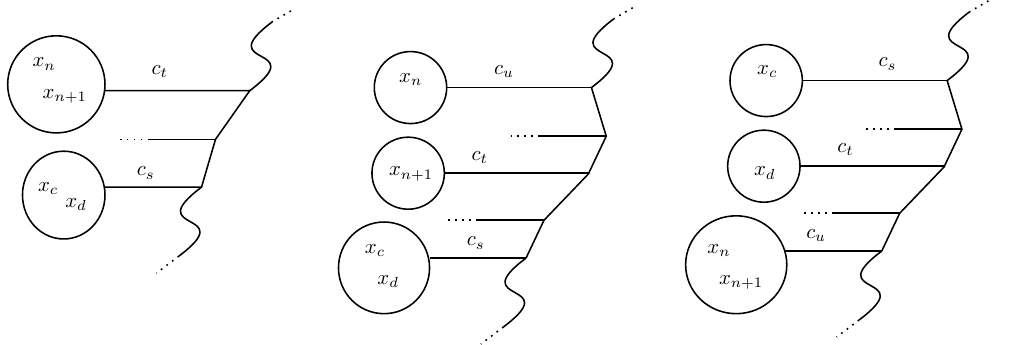}
\caption{The three possible ways to distribute the marked points $x_n, x_{n+1}, x_c, x_d$ among the fixed components $C'_s, C'_t, C'_u$, such that \( \lambda'_A = (x_n x_{n+1} \mid x_c x_d) \) holds.}
\label{stu}
\end{figure}

When $x_c, x_d$ live on the same fixed component, we have
\begin{equation*}
    \binom{n-3}{n_s-2}\prod\limits_{\substack{i=1 \\ i \neq s}}^\sigma\binom{n-n_s-1-\sum_{\substack{j=1 \\ j \neq s,t}}^{i}n_i}{n_i}
\end{equation*}
ways to distribute $n_{\underline{\sigma}}$ between the fixed components. When $x_c, x_d$ live on different fixed components we have
\begin{equation*}
    \binom{n-3}{n_s-1}\binom{n-n_s-2}{n_t-1}\prod\limits_{\substack{i=1 \\ i \neq s}}^\sigma\binom{n-n_s-n_t-1-\sum_{\substack{j=1 \\ j \neq s,t}}^{i}n_i}{n_i}
\end{equation*}
possible ways to distribute the point conditions. Therefore, we can rewrite \eqref{YA} in this way:

\begin{align}
&\sum_{\sigma=1}^{ra+b} 
      \sum_{\mathcal{S}_\sigma} \mult(\mathcal{S}_\sigma)
       \sum_{\substack{
        \{ (\Delta_{\mathbb{F}_r}(a'_i, b'_i, w_{i\underline{d'_i}})\}_{i=1}^{\sigma}  \\ 
           \sum_{i=0}^{\sigma}(a'_i ,d'_i) = (a ,d+\sigma)  }}
    \varphi \cdot   
    \mathcal{X} \cdot \Bigg[ \binom{n-3}{n_s-2}\prod\limits_{\substack{i=1 \\ i \neq s}}^\sigma\binom{n-n_s-1-\sum_{\substack{j=1 \\ j \neq s,t}}^{i}n_i}{n_i} \\& \Bigg[ (ra_t + b_t)^2(c_t+c_s)\prod\limits_{\substack{q=1 \\ q \neq s,t}}^\sigma c_q + (ra_t + b_t)(ra_u + b_u)\prod\limits_{\substack{q=1 \\ q \neq s}}^\sigma c_q \Bigg] + \binom{n-3}{n_s-1}\binom{n-n_s-2}{n_t-1} \\&\prod\limits_{\substack{i=1 \\ i \neq s}}^\sigma\binom{n-n_s-n_t-1-\sum_{\substack{j=1 \\ j \neq s,t}}^{i}n_i}{n_i}(ra_u + b_u)^2\prod\limits_{\substack{q=1 \\ q \neq u}}^\sigma c_q   \Bigg]
    \prod_{i=1}^{\sigma} 
   \mathcal{N}^{0}_{\Delta_{\mathbb{F}_r}(a'_i, b'_i, w_{i\underline{d'_i}})}(p_{\underline{n_i}}).  
\end{align}

Similarly, if we consider another cross-ratio \( \lambda'_B = (x_n x_c \mid x_{n+1} x_d) \), we obtain the following:
    
    \begin{align*}
    &\mathcal{N}^{0}_{\Delta_{\mathbb{F}_r}(a, b, w_{\underline{d}})}(p_{\underline{n-1}}, L_{a}, L_b, {\lambda'}_{B}) = \sum_{\substack{
        (\Delta_{1}, n_{1} | \Delta_{2}, n_{2}) \\ 
        \text{is a 1/1 split respecting } {\lambda}'_l
    }} 
    \mathcal{N}^{0}_{\Delta_{1}}(p_{\underline{n_1}}, L_{a}, L_{e_1}) 
    \cdot 
    \mathcal{N}^{0}_{\Delta_{2}}(p_{\underline{n_2}}, L_b, L_{e_2}) 
    \\&+
    \sum_{\sigma=1}^{ra+b} 
      \sum_{\mathcal{S}_\sigma} \mult(\mathcal{S}_\sigma)
       \sum_{\substack{
        \{ (\Delta_{\mathbb{F}_r}(a'_i, b'_i, w_{i\underline{d'_i}}), n_i \}_{i=1}^{\sigma}  \\ 
          \sum_{i=0}^{\sigma}(a'_i ,d'_i) = (a ,d+\sigma)  }}
    \mathcal{X} \cdot  
   \varphi \cdot \Bigg[ (c_t+c_s) \cdot
        \mathcal{N}^{0}_{\Delta_{\mathbb{F}_r}(a'_t, b'_t, w_{t\underline{d'_t}})}(p_{\underline{n_t}}, L_a)\cdot \\&
  \mathcal{N}^{0}_{\Delta_{\mathbb{F}_r}(a'_s, b'_s, w_{s\underline{d'_s}})}(p_{\underline{n_s}}, L_b)   \prod_{\substack{i=1\\i\neq s,t}}^{\sigma} c_i \cdot
   \mathcal{N}^{0}_{\Delta_{\mathbb{F}_r}(a'_i, b'_i, w_{i\underline{d'_i}})}(p_{\underline{n_i}}) + \mathcal{N}^{0}_{\Delta_{\mathbb{F}_r}(a'_s, b'_s, w_{s\underline{d'_s}})}(p_{\underline{n_s}}, L_a) \cdot \\&
   \mathcal{N}^{0}_{\Delta_{\mathbb{F}_r}(a'_u, b'_u, w_{u\underline{d'_u}})}(p_{\underline{n_u}}, L_b) \cdot  c_u \cdot \prod_{\substack{i=1\\i\neq u,s}}^{\sigma} c_i \cdot
   \mathcal{N}^{0}_{\Delta_{\mathbb{F}_r}(a'_i, b'_i, w_{i\underline{d'_i}})}(p_{\underline{n_i}}) \Bigg].
    \end{align*}
After substituting the factors corresponding to all possible distributions of the point conditions, and taking into account the number of intersection points of $L_a$ and $L_b$ with the fixed components, we use the identity
\[
\mathcal{N}^{0}_{\Delta_{\mathbb{F}_r}(a, b, w_{\underline{d}})}(p_{\underline{n-1}}, L_a, L_b, \lambda'_A) = \mathcal{N}^{0}_{\Delta_{\mathbb{F}_r}(a, b, w_{\underline{d}})}(p_{\underline{n-1}}, L_a, L_b, \lambda'_B)
\]
to derive Equation~\eqref{NFrw}, which completes the proof.

\end{proof}

\begin{corollary}
    \label{Frn}

For $ n= \# \Delta_{\mathbb{F}_{r}}(a,b)-1$, the number of tropical curves of degree \(\Delta_{\mathbb{F}_{r}}(a,b)\), passing through $n$ point conditions can be computed recursively using the following formula. This formula is derived from Theorem~\ref{Frnw} where $b=d$ and $w_i=1$ for $1\leq i \leq b$:

 \begin{equation}\label{NFr}
\mathcal{N}^{0}_{\Delta_{\mathbb{F}_{r}}(a,b)}(p_{\underline{n}}) =     \frac{1}{b!}\mathcal{N}^{0}_{\Delta_{\mathbb{F}_r}(a, b, w_{\underline{b}})}(p_{\underline{n}})
    \end{equation}
\end{corollary}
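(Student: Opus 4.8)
The plan is to deduce the identity directly from Theorem~\ref{Frnw} together with a relabeling argument for the $b$ ends in the primitive direction $(1,0)$. First I would specialize Theorem~\ref{Frnw} to the case $d=b$ and $w_i=1$ for $1\leq i\leq b$. Since $n=\#\Delta_{\mathbb{F}_r}(a,b)-1=\#\Delta_{\mathbb{F}_r}(a,b,w_{\underline{b}})-1$, the hypothesis of Theorem~\ref{Frnw} is met, so its recursion computes the labeled count $\mathcal{N}^{0}_{\Delta_{\mathbb{F}_r}(a, b, w_{\underline{b}})}(p_{\underline{n}})$. It then remains to relate this labeled count to the unlabeled count $\mathcal{N}^{0}_{\Delta_{\mathbb{F}_{r}}(a,b)}(p_{\underline{n}})$, which is precisely the content of the $\tfrac{1}{b!}$ factor.

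For the relabeling step, recall from Definition~\ref{Degree} that a tropical stable map of degree $\Delta_{\mathbb{F}_r}(a,b)$ has exactly $b$ non-contracted ends of weight one in the primitive direction $(1,0)$, and that the sole difference between the two enumerative problems is whether these $b$ ends carry the labels $\Tilde{e}_1,\dots,\Tilde{e}_b$ prescribed in Definition~\ref{tropical} (an $(m+b)$-marked versus an $m$-marked stable map). I would introduce the forgetful map that drops these $b$ labels, sending each stable map contributing to $\mathcal{N}^{0}_{\Delta_{\mathbb{F}_r}(a, b, w_{\underline{b}})}(p_{\underline{n}})$ to an unlabeled one contributing to $\mathcal{N}^{0}_{\Delta_{\mathbb{F}_{r}}(a,b)}(p_{\underline{n}})$. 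Because all $b$ ends have weight one, they are interchangeable in the unlabeled setting, so each unlabeled map admits exactly $b!$ distinct labelings; hence this forgetful map is $b!$-to-one over the generic locus of maps contributing to the count.

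The crucial point is that the multiplicity is preserved along each fiber. Since the corollary involves only point conditions, we have $l=0$, so $\mult_{\mathrm{cr}}(v)=1$ at every vertex and $\mult(C)=\mult_{\ev}(C)$. The ev-matrix records only the base-point coordinates, the lengths of the bounded edges, and the point conditions, and none of these data reference the labels on the $(1,0)$-ends. Consequently all $b!$ labelings of a given unlabeled map share one and the same multiplicity, and summing multiplicities over each fiber contributes a factor of $b!$. This yields $\mathcal{N}^{0}_{\Delta_{\mathbb{F}_r}(a, b, w_{\underline{b}})}(p_{\underline{n}}) = b!\cdot \mathcal{N}^{0}_{\Delta_{\mathbb{F}_{r}}(a,b)}(p_{\underline{n}})$, and dividing by $b!$ gives~\eqref{NFr}.

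The main obstacle I anticipate is justifying rigorously that the forgetful map is exactly $b!$-to-one with multiplicity preserved, i.e.\ ruling out that some contributing map carries a nontrivial symmetry exchanging two of its $(1,0)$-ends while fixing all other marked data. This is resolved by observing that the labels decorate distinct unbounded edges of the abstract graph, so any permutation of them produces a genuinely distinct marked stable map, and that no imposed condition (point or cross-ratio) is attached to these ends, whence the evaluation data, and therefore the multiplicity, is identical across the fiber.
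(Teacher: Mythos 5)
Your proposal is correct and follows essentially the same route as the paper, which treats the corollary as immediate from Theorem~\ref{Frnw}: since all $b$ right ends have weight one and carry no imposed conditions, there are exactly $b!$ ways to label them, so the labeled count is $b!$ times the unlabeled one. Your additional care in checking that the multiplicity is constant along the fibers of the label-forgetting map (trivial cross-ratio multiplicities since $l=0$, and an ev-matrix that never references the $(1,0)$-end labels) makes explicit what the paper leaves implicit, but it is the same argument.
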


In the next example, it is shown how we can use Theorem \ref{Main0} to count tropical stable maps when more than one movable branch appears.

\begin{example}\label{ExF3}
We want to count the number of tropical stable maps  
\[
C = (\Gamma, x_{\underline{9}}, \Tilde{e}_1, h)
\]  
of degree \(\Delta_{\mathbb{F}_3}(2,1,1)\), passing through 9 point conditions  
\(p_{\underline{9}}\), where \(h(x_i) = p_i\) for \(1 \leq i \leq 9\),  
and satisfying the cross-ratio conditions \(\lambda_1 = \{x_4, x_5, x_6, \Tilde{e}_1\}\)  
and \(\lambda'_2 = (x_1, x_2 \mid x_3, \Tilde{e}_1)\).

First, we find all possible ways to split \(C\) over a 2/0 contracted bounded edge.  
Consider all \(a_1, a_2\) and \(d_1, d_2\) such that  
\[
a_1 + a_2 = 2, \quad d_1 + d_2 = 1.
\]
Since one of the fixed components has a marked right end \(\Tilde{e}_1\) of weight 1; therefore, \(b_1 + b_2 = 1\).

Moreover, if we consider all possible distributions of the conditions \(p_{\underline{9}}\), \(\lambda_1\) between the two fixed components \(C_1, C_2\), such that  
\[
n_1 + n_2 + 1 = 10 \quad \text{and} \quad \dim Y_1 = 2, \; \dim Y_2 = 0,
\]  
then:
\[
\#\Delta_1 - 2 = n_1, \quad \#\Delta_2 - 1 = n_2.
\]
Therefore, we obtain the following list of possible degrees for \(C_1, C_2\):

\begin{equation}\label{2/0table}
    \begin{array}{cccc|cccc}
       \Delta_1 & \#\Delta_1 & n_1 & l_1 & \Delta_2 & \#\Delta_2 & n_2 & l_2\\ \hline
       \Delta_{\mathbb{F}_3}(2,0) & 10 & 8 & 0 & \Delta_{\mathbb{F}_3}(0,1,1)  & 2 & 1 & 1\\
        \Delta_{\mathbb{F}_3}(0,1,1) & 2 & 0 & 0 & \Delta_{\mathbb{F}_3}(2,0) & 10 & 9 &  1\\
         \Delta_{\mathbb{F}_3}(1,0) & 5 & 3 & 0 &\Delta_{\mathbb{F}_3}(1,1,1)  & 7 & 6 & 1\\
         \Delta_{\mathbb{F}_3}(1,1,1) & 7 & 5 & 0 & \Delta_{\mathbb{F}_3}(1,0) & 5 & 4 &  1\\
    \end{array}
\end{equation}
Notice that there must be 4 marked ends on a fixed component to have a degenerate cross-ratio defined on that component. Three out of 4 marked ends must be among the marked points and \(\Tilde{e}_1\). Therefore, the first row of Table~\ref{2/0table} is not possible. (In the other tables, we have already eliminated the impossible cases.)

If we consider all possible ways to pick \(n_1\) point conditions out of \(p_{\underline{9}}\), while either \(x_1, x_2\) or \(x_3, \Tilde{e}_1\) must lie on \(C_1\), then we realize that the second row of Table~\ref{2/0table} is impossible because \(n_1 = 0\).

Let \(\{p_i\}^{\#N}\) be a set of \(N \leq 9\) point conditions in \(p_{\underline{9}}\).  
In the last row, for \(C_2\) we have  
\[
\mathcal{N}^{0}_{\Delta_{\mathbb{F}_{3}}(1,0)}\left(\{p_i\}^{\#4}, \lambda_{1}^{\rightarrow e_2}\right),
\]  
which is impossible because, in order for the cross-ratio conditions \(\lambda_1\) and \(\lambda'_2\) to hold, we would need \(p_1, p_2, p_4, p_5, p_6\) to lie on \(C_2\), which is not possible.

So what remains is:

\begin{equation}\label{2/0exeq}
    \begin{aligned}
    & \sum_{\substack{
        (\Delta_{1}, n_{1} | \Delta_{2}, n_{2}) \\ 
        \text{is a 2/0 split respecting } {\lambda}'_2 \\ 
        \text{and } \#\Delta_2 = n_2 + 1 
    }} 
    \mathcal{N}^{0}_{\Delta_{1}}(p_{\underline{n_1}}, p) 
    \cdot 
    \mathcal{N}^{0}_{\Delta_{2}}(p_{\underline{n_2}}, {\lambda}_{1}^{\rightarrow e}) = \\
    & \binom{6}{1}\mathcal{N}^{0}_{\Delta_{\mathbb{F}_{3}}(1,0)}(p_{1},p_2, \{p_i\}^{\#1}, p)\cdot \mathcal{N}^{0}_{\Delta_{\mathbb{F}_{3}}(1,1,1)}(p_3, \{p_i\}^{\#5}, {\lambda}_{1}^{\rightarrow e}).
\end{aligned}
\end{equation}
 Here, out of the 9 point conditions, 3 of them—\(p_1, p_2\), and \(p_3\)—are fixed. The number of possible ways to pick the third point condition for \(C_1\) is \(\binom{6}{1}\). As discussed in Section \ref{Split curves}, and shown by the equalities in \ref{hiddenf}, in the case of a 2/0 split, when we have a \({\lambda}^{\rightarrow e}\), the marked end \(e\) is an f-point.
 In the case of a 2/0 split, when we have a \({\lambda}^{\rightarrow e}\), the marked end \(e\) is an f-point.

Now we want to find all possible ways to split \(C\) over a 1/1 contracted bounded edge.  
The following list contains all possible degrees for \(C_1, C_2\), where \(\dim Y_1 = \dim Y_2 = 1\):

\begin{equation}\label{1/1table}
    \begin{array}{ccccc|ccccc}
       \Delta_1 & \#\Delta_1 & k_1 & n_1 & l_1 & \Delta_2 & \#\Delta_2 & k_2 & n_2 & l_2\\ \hline
       \Delta_{\mathbb{F}_3}(2,0) & 10 & 1 & 8 & 1 & \Delta_{\mathbb{F}_3}(0,1,1) & 2 & 1 & 1 & 0 \\
        \Delta_{\mathbb{F}_3}(1,0) & 5 & 1 & 4 & 0 & \Delta_{\mathbb{F}_3}(1,1,1) & 7 & 1 & 5 & 1 
    \end{array}
\end{equation}
In this case, after considering all possible ways to pick \(n_1\) point conditions out of \(p_{\underline{9}}\) such that \(x_1, x_2\) have to lie on \(C_1\), we obtain \eqref{1/1exeq}. Notice that the two fixed components intersect in \(a_1 b_2 + a_2 b_1 + 3 a_1 a_2\) points.

\begin{equation}\label{1/1exeq}
    \begin{aligned}
    & \sum_{\substack{
        (\Delta_{1}, n_{1}, l_1 | \Delta_{2}, n_{2}, l_2) \\ 
        \text{is a 1/1 split respecting } {\lambda}'_2 
    }} 
    \mathcal{N}^{0}_{\Delta_{1}}(p_{\underline{n_1}}, L_{e_1},  {\lambda}_{\underline{l_1}}^{\rightarrow e_1}) 
    \cdot 
    \mathcal{N}^{0}_{\Delta_{2}}(p_{\underline{n_2}}, L_{e_2},  {\lambda}_{\underline{l_2}}^{\rightarrow e_2})  
    \\&=  2\cdot\binom{6}{6}\mathcal{N}^{0}_{\Delta_{\mathbb{F}_{3}}(2,0)}(p_{1},p_2,\{p_i\}^{\#6}, L_{e_1}, {\lambda}_{1}) \cdot \mathcal{N}^{0}_{\Delta_{\mathbb{F}_{3}}(0,1,1)}(p_3, L_{e_2}) 
    \\&+4\cdot\binom{3}{1}\binom{3}{1}\mathcal{N}^{0}_{\Delta_{\mathbb{F}_{3}}(1,0)}(p_{1},p_2, \{p_i\}^{\#2}, L_{e_1}) \cdot \mathcal{N}^{0}_{\Delta_{\mathbb{F}_{3}}(1,1,1)}(p_3, \{p_i\}^{\#4}, L_{e_2}, {\lambda}_{1}^{\rightarrow e_2})
    \\&+4\cdot\binom{3}{1}\binom{2}{1}\mathcal{N}^{0}_{\Delta_{\mathbb{F}_{3}}(1,0)}(p_{1},p_2, \{p_i\}^{\#2}, L_{e_1}) \cdot \mathcal{N}^{0}_{\Delta_{\mathbb{F}_{3}}(1,1,1)}(p_3, \{p_i\}^{\#4}, L_{e_2}, {\lambda}_{1}) 
    \\&= 2\cdot \mathcal{N}^{0}_{\Delta_{\mathbb{F}_{3}}(2,0)}(p_{1},p_2,\{p_i\}^{\#6}, \lambda_1) \cdot \mathcal{N}^{0}_{\Delta_{\mathbb{F}_{3}}(0,1,1)}(p_3) 
    \\&+ 4\cdot9\cdot\mathcal{N}^{0}_{\Delta_{\mathbb{F}_{3}}(1,0)}(p_{1},p_2, \{p_i\}^{\#2}) \cdot \mathcal{N}^{0}_{\Delta_{\mathbb{F}_{3}}(1,1,1)}(p_3, \{p_i\}^{\#4}, L_{e_2}, {\lambda}_{1}^{\rightarrow e_2})
    \\&+ 4\cdot6\cdot\mathcal{N}^{0}_{\Delta_{\mathbb{F}_{3}}(1,0)}(p_{1},p_2, \{p_i\}^{\#2}) \cdot \mathcal{N}^{0}_{\Delta_{\mathbb{F}_{3}}(1,1,1)}(p_3, \{p_i\}^{\#4}, \lambda_1).
\end{aligned}
\end{equation}

As discussed in Section \ref{Split curves}, where we have a \({\lambda}^{\rightarrow e_i}\), we need to keep \(L_{e_i}\), because 
\[
{\lambda}^{\rightarrow e_i} = \{ x_{\alpha}, x_{\beta}, x_{\eta}, e_i \},
\]
where \(\alpha, \beta, \eta \in \underline{m+d}\). This is why we consider two cases separately: when three marked ends contributing to \(\lambda_1\) lie on one component and the fourth one lies on another component (as in the second summand in \eqref{1/1exeq}), or when all marked ends lie on one component (as in the first and third summands in \eqref{1/1exeq}).


All possible movable branches \(\mathcal{S}_\sigma\) for \(1 \leq \sigma \leq ra + b = 7\) are shown in Figure \ref{S7}. Since we have only one marked right end of weight 1 and only one degenerate cross-ratio condition, we cannot have \(\mathcal{S}_{1}, \mathcal{S}_{5}, \mathcal{S}_{6}, \mathcal{S}_{7}\).

\begin{figure}[h!]
    \centering
    \includegraphics[scale=0.7]{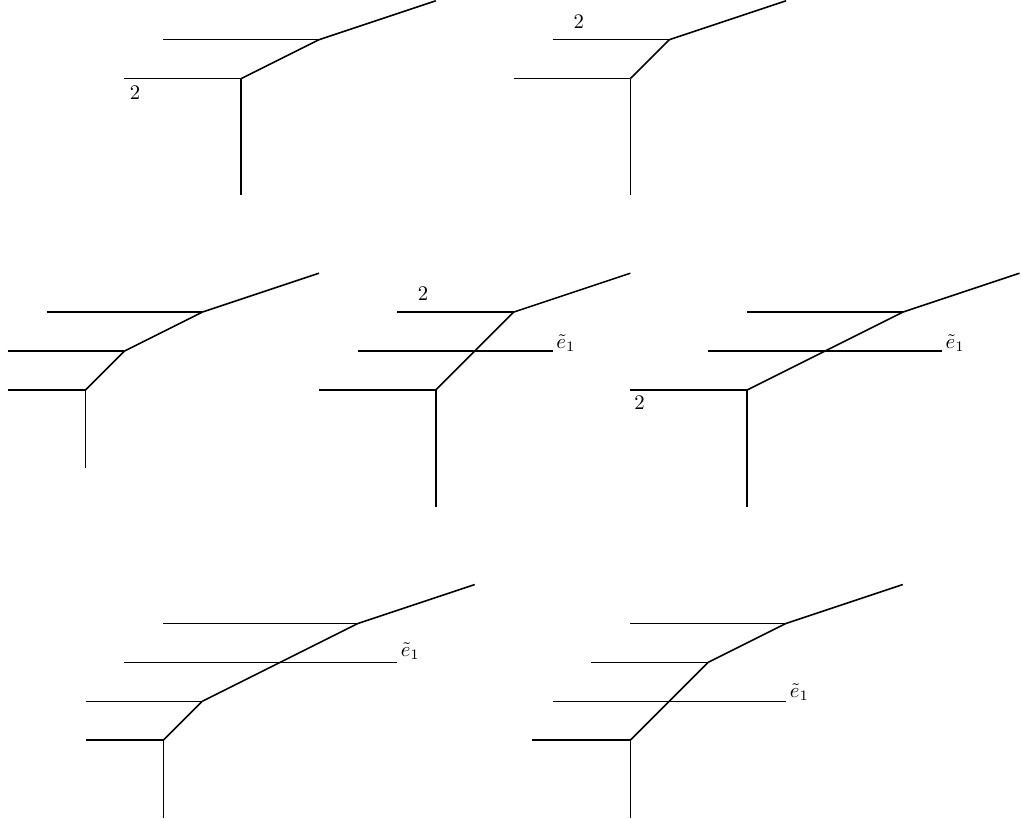}
    \caption{All possible movable branches for a tropical stable map contributing to 
    \(\mathcal{N}^{0}_{\Delta_{\mathbb{F}_{3}}(2,1,1)}(p_{\underline{9}}, \lambda_{1}, \lambda'_{2})\), where \(|\lambda'_2|\) has a very long length.}
    \label{S7}
\end{figure}

Let \(\sigma=2\). As shown in Figure \ref{S7}, the weights of the left ends of \(\mathcal{S}_2\) are \(c_1, c_2 \in \{1, 2\}\) such that \(c_1 + c_2 = 3\). We need to find all \(a'_1, a'_2\) and \(d'_1, d'_2\) satisfying 
\(
a'_1 + a'_2 = 1,  d'_1 + d'_2 = 3
\).


If \(a'_1 = 0\) (or similarly \(a'_2 = 0\)) and we have more than one point condition on a fixed component of degree \(\Delta_{\mathbb{F}_3}(0, b'_1, w_{1\underline{d'_1}})\), then these point conditions must lie on a line and thus cannot be in general position, which leads to a contradiction. Hence, if \(a'_i = 0\), we must have \(n_i = 1\), and therefore \(l_i = 0\).

On the one hand, to satisfy \(\lambda'_2\), the marked right end \(\tilde{e}_1\) and the point condition \(p_3\) must lie on the fixed component of degree \(\Delta_{\mathbb{F}_3}(0, b'_i, w_{i\underline{d'_i}})\). On the other hand, the images of both \(C'_1\) and \(C'_2\) must contain marked right ends in addition to \(\tilde{e}_1\); otherwise, they cannot be glued to \(\mathcal{S}_2\).
So the fixed component of degree \(\Delta_{\mathbb{F}_3}(0, b'_i, w_{i\underline{d'_i}})\) corresponds to a vertex adjacent to four parallel ends, including two ends in the primitive direction \((1,0)\). Since there is only one point condition on this component, no cross-ratio can be satisfied at this 4-valent vertex, which is impossible. Therefore, no pair of fixed components satisfies these conditions.

    


Let $\sigma=3$. As we can see in Figure \ref{S7}, the sum of the weights of the left ends of $\mathcal{S}_3$ can be either 4 or 3.

Using the same argument as for $\mathcal{S}_2$, we can only have one point condition and no cross-ratio on the fixed components of degree $\Delta_{\mathbb{F}_3}(0, b'_i, w_{i\underline{d'_i}})$. However, when $\sum_{i=1}^3 c_i = 4$ and for each $i = 1, \cdots, 3$ we have $c_i \in \{1, 2\}$, since $C$ satisfies both $\lambda_1$ and $\lambda'_2$, there must be at least two point conditions — the point condition $p_3$ and one of $p_4, p_5, p_6$ — on a fixed component of degree $\Delta_{\mathbb{F}_3}(0, b'_i, w_{i\underline{d'_i}})$, which is impossible.

Therefore, the only possible case is when $\sum_{i=1}^3 c_i = 3$ and $c_i = 1$ for each $i = 1, \cdots, 3$, and we obtain:

 \begin{equation}
\begin{aligned}
&\sum_{\substack{
        \{\Delta_{\mathbb{F}_{3}}(a'_i, b'_i, w_{i\underline{d'_i}}), n_{i}, l_i\}_{i=1}^3 \\ 
        \text{where } \sum_{i=0}^3(a'_i, d'_i)=(2,4)
    }} \mathcal{Y}\cdot \mathcal{X}\cdot
    \prod_{i=1}^3
    \mathcal{N}^{0}_{\Delta_{\mathbb{F}_{3}}(a'_i, b'_i, w_{i\underline{d'_i}})}(\{p_i\}^{\#n_i}, {\lambda}_{\underline{l_i}}) = \\&
    2\cdot\mathcal{N}^{0}_{\Delta_{\mathbb{F}_3}(1,2,1,1)}(\{p_i\}^{\#7}, \lambda_1) \cdot \mathcal{N}^{0}_{\Delta_{\mathbb{F}_3}(0,1,1)}(p_1)\cdot \mathcal{N}^{0}_{\Delta_{\mathbb{F}_3}(0,1,1)}(p_2).       
\end{aligned}\label{S3eq}
\end{equation}
Since we are in Case (A) of Theorem \ref{Main0}, we have $\mathcal{Y}=1$. Only one fixed component has two marked right ends of the same weight. We can glue this fixed component to \(\mathcal{S}_3\) in two ways, so here \(\mathcal{X} = 2\).

Let $\sigma = 4$, as we can see in Figure \ref{S7}, the sum of the weights of the left ends of $\mathcal{S}_2$ is $\sum_{i=1}^4 c_i = 4$ where $c_i = 1$ for each $i$. Therefore, we need to find all $a'_{\underline{4}}$, $d'_{\underline{4}}$ such that:
\[
\sum_{i=1}^4 a'_i = 1, \quad \sum_{i=1}^4 d'_i = 4.
\]
If we consider all possible distributions of the point conditions $p_{\underline{9}}$ among the four fixed components such that $\sum_{i=1}^4 n_i = 9$, we obtain:

 \begin{equation}
\begin{aligned}
    &\sum_{\substack{
        \{\Delta_{\mathbb{F}_{3}}(a'_i, b'_i, w_{i\underline{d'_i}}), n_{i}\}_{i=1}^4 \\ 
        \text{where } \sum_{i=0}^4(a'_i, d'_i)=(2,5)
    }} \mathcal{Y}\cdot \mathcal{X}\cdot
    \prod_{i=1}^4
    \mathcal{N}^{0}_{\Delta_{\mathbb{F}_{3}}(a'_i, b'_i, w_{i\underline{d'_i}})}(\{p_i\}^{\#n_i}) = 
    \\& \binom{3}{2}
    \mathcal{N}^{0}_{\Delta_{\mathbb{F}_3}(1,1,1)}(\{p_i\}^{\#6}) \cdot \mathcal{N}^{0}_{\Delta_{\mathbb{F}_3}(0,1,1)}(p_3)\cdot \mathcal{N}^{0}_{\Delta_{\mathbb{F}_3}(0,1,1)}(\{p_i\}^{\#1})\cdot \mathcal{N}^{0}_{\Delta_{\mathbb{F}_3}(0,1,1)}(\{p_i\}^{\#1}).
    \end{aligned}\label{S4eq}
\end{equation}
Here again, $\mathcal{Y}=1$. Since all of the fixed components have one marked right end, we have \(\mathcal{X} = 1\).
But note that there are two possible \(\mathcal{S}_4\) in Figure \ref{S7}. 
To ensure that \(\lambda_1\) holds, the point conditions \(\{p_4, p_5, p_6\}\) must be placed on distinct fixed components. Moreover, to satisfy \(\lambda'_2\), the point conditions \(\{p_1, p_2\}\) must lie on the same fixed component. Therefore, they must be assigned to the fixed component of degree \(\Delta_{\mathbb{F}_3}(1,1,1)\).
So we need to choose 2 point conditions from \(\{p_4, p_5, p_6\}\) and assign them to two of the fixed components of degree \(\Delta_{\mathbb{F}_3}(0,1,1)\), which gives a combinatorial factor of \(\binom{3}{2}\). The point condition \(p_3\) must be assigned to the third fixed component of degree \(\Delta_{\mathbb{F}_3}(0,1,1)\), and the remaining point conditions will be placed on the fixed component of degree \(\Delta_{\mathbb{F}_3}(1,1,1)\).

Therefore, we can count the number of tropical stable maps of degree \(\Delta_{\mathbb{F}_3}(2,1,1)\), which pass through 9 point conditions  
\(p_{\underline{9}}\)  
and satisfy the cross-ratio conditions \(\lambda_1 = \{x_4, x_5, x_6, \Tilde{e}_1\}\)  
and \(\lambda'_2 = (x_1, x_2 \mid x_3, \Tilde{e}_1)\), using the following formula:

 \begin{equation}
\begin{aligned}
    & \sum_{\substack{
        (\Delta_{1}, n_{1} | \Delta_{2}, n_{2}) \\ 
        \text{is a 2/0 split respecting } {\lambda}'_2 \\ 
        \text{and } \#\Delta_2 = n_2 + 1 
    }} 
    \mathcal{N}^{0}_{\Delta_{1}}(p_{\underline{n_1}}, p) 
    \cdot 
    \mathcal{N}^{0}_{\Delta_{2}}(p_{\underline{n_2}}, {\lambda}_{1}^{\rightarrow e}) 
\\&+
\sum_{\substack{
        (\Delta_{1}, n_{1}, l_1 | \Delta_{2}, n_{2}, l_2) \\ 
        \text{is a 1/1 split respecting } {\lambda}'_2 
    }} 
    \mathcal{N}^{0}_{\Delta_{1}}(p_{\underline{n_1}}, L_{e_1},  {\lambda}_{\underline{l_1}}^{\rightarrow e_1}) 
    \cdot 
    \mathcal{N}^{0}_{\Delta_{2}}(p_{\underline{n_2}}, L_{e_2},  {\lambda}_{\underline{l_2}}^{\rightarrow e_2})  \\&+  \sum_{\substack{
        \{\Delta_{\mathbb{F}_{3}}(a'_i, b'_i, w_{i\underline{d'_i}}), n_{i}, l_i\}_{i=1}^3 \\ 
        \text{where } \sum_{i=0}^3(a'_i, d'_i)=(2,4)
    }} 2\cdot
    \prod_{i=1}^3
    \mathcal{N}^{0}_{\Delta_{\mathbb{F}_{3}}(a'_i, b'_i, w_{i\underline{d'_i}})}(\{p_i\}^{\#n_i}, {\lambda}_{\underline{l_i}})\\&+
2\sum_{\substack{
        \{\Delta_{\mathbb{F}_{3}}(a'_i, b'_i, w_{i\underline{d'_i}}), n_{i}\}_{i=1}^4 \\ 
        \text{where } \sum_{i=0}^4(a'_i, d'_i)=(2,5)
    }} 
    \prod_{i=1}^4
    \mathcal{N}^{0}_{\Delta_{\mathbb{F}_{3}}(a'_i, b'_i, w_{i\underline{d'_i}})}(\{p_i\}^{\#n_i})  \\
    &= 6\cdot\mathcal{N}^{0}_{\Delta_{\mathbb{F}_{3}}(1,0)}(p_{1},p_2, \{p_i\}^{\#1}, p)\cdot \mathcal{N}^{0}_{\Delta_{\mathbb{F}_{3}}(1,1,1)}(p_3, \{p_i\}^{\#5}, {\lambda}_{1}^{\rightarrow e})\\&+2\cdot \mathcal{N}^{0}_{\Delta_{\mathbb{F}_{3}}(2,0)}(p_{1},p_2,\{p_i\}^{\#6}, \lambda_1) \cdot \mathcal{N}^{0}_{\Delta_{\mathbb{F}_{3}}(0,1,1)}(p_3) 
    \\&+ 36\cdot\mathcal{N}^{0}_{\Delta_{\mathbb{F}_{3}}(1,0)}(p_{1},p_2, \{p_i\}^{\#2}) \cdot \mathcal{N}^{0}_{\Delta_{\mathbb{F}_{3}}(1,1,1)}(p_3, \{p_i\}^{\#4}, L_{e_2}, {\lambda}_{1}^{\rightarrow e_2})
    \\&+ 36\cdot\mathcal{N}^{0}_{\Delta_{\mathbb{F}_{3}}(1,0)}(p_{1},p_2, \{p_i\}^{\#2}) \cdot \mathcal{N}^{0}_{\Delta_{\mathbb{F}_{3}}(1,1,1)}(p_3, \{p_i\}^{\#4}, \lambda_1) 
    \\& + 2  \cdot\mathcal{N}^{0}_{\Delta_{\mathbb{F}_3}(1,2,1,1)}(\{p_i\}^{\#7}, \lambda_1) \cdot \mathcal{N}^{0}_{\Delta_{\mathbb{F}_3}(0,1,1)}(p_1)\cdot \mathcal{N}^{0}_{\Delta_{\mathbb{F}_3}(0,1,1)}(p_2) \\&+ 6  \cdot 
    \mathcal{N}^{0}_{\Delta_{\mathbb{F}_3}(1,1,1)}(\{p_i\}^{\#6}) \cdot \mathcal{N}^{0}_{\Delta_{\mathbb{F}_3}(0,1,1)}(p_3)\cdot \mathcal{N}^{0}_{\Delta_{\mathbb{F}_3}(0,1,1)}(\{p_i\}^{\#1})\cdot \mathcal{N}^{0}_{\Delta_{\mathbb{F}_3}(0,1,1)}(\{p_i\}^{\#1}).
    \end{aligned}\label{ExS7eq}
\end{equation}
\end{example}

\section{Recovering established results for specific degrees and conditions}\label{Test}

In this section, we explore special cases of the formula presented in Theorem \ref{Main0}, recovering known results under specific conditions.  

\begin{example} 
Consider the case where in \eqref{General}, \( b = d = 0 \) and \( r = 1 \). In this setting, the image of tropical stable maps of degree \( \Delta_{\mathbb{F}_1}(a,0) \) in the plane consists of tropical curves with \( a \) ends in each of primitive directions: \( (1,1) \), \( (-1,0) \), and \( (0,-1) \), in another word, we are counting the tropical stable maps on $\mathbb{P}^2$ of degree $a$. Notice that we have no marked right ends and thus no non-contracted marked ends. As discussed in Section \ref{Prel}, this implies that all marked ends contributing to the cross-ratio conditions are actually marked points. Therefore, the definition of the cross-ratio given in Definition \ref{CR} coincides with the one established in \cite{GM, G, FM}.  
Since the multi-line condition defined in \ref{Line}, also coincides with \cite[Definition 1.8]{G}, so we have the following equality,

\begin{equation}
\mathcal{N}_{a}(p_{\underline{n}}, L_{\underline{k}}, {\lambda}_{\underline{l}})=  \mathcal{N}^{0}_{\Delta_{\mathbb{F}_{1}}(a,0)} (p_{\underline{n}}, L_{\underline{k}}, {\lambda}_{\underline{l}}),
\end{equation}

between the number of rational plane degree $a$ curves in $\mathbb{P}^2$ satisfying general positioned point, curve, and cross-ratio conditions and the number of tropical stable maps contributing to $ \mathcal{N}^{0}_{\Delta_{\mathbb{F}_{1}}(a,0)} (p_{\underline{n}}, L_{\underline{k}}, {\lambda}_{\underline{l}}) $.

\end{example}  
\begin{lemma}[Generalized Kontsevich's formula for $\mathbb{P}^2$ \cite{G}]\label{recoveingG}
 Consider \( n \) points \( p_{\underline{n}} \), \( k \) multi-lines \( L_{\underline{k}} \), and \( l\geq 1 \) cross-ratio conditions \( {\lambda}_{\underline{l}} \), all in general position. The number of \(m\)-marked tropical stable maps of degree $\Delta_{\mathbb{F}_{1}}(a,0)$,  contribute to  
\(\mathcal{N}^{0}_{\Delta_{\mathbb{F}_{1}}(a,0)}(p_{\underline{n}}, L_{\underline{k}}, {\lambda}_{\underline{l}})\)  
is determined recursively by the following formulas. Here, \( m = n + k + f \), where the marked points labeled in \( \underline{f} \) satisfy no additional conditions. Let $\lambda'_l$ denote a cross-ratio that degenerates to $\lambda_l$.

\begin{enumerate}
    \item {For} $n \geq 1$:
    
    \begin{equation}
            \mathcal{N}^{0}_{\Delta_{\mathbb{F}_{1}}(a,0)}(p_{\underline{n}}, L_{\underline{k}}, {\lambda}_{\underline{l}}) = 
    \end{equation}

    \begin{align}
    &\sum_{\substack{
        (\Delta_{1}, n_{1}, k_{1}, l_{1}, f_{1} | \Delta_{2}, n_{2}, k_{2}, l_{2}, f_{2}) \\ 
        \text{is a 1/1 split respecting } {\lambda}'_l
    }} 
    \mathcal{N}^{0}_{\Delta_{1}}(p_{\underline{n_1}}, L_{\underline{k_1}}, L_{e_1}, {\lambda}_{\underline{l_1}}^{\rightarrow e_1}) 
    \cdot 
    \mathcal{N}^{0}_{\Delta_{2}}(p_{\underline{n_2}}, L_{\underline{k_2}}, L_{e_2}, {\lambda}_{\underline{l_2}}^{\rightarrow e_2}) 
    \\ 
    &+ \sum_{\substack{
        (\Delta_{1}, n_{1}, k_{1}, l_{1}, f_{1} | \Delta_{2}, n_{2}, k_{2}, l_{2}, f_{2}) \\ 
        \text{is a 2/0 split respecting } {\lambda}'_l \\ 
        \text{and } \#\Delta_1 = n_1 + l_1 - f_1 
    }} 
    \mathcal{N}^{0}_{\Delta_{1}}(p_{\underline{n_1}}, L_{\underline{k_1}}, {\lambda}_{\underline{l_1}}^{\rightarrow e_1}) 
    \cdot 
    \mathcal{N}^{0}_{\Delta_{2}}(p_{\underline{n_2}},  p, L_{\underline{k_2}}, {\lambda}_{\underline{l_2}}^{\rightarrow e_2}).  
    \end{align}\\

    \item {For} $n = 0$ the equation:

\begin{equation}
    \mathcal{N}^{0}_{\Delta_{\mathbb{F}_{1}}(a,0)}(L_{\underline{k}}, {\lambda}_{\underline{l}}) =    
    \mathcal{N}^{0}_{\Delta_{\mathbb{F}_{1}}(a,0)}(p, L_{\underline{k-2}}, {\lambda}_{\underline{l-1}}^{\rightarrow e}),   
\end{equation}\\
holds where  $p = \ev_e (C)$ is in the intersection of the two multi-line conditions in Lemma \ref{0}, and $e$ is the contracted bounded edge in Lemma \ref{n0}.

\end{enumerate}

   The recursive formula in \cite{G} is the same formula stated above.
\end{lemma}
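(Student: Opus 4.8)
The plan is to verify that the hypotheses $b=d=0$ and $r=1$ collapse every ingredient of Theorem \ref{Main0} onto Goldner's recursion, the one genuinely new summand — the movable-branch contribution \eqref{3} — being empty in this range. First I would record the identification of the ambient objects: for $\Delta_{\mathbb{F}_{1}}(a,0)$ the degree of Definition \ref{Degree} consists of $a$ ends in each of the directions $(0,-1)$, $(r,1)=(1,1)$ and $(-1,0)$ (since $ra+\sum w_i=a$), with no marked right ends, which is exactly the fan degree of a plane tropical curve of degree $a$. Correspondingly the defining trapezoid with vertices $(0,0),(a,0),(a,b),(0,ra+b)$ degenerates, for $b=0$, to the triangle $\mathrm{conv}\{(0,0),(a,0),(0,a)\}$, i.e. the standard Newton triangle of $\mathbb{P}^2$. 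Together with the preceding example — which shows that in the absence of marked right ends the cross-ratio of Definition \ref{CR} and the multi-line of Definition \ref{Line} coincide with those of \cite{G} — this places us in the exact combinatorial setting of the generalized $\mathbb{P}^2$ formula.

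The heart of the argument is to show that summand \eqref{3} is vacuous, equivalently that a tropical stable map of degree $\Delta_{\mathbb{F}_{1}}(a,0)$ admitting a very long cross-ratio $\lambda'_l$ never carries a movable branch $\mathcal{S}$, so that Proposition \ref{*} forces a contracted bounded edge. I would argue this through Observation \ref{f6}: by conditions (2) and (3) of Definition \ref{DefString} all bounded edges joining a movable branch to the fixed components are parallel, lying in the single direction $(1,0)$, and all vertices of $\mathcal{S}$ move in $(1,0)$. By Observation \ref{f6} such a component is dual to exactly one boundary edge of the Newton polygon on its connecting side, yet must simultaneously be dual to a concave (two-edged) part on the opposite side in order to move unboundedly. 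When $b>0$ this opposite side is supplied by the right edge of the trapezoid (outer normal $(1,0)$, carrying the marked right ends); but for $b=d=0$ this right edge has collapsed to the single vertex $(a,0)$, so the branch has no $(1,0)$-ends ($d_0=0$, whence $\sum w_{0i}=0$ and the dual polygon in the fifth item of Definition \ref{DefString} degenerates to a triangle with no right edge). Hence the concavity requirement cannot be met on both sides and the purported unbounded $(1,0)$-movement is in fact bounded — a contradiction. The same collapse rules out the one-vertex branch $\mathcal{S}_1$, since its vertex would then be adjacent only to $(-1,0)$-ends carrying multi-line marked points, whose admissible unbounded directions (Observation \ref{oneEnd}, Figure \ref{Ends}) exclude $(1,0)$. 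This is the step I expect to be the main obstacle, as it is precisely where the Hirzebruch geometry departs from the planar one.

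With \eqref{3} removed, only the 1/1 and 2/0 summands \eqref{1} and \eqref{2} survive for $n\geq 1$, and for $n=0$ Lemma \ref{n0} produces the single 2/0 contraction \eqref{n=0} with no branch involved. Since the $\mathcal{N}^0$-counts, the ev-multiplicity, and the cross-ratio multiplicity of Definition \ref{multcr} are all defined by the same intersection-theoretic degree and local resolution data in both papers, the two surviving summands are term-by-term the splittings used in \cite{G}. I would close by matching the notation $(\Delta_1,n_1,k_1,l_1,f_1\mid\Delta_2,n_2,k_2,l_2,f_2)$ and the adapted cross-ratios $\lambda^{\rightarrow e_i}$ of Notation \ref{splitNotation} with Goldner's Construction 54, so that the displayed recursion is literally his. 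The remaining verifications — that the dimension bookkeeping \eqref{conditions on splitting cbd} and the condition $\#\Delta_1=n_1+l_1-f_1$ specialize correctly — are routine, and I would only indicate them.
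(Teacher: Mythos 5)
Your overall reduction is the same as the paper's: identify the $r=1$, $b=d=0$ data with Goldner's setting, and then show that the movable-branch summand \eqref{3} of Theorem \ref{Main0} is empty, so that only the $1/1$ and $2/0$ splits survive and the recursion becomes literally the one of \cite{G}. The gap is in how you kill the branches $\mathcal{S}_\sigma$ with $\sigma>1$. Observation \ref{f6} does not say what you use it for: it asserts that the dual polygon of the movable component has its concave part on the \emph{connecting} side and contains at least two boundary edges of $P$ on the \emph{other} side (you have inverted this), and it does not single out the right edge of the trapezoid. For $b=d=0$ the dual polygon of a hypothetical branch is the triangle with vertices $(0,0)$, $(a_0,0)$, $(0,a_0)$, whose boundary still contains parts of \emph{two} boundary edges of $P$, namely the bottom edge (dual to the $(0,-1)$-ends) and the hypotenuse (dual to the $(1,1)$-ends). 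So the necessary condition of Observation \ref{f6} is satisfied by such a hypothetical branch, and the collapse of the right edge produces no contradiction; your central step fails as written.

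What actually rules out these branches, and what the paper proves, is finer. By item \ref{fifth} of Definition \ref{DefString} a branch has $a_0\geq 1$ ends of direction $(1,1)$; by Lemma \ref{4cases} a vertex $v$ carrying such an end is 3-valent of type I, so one edge of $v$ goes to the fixed part, and its third edge lies in a special half-plane which, precisely because the degree has no $(1,0)$-ends, is 1-ray; Lemma \ref{1Ray} then forbids that edge from leading to another vertex of the movable component, so $v$ is adjacent to two ends and is disconnected from the rest of the branch, contradicting $\sigma>1$ (Figure \ref{HH'}). Alternatively, your intuition that a branch "needs $(1,0)$-ends" can be made rigorous by counting rather than by Observation \ref{f6}: by Remark \ref{Sfix} each of the $\sigma$ vertices carries exactly one fixed left end of weight $c_i\geq 1$; by condition (5) of Definition \ref{DefString} the $(1,1)$- and $(0,-1)$-ends can only sit at the two 3-valent chain-end vertices, forcing $a_0=1$; and then the relation $\sum_{i=1}^{\sigma}c_i = r a_0 + \sum_{i=1}^{d_0} w_{0i}$ from Definition \ref{DefString} gives $\sum_i c_i = 1$ when $r=1$ and $d_0=0$, incompatible with $\sigma\geq 2$. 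Either repair is needed. (Your $\sigma=1$ case reaches the right conclusion, but the paper's argument is more robust: a vertex all of whose adjacent edges have primitive direction $(-1,0)$ violates the balancing condition outright, with no appeal to the movement analysis of Figure \ref{Ends}.)
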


\begin{proof}

To show that the recursive formula in \cite{G} agrees with the one stated in Theorem \ref{Main0} for tropical stable maps of degree \( \Delta_{\mathbb{F}_{1}}(a,0) \), it is enough to prove that such maps do not have any movable branches \( \mathcal{S}_\sigma \).

Assume otherwise. Let \( C \) be a tropical stable map contributing to \( \mathcal{N}^{0}_{\Delta_{\mathbb{F}_{1}}(a,0)} (p_{\underline{n}}, L_{\underline{k}}, {\lambda}_{\underline{l-1}}, \lambda'_l) \), where \( \lambda'_l \) has a very large length. If \( C \) has a movable branch \( \mathcal{S}_\sigma \), and if \( \sigma = 1 \), then \( \mathcal{S}_\sigma \) contains only one vertex \( v \). According to Definition \ref{DefString}, this vertex is adjacent only to parallel edges in the direction of movement \( (1,0) \). Since the image of a tropical stable map in the plane has no ends in direction \( (1,0) \), \( v \) cannot be balanced, which leads to a contradiction.

If \( \sigma > 1 \), then \( \mathcal{S}_\sigma \) contains at least two vertices, and the only possible directions for the adjacent ends are \( (1,1) \), \( (-1,0) \), and \( (0,-1) \). By part \ref{fifth} of Definition \ref{DefString}, at least one vertex \( v \) in the movable branch must be adjacent to an end in direction \( (1,1) \). By Lemma \ref{4cases}, this vertex \( v \) is a 3-valent vertex of type I, and hence is adjacent to one end and two bounded edges, with one of those edges connecting to a vertex in the fixed component. Since \( \sigma > 1 \), not all edges adjacent to \( v \) can be parallel; otherwise, \( \mathcal{S}_\sigma \) would have a two-dimensional movement. Therefore, the balancing condition at \( v \) implies that the other edges adjacent to \( v \) must lie in the half-planes \( H_v \) and \( H'_v \). This is the situation shown in Figure \ref{HH'}.

\begin{figure}[h!]
\centering
\includegraphics[scale = 0.7]{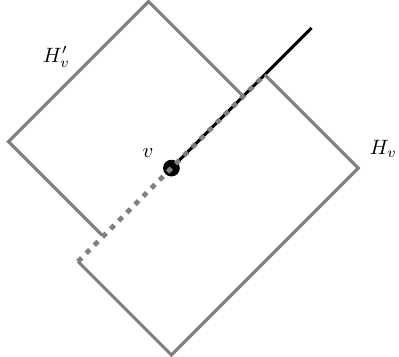}
\caption{In this figure, we see a vertex \( v \) of a movable branch \( \mathcal{S}_\sigma \), with \( \sigma > 1 \), which is connected to an end of primitive direction \( (1,1) \). The half-planes \( H_v \) and \( H'_v \) are 1-ray half-planes.
 }
\label{HH'}
\end{figure}

 Since $v$ is a 3-valent vertex of type I and its direction of movement is $(1,0)$, $v$ is adjacent to a vertex in the fixed component inside \( H'_v \). Inside $H_v$, either $v$ can be adjacent to the only end in $H_v$ or to another vertex in the movable component. But $H_v$ is a 1-ray special half-plane in this case, so $v$ cannot be adjacent to another vertex in the movable component in $H_v$; otherwise this contradicts Lemma \ref{1Ray}. Therefore, \( v \) is a 3-valent vertex adjacent to two ends, which contradicts the assumption that \( \sigma > 1 \).

\end{proof}

As a result of Lemma \ref{recoveingG} and \cite[Corollary 4.7]{G}, in the case \( l = k = 0 \), Kontsevich's formula also follows from the formula presented in Theorem \ref{Main0}.

\begin{example}
Consider the case where, in \eqref{General}, we take \( r = 2 \), \( b = d \), and \( w_i = 1 \) for \( 1 \leq i \leq d \), and assume further that \( l = k = f = 0 \). 
In this setting, we do not mark any non-contracted ends. Thus, we are counting the number of tropical curves of degree \( \Delta_{\mathbb{F}_{2}}(a,b) \) passing through \( n \) points in general position. We obtain the following equality:
\begin{equation}\label{FMKonts}
\mathcal{N}^{0}_{\mathbb{F}_{2}}(a,b)=  \mathcal{N}^{0}_{\Delta_{\mathbb{F}_{2}}(a,b)}(p_{\underline{n}}),
\end{equation}
where the left-hand side of \eqref{FMKonts} denotes the number introduced in \cite{FM}.
\end{example}

\begin{lemma}\label{F2onestring}
    A tropical stable map contributing to \(\mathcal{N}^{0}_{\Delta_{\mathbb{F}_{2}}(a,b)}(p_{\underline{n}})\) can contain only one movable branch with exactly two vertices; see Figure~\ref{F2S}. In other words, in the formula \eqref{NFrw}, we have $\sigma = 2$.
\end{lemma}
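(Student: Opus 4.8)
The plan is to run the argument inside the recursion of Theorem~\ref{Frnw}: a curve contributing to $\mathcal{N}^{0}_{\Delta_{\mathbb{F}_2}(a,b)}(p_{\underline{n}})$ is realized through a tropical stable map $C$ contributing to $\mathcal{N}^{0}_{\Delta_{\mathbb{F}_2}(a,b)}(p_{\underline{n-1}},L_a,L_b,\lambda'_A)$ with $|\lambda'_A|$ very long, and by Proposition~\ref{*} such a $C$ either splits over a contracted bounded edge or carries a movable branch $\mathcal{S}_\sigma$. First I would pin down the local shape of every vertex of $\mathcal{S}_\sigma$. Here $l=1$, so there are no degenerate cross-ratio conditions, and $\lambda'_A$ is non-degenerate; by Lemma~\ref{multSC} the four marked ends of $\lambda'_A$ all lie on the fixed components, and since two of them, $x_n$ and $x_{n+1}$, carry the multi-line conditions $L_a$ and $L_b$, the branch $\mathcal{S}_\sigma$ carries neither a cross-ratio nor a multi-line condition. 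Invoking Remark~\ref{Gol18} (a vertex of valence $>3$ must have a nonempty $\lambda_v$, each adjacent edge pointing toward an entry of some $\lambda_j\in\lambda_v$), I conclude that every vertex of $\mathcal{S}_\sigma$ is an ordinary $3$-valent vertex, and by Definition~\ref{IIII} together with Lemma~\ref{4cases} each one is of type~I, i.e.\ attached to a fixed component through a fixed left end of primitive direction $(-1,0)$. In particular condition~(6) of Definition~\ref{DefString} forces $d_0=0$: a marked right end would produce a vertex of valence $>3$, which cannot occur.

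Next I would translate this into the dual Newton polygon. With $d_0=0$ and $r=2$, condition~\ref{fifth} of Definition~\ref{DefString} and Definition~\ref{Degree} give that the degree $\Delta_{\mathbb{F}_2}(a_0,b_0,w_{\underline{d_0}},c_{\underline{\sigma}})$ of $\mathcal{S}_\sigma$ is dual to the lattice triangle $T$ with vertices $(0,0),(a_0,0),(0,2a_0)$, whose left edge $\{0\}\times[0,2a_0]$ (outer normal $(-1,0)$) carries all fixed left ends and has total lattice length $2a_0$. Since every vertex of $\mathcal{S}_\sigma$ is $3$-valent, the subdivision dual to $\mathcal{S}_\sigma$ is a unimodular triangulation of $T$, so the number of its $2$-cells, which equals $\sigma$, is $2\,\mathrm{Area}(T)=2a_0^2$. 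On the other hand, a vertex of $\mathcal{S}_\sigma$ is of type~I precisely when its dual triangle has an edge on the left edge of $T$; each of the $2a_0$ unit segments of that edge belongs to a unique triangle, so at most $2a_0$ triangles are of this kind. As every vertex must be of type~I (a non-type-I vertex would be detached from all fixed components and would move independently, contradicting the one-dimensionality of the family in Remark~\ref{B}), I obtain $\sigma=2a_0^2\le 2a_0$, forcing $a_0=1$ and hence $\sigma=2$.

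Finally I would dispose of the degenerate possibility $\sigma=1$. A branch $\mathcal{S}_1$ would be a single vertex adjacent only to parallel ends of primitive directions $(1,0)$ and $(-1,0)$; but marked right ends cannot lie on the branch (shown above) and every $(-1,0)$ end is a fixed left end, so the vertex would be adjacent only to leftward ends and would violate the balancing condition. Hence $\sigma\ge 2$, and combined with the bound above this yields $\sigma=2$. Uniqueness of the branch is then immediate: dropping $\lambda'_A$ produces a one-dimensional family (Remark~\ref{B}), so the movable component $B=\mathcal{S}_\sigma$ is connected and unique; this is the configuration drawn in Figure~\ref{F2S}.

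I expect the main obstacle to be the bookkeeping in the second paragraph: making the equivalence ``type~I vertex $\Longleftrightarrow$ dual triangle meets the left edge'' precise, and justifying rigorously that any $2$-cell of a unimodular triangulation of $T$ which fails to touch the left edge yields a branch vertex with no fixed left end — hence a vertex that is neither type~I, II, nor~III and therefore contributes an extra independent direction of movement. Pinning down the edge count cleanly — that the $2a_0$ left-boundary segments support at most $2a_0$ triangles while a unimodular triangulation of $T$ contains $2a_0^2$ triangles — is exactly where the inequality $a_0\le 1$ is won, so that is the step I would write with the most care.
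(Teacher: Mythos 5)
Your first and third steps (all branch vertices are $3$-valent since there are no degenerate cross-ratio conditions, and $\sigma=1$ is impossible) match the paper's proof, and your overall strategy — force $a_0=1$ by a counting argument and conclude $\sigma\le 2$ — is the same in spirit as the paper's ``infinite chain'' argument. But the counting step itself has a genuine gap: you assume that the subdivision of $T$ dual to $\mathcal{S}_\sigma$ is a \emph{unimodular} triangulation, so that the number of $2$-cells equals $2\,\mathrm{Area}(T)=2a_0^2$ and the left edge of $T$ is cut into $2a_0$ \emph{unit} segments. Three-valence only makes the dual cells triangles; the normalized area of the triangle dual to a vertex is that vertex's multiplicity, which exceeds $1$ whenever an adjacent edge or end has weight $>1$. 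The fixed left ends of $\mathcal{S}_\sigma$ carry weights $c_i\ge 1$ that are \emph{not} known to equal $1$ at this stage — that $c_1=c_2=1$ is part of the conclusion, and the paper's general machinery (the factors $\mathcal{Y},\mathcal{X}$ in Theorem \ref{Main0}, Lemmas \ref{mul1} and \ref{multSC}) explicitly tracks $c_i>1$. So the left edge is divided into $\sigma$ segments of lattice lengths $c_i$, not into $2a_0$ unit ones, and without unimodularity the area argument only yields the upper bound $\#\{2\text{-cells}\}\le 2a_0^2$, which is the wrong direction: your inequality $2a_0^2\le 2a_0$ is never established, and no contradiction appears for $a_0\ge 2$.

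The gap is repairable, and the repair is essentially the paper's argument made quantitative on the graph side rather than the dual side: $\mathcal{S}_\sigma$ is a tree with $V$ trivalent vertices, hence $V-1$ bounded edges, and (using your correct observation $d_0=0$) it has $\sigma+2a_0$ ends, so $3V=2(V-1)+\sigma+2a_0$, i.e.\ $V=\sigma+2a_0-2$. Since every vertex must be adjacent to at least one of the $\sigma$ fixed left ends (your type-I observation), $V\le\sigma$, forcing $a_0\le 1$; with $a_0\ge 1$ this gives $a_0=1$, so $\sum_{i=1}^{\sigma}c_i=2a_0=2$, and $c_i\ge 1$ forces $\sigma\le 2$, whence $\sigma=2$ and $c_1=c_2=1$. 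Two smaller inaccuracies: Lemma \ref{multSC} cannot be \emph{cited} to place all four ends of $\lambda'_A$ on fixed components — it computes multiplicities \emph{assuming} such a placement; you must argue it, e.g.\ point-conditioned ends are excluded from branches by condition (1) of Definition \ref{DefString}, and a multi-line point or marked right end on the branch would create a vertex of valence $>3$ with no degenerate cross-ratio available to justify it (Remark \ref{Gol18}). Also, your exclusion of right ends via condition (6) of Definition \ref{DefString} is stated there only for $\sigma>1$, so your $\sigma=1$ case needs that same valence observation rather than the phrase ``shown above''.
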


\begin{figure}[h!]
\centering
\includegraphics[scale=0.8]{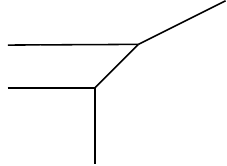}
\caption{The movable branch $\mathcal{S}_2$ for tropical stable maps of degree $\Delta_{\mathbb{F}_{2}}(a,b)$.}
\label{F2S}
\end{figure}

\begin{proof}
Since there is no degenerate cross-ratio condition, all vertices in the movable branch $\mathcal{S}_\sigma$ must be 3-valent. Furthermore, there are no marked right ends, so we cannot have a movable branch $\mathcal{S}_1$ consisting of a single vertex.

If $\mathcal{S}_\sigma$ has more than two vertices, then it must have more than two ends of primitive direction $(-1,0)$, each with a weight $c_i \geq 1$ for $1 \leq i \leq \sigma$. If $\sigma > 2$, then (as illustrated in Figure~\ref{infnite}), the following inequality holds:
\[
\left(\sum_{i=1}^{\sigma} c_i\right) - 2 > 0,
\]
Which implies that $\mathcal{S}_\sigma$ contains an infinite chain of vertices, which is a contradiction.
\end{proof}

\begin{figure}[h!]
\centering
\includegraphics[scale=0.8]{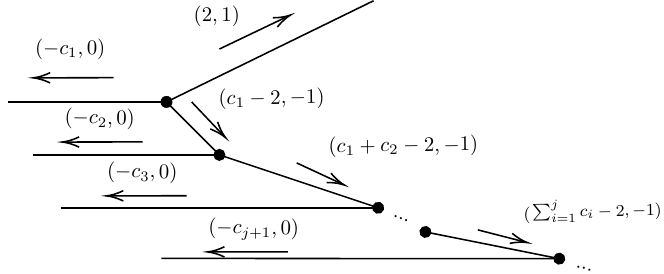}
\caption{A movable branch with an infinite chain of vertices, where $c_i \geq 1$ for each $1 \leq i \leq \sigma$.}
\label{infnite}
\end{figure}

\begin{lemma}[Tropical Kontsevich formula for $\mathbb{F}_2$ \cite{FM}] \label{recoveingFM}

For $ n= \# \Delta_{\mathbb{F}_{2}}(a,b)-1$, the number of \(n\)-marked tropical stable maps of degree \(\Delta_{\mathbb{F}_{2}}(a,b)\), which contribute to  
\(\mathcal{N}^{0}_{\Delta_{\mathbb{F}_{2}}(a,b)}(p_{\underline{n}})\),  
can be computed recursively using the following formula. This formula is derived from Theorem~\ref{Main0} by choosing suitable parameters.

 \begin{equation}\label{NF2}
   \begin{aligned}
   \mathcal{N}^{0}_{\Delta_{\mathbb{F}_{2}}(a,b)}(p_{\underline{n}}) =
    &\sum_{\substack{
        (\Delta_{\mathbb{F}_2}(a_1, b_1) | \Delta_{\mathbb{F}_2}(a_2, b_2)) \\ 
      (a_1, b_1)+(a_2, b_2)=(a,b) }} \Phi_1 \cdot
    \mathcal{N}^{0}_{\Delta_{\mathbb{F}_2}(a_1, b_1)}(p_{\underline{n_1}}) 
    \cdot 
    \mathcal{N}^{0}_{\Delta_{\mathbb{F}_2}(a_2, b_2)}(p_{\underline{n_2}}) 
    \\ 
    &+  \sum_{\substack{
        (\Delta_{\mathbb{F}_2}(a_1, b_1) | \Delta_{\mathbb{F}_2}(a_2, b_2))  \\ 
        (a_1, b_1)+(a_2, b_2)=(a-1,b+2) }}
    \Phi_2 \cdot
   \mathcal{N}^{0}_{\Delta_{\mathbb{F}_2}(a_1, b_1)}(p_{\underline{n_1}}) 
    \cdot 
    \mathcal{N}^{0}_{\Delta_{\mathbb{F}_2}(a_2, b_2)}(p_{\underline{n_2}}), 
    \end{aligned}
    \end{equation}\\
such that the following hold:\\

\begin{equation}
   \begin{aligned}
\Phi_1 &= (2a_1 + b_1)(2a_2 + b_2)(a_1 b_2 + a_2 b_1 +2a_1 a_2)\binom{4a+2b-4}{4a_1 +2b_1 -2}\\ &- (2a_1 + b_1)^2(a_1 b_2 + a_2 b_1 +2a_1 a_2)\binom{4a+2b-4}{4a_1 +2b_1 -1}
  \end{aligned}
\end{equation}

\begin{equation}
\Phi_2 = 2(2a_1 + b_1)(2a_2 + b_2)(b_1 b_2)\binom{4a+2b-4}{4a_1 +2b_1 -2}- 2(2a_1 + b_1)^2(b_1 b_2)\binom{4a+2b-4}{4a_1 +2b_1 -1}
\end{equation}\\

The recursive formula in \cite{FM} is the same as the one given above.   
 
\end{lemma}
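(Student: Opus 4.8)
The plan is to specialize the general point-counting recursion of Theorem \ref{Frnw} to the parameters $r=2$, $b=d$, and $w_i=1$ for all $1\le i\le b$, and then to pass to the unweighted count $\mathcal{N}^{0}_{\Delta_{\mathbb{F}_{2}}(a,b)}(p_{\underline{n}})$ via the labeling factor $\tfrac{1}{b!}$ of Corollary \ref{Frn}. Under this specialization the two summands of the target formula \eqref{NF2} should arise as follows: the $1/1$-split summand of \eqref{NFrw} will produce the first summand of \eqref{NF2}, and the movable-branch summand of \eqref{NFrw} will produce the second. Since no multi-line or cross-ratio condition survives on the final pure-point count, the only auxiliary structure used is the two multi-lines $L_a,L_b$ and the single cross-ratio $\lambda'_A$ introduced in the proof of Theorem \ref{Frnw}, so I would read off the intersection factors $ra_1a_2+a_1b_2+a_2b_1$ and the binomial distribution factors directly from that proof.

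First I would invoke Lemma \ref{F2onestring}, which forces $\sigma=2$: a tropical stable map of degree $\Delta_{\mathbb{F}_{2}}(a,b)$ admits only the movable branch $\mathcal{S}_2$ with exactly two $3$-valent vertices. This collapses the sum $\sum_{\sigma}$ in \eqref{NFrw} to the single term $\sigma=2$. I would then determine the degree of $\mathcal{S}_2$ from Definition \ref{DefString} and the balancing relations \eqref{eq:1}--\eqref{eq:3}: with all weights equal to one we get $c_1=c_2=1$, hence $2a_0+b_0=c_1+c_2=2$, forcing $a_0=1$ and $b_0=0$, so that $\mathcal{S}_2$ carries one $(2,1)$-end and one $(0,-1)$-end, and Lemma \ref{mul1} gives $\mult(\mathcal{S}_2)=\lvert y_1\rvert$. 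Feeding $a=a_0+a_1'+a_2'$ and $b_i'=b_i+c_i$ into \eqref{eq:1}--\eqref{eq:3} then yields exactly the degree split $(a_1',b_1')+(a_2',b_2')=(a-1,b+2)$ appearing in the second summand of \eqref{NF2}.

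Next I would match the numerical coefficients. Because $\sigma=2$, only Case (B) of the factor $\mathcal{Y}$ in Theorem \ref{Frnw} can occur (two of the marked ends of $\lambda'_A$ on each fixed component), and all terms requiring a third fixed component $C_u$ vanish; the surviving factor is $\mathcal{Y}=(c_s+c_t)=2$, which I expect to account for the overall factor $2$ in $\Phi_2$. The gluing count $\mathcal{X}$ specializes, for weight-one ends, to the number of ways of attaching the two $(-1,0)$-ends of $\mathcal{S}_2$ to weight-one right ends of the fixed components, producing the $b_1b_2$ factor of $\Phi_2$. For the binomials I would substitute $n=\#\Delta_{\mathbb{F}_{2}}(a,b)-1=4a+2b-1$ and $n_i=\#\Delta_i-1=4a_i+2b_i-1$, so that $n-3=4a+2b-4$ and the expressions $\binom{n-3}{n_1-1}$ and $\binom{n-3}{n_1}$ reproduce $\binom{4a+2b-4}{4a_1+2b_1-2}$ and $\binom{4a+2b-4}{4a_1+2b_1-1}$ of $\Phi_1$.

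The hard part will be the bookkeeping of the labeling factor $\tfrac{1}{b!}$ of Corollary \ref{Frn} and its interaction with the gluing multiplicity $\mathcal{X}$, the factor $\mathcal{Y}$, and the prefactor $\tfrac{1}{r}=\tfrac12$ of \eqref{NFrw}. One must verify that, after distributing the $b$ labeled right ends among the sub-degrees $(a_1',b_1')$ and $(a_2',b_2')$, the resulting multinomial factors combine with $\mathcal{X}$ and the $\tfrac12$ prefactor to leave precisely the coefficients $\Phi_1$ and $\Phi_2$ of \cite{FM}, with no residual factor and with the binomial indices in exact agreement. A secondary check is that passing from the auxiliary count $\mathcal{N}^{0}_{\Delta_{\mathbb{F}_{2}}(a,b)}(p_{\underline{n-1}},L_a,L_b,\lambda'_A)$ of Theorem \ref{Frnw} to the pure-point count introduces no movable-branch contribution beyond the $\sigma=2$ term, which is guaranteed by Lemma \ref{F2onestring} but should be stated explicitly.
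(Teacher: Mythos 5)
Your proposal follows essentially the same route as the paper's own proof: specialize Theorem \ref{Frnw} to $r=2$ with $b=d$ and $w_i=1$, invoke Lemma \ref{F2onestring} to force $\sigma=2$ (so that $\mult(\mathcal{S}_2)=1$ and $\mathcal{X}=b_1b_2$), and pass to the unlabeled count via the $\tfrac{1}{b!}$ factor of Corollary \ref{Frn}. If anything, your write-up is more detailed than the paper's brief argument, which cites exactly these ingredients, sets $r=\sigma=2$ in \eqref{NFrw}, and leaves the coefficient matching and the derivation of the split $(a_1',b_1')+(a_2',b_2')=(a-1,b+2)$ implicit, whereas you carry them out.
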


\begin{proof}

By Lemma~\ref{F2onestring}, there is only one movable branch $\mathcal{S}_2$ in which all ends have weight 1. Therefore, we have $\sum_{\mathcal{S}_\sigma} \mult(\mathcal{S}_\sigma) = 1$, and $\mathcal{X}$ denotes the total number of ways to glue $C_1$ and $C_2$ along $\mathcal{S}_2$, which in this case is $b_1 b_2$.

To apply Formula~\eqref{NFrw} to this enumerative problem, we need to label all ends in the primitive direction $(1,0)$. Since $b = d$ and $w_i = 1$ for all $1 \leq i \leq d$, there are $b!$ ways to label the right ends of a tropical stable map contributing to $\mathcal{N}^{0}_{\Delta_{\mathbb{F}_{2}}(a,b)}(p_{\underline{n}})$.
By setting $r = \sigma = 2$ in Formula~\eqref{NFrw}, and then applying Remark \ref{Frn}, we obtain Equation~\eqref{NF2}.
\end{proof}

\nocite{*}
\bibliographystyle{IEEEannot}
\bibliography{annot}

\end{document}